\newtheorem{theorem}{Theorem}[section]
\newtheorem{lemma}[theorem]{Lemma}
\newtheorem{proposition}[theorem]{Proposition}
\newtheorem{corollary}[theorem]{Corollary}
\newtheorem{definition}[theorem]{Definition}
\theoremstyle{remark}
\newtheorem{remark}[theorem]{Remark}
\def\underarrow#1{\mathop{\vtop{\m@th\ialign{##\crcr
$\hfil\displaystyle{#1}\hfil$\crcr
\noalign{\kern3pt\nointerlineskip}
\hfil$\uparrow$\hfil\crcr\noalign{\kern3pt}}}}\limits}
\def\lim{\mathop{\rm lim}\nolimits}
\def\rank{\mathop{\rm rank}\nolimits}
\def\colim{\mathop{\rm colim}\nolimits}
\def\Spec{\mathop{\rm Spec}}
\def\Hom{\mathop{\rm Hom}\nolimits}
\def\Tors{\mathop{\rm Tors}\nolimits}
\def\Sh{\mathop{\textit{Sh}}\nolimits}
\def\B+{{B^+_{{\rm dR}}}}
\def\BdR{{B_{{\rm dR}}}}
\newcommand\cInd{\mathop{\mbox{$c$-$\mathrm{Ind}$}}}
\newcommand{\Syl}{\mathrm{Syl}}
\newcommand{\G}{\mathrm{G}}
\newcommand{\T}{\mathrm{T}}
\newcommand{\rP}{\mathrm{P}}
\newcommand{\rH}{\mathrm{H}}
\newcommand{\rN}{\mathrm{N}}
\newcommand{\M}{\mathrm{M}}
\newcommand{\U}{\mathrm{U}}
\newcommand{\Z}{\mathbb{Z}}
\newcommand{\R}{\mathbb{R}}
\newcommand{\C}{\mathbb{C}}
\newcommand{\bL}{\mathbb{L}}
\newcommand{\Wchi}{\mathcal{W}(\chi)}
 \newcommand{\Fs}{\mathcal{F}_{\mathcal{W}(\Id)}}
\newcommand{\Res}{\mathrm{Res}}
\newcommand{\Irr}{\mathrm{Irr}}
\newcommand{\ind}{\mathrm{ind}}
\newcommand{\GL}{\mathrm{GL}}
\newcommand{\id}{\mathrm{id}}
\newcommand{\Id}{\mathrm{Id}}
\newcommand{\End}{\mathrm{End}}
\newcommand{\Gm}{{\mathbb{G}_m}}
\newcommand{\E}{\mathcal{E}}
\newcommand{\OO}{\mathcal{O}}
\newcommand{\mc}{\mathcal}
\newcommand{\Q}{\mathbb{Q}}
\newcommand{\F}{\mathbb{F}}
\newcommand{\bb}{\mathbb}
\newcommand{\Ig}{\mathrm{Ig}}
\newcommand{\Lie}{\mathrm{Lie}}
\newcommand{\redu}{\mathrm{red}}
\newcommand{\dom}{\mathrm{dom}}
\newcommand{\Std}{\mathrm{Std}}
\newcommand{\ov}{\overline}
\newcommand{\Cent}{\mathrm{Cent}}
\newcommand{\A}{\mathbb{A}}
\newcommand{\Gal}{\mathrm{Gal}}
\newcommand{\GSp}{\mathrm{GSp}}
\newcommand{\I}{\mathrm{I}}
\newcommand{\Mult}{\mathrm{Mult}^{\mathrm{sc}}}
\newcommand{\N}{\mathbb{N}}
\newcommand{\std}{\mathrm{std}}
\newcommand{\Ind}{\mathrm{Ind}}
\newcommand{\Sht}{\mathrm{Sht}}
\newcommand{\Spd}{\mathrm{Spd}}
\newcommand{\opp}{\mathrm{opp}}
\newcommand{\Act}{\mathrm{Act}}
\newcommand{\Bun}{\mathrm{Bun}}
\newcommand{\BunG}{\mathrm{Bun_{\G}}}
\newcommand{\Bunn}{\mathrm{Bun_{n}}}
\newcommand{\Rep}{\mathrm{Rep}}
\newcommand{\Aut}{\mathrm{Aut}}
\newcommand{\Dc}{\mathrm{D}}
\newcommand{\Dlis}{\mathrm{D}_{\mathrm{lis}}}
\newcommand{\Perf}{\mathrm{Perf}}
\newcommand{\Coh}{\mathrm{Coh}}
\newcommand{\Nilp}{\mathrm{Nilp}}
\newcommand{\IndPerf}{\mathrm{IndPerf}}
\newcommand{\ad}{\mathrm{ad}}
\newcommand{\rB}{\mathrm{B}}
\newcommand{\rZ}{\mathrm{Z}}
\mathchardef\mhyphen="2D
\def\ol{\overline}
\def\ra{\rightarrow}
\def\Div{\mathrm{Div}}
\title{Categorical local Langlands and torsion classes of some Shimura varieties}
\author{Kieu Hieu Nguyen}
\email{kieu-hieu.nguyen@uvsq.fr}
\address{University of Versailles Saint-Quentin, France}
\thanks{
\small{MSC class:	11S37}
}
\thanks{
\small{
The author acknowledges support by the European Union ERC in form of Consolidator Grants : RELANTRA, project number 101044930. Views and opinions expressed are however those of the author only and do not necessarily reflect those of the European Union or the European Research Council. Neither the European Union nor the granting authority can be held responsible for them.
 }
}
\begin{document}

\begin{abstract}
We study the cohomology of various local Shimura varieties for $\GL_n$. This provides an explicit description of the spectral action constructed by Fargues-Scholze in certain cases and allows us to prove some strongly generic part of the categorical local Langlands conjecture for $\GL_n$ with torsion coefficients. As applications, we are able to prove an analogue of the Harris-Viehmann conjecture and deduce new vanishing results for the cohomology of Shimura varieties of type $A$ in the torsion coefficient setting.
           
\end{abstract}

\maketitle
\tableofcontents
\section{Introduction}

Let $ \ell \neq p$ be primes and fix an algebraic closure $\overline{\Q}_p$ of $\Q_p$ with associated Weil group $W_{\Q_p}$. In \cite{FS}, Fargues and Scholze formulated the categorical local Langlands program for a reductive group $\G$ defined over $\Q_p$. The aim of this program is to relate sheaves on the stack of Langlands parameters, "the spectral side", to sheaves on $\Bun_{\G}$, "the geometric side". On the spectral side, one considers $\Perf^{\mathrm{qc}}( [Z^1(W_{\Q_p}, \widehat{\G})_{\Lambda}/\widehat{\G}] )$ (resp. $\Dc^{b,\mathrm{qc}}_{\mathrm{coh}}( [Z^1(W_{\Q_p}, \widehat{\G})_{\Lambda}/\widehat{\G}]) $), the derived category of perfect complexes on the stack of $L$-parameters $ [Z^1(W_{\Q_p}, \widehat{\G})_{\Lambda}/\widehat{\G}] $ with quasi-compact support (resp. bounded derived category of sheaves with coherent cohomology with quasi-compact support), as in \cite{DH,Zhu1} and \cite[Section~VIII.I]{FS}. On the geometric side, one considers $\Dlis(\Bun_{\G},\Lambda)$, the category of lisse-\'etale sheaves and $\Dlis(\Bun_{\G},\Lambda)^{\omega}$ the sub-category of compact objects, as defined in \cite[Section~VII.7]{FS}, where $ \Lambda \in \{ \ov \F_{\ell}, \ov \Z_{\ell}, \ov \Q_{\ell} \}$ and $\ell$ is not a bad prime, namely the order of $\pi_0 Z(\G)$ is invertible in $\Lambda$. Fargues and Scholze constructed an action (called the spectral action) of the category $\Perf^{\mathrm{qc}}( [Z^1(W_{\Q_p}, \widehat{\G})_{\Lambda}/\widehat{\G}] )$ on the category $\Dlis(\Bun_{\G}, \Lambda)$ and expect this action relates these categories in a precise way. Fix a Borel subgroup $\rB \subset \G$ with unipotent radical $\U$ and a generic character $\psi : \U(\Q_p) \longrightarrow \Lambda^{\times}$. Let $\mathcal{W}_{\varphi}$ be the Whittaker sheaf, which is the sheaf concentrated on the stratum $\Bun_{\G}^1$ corresponding to the representation $\cInd^{\G}_{\U} \psi $ of $\G(\Q_p)$. We define the functor
\begin{align*}
  \Ind\Perf^{\mathrm{qc}}( [Z^1(W_{\Q_p}, \widehat{\G})_{\Lambda}/\widehat{\G}]) &\longrightarrow \Dlis(\Bun_{\G}, \Lambda)  \\
  \M &\longmapsto \M \star \mathcal{W}_{\psi}
\end{align*}
as the colimit-preserving extension of the spectral action on $\mathcal{W}_{\psi}$ (where $\star$ denotes the spectral action). Then the categorical local Langlands conjecture predicts that the corresponding right adjoint of the above functor is fully faithful when restricted to the compact objects, and induces an equivalence of ($\Perf([Z^1(W_{\Q_p}, \widehat{\G})_{\Lambda}/\widehat{\G}])$-linear small stable) $\infty$-categories 
\[
\Dlis(\Bun_{\G}, \Lambda)^{\omega} \cong \Ind\Perf^{\mathrm{b,qc}}_{\Coh, \Nilp}( [Z^1(W_{\Q_p}, \widehat{\G})_{\Lambda}/\widehat{\G}]).
\]

\subsection{Main results} \textbf{} \\

In \cite{GLn}, we proved some part of this conjecture for $\G = \GL_n$ and $\Lambda = \ov \Q_{\ell}$. The goal of this current manuscript is to extend these results to the case $\Lambda = \ov \F_{\ell}$ and deduce some applications to the cohomology of local and global Shimura varieties (we treat the case $\G = \Res_{F/\Q_p}(\GL_{n, F})$ in the main text but let us only consider the case $F = \Q_p$ in the introduction).

We now describe in more details the main results. On the spectral side, let $\phi = \phi_1 \oplus \dotsc \oplus \phi_r$ be an $L$-parameter of $\G$ such that for $1 \leq i \leq r$, $\phi_i$ is an irreducible representation of $W_{\Q_p}$ and for $1 \leq i \neq j \leq r$, there does not exist an unramified character $\chi$ of $W_{\Q_p}$ such that $\phi_i \simeq \phi_j \otimes \chi$. Let $[C_{\phi}]$ be the connected component containing $\phi$ in $[Z^1(W_{\Q_p}, \widehat{\G})_{\Lambda}/\widehat{\G} ]$. Then we have (proposition \ref{itm : simple connected components})
    \[
    [C_{\phi}] \simeq [(\mathbb{G}^r_{m} \times \mu_{\Lambda}) / \mathbb{G}^r_m],
    \]
    the quotient stack of $(\mathbb{G}^r_{m} \times \mu_{ \Lambda})$ by the trivial action of $\mathbb{G}^r_{m}$ and $\mu_{\Lambda} = \displaystyle \prod_{i = 1}^r \Spec \overline{\F}_{\ell}[\Syl_i]$ where $\Syl_i$ is the $\ell$-Sylow group of $ \F^{\times}_{p^{f_i}} $ and the $f_i$'s are uniquely determined by $\phi$. We note that the centralizer $S_{\phi}$ of $\phi$ is isomorphic to $\mathbb{G}^r_{m}$. In particular, the category $\Ind\Perf^{\mathrm{b,qc}}_{\Coh}([C_{\phi}])$ is a direct summand of $\Ind\Perf^{\mathrm{b,qc}}_{\Coh, \Nilp}( [Z^1(W_{\Q_p}, \widehat{\G})_{\Lambda}/\widehat{\G}])$ and
    \[
    \Ind\Perf^{\mathrm{b,qc}}_{\Coh}([C_{\phi}]) \simeq \bigoplus_{\chi \in \Irr(\mathbb{G}^r_m)} \mathrm{D}^{\mathrm{b}}_{\mathrm{\Coh}}(A_{\phi}\text{-Mod}),
    \]
where $A_{\phi}$ is the ring of global sections of $[C_{\phi}]$ and $\mathrm{D}^{\mathrm{b}}_{\mathrm{\Coh}}(A_{\phi}\text{-Mod})$ is the derived category of bounded complex of $A_{\phi}$-modules with coherent cohomology.
    
    On the automorphic side, we let $\Dlis^{[C_{\phi}]}(\Bun_{\G}, \Lambda)^{\omega} \subset \Dlis(\Bun_{\G},\Lambda)^{\omega}$ be the full sub-category generated by the objects on which the excursion operator corresponding to the function that is $1$ on $[C_{\phi}]$ and $0$ elsewhere acts via the identity. In particular, the Schur-irreducible objects in this subcategory all have Fargues-Scholze parameter given by an $L$-parameter in $[C_{\phi}]$. 
    
    Let $\chi \in \Irr(S_{\phi})$ be a character, then one can construct an element $b \in B(\G)$ and an irreducible representation $\pi_b$ in $\Rep_{\Lambda}(\G_b(\Q_p))$ whose Fargues-Scholze parameter is given by $\phi$. Let $R(\phi)$ be the set of such pairs $(b, \pi_b)$, then the previous construction gives a bijection between $\Irr(S_{\phi})$ and the set $R(\phi)$, (proposition \ref{itm : vanishing}). For each pair $(b_{\chi}, \pi_{\chi})$, one can consider the block $\Rep_{\Lambda}(\mathfrak{s}_{\phi, \chi})$ of $\Rep_{\Lambda}(\G_{b_{\chi}}(\Q_p))$ containing $\pi_{\chi}$ and show that this block is actually equivalent to the category of $A_{\phi}$-modules (propositions \ref{itm : description of blocks}, \ref{itm : structure of a block}). In addition, one can consider the derived category $\mathrm{D}(\Rep_{\Lambda}(\mathfrak{s}_{\phi, \chi}))^{\omega}$ consisting of bounded complexes of finitely generated cohomology groups as a full sub-category of $\Dlis^{[C_{\phi}]}(\Bun_{\G}, \Lambda)^{\omega}$ via the renormalized push forward functor
    \[
    i^{\mathrm{ren}}_{b_{\chi} !} := i_{b_{\chi}!}(\delta^{-1/2}_{b_{\chi}} \otimes - )[- d_{\chi}] :\Dlis (\Bun^{b_{\chi}}_{\G}, \Lambda) \longrightarrow \Dlis(\Bun_{\G}, \Lambda),
    \]
    where $d_{\chi} = \langle 2\rho, \nu_{b_{\chi}} \rangle $. Our first main theorem is the following (see theorem \ref{itm : spectral action - general} for the precise statement):

    \begin{theorem} \phantomsection \label{itm : main theorem in the intro}
        \begin{enumerate}
            \item[(1)] We have a decomposition of categories
            \[
            \Dlis^{[C_{\phi}]}(\Bun_{\G}, \Lambda)^{\omega} \simeq \bigoplus_{\chi \in \Irr(S_{\phi})} i^{\mathrm{ren}}_{b_{\chi} !} \big( \mathrm{D}(\Rep_{\Lambda}(\mathfrak{s}_{\phi, \chi}))^{\omega} \big),
            \]
            \item[(2)] The spectral action acting on the Whittaker sheaf $\mathcal{W}_{\varphi}$ gives an equivalence 
            \[
            \Ind\Perf^{\mathrm{b,qc}}_{\Coh}([C_{\phi}]) \simeq \Dlis^{[C_{\phi}]}(\Bun_{\G}, \Lambda)^{\omega}.
            \]
        \end{enumerate}
    \end{theorem}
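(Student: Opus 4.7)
The plan is to prove (1) from the block structure on each Newton stratum together with the bijection $\Irr(S_\phi) \leftrightarrow R(\phi)$, and then to bootstrap to (2) by pinning down the spectral action of $\Ind\Perf^{\mathrm{b,qc}}_{\Coh}([C_\phi])$ on the Whittaker sheaf $\mathcal{W}_\psi$.

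For (1), I would begin with the stratification of $\Bun_\G$ by Newton strata $b \in B(\G)$, so that any compact object of $\Dlis(\Bun_\G, \Lambda)^\omega$ is glued from its stalks $i_b^{*} X \in \mathcal{D}(\Rep_\Lambda(\G_b(\Q_p)))$. The compatibility of the Fargues-Scholze parameter with these stalks forces each stalk to have all of its simple subquotients in blocks whose Fargues-Scholze parameter lies in $[C_\phi]$. Combining the bijection of proposition \ref{itm : vanishing} with the block equivalences of propositions \ref{itm : description of blocks} and \ref{itm : structure of a block}, the only blocks that can contribute are the $\Rep_\Lambda(\mathfrak{s}_{\phi,\chi})$ placed on the stratum $b_\chi$. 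The resulting decomposition is orthogonal because different $b_\chi$ sit on disjoint locally closed strata, so $\Hom$ from $i^{\mathrm{ren}}_{b_\chi !}(-)$ to $i^{\mathrm{ren}}_{b_{\chi'} !}(-)$ vanishes whenever $\chi \neq \chi'$; fullness of each embedding follows from the recollement together with the block equivalence.

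For (2), the functor $\M \mapsto \M \star \mathcal{W}_\psi$ preserves colimits and is $\Perf([Z^1(W_{\Q_p}, \widehat{\G})_\Lambda/\widehat{\G}])$-linear by the construction of the spectral action. Restricting to the summand $\Ind\Perf^{\mathrm{b,qc}}_{\Coh}([C_\phi])$ lands in $\Dlis^{[C_\phi]}(\Bun_\G, \Lambda)^\omega$ by excursion-operator compatibility. Using (1) together with the identifications $\Rep_\Lambda(\mathfrak{s}_{\phi,\chi}) \simeq A_\phi\text{-Mod}$ and $\Ind\Perf^{\mathrm{b,qc}}_{\Coh}([C_\phi]) \simeq \bigoplus_\chi \mathrm{D}^{b}_{\Coh}(A_\phi\text{-Mod})$, both source and target become $|\Irr(S_\phi)|$-fold sums of $\mathrm{D}^{b}_{\Coh}(A_\phi\text{-Mod})$ indexed by $\chi \in \Irr(S_\phi) = \Irr(\mathbb{G}_m^r)$. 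It then suffices to show that the spectral action sends the $\chi$-summand of the source into $i^{\mathrm{ren}}_{b_\chi !}\mathcal{D}(\Rep_\Lambda(\mathfrak{s}_{\phi,\chi}))^\omega$ via an $A_\phi$-linear equivalence. I would test on the generator $k(\phi)\otimes \chi$: the local Shimura variety cohomology computation that forms the technical core of the paper identifies $(k(\phi)\otimes \chi) \star \mathcal{W}_\psi$ with a Whittaker-type generator of the block $\mathfrak{s}_{\phi,\chi}$ supported on $b_\chi$, and $A_\phi$-linearity then extends this matching to the whole category.

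The main obstacle will be precisely this spectral-action computation on $\mathcal{W}_\psi$: proving that acting by a residue-class sheaf at $\phi$ twisted by $\chi$ moves $\mathcal{W}_\psi$ from the basic stratum to $b_\chi$ and produces the correct Whittaker-type generator of $\Rep_\Lambda(\mathfrak{s}_{\phi,\chi})$ on that stratum. This is where the torsion-coefficient refinement of the explicit local Shimura variety cohomology computations enters, and it is the substantive input of the proof beyond the formal block decomposition (1). A secondary point is verifying full faithfulness, which reduces to computing $\End_{\Dlis}(\mathcal{W}_\psi^{[C_\phi]}) \simeq A_\phi$ with the correct $\mathbb{G}_m^r$-weight structure; this should follow from the Whittaker normalization and the structure of the block, but needs to be tracked carefully because over $\overline{\F}_\ell$ the centralizer-group scheme $S_\phi = \mathbb{G}_m^r$ has nontrivial finite subgroup schemes $\mu_\Lambda$ affecting $A_\phi$.
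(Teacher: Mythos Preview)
Your plan for (2) is close to the paper's, but your argument for (1) has a genuine gap, and the logical order you propose is inverted relative to how the paper must (and does) proceed.

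The gap is the claim that ``different $b_\chi$ sit on disjoint locally closed strata, so $\Hom$ from $i^{\mathrm{ren}}_{b_\chi !}(-)$ to $i^{\mathrm{ren}}_{b_{\chi'} !}(-)$ vanishes whenever $\chi \neq \chi'$.'' Recollement along a stratification only gives \emph{semi}-orthogonality: if $b$ is open and $b'$ lies in its closure, then $\RHom(i_{b!}A,i_{b'!}B)=0$, but $\RHom(i_{b'!}B,i_{b!}A)$ need not vanish (it computes a local-cohomology-type object). For $r>1$ many of the $b_\chi$ are non-basic and sit in closure relations with one another, so this direction is genuinely at issue. The paper obtains full orthogonality only \emph{after} proving $C_\chi\star\mathcal F_{\Id}\simeq\mathcal F_\chi$ (theorem \ref{itm : main theorem I}) and its extension to pro-generators (proposition \ref{itm : spectral action - progenerator}): since each $C_\chi\star(-)$ is an auto-equivalence, one reduces $\RHom(\mathcal F_\chi,\mathcal F_{\chi'})$ to $\RHom(\mathcal F_{\Id},\mathcal F_{\chi^{-1}\chi'})$, and now $\mathcal F_{\Id}$ lives on the open basic stratum, so the \emph{good} direction of semi-orthogonality (or Bernstein-block orthogonality when $b_{\chi^{-1}\chi'}=b_{\Id}$) applies.

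Relatedly, you invoke propositions \ref{itm : description of blocks} and \ref{itm : structure of a block} as inputs to (1), but in the paper these are not independent facts: they are \emph{deduced} from the spectral-action computation, by induction on $r$ with the supercuspidal case (theorem \ref{itm : supercuspidal - spectral action}) as base. That base case in turn needs the global Shimura-variety argument of \S\ref{itm : Shimura} to pin down the shift. So the ``formal block decomposition (1)'' you describe is not formal at all; it already requires the local Shimura input you defer to (2). The correct order is: establish $C_\chi\star\mathcal F_{\Id}=\mathcal F_\chi$ first, use it to prove the block equivalences with $A_\phi$-modules and the orthogonal decomposition, and only then assemble the categorical equivalence. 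Finally, a small point: testing on the residue $k(\phi)\otimes\chi$ is not what the paper does --- it acts by the line bundle $C_\chi$ on the Whittaker generator and lands on the pro-generator $\mathcal W_{\mathfrak s_{\phi,\chi}}$, not on $\pi_\chi$; the residue would instead produce the irreducible $\pi_\chi$.
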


\subsection{Some ideas of the proof} \textbf{} \\

The strategy of the proof of our first main results is essentially the same as that of the $\ell$-adic case $\Lambda = \ov \Q_{\ell}$ in \cite{GLn}. For each $\chi \in \Irr(S_{\phi})$, we denote by $\mathcal{F}_{\chi}$ the sheaf $i^{\mathrm{ren}}_{b_{\chi} !}(\pi_{b_{\chi}}) \in \Dlis^{[C_{\phi}]}(\Bun_{\G})^{\omega}$. Note also that there is a vector bundle $C_{\chi}$ on $[C_{\phi}]$ associated to $\chi$. The first step in the proof is to show that (theorem \ref{itm : main theorem I})
\begin{equation} \phantomsection \label{main equation}
C_{\chi} \star \mathcal{F}_{\Id} \simeq \mathcal{F}_{\chi}, 
\end{equation}
and then extend this computation to an equivalence of categories by the knowledge of the structures of $\Rep_{\Lambda}(\mathfrak{s}_{\phi, \chi})$. 

Let us give more details on the proof of the equation (\ref{main equation}). Note that $\Irr(S_{\phi})$ is a group isomorphic to $\Z^r$ and it is enough to compute $C_{\chi} \star \mathcal{F}$ for $\chi = (d_1, \dotsc, d_r)$ with positive entries. For each $1 \leq i \leq r$, denote by $\chi_i$ the element in $\Z^r$ with $1$ in the $i^{\textrm{th}}$-position and $0$ elsewhere. We use induction on $r$ and then induction on $|\chi| := d_1 + \dotsc + d_r$. For the induction step, the arguments in \cite{GLn} still work without too much change. For this, we only need to show that
\[
C_{\chi_i} \star \mathcal{F}_{\chi} \simeq \mathcal{F}_{\chi \otimes \chi_i},
\]
for $1 \leq i \leq r$. In order to show this equality, we need to prove that
\begin{equation} \phantomsection \label{itm : stalk}
i^*_{b_{\chi \otimes \chi_i}}C_{\chi_i} \star \mathcal{F}_{\chi} \simeq i^*_{b_{\chi \otimes \chi_i}} \mathcal{F}_{b_{\chi \otimes \chi_i}},    
\end{equation} 
and show that when $b \neq b_{\chi \otimes \chi_i} \in B(\G)$, we have
\begin{equation} \phantomsection \label{itm : vanishing of stalks}
i^*_{b}C_{\chi_i} \star \mathcal{F}_{\chi} \simeq 0.   
\end{equation}

In order to compute the spectral action of $C_{\chi}$, let $\mu_{\Std} = (1, 0^{(n-1)})$ be the simplest minuscule cocharacter of $\GL_n$ and denote by $\T_{\mu_{\Std}}$ the Hecke operator on $\Dlis^{[C_{\phi}]}(\Bun_{\G}, \Lambda)$ associated to the highest weight representation $V$ of $\GL_n$ of weight $\mu_{\Std}$. Then, on one hand we can relate the action of $\T_{\mu_{\Std}}$ with the cohomology of local Shtukas spaces
\[
i^*_b\T_{\mu_{\Std}}(\mathcal{F}_{\chi}) \simeq R\Gamma_{c}(\G,b_{\chi},b,\mu)[\pi_{b_{\chi}}],
\]
up to some shifts and twists (for precise formulas, see lemma \ref{shimhecke}). On the other hand, we can also relate $\T_{\mu}$ with the $C_{\chi_i}$'s by the formula (proposition \ref{itm : fundamental decomposition of Hecke operator})
\[
\T_{\mu_{\Std}}(-) \simeq \bigoplus^r_{i = 1} C_{\chi_i} \star (-) \boxtimes \phi_i,
\]
and thus one can compute $C_{\chi_i} \star \mathcal{F}_{\chi} $ via the cohomology of various local Shimura varieties. By combining vanishing results of local Shimura varieties (proposition \ref{itm : vanishing}) with an analogue of Boyer's trick \cite[section $5$]{GLn} and a careful study of modifications of vector bundles on the Fargues-Fontaine curve, we can prove the equalities $(\ref{itm : stalk}), (\ref{itm : vanishing of stalks})$ as in \cite[section $6$]{GLn}. 

In the $\ell$-adic case, the initial step (the case $r = 1$) can be deduced quite easily from the works \cite{Far04}, \cite{Boy09}, \cite{SWS}, \cite{Han}. However these results are not available in the $\ell$-modular setting. For this reason, we will use global method to treat this case. Namely, we relate the cohomology of basic local Shimura varieties with some global Shimura varieties with $\ov \F_{\ell}$-coefficient and then use torsion vanishing technique to relate the latter with the cohomology of global Shimura varieties with $\ov \Q_{\ell}$-coefficient.  

We now elaborate on details of this argument. In this case we have $\Irr(S_{\phi}) \simeq \Z$ and let $\chi$ be a character of $S_{\phi}$. By using vanishing results for local Shimura varieties (proposition \ref{itm : vanishing}), one can show equality (\ref{main equation}) up to some shift (section \ref{itm : up to some shift}).

By using Mantovan's formula \cite[Theorem 8.5.7]{DHKM24}, \cite[Theorem 1.12]{HL} for some well-chosen Shimura variety $\Sh_{K^p}$, we know that the complex $R\Gamma_c(\Sh_{K^p}, \ov \F_{\ell})$ has a filtration as a complex of $\G(\Q_p)$-representations with graded pieces isomorphic to $ i_1^* \T_{\mu}(i_{b !} \mathcal{I} )[-d](-d/2) $ where $d = \langle 2\rho, \mu \rangle$ is the dimension of the Shimura variety $\Sh_{K^p}$, where $b$ varies in the finite set $B(\G, \mu)$ and where $\mathcal{I}$ is some sheaf related to Igusa varieties. Here $b_{\chi}$ is the unique basic element in $B(\G, \mu)$. 

We denote by $R\Gamma_c(\Sh_{K^p}, \ov \F_{\ell})^{[\phi]}$ the projection of $R\Gamma_c(\Sh_{K^p}, \ov \F_{\ell})$ to the block corresponding to $\phi$ in the derived category $\mathrm{D}(\Rep_{\ov \F_{\ell}}(\G(\Q_p)))$. By local vanishing property again, the contribution of a non-basic $b$ to $R\Gamma_c(\Sh_{K^p}, \ov \F_{\ell})^{[\phi]}$ is trivial and by the affineness of Igusa varieties, we see that $ i_1^* \T_{\mu}(i_{b_{\chi} !} \mathcal{I} )^{[\phi]}[-d](-d/2) $ is concentrated in $1$ degree. Thus $R\Gamma_c(\Sh_{K^p}, \ov \F_{\ell})^{[\phi]}$ is concentrated in $1$ degree and torsion free. Hence we can lift this cohomology group to characteristic $0$ coefficient where we know how to compute the cohomology. This allows us to deduce the correct shift in equality (\ref{main equation}) and conclude.

\subsection{Some applications} \textbf{} \\

\textit{Hecke eigensheaves and the Harris-Viehmann conjecture.} \textbf{} \\

Our main theorem \ref{itm : main theorem in the intro} has some interesting applications. First of all, since the irreducible representations of $S_{\phi}$ are all of dimension $1$, we deduce the following description of the regular representation $ \displaystyle V_{\text{reg}} = \bigoplus_{\chi \in \Irr(S_{\phi})} \chi $. Thus the sheaf $ \displaystyle \mathcal{G}_{\phi} := \bigoplus_{\chi \in \Irr(S_{\phi})} \mathcal{F}_{\chi} $ is non trivial and  we can compute $\displaystyle \T_{V} (\mathcal{G}_{\phi})$ for every algebraic representation $V$ of $\GL_n$ and then prove the following result (theorem \ref{itm : Hecke eigensheaf}).
\begin{theorem}
The sheaf $ \displaystyle \mathcal{G}_{\phi} $ is a non trivial Hecke eigensheaf corresponding to the $L$-parameter $\phi$.
\end{theorem}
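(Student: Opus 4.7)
The plan is to verify the Hecke eigensheaf relation $\T_V(\mathcal{G}_\phi) \simeq \mathcal{G}_\phi \boxtimes (r_V \circ \phi)$ first for the standard representation $V = V_{\mu_{\Std}}$ via the decomposition of Proposition \ref{itm : fundamental decomposition of Hecke operator}, and then to propagate it to an arbitrary $V \in \Rep(\widehat{\GL}_n)$ using the symmetric monoidal structure of the Hecke functors. Plugging $\mathcal{G}_\phi = \bigoplus_{\chi \in \Irr(S_\phi)} \mathcal{F}_\chi$ into
\[
\T_{\mu_{\Std}}(-) \simeq \bigoplus_{i=1}^r C_{\chi_i} \star (-) \boxtimes \phi_i
\]
reduces the computation to evaluating each $C_{\chi_i} \star \mathcal{G}_\phi$. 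The key input is an upgrade of equation (\ref{main equation}) to $C_\chi \star \mathcal{F}_{\chi'} \simeq \mathcal{F}_{\chi \otimes \chi'}$ for all $\chi, \chi' \in \Irr(S_\phi) \simeq \Z^r$, which follows from associativity of the spectral action together with the tensor structure $C_\chi \otimes C_{\chi'} \simeq C_{\chi \otimes \chi'}$ of line bundles on $[C_\phi] \simeq [(\mathbb{G}_m^r \times \mu_\Lambda)/\mathbb{G}_m^r]$. Reindexing the sum over the group $\Irr(S_\phi)$ by $\chi \mapsto \chi \otimes \chi_i$ then yields $C_{\chi_i} \star \mathcal{G}_\phi \simeq \mathcal{G}_\phi$, so that
\[
\T_{\mu_{\Std}}(\mathcal{G}_\phi) \simeq \bigoplus_{i=1}^r \mathcal{G}_\phi \boxtimes \phi_i \simeq \mathcal{G}_\phi \boxtimes (r_{\std} \circ \phi),
\]
which is precisely the eigensheaf property for $V_{\mu_{\Std}}$.

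To extend from $V_{\mu_{\Std}}$ to arbitrary algebraic $V$, I would invoke the relations $\T_{V \otimes V'} \simeq \T_V \circ \T_{V'}$ and $\T_{V \oplus V'} \simeq \T_V \oplus \T_{V'}$ together with the Tannakian fact that $\Rep(\widehat{\GL}_n)$ is generated under tensor products, direct sums, and subquotients by $V_{\mu_{\Std}}$ and its dual $V_{\mu_{\Std}}^\vee$; the dual case is handled by repeating the computation above for the dual minuscule cocharacter $\mu_{\Std}^\vee$. Non-triviality of $\mathcal{G}_\phi$ is immediate: each summand $\mathcal{F}_\chi = i^{\mathrm{ren}}_{b_\chi !}(\pi_{b_\chi})$ is non-zero since $\pi_{b_\chi}$ is a non-zero irreducible admissible representation and $i^{\mathrm{ren}}_{b_\chi !}$ is a fully faithful embedding into $\Dlis(\Bun_\G, \Lambda)$.

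The principal obstacle I anticipate is ensuring that the $W_{\Q_p}$-equivariance in the final eigensheaf isomorphism is fully compatible under the reassembly of the $r$ summands $\mathcal{G}_\phi \boxtimes \phi_i$ into $\mathcal{G}_\phi \boxtimes (r_{\std} \circ \phi)$, and similarly when transporting through tensor products for general $V$. This should however be built into the spectral-action formalism of Fargues-Scholze and already tracked within Proposition \ref{itm : fundamental decomposition of Hecke operator}, so the verification ought to reduce to bookkeeping rather than new geometric input.
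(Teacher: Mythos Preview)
Your proposal is correct and follows essentially the same route as the paper: apply Proposition~\ref{itm : fundamental decomposition of Hecke operator} summand-by-summand to each $\mathcal{F}_{\chi'}$, use Theorem~\ref{itm : main theorem I} together with the monoidality $C_\chi \otimes C_{\chi'} \simeq C_{\chi\otimes\chi'}$ to obtain $C_\chi \star \mathcal{F}_{\chi'} \simeq \mathcal{F}_{\chi\otimes\chi'}$, and then reindex the sum over the group $\Irr(S_\phi)$. The only minor redundancy is that you first establish the eigensheaf relation for $V_{\mu_{\Std}}$ and then propagate via the monoidal structure, whereas Proposition~\ref{itm : fundamental decomposition of Hecke operator} already furnishes the decomposition $\T_V(\mathcal{F}) = \bigoplus_\chi C_\chi \star \mathcal{F} \boxtimes \sigma_\chi$ for \emph{arbitrary} $V$, so the extension step is unnecessary; also note that the Galois equivariance in that proposition is stated for $W_F$ rather than $W_{\Q_p}$.
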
 

We can also apply our main theorem to deduce a description of the cohomology of non-basic local Shimura varieties. Thus we can deduce a version of the Harris-Viehmann conjecture as in the $\ell$-adic case. For simplicity, let us state the result up to some shifts and twists and only in the minuscule case where $\mu = (1^{(a)}, 0^{(n-a)})$ for some integer $1 \le a \le n$ (see theorem \ref{itm : Harris-Viehmann conjecture} for a more general version of the result).
\begin{theorem}
With the above notations, if $\mu$ is minuscule then we have the following identity (up to some shifts and twists) of $ \GL_n(\Q_p) \times W_{\Q_p} $-representations
\[
R\Gamma_{c}(\GL_n, b,\mu) [\pi_{b}] \simeq \Ind_{\rP}^{\GL_n} R\Gamma_{c}(\M, b_{\M},\mu_{\M})  [\pi_{b}].
\]
where $ b_{\M} $ is the reduction of $b$ to $\M$ and $\mu_{\M} = \mu_1 \times \dotsc \times \mu_k$ and where $\mu_j = (1^{(\deg\E(\lambda_j))}, 0^{(\rank\E(\lambda_j) - \deg\E(\lambda_j))})$.
\end{theorem}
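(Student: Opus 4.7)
The strategy is to reduce the identity to a computation on the spectral side using Theorem \ref{itm : main theorem in the intro}, and then exploit the compatibility of the spectral action with parabolic induction. Throughout, I would use Lemma \ref{shimhecke} to translate between cohomology of local Shtuka spaces and stalks of Hecke transforms on $\Bun_\G$.

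First, by Lemma \ref{shimhecke}, the left-hand side $R\Gamma_c(\GL_n, b, \mu)[\pi_b]$ is identified, up to the shifts and Tate twists given by the relative dimension, with the $b$-stalk $i_b^* \T_\mu(\mathcal{F}_\Id)$ of the Hecke transform of the basic sheaf $\mathcal{F}_\Id$ attached to $\Id \in \Irr(S_\phi)$. By Theorem \ref{itm : main theorem in the intro}(2), $\mathcal{F}_\Id$ is obtained from a line bundle on $[C_\phi]$ via the spectral action on $\mathcal{W}_\psi$, and $\T_\mu$ is the spectral action of the $[C_\phi]$-restriction of $V_\mu$. Hence the left-hand side is entirely controlled by a computation on $[C_\phi]$.

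Second, because $\mu$ is minuscule and $\phi = \phi_1 \oplus \cdots \oplus \phi_r$, the restriction of $V_\mu$ to the Levi dual group $\widehat{\M} = \prod_{j=1}^k \GL_{\rank \E(\lambda_j)}$ is multiplicity-free, with summands indexed by cosets in $W_\M \backslash W / W_\mu$. The Newton slope decomposition of $b$ singles out the coset corresponding to $\mu_\M = \mu_1 \times \cdots \times \mu_k$. I would then show that only this summand contributes to the $b$-stalk, the remaining summands being supported on other Newton strata. This is a Newton-stratified refinement of Proposition \ref{itm : vanishing}, relying on the detailed analysis of modifications of vector bundles on the Fargues-Fontaine curve developed in \cite[section $6$]{GLn}, suitably enhanced to track which $\widehat{\M}$-weight a modification carries.

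Third, I would invoke the compatibility of the spectral action with parabolic induction via the diagram $\Bun_\M \xleftarrow{q} \Bun_\rP \xrightarrow{p} \Bun_\G$ and the corresponding restriction of $L$-parameters $[C_\phi] \to [C_{\phi|_{\widehat{\M}}}]$ on the spectral side, identifying the surviving summand with $\Ind_\rP^{\GL_n}$ of the corresponding $b_\M$-stalk of a Hecke transform on $\Bun_\M$. Applying Lemma \ref{shimhecke} on the Levi then yields $\Ind_\rP^{\GL_n} R\Gamma_c(\M, b_\M, \mu_\M)[\pi_b]$. The main obstacle I anticipate is the second step: the stratified vanishing that isolates the single $\mu_\M$-summand, together with the bookkeeping of the normalizations $\delta_\rP^{1/2}$ and the shifts $\langle 2\rho, \nu_b\rangle$ which are absorbed into the ``shifts and twists'' of the statement. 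Step~1 is essentially formal given the main theorem, and Step~3 reduces to geometric constant term compatibility in the Fargues-Scholze setup; it is precisely the geometric analysis of Section~6 of \cite{GLn}, now applied uniformly across non-basic strata rather than between pairs of strata, that carries the real content of the proof.
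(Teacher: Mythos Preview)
Your Step~1 contains a genuine error that propagates through the rest of the argument. The left-hand side $R\Gamma_c(\GL_n,b,\mu)[\pi_b]$ is a $\GL_n(\Q_p)$-representation: one takes the $\pi_b$-isotypic part for the action of $\G_b(\Q_p)$. By Lemma~\ref{shimhecke} (third formula, with first slot $1$ and second slot $b$), this is $i_1^*\,\T_{-\mu}\bigl(i_{b!}^{\mathrm{ren}}\pi_b\bigr)=i_1^*\,\T_{-\mu}(\mathcal{F}_\lambda)$, where $\lambda\in\Irr(S_\phi)$ corresponds to $(b,\pi_b)$. What you wrote, $i_b^*\,\T_\mu(\mathcal{F}_{\Id})$, is instead the $\pi_{\Id}$-isotypic part for the $\GL_n(\Q_p)$-action and lands in $\Dc(\G_b(\Q_p),\Lambda)$; it is the wrong object, not merely a dual of the correct one. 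Once corrected, your subsequent geometric-constant-term picture would need to be set up on the other side of the correspondence $\Bun_{\M}\leftarrow\Bun_{\rP}\rightarrow\Bun_{\G}$.

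Even with this correction, your route through Steps~2--3 is considerably more involved than the paper's. The paper does not invoke Eisenstein/constant-term functors nor a stratified vanishing analysis. Instead it computes both sides \emph{explicitly} and compares. Using Proposition~\ref{itm : fundamental decomposition of Hecke operator} and Theorem~\ref{itm : main theorem I}, one obtains directly
\[
R\Gamma_c(\G,b,\mu)[\delta_b^{1/2}\otimes\pi_b]\;\simeq\;\pi_1\boxtimes\Hom_{S_\phi}\bigl(\chi_b^{-1},\,r_{-\mu}\circ\phi\bigr)[-h],
\]
which is equation~(\ref{itm : cohomology of RZ spaces}). The same formula applied to $\M$ and $b_\M$ gives, for each $\mu_\M$, the Levi cohomology as $\pi_1^{\M}\boxtimes\Hom_{S_\phi}(\chi_b^{-1},r_{-\mu_\M}\circ\phi)$, since $S_{\phi}\subset\widehat{\M}$. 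Then $\Ind_{\rP}^{\GL_n}\pi_1^{\M}=\pi_1$, and matching the Galois pieces is just the decomposition $(r_{-\mu})|_{\widehat{\M}}\simeq\bigoplus_{\mu_\M}(r_{-\mu_\M})^{m_{-\mu_\M}}$. In the minuscule case the sum collapses because $b_\M$ is basic in $\M$, so $R\Gamma_c(\M,b_\M,\mu_\M')$ vanishes unless $\mu_\M'$ matches $\kappa(b_\M)$. No Newton-stratified vanishing beyond Proposition~\ref{itm : vanishing} is needed; the ``main obstacle'' you identify in Step~2 simply does not arise.
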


\textit{Torsion vanishing of the cohomology of some Shimura varieties.} \textbf{} \\

Torsion vanishing result of the cohomology of Shimura varieties are studied extensively these years. Pioneering results in this direction are obtained in \cite{CS, CS1} for generic part of the cohomology. Then similar results are proved in \cite{Ko1, HL, dSS, DHKM24} by using ideas coming from Fargues-Scholze machinery. The papers \cite{Boy19}, \cite[Theorem B, Theorem 7.5.2]{CT} contain results beyond the generic case as well. In a very recent paper \cite{Yang-Zhu}, the authors announced a torsion vanishing result for the generic part of the cohomology of some Shimura varieties and their method is more versatile. With our main theorem, we can obtain some torsion vanishing results for generic part (section \ref{itm : generic part}) in the style of \cite{Ko1, HL} and also for non-generic part (section \ref{itm : non-generic part}) in the direction of \cite{Boy19, CT}. We can also compute the supercuspidal part of the cohomology with torsion coefficients of local Shimura varieties associated to unitary groups (section \ref{itm : local Shimura variety of unitary group}).

%\subsubsection*{Organization of the paper} \textbf{} \\

\subsection*{Acknowledgments}

I would like to thank Alexander Bertoloni-Meli, Pascal Boyer, Linus Hamann, Naoki Imai, Teruhisa Koshikawa, Thomas Lanard, Nhat-Hoang Le, Thanh-Dat Pham-Ngo, Vincent Sécherre, Mingjia Zhang for very helpful discussions and feedback.  \\

\section*{Notation}
We use the following notation: 
\begin{itemize}
\item  $\ell$ and $p$ are distinct odd prime numbers.
\item $ \breve \Q_p $ is the completion of the maximal unramified extension of $\Q_p$ with Frobenius $ \sigma $. Let $\overline{\Q}_p$ be an algebraic closure of $\Q_p$ and denote $\Gamma := \Gal(\overline{\Q}_p / \Q_p)$.
%\item $ \ov \Z_{\ell} $ is the ring of integers of the maximal unramified extension of $\Q_{\ell}$.
\item $ \G $ is a connected reductive group over $\Q_p$. Let $\mathrm{H}$ be a quasi-split inner form of $\G$ and fix an inner twisting $ \G_{\breve \Q_p} \overset{\sim}{\longrightarrow} \mathrm{H}_{\breve \Q_p} $. 
\item  $ A \subseteq \T \subseteq B $ where $ A $ is a maximal split torus, $ \T = Z_H(A) $ is the centralizer of $ A $ in $ \mathrm{H} $ and $ B $ is a Borel subgroup in $\mathrm{H}$. Let $ \U $ be its unipotent radical.
\item $ (X^*(\T), \Phi, X_*(\T), \Phi^{\vee}) $ is the absolute root datum of $\G$ with positive roots $ \Phi^+ $ and simple roots $ \Delta $ with respect to the choice of $B$.
\item $\rho$ is the half sum of the positive roots.
\item  Further, $ (X^*(A), \Phi_0, X_*(A), \Phi^{\vee}_0) $ denotes the relative root datum, with positive roots $ \Phi^+_0 $ and simple roots $ \Delta_0 $.
\item On $ X_*(A)_{\Q} $ resp.~$ X_*(\T)_{\Q} $ we consider the partial order given by $ \nu \leq \nu' $ if $ \nu' - \nu $ is a non-negative rational linear combination of positive resp.~relative coroots. 
\item Let $\rP$ be a parabolic subgroup of $\G$, then $\Ind_{\rP}^{\G}$, resp. $\ind_{\rP}^{\G}$, denotes the normalized, resp. un-normalized parabolic induction.
\item Let $ C | \overline{\Q}_p $ be an algebraically closed complete field. Let $C^{\circ}$ resp. $C^{\flat,\circ}$ be the subring of power-bounded elements of $C$ resp. $C^{\flat}$ and let $\xi$ be a generator of the kernel of the surjective map $W(C^{\flat,\circ})\rightarrow C^{\circ}$. Let $\B+:=\B+(C)$ be the $\xi$-adic completion of $W(C^{\flat,\circ})[1/p]$ and $\BdR=\BdR(C)=\B+[\xi^{-1}]$. Then $\B+\cong C  [[\xi]]$ and $\BdR\cong C((\xi))$.
\item Let $F$ be a finite extension of $\Q_p$. We denote by $X_F$ the schematic Fargues-Fontaine curve over $C^{\flat}$. The untilt $C$ of $C^{\flat}$ corresponds to a point $ \infty \in |X_F|$ with residue field $C$ and $\widehat{\mathcal{O}}_{X, \infty}\cong \B+$. The subscript $F$ will be omitted when there is no confusion.  
\item For $ \lambda \in \Q $, we denote the stable vector bundle on $X_F$ whose slope is $\lambda$ by $\OO_F(\lambda)$. If $\lambda = 0$, we simply write $\OO_F$ for $\OO_F(0)$. The subscript $F$ will be omitted when there is no confusion. 
\item Let $B(\G)$ be the set of $\G(\breve \Q_p)$-$\sigma$-conjugacy classes of elements of $\G(\breve \Q_p)$. For each elements $[b] \in B(\G)$, we denote by $\G_b$ the $\sigma$-centralizer of $b$.  By work of Kottwitz, elements $[b]$ are classified by their Kottwitz point $\kappa_{\G}(b)\in \pi_1(\G)_{\Gamma}$ and their Newton point $\nu_b \in X_*(A)_{\mathbb Q,\dom}$.
\item For a $\G$-bundle $\E$ on $X$ let $\nu_{\E} \in X_*(A)_{\mathbb Q,\dom}$ be the corresponding Newton polygon.
\item $\Bun_{\G}$ is the stack of $\G$-bundles on the Fargues-Fontaine curve.
\end{itemize} 
\section{Generalities on $\Bun_{\G}$}
\subsection{Combinatoric description of $\Bun_{\G}$}
Let $\G$ be a connected reductive group defined over a finite extension of $\Q_p$. We let $\Perf$ denote the category of perfectoid spaces over $\ol{\mathbb{F}}_{p}$. We write $\ast := \Spd(\ol{\mathbb{F}}_{p})$ for the natural base. The key object of study is the moduli pre-stack $\BunG$ sending $S \in \Perf$ to the groupoid of $\G$-bundles on the relative Fargues--Fontaine curve $X_{S}$. The following theorem gives a geometric description of $\BunG$.
\begin{theorem} \cite[Theorem III.0.2]{FS}
    The pre-stack $\Bun_{\G}$ satisfies the following properties:
    \begin{enumerate}
        \item The prestack $\Bun_{\G}$ is a small $v$-stack.
        \item The points $ | \Bun_{\G} | $ are naturally in bijection with Kottwitz’ set $B(\G)$ of $\G$-isocrystals.
        \item The map 
        \[
        \nu :  | \BunG | \longrightarrow B(\G) \longrightarrow  (X_{*}(\T)_{\Q, \mathrm{dom}})^{\Gamma}
        \]
        is semi-continuous and
        \[
        \kappa : | \BunG | \longrightarrow B(\G) \longrightarrow \pi_1(\G_{\ol{\Q}_p})_{\Gamma}
        \]
        is locally constant. Moreover, the map $| \BunG | \longrightarrow B(\G)$ is a homeomorphism when $B(\G)$ is equipped
with the order topology \cite{Vie, Han2}.
        \item The semistable locus $\Bun^{ss}_{\G} \subset \BunG $ is open, and given by
        \[
        \Bun^{ss}_{\G} \simeq \coprod_{b \in B(\G)_{\mathrm{basic}}} [\bullet / \underline{\G_b(\Q_p)}].
        \]
        \item For any $b \in B(\G)$, the corresponding subfunctor
        \[
        i^b : \Bun^b_{\G} \subset \BunG
        \]
        is locally closed, and isomorphic to $[\bullet / \widetilde{\G}_b ]$  where $\widetilde{\G}_b$ is a $v$-sheaf of groups such that $\widetilde{\G}_b \longrightarrow \ast $  is representable in locally spatial diamonds with $ \pi_0\widetilde{\G}_b = \G_b(\Q_p) $. The connected component $\widetilde{\G}_b^{0} \subset \widetilde{\G}_b$ of the identity is cohomologically smooth of dimension $\langle 2\rho, \nu_b \rangle$. 
    \end{enumerate}
\end{theorem}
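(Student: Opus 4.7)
My plan is to establish the five assertions in order, reducing most computations to the case of a geometric point via $v$-descent, and then propagating the pointwise data to a geometric structure on $\Bun_{\G}$.

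For (1), I would first verify $v$-descent for $\G$-bundles on the relative Fargues--Fontaine curve $X_S$. Since $X_S$ is functorial in $S$ and vector bundles on the curve satisfy $v$-descent (via Beauville--Laszlo gluing together with the tilting equivalence near the untilt), a Tannakian argument propagates descent to $\G$-bundles. Smallness reduces to bounding the Newton polygon: expressing $\Bun_{\G}$ as a filtered colimit of sub-$v$-stacks cut out by HN slope bounds, each of which is controlled by Banach--Colmez spaces and is therefore small. For (2), over an algebraically closed nonarchimedean field $C$ with tilt, Fargues' classification says every $\G$-bundle on $X_C$ is isomorphic to $\E_b$ for a unique class $b\in B(\G)$; its invariants are read off from the HN reduction to a standard parabolic (which gives $\nu$) and from the topological type of the underlying bundle (which gives $\kappa$).

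For (3), semicontinuity of $\nu$ is the classical Newton stratification principle: the HN filtration of a family of $\G$-bundles can only degenerate under specialization, so $\nu$ can only jump up on closed subsets. Local constancy of $\kappa$ follows because $\pi_1(\G_{\ol\Q_p})_\Gamma$ is discrete and $\kappa$ is a topological invariant, invariant on connected families. Comparing with the order topology on $B(\G)$ of Viehmann--Hansen identifies the quotient topology. For (4), a basic class $b$ corresponds to a semistable bundle $\E_b$ whose automorphism algebra is (an inner form of) the unit group of a central simple algebra, so its automorphism sheaf on the point is represented by the locally profinite group $\underline{\G_b(\Q_p)}$, giving the asserted description of $\Bun^{ss}_{\G}$.

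For the key assertion (5), I would analyze the automorphism $v$-sheaf $\widetilde{\G}_b := \underline{\Aut}(\E_b)$ at the stratum $\Bun_{\G}^b$. One splits it as a short exact sequence in which the quotient $\pi_0\widetilde{\G}_b = \G_b(\Q_p)$ comes from automorphisms on the generic fiber of the curve, and the identity component $\widetilde{\G}_b^0$ is built from the positive-slope part of the adjoint bundle $\mathrm{ad}\,\E_b$. Concretely one realizes $\widetilde{\G}_b^0$ as a successive extension of Banach--Colmez spaces associated to the positive slopes, which are cohomologically smooth of dimension equal to the corresponding slope times its multiplicity. Summing these contributions gives precisely $\langle 2\rho,\nu_b\rangle$. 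Local closedness of $i^b$ then follows from (3) by combining the semicontinuity of $\nu$ with the local constancy of $\kappa$.

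The principal obstacle I expect is step (5): establishing the cohomological smoothness of $\widetilde{\G}_b^0$ and the dimension formula, which requires the full theory of Banach--Colmez spaces over perfectoid bases together with the cohomological smoothness criterion for small $v$-sheaves. The remaining parts reduce cleanly to classical results of Fargues and Kottwitz and to standard $v$-descent.
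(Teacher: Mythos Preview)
The paper does not prove this theorem at all: it is stated with an explicit citation to \cite[Theorem III.0.2]{FS} and used as background input, with no accompanying proof or even a sketch. So there is no ``paper's own proof'' to compare your proposal against.

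Your outline is a reasonable high-level summary of how the argument runs in Fargues--Scholze, and nothing in it is obviously wrong as a roadmap. But since the paper treats this result as a black box, a proof was never expected here; the appropriate response in this context is simply to cite \cite{FS} (and \cite{Vie, Han2} for the topology statement) rather than to reprove it.
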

For $\G = \Res_{F/\Q_p} \GL_{n, F}$ where $F$ is an unramified extension of $\Q_p$ of degree $d$, we have $X_*(\T) \simeq \displaystyle \prod_{i \in \Z / d \Z} \Z^n $. The Weil group $W_{\Q_p}$ acting on $X_*(\T)$ factors through the relative Weil group $W_{F}$ and the element $\overline{1}$ which is the image of $1$ in $ \Z / d \Z  \simeq \Gal(F / \Q_p) \simeq W_{\Q_p} / W_{F} $ acts by sending $ (x_i)_{i=0}^{d-1} \in \displaystyle \prod_{i \in \Z / d \Z} \Z^n $ to $ (x_{i-1})_{i=0}^{d-1} \in \displaystyle \prod_{i \in \Z / d \Z} \Z^n $ where we denote $x_{j+kd} := x_{j}$ for $0 \leq j < d$ and for every integer number $k$.

The positive roots of $\Res_{F/\Q_p} \GL_{n, F}$ (corresponding to the Borel subgroup given by the upper triangular matrices) are 
\[
 \Phi^+ = \{ e_{k, i} - e_{k', i} \ | \ k, k' \in \{ 1, 2, \dotsc, n \}, \ i \in \{ 0, 1, \dotsc, d-1 \}, \ k < k' \}, 
\]
and the simple roots are
\[
 \Delta = \{ e_{k, i} - e_{k+1, i} \ | \ k \in \{ 1, 2, \dotsc, n-1 \}, \ i \in \{ 0, 1, \dotsc, d-1 \} \},
\]
and therefore we can identify $(X_{*}(\T)_{\Q, \mathrm{dom}})^{\Gamma}$ with the set $ \{ (x_1, \dotsc, x_n) \ | \ x_i \in \Q, \ \ x_i \geq x_{i + 1}  \}$.

We have $B(\Res_{F/\Q_p} \GL_{n, F}) \simeq B(\GL_{n, F}) $ by Shapiro's lemma. An element $b$ of $B(\GL_{n, F})$ is given by a sequence
\[
\lambda_1 = \dfrac{r_1}{s_1} > \dotsc > \lambda_a = \dfrac{r_a}{s_a},
\]
where $(r_i, s_i) = 1$ and with the multiplicities $ m_1s_1, \dotsc, m_as_a $ such that $ \displaystyle \sum_i m_is_i = n $. Then the corresponding 
Newton point $\nu_b$ in $(X_{*}(\T)_{\Q, \mathrm{dom}})^{\Gamma}$ is
\[
\nu_b = \big(\underbrace{\tfrac{\lambda_1}{d}, \dotsc,\tfrac{\lambda_1}{d}}_{m_1s_1 \ \text{times}}, \dotsc, \underbrace{\tfrac{\lambda_a}{d} , \dotsc, \tfrac{\lambda_a}{d}}_{m_as_a \ \text{times}} \big) \in \Q^n,
\]
and we have $ \langle 2\rho, \nu_b \rangle = \displaystyle \sum_{i < j} m_im_js_is_j(\lambda_i - \lambda_j) $. The associated group $\G_b$ is isomorphic to 
\[
\G_b = \Res_{F / \Q_p}(\GL_{m_1}(D_{\lambda_1})) \times \dotsc \times \Res_{F / \Q_p}(\GL_{m_a}(D_{\lambda_a})),
\]
where $D_{\lambda}$ is the division algebra over $F$ with invariant $\lambda$. Moreover, $\pi_1(\G_{\ol{\Q}_p})_{\Gamma} = X_*(\T)/(\text{coroot lattice}) $ is naturally isomorphic to $\Z$, and in this case $\kappa(b)$ is given by $ \displaystyle \sum_i m_ir_i $. Remark that if $\E_b$ is the $\G$-bundle corresponding to $b$ then $\nu_{\E_b} = (-\nu_b)_{\text{dom}}$.
\subsection{Sheaves on $\Bun_{\G}$}

Let $\Lambda$ be in $\{ \overline{\Q}_{\ell}, \overline{\F}_{\ell} \}$.  We will be interested in the derived category $\Dlis(\Bun_{\G},\ol{\mathbb{Q}}_{\ell})$ of lisse-\'etale $\overline{\mathbb{Q}}_{\ell}$-sheaves \cite[Section~VII.6.]{FS} and denote by $\Dlis(\Bunn,\ol{\mathbb{Q}}_{\ell})^{\omega}$ for the triangulated sub-category of compact objects in $\Dlis(\Bun_{\G},\ol{\mathbb{Q}}_{\ell})$. When $\ell$ is very good as in \cite[Page 33]{FS}, we will also be interested in $\Dlis(\Bun_{\G},\ol{\mathbb{F}}_{\ell})$, the derived category of étale $\overline{\F}_{\ell}$-sheaves on $\Bun_{\G}$ and its triangulated sub-category of compact objects $\Dlis(\Bun_{\G},\ol{\mathbb{F}}_{\ell})^{\omega}$. 

%Note that in \cite{FS}, the last two categories are denoted by $\mathrm{D}(\Bun_{\G},\ol{\mathbb{F}}_{\ell})$ and $\mathrm{D}(\Bun_{\G},\ol{\mathbb{F}}_{\ell})^{\omega}$ respectively but our slight change in notation allows us to use the notation $\Dlis(\Bun_{\G}, \Lambda)$ and $\Dlis(\Bun_{\G}, \Lambda)^{\omega}$ uniformly for $\Lambda \in \{ \overline{\Q}_{\ell}, \overline{\F}_{\ell} \}$.

The key point is that objects in these categories are manifestly related to smooth admissible representations of $\G(\mathbb{Q}_{p})$. The strata of $\BunG$ admit a natural map $\Bun_{\G}^{b} \ra [\ast/\G_{b}(\mathbb{Q}_{p})]$ to the classifying stack defined by the $\mathbb{Q}_{p}$-points of the $\sigma$-centralizer $\G_{b}$, and, by \cite[Proposition~VII.7.1]{FS}, pullback along this map induces an equivalence
\[ \Dc(\G_{b}(\mathbb{Q}_{p}), \Lambda) \simeq \Dlis([\ast/\G_{b}(\mathbb{Q}_{p})], \Lambda) \xrightarrow{\simeq} \Dlis(\Bun^{b}_{\G}, \Lambda), \]
where $\Dc(\G_{b}(\mathbb{Q}_{p}), \Lambda)$ is the unbounded derived category of smooth $\Lambda$-representations of  $\G_{b}(\mathbb{Q}_{p})$.

%In the following, we consider the case $\G = \Res_{F / \Q_p}(\GL_{n, F})$ where $F$ is an unramified extension of $\Q_p$ and we use the notation $\Bun_n$ instead of $\Bun_{\G}$. 
Recall that via excision triangles, there is an infinite semi-orthogonal decomposition of $\Dlis(\BunG, \Lambda)$ into the various $\Dlis(\Bun^b_{\G}, \Lambda)$ for $b \in B(\G)$ \cite[Chap. VII]{FS}. 

We recall the definition of Hecke operators on $\Dlis(\BunG, \Lambda)$. For a finite index set $I$, we let $\Rep_{\Lambda}(\phantom{}^{L}\G^{I}(\Lambda))$ denote the category of algebraic representations of $I$-copies of the Langlands dual group, and we let $\Div^{I}$ be the product of $I$-copies of the diamond $\Div^{1} = \Spd(\Breve{\mathbb{Q}}_{p})/\mathrm{Frob}^{\mathbb{Z}}$. The diamond $\Div^{1}$ parametrizes, for $S \in \Perf$, characteristic $0$ untilts of $S$, which in particular give rise to Cartier divisors in $X_{S}$. We then have the Hecke stack
\[ \begin{tikzcd}
& & \arrow[dl, swap, "h^{\leftarrow}"] \mathrm{Hck} \arrow[dr,"h^{\rightarrow} \times supp"] & & \\
& \Bun_{\G} & & \Bun_{\G} \times \Div^{I}  & 
\end{tikzcd} \]
defined as the functor parametrizing, for $S \in \Perf$ together with a map $S \rightarrow \Div^{I}$ corresponding to characteristic $0$ untilts $S_{i}^{\sharp}$ defining Cartier divisors in $X_{S}$ for $i \in I$, a pair of $G$-torsors $\mathcal{E}_{1}$, $\mathcal{E}_{2}$ together with an isomorphism 
\[ \beta:\mathcal{E}_{1}|_{X_{S} \setminus \bigcup_{i \in I} S_{i}^{\sharp}} \xrightarrow{\simeq} \mathcal{E}_{2}|_{X_{S} \setminus \bigcup_{i \in I} S_{i}^{\sharp}},\]
where $h^{\leftarrow}((\mathcal{E}_{1},\mathcal{E}_{2},i,(S_{i}^{\sharp})_{i \in I})) = \mathcal{E}_{1}$ and $h^{\rightarrow} \times supp((\mathcal{E}_{1},\mathcal{E}_{2},\beta,(S_{i}^{\sharp})_{i \in I})) = (\mathcal{E}_{2},(S_{i}^{\sharp})_{i \in I})$. For each element $W \in \Rep_{\Lambda}(^{L}\G^{I}(\Lambda))$, the geometric Satake correspondence of Fargues--Scholze \cite[Chapter~VI]{FS} furnishes a solid $\Lambda$-sheaf $\mathcal{S}_{W}$ on $\mathrm{Hck}$. This allows us to define Hecke operators.
\begin{definition}
For each $W \in \Rep_{\Lambda}(\phantom{}^{L}\G^{I}(\Lambda))$, we define the Hecke operator
\[ \T_{W}: \Dlis(\Bun_{\G},\Lambda) \rightarrow \Dlis(\Bun_{\G} \times X^{I},\Lambda) \]
\[ A \mapsto R(h^{\rightarrow} \times supp)_{\natural}(h^{\leftarrow *}(A) \otimes^{\mathbb{L}} \mathcal{S}_{W}),\]
where $\mathcal{S}_{W}$ is a solid $\Lambda$-sheaf and the functor $R(h^{\rightarrow} \times supp)_{\natural}$ is the natural push-forward (i.e the left adjoint to the restriction functor in the category of solid $\Lambda$-sheaves \cite[Proposition~VII.3.1]{FS}). 
\end{definition}
It follows by 
\cite[Theorem~I.7.2, Proposition~IX.2.1, Corollary~IX.2.3]{FS} that, if $E$ denotes the reflex field of $W$, this induces a functor 
\[ \Dlis(\Bun_{\G},\Lambda) \rightarrow \Dlis(\Bun_{\G},\Lambda)^{BW_{E}^{I}} \]
which we will also denote by $\T_{W}$. The Hecke operators are natural in $I$ and $W$ and compatible with exterior tensor products.

Let $b, b'$ be elements in $ B(\G)$. Given a geometric dominant cocharacter $\mu$ of $\G$ with reflex field $E$, we call the quadruple $(\G,b,b',\mu)$ a local shtuka datum. Attached to it, we define the shtuka space
\[ \Sht(\G,b,b',\mu) \longrightarrow \Spd(\Breve{E}), \]
as in \cite{SW} where $\Breve{E} := \Breve{\mathbb{Q}}_{p}E$, to be the space parametrizing modifications $\mathcal{E}_{b} \longrightarrow \mathcal{E}_{b'}$ of $\G$-bundles on the Fargues--Fontaine curve $X$ with type bounded by $\mu$.
\begin{remark}
    When $b=1$, we also use the notation $\Sht(\G, b', \mu)$ for $\Sht(\G, 1, b', \mu)$. We note that our definition of $\Sht(\G, b', \mu)$ coincides with $\Sht(\G, b', -\mu)$ in the notation of \cite{SW}, where $ - \mu$ is the dominant inverse of $\mu$. This convention limits the appearance of duals when studying the cohomology of these spaces. 
\end{remark}

This has commuting actions of $\G_{b}(\mathbb{Q}_{p})$ and $\G_{b'}(\mathbb{Q}_{p})$ coming from automorphisms of $\mathcal{E}_{b}$ and $\mathcal{E}_{b'}$, respectively. Moreover, we have a Weil descent datum of $\Sht(\G, b, b', \mu)$ coming from the equlities $ b b^{\sigma} (b^{-1})^{\sigma} = b $ and $ (b')^{-1} b' (b')^{\sigma} = (b')^{\sigma} $. 
%Moreover the equlities $ b b^{\sigma} (b^{-1})^{\sigma} = b $ and $ (b')^{-1} b' (b')^{\sigma} = (b')^{\sigma} $ induces the isomorphisms $ t_b : \E_{b^{\sigma}} \simeq \E_{b} $ and $ t_{b'} : \E_{b'} \simeq \E_{(b')^{\sigma}}$. Thus we have a Weil descent datum of $\Sht(\G, b, b', \mu)$ defined by
%\begin{align*}
%    \Sht(\G, b, b', \mu) &\longrightarrow \Sht(\G, b^{\sigma}, (b')^{\sigma}, \mu) \\
%    f \quad &\longmapsto t_{b'} \circ f \circ t_b.
%\end{align*}

We define the tower 
\[ \Sht(\G,b,b',\mu)_{K} := \Sht(\G,b,b',\mu)/\underline{K} \longrightarrow \Spd(\Breve{E}) \] of locally spatial diamonds \cite[Theorem~23.1.4]{SW}
for varying open compact subgroups $K \subset \G_{b'}(\mathbb{Q}_{p})$. We can define the cohomology $R\Gamma_c(\Sht(\G,b,b',\mu)_{K}, \mathcal{S}_{\mu})$ as in \cite[section 3]{Nao}. More precisely, there is a natural map 

%When $b$ is trivial, we denote the moduli space $\Sht(\G,b,b',\mu)$ by $\Sht(\G,b',\mu)$. 

\[ \mathsf{p}: \Sht(\G,b,b',\mu)_{\infty} \longrightarrow \mathrm{Hck} \] 
mapping to the locus of modifications with type bounded by $\mu$. Attached to the geometric cocharacter $\mu$, consider the highest weight representation $V_{\mu} \in \Rep_{\Lambda}(\phantom{}^{L}\G(\Lambda))$. The associated $\Lambda$-sheaf $\mathcal{S}_{\mu}$ on $\mathrm{Hck}$ (by the geometric Satake isomorphism) considered above will be supported on this locus, and we abusively denote $\mathcal{S}_{\mu}$ for the pullback of this sheaf along $\mathsf{p}$. Since $\mathsf{p}$ factors through the quotient of $\Sht(\G,b,b',\mu)_{\infty}$ by the simultaneous group action of $\G_{b}(\mathbb{Q}_{p}) \times \G_{b'}(\mathbb{Q}_{p})$, this sheaf will be equivariant with respect to these actions. This allows us to define the complex
\[ R\Gamma_{c}(\G,b,b',\mu) := \colim_{K \rightarrow \{1\}} R\Gamma_{c}(\Sht(\G,b,b',\mu)_{K,\mathbb{C}_{p}},\mathcal{S}_{\mu}) \]
which will be a complex of smooth admissible $\G_{b}(\mathbb{Q}_{p}) \times \G_{b'}(\mathbb{Q}_{p}) \times W_{E_{\mu}}$-modules, where $\Sht(\G,b,b',\mu)_{K,\mathbb{C}_{p}}$ is the base change of $\Sht(\G,b,b',\mu)_{K}$ to $\mathbb{C}_{p}$. Remark that if $\mu$ is minuscule then $ \mathcal{S}_{\mu} \simeq \Lambda[d_{\mu}](\tfrac{d_{\mu}}{2})$ where $d_{\mu} = \langle 2 \rho, \mu \rangle$. For $\pi_{b}$ an irreducible smooth representation of $\G_b(\Q_p)$ with coefficient in $\Lambda$, this allows us to define the following complexes 
\begin{equation}{\label{eq: RGammaflat}}
    R\Gamma^{\flat}_{c}(\G,b,b',\mu)[\pi_{b}] := R\mathcal{H}om_{\G_{b}(\mathbb{Q}_{p})}(R\Gamma_{c}(\G,b,b',\mu),\pi_{b})
\end{equation}
and
\begin{equation}{\label{eq: RGamma}}
    R\Gamma_{c}(\G,b,b',\mu)[\pi_{b}] := R\Gamma_{c}(\G,b,b',\mu) \otimes^{\mathbb{L}}_{\mathcal{H}(\G_{b}(\Q_p))} \pi_{b}
\end{equation}
where $\mathcal{H}(\G_{b}(\Q_p)) := C_c^{\infty}(\G_b(\Q_p), \Lambda)$ is the smooth Hecke algebra. Analogously, for $\pi_{b'}$ an irreducible smooth $\Lambda$-representation of $\G_{b'}(\Q_p)$, we can define $R\Gamma_{c}(\G,b,b',\mu)[\pi_{b'}]$ and $R\Gamma^{\flat}_{c}(\G,b,b',\mu)[\pi_{b'}]$. %It follows by \cite[Corollary~I.7.3]{FS} and \cite[Page~317]{FS} that these will be valued in smooth admissible representations of finite length.
 
Recall that for each $b \in B(\GL_n)$, we can define a character $\kappa_b : \G_b(\Q_p) \longrightarrow \Lambda^{\times}$ and we have
\[
R\Gamma_c(\widetilde{\G}_b^{0}, \Lambda) = \kappa_b[-2d_b],
\]
as in \cite{GI}, lemma $4.18$ and as in \cite{Ham1}, page $88$, before lemma $11.1$. Remark that by \cite[Corollary 1.9(2)]{HI}, we have $\kappa_b = \delta^{-1}_b$ where $\delta_b$ is the modulus character associated to $b$ which we recall in \S \ref{itm : combinatoric description of Hecke operators}. By the same arguments as in \cite[Lemma 2.10]{GLn}, we can show the following result relating the above complexes to Hecke operators on $\Bun_{\G}$.

\begin{lemma}{\label{shimhecke}}{\cite[Section~IX.3]{FS}}
Given a local shtuka datum $(\G,b,b',\mu)$ as above and $\pi_{b'}$ (resp. $\pi_{b}$) a finitely generated smooth $\Lambda$-representation of $\G_{b'}(\mathbb{Q}_{p})$ (resp. $\G_{b}(\mathbb{Q}_{p})$), we can consider the associated sheaves $\pi_{b} \in \Dc(\G_{b}(\mathbb{Q}_{p}), \Lambda) \simeq \Dlis(\Bun_{\G}^{b})$ and $\pi_{b'} \in \Dlis(\Bun_{\G}^{b'}) \simeq \Dc(\G_{b'}(\mathbb{Q}_{p}), \Lambda)$ on the HN-strata $i_{b}: \Bun_{\G}^{b} \hookrightarrow \Bun_{\G}$ and $i_{b'}: \Bun_{\G}^{b'} \hookrightarrow \Bun_{\G}$. There then exists isomorphisms
\[ 
R\Gamma_{c}(\G,b,b',\mu)[\pi_{b} \otimes \kappa_b^{-1}][2d_{b}] \simeq  i_{b'}^{*}\T_{\mu}i_{b!}(\pi_{b}),
\]
\[
R\Gamma^{\flat}_{c}(\G,b,b',\mu)[\pi_{b}][-2d_{b'}] \simeq  i_{b'}^{!}\T_{\mu}Ri_{b*}(\pi_{b})\otimes\kappa_b^{-1},
\]
of complexes of $\G_{b'}(\mathbb{Q}_{p}) \times W_{E_{\mu}}$-modules and isomorphisms
\[ 
R\Gamma_{c}(\G,b,b',\mu)[\pi_{b'}\otimes \kappa_{b'}^{-1}][2d_{b'}] \simeq i_{b}^{*}\T_{-\mu}i_{b'!}(\pi_{b'}), \]
\[
R\Gamma^{\flat}_{c}(\G,b,b',\mu)[\pi_{b'}][-2d_{b}] \simeq i_{b}^{!}\T_{-\mu}Ri_{b'*}(\pi_{b'}) \otimes \kappa_{b'}^{-1}, 
\]
of complexes of $\G_{b}(\mathbb{Q}_{p}) \times W_{E_{\mu}}$-modules, where $-\mu$ is the dominant cocharacter conjugate to the inverse of $\mu$ and where $d_b = \langle 2\rho, \nu_b \rangle$ and $d_{b'} = \langle 2\rho, \nu_{b'} \rangle$. 
\end{lemma}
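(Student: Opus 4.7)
The plan is to reduce to the fundamental computation of Fargues--Scholze in \cite[Section~IX.3]{FS} relating the Hecke correspondence on $\Bun_{\G}$ to shtuka spaces, and then track the shifts and twists carefully as in \cite[Lemma~2.10]{GLn}. The geometric input is that, after restriction to modifications of type bounded by $\mu$ with left leg isomorphic to $\mathcal{E}_{b}$ and right leg isomorphic to $\mathcal{E}_{b'}$, the fiber product
\[
\Bun_{\G}^{b} \times_{\Bun_{\G},h^{\leftarrow}} \mathrm{Hck}^{\leq \mu} \times_{h^{\rightarrow},\Bun_{\G}} \Bun_{\G}^{b'}
\]
can be identified with the quotient $[\Sht(\G,b,b',\mu)_{\infty}/\G_{b}(\Q_p) \times \G_{b'}(\Q_p)]$, with the pullback of $\mathcal{S}_{\mu}$ along $\mathsf{p}$ recovering the Satake sheaf. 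Proper base change then rewrites $i_{b'}^{*}\T_{\mu}i_{b!}(\pi_{b})$ as the compactly supported cohomology of this shtuka quotient with coefficients in $\mathcal{S}_{\mu}$, paired with $\pi_{b}$.

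The second step is to account for the classifying-stack structure of $\Bun_{\G}^{b} \simeq [\ast/\widetilde{\G}_{b}]$. Since $\pi_{0}\widetilde{\G}_{b} = \G_{b}(\Q_{p})$, the morphism $[\ast/\G_{b}(\Q_{p})] \to \Bun_{\G}^{b}$ has fiber the cohomologically smooth $\widetilde{\G}_{b}^{0}$, whose compactly supported cohomology computes to $\kappa_{b}[-2d_{b}]$ by \cite{GI} and \cite{Ham1}. Comparing $i_{b!}(\pi_{b})$ with the naive pushforward from the coarse classifying stack therefore contributes exactly the shift $[2d_{b}]$ and the character $\kappa_{b}^{-1}$, yielding the first isomorphism. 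The expression involving $R\Gamma_{c}(\G,b,b',\mu) \otimes^{\mathbb{L}}_{\mathcal{H}(\G_{b}(\Q_{p}))} \pi_{b}$ then matches by definition.

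For the $!$-version on the same side, I would apply Verdier duality on both sides of the Hecke correspondence: on $\Bun_{\G}$ the dualizing complex of the stratum $\Bun_{\G}^{b'}$ produces the complementary shift $[-2d_{b'}]$, while $\mathcal{S}_{\mu}$ is Verdier self-dual up to a Tate twist. This exchanges $i_{b!}$ with $Ri_{b*}$, $i_{b'}^{*}$ with $i_{b'}^{!}$, and replaces the coinvariants $R\Gamma_{c}(\cdot)[\pi_{b}]$ by the invariants $R\Gamma^{\flat}_{c}(\cdot)[\pi_{b}]$ defined in \eqref{eq: RGammaflat}. The two isomorphisms involving $\pi_{b'}$ are obtained by applying the same argument after swapping the roles of the left and right legs of the Hecke correspondence; this reversal of orientation sends the modification type $\mu$ to its dominant inverse $-\mu$.

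The main technical obstacle is the bookkeeping of all the shift, twist, and Tate-twist factors: one must verify that the smooth dimensions $d_{b}$, $d_{b'}$, the characters $\kappa_{b}$, $\kappa_{b'}$, and the Tate twists coming from Verdier duality and from the normalization of $\mathcal{S}_{\mu}$ in the minuscule case $\mathcal{S}_{\mu} \simeq \Lambda[d_{\mu}](d_{\mu}/2)$ combine exactly as stated. The geometric content is by now standard from \cite{FS}; the real work is to carry out this accounting without losing any sign or twist, mirroring the $\ell$-adic argument of \cite[Lemma~2.10]{GLn}, which transfers to the $\overline{\F}_{\ell}$-setting since the smoothness and cohomological computations of $\widetilde{\G}_{b}^{0}$ are independent of the coefficient ring (modulo the assumption that $\ell$ is not a bad prime).
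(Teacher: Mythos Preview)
Your proposal is correct and matches the paper's approach exactly: the paper does not give an independent proof but simply states ``By the same arguments as in \cite[Lemma 2.10]{GLn}'' and cites \cite[Section~IX.3]{FS}, which is precisely the route you outline. Your identification of the key inputs---the fiber product description of the Hecke stack restricted to the relevant strata, the computation $R\Gamma_c(\widetilde{\G}_b^{0},\Lambda)=\kappa_b[-2d_b]$, Verdier duality for the $!$-versions, and swapping legs to pass from $\mu$ to $-\mu$---is exactly what is needed.
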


We can show that the complexes defined in equations $(\ref{eq: RGammaflat})$ and $(\ref{eq: RGamma})$ will be valued in smooth admissible representations of finite length by using the same arguments as in \cite[Theorem~I.7.2]{FS} and \cite[Pages~322, 323]{FS} . More precisely, the Hecke operators $ \T_{\mu} $ and the functors $i_{b!}, i^*_{b'}$ preserve compact and ULA objects for all $b, b'$. Then lemma $\ref{shimhecke}$ above implies that the cohomology groups of $R\Gamma_{c}(\G,b,b',\mu)[\pi_{b}]$ are compact and ULA in the category of smooth $\G_{b'}(\Q_p)$-representations with $\Lambda$-coefficients. Now, by pulling through Verdier duality as in \cite[Pages~323]{FS}, we deduce the same conclusion for $R\Gamma^{\flat}_{c}(\G,b,b',\mu)[\pi_{b}]$.

\section{Stacks of $L$-parameters}
 We fix a nonarchimedean local field $E$ with residue field $\F_q$ of characteristic $p \neq 2$ and an odd prime $\ell \neq p$. Let $\G$ be a reductive group over $E$ and then we have the dual group $\widehat{\G}/ \Z_{\ell}$, which we endow with its usual algebraic action of the Weil group $W_{E}$. 

\subsection{Some connected components of the stack of $L$-parameters} 
 
 In this paragraph we recall the definition of the stack of $L$-parameters and recollect some of its geometric properties. Let $A$ be any $\Z_{\ell}$-algebra, regarded as a condensed $\Z_{\ell}$-algebra. 
 
\begin{definition}
    An $L$-parameter for $\G$ with coefficients in $A$ is a section
    \[
    \phi : W_{E} \longrightarrow \widehat{G}(A) \rtimes W_{E}
    \]
    of the natural map of condensed groups $ \widehat{G}(A) \rtimes W_{E} \longrightarrow W_{E} $. Equivalently, an $L$-parameter for $\G$ with coefficients in $A$ is a condensed $1$-cocycle $\phi : W_{E} \longrightarrow \widehat{\G}(A)$ for the given $W_{E}$ action. More concretely, if $A$ is endowed with the topology given by writing $A = \textrm{colim}_{A' \subset A} A'$ where $A'$ is finitely generated $\Z_{\ell}$-module with its $\ell$-adic topology then an $L$-parameter with values in $A$ is a $1$-cocycle $ \phi : W_{E} \longrightarrow \widehat{\G}(A)$ such that if $\widehat{\G} \hookrightarrow \GL_n $ the associated map $ \phi : W_{E} \longrightarrow \GL_n(A) $ is continuous.
\end{definition}

With this definition of $L$-parameter over any $\Z_{\ell}$-algebra $A$, we can define a moduli space, whose $A$-points are the continuous $1$-cocycles $ \phi : W_{E} \longrightarrow \widehat{\G}(A) $ with respect to the natural action of $W_{E}$ on $\widehat{\G}(A)$. This defines the scheme considered in \cite{FS, DH, Zhu1} and we denote this scheme by $ Z^1(W_{E}, \widehat{\G})_{\Z_{\ell}} $, over $\Z_{\ell}$.

By a discretization process, Fargues and Scholze show \cite[ Theorem I.9.1 ]{FS} that $ Z^1(W_E, \widehat{\G})_{\Z_{\ell}} $ can be written as a union of open and closed affine subschemes $ Z^1(W_{E}/\rP, \widehat{\G})_{\Z_{\ell}} $ as $\rP$ runs through subgroups of the wild inertia $\rP_E$ of $W_{E}$, where $ Z^1(W_{E}/\rP, \widehat{\G})_{\Z_{\ell}} $ parametrizes condensed $1$-cocycles that are trivial on $\rP$. We also denote its base change to $\overline{\Q}_{\ell}$ or to $\overline{\F}_{\ell}$ by $ Z^1(W_{E}/\rP, \widehat{\G})_{\overline{\Q}_{\ell}} $ and $ Z^1(W_{E}/\rP, \widehat{\G})_{\overline{\F}_{\ell}} $, respectively. This allows us to consider the Artin stack quotient $[Z^1(W_{E}, \widehat{\G})_{\Z_{\ell}}/\widehat{\G} ]$, where $\widehat{\G}$ acts via conjugation. We then consider the base change to $\overline{\Q}_{\ell}$ or to $\overline{\F}_{\ell}$, denoted by $[Z^1(W_{E}, \widehat{\G})_{\overline{\Q}_{\ell}}/\widehat{\G} ]$ or $[Z^1(W_{E}, \widehat{\G})_{\overline{\F}_{\ell}}/\widehat{\G} ]$ respectively and refer to it as the stack of Langlands parameters, as well as the categories $\Perf([Z^1(W_{E}, \widehat{\G})_{\overline{\Q}_{\ell}}/\widehat{\G}])$ and $\Perf([Z^1(W_{E}, \widehat{\G})_{\overline{\F}_{\ell}}/\widehat{\G}])$ of perfect complexes of sheaves on these spaces.

From now on, we suppose $\G = \Res_{F/\Q_p}(\GL_{n, F})$ where $F$ is an unramified extension of $\Q_p$ of degree $d$.  We fix a lift $\sigma$ of the arithmetic Frobenius of $F$ to $W_{F}/\rP_{F}$ and $\tau$ a topological generator of $\I_{F}/\rP_{F}$. Let $\phi$ be an $L$-parameter of $\G$ (over $\Q_p$), by Shapiro's lemma it corresponds to a representation $\Tilde{\phi} : W_{F} \longrightarrow \widehat{\GL_n}(\Lambda)$. We study the connected components of $[Z^1(W_{\Q_p}, \widehat{\G})_{\Lambda}/\widehat{\G} ]$ and describe the components containing an $L$-parameter $\phi$ such that $ \Tilde{\phi}_{| \I_F}$ is multiplicity free (when considered as an $\I_F$-representation), where $\Lambda \in \{ \overline{\Q}_{\ell}, \overline{\F}_{\ell} \}$.  

\begin{proposition} \phantomsection \label{itm : simple connected components}
    Let $\phi$ be an $L$-parameter of $\G$ such that $\Tilde{\phi} \simeq \Tilde{\phi}_1 \oplus \dotsc \oplus \Tilde{\phi}_r$ where for $1 \leq i \leq r$, $\Tilde{\phi}_i$ is an irreducible representation of $W_F$ and for $1 \leq i \neq j \leq r$, there does not exist an unramified character $\chi$ of $W_F$ such that $\Tilde{\phi}_i \simeq \Tilde{\phi}_j \otimes \chi$. Let $[C_{\phi}]$ be the connected component containing $\phi$ in $[Z^1(W_{\Q_p}, \widehat{\G})_{\Lambda}/\widehat{\G} ]$. For $1 \leq i \leq r$, we denote by $f_i$ the number of irreducible factors of $\Tilde{\phi}_{i| \I_F}$. Then we have
    \[
    [C_{\phi}] \simeq [(\mathbb{G}^r_{m} \times \mu_{\Lambda}) / \mathbb{G}^r_m],
    \]
    the quotient stack of $(\mathbb{G}^r_{m} \times \mu_{ \Lambda})$ by the trivial action of $\mathbb{G}^r_{m}$ and $\mu_{ \Lambda} = \Spec \overline{\Q}_{\ell}$ if $\Lambda = \overline{\Q}_{\ell}$ and $\mu_{\Lambda} = \displaystyle \prod_{i = 1}^r \Spec \overline{\F}_{\ell}[\Syl_i]$ where $\Syl_i$ is the $\ell$-Sylow group of $ \F^{\times}_{p^{f_i}} $ if $\Lambda = \ov \F_{\ell}$.
\end{proposition}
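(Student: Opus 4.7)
My plan is to proceed via Shapiro followed by a block decomposition, then compute the component of each irreducible piece, and finally identify the centralizer. First, Shapiro's lemma identifies $[Z^1(W_{\Q_p}, \widehat{\G})_{\Lambda}/\widehat{\G}]$ with the stack of $W_F$-representations into $\GL_n$ modulo conjugation, so I work throughout with $\tilde{\phi} : W_F \to \GL_n$. The hypothesis that the irreducible summands $\tilde{\phi}_i$ are pairwise non-isomorphic up to unramified twist implies, via Clifford theory, that their $\I_F$-restrictions have pairwise disjoint Frobenius-orbits of irreducible constituents, so $\Ext^1_{W_F}(\tilde{\phi}_i, \tilde{\phi}_j) = 0$ for $i \ne j$. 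I would deduce---either by formal deformation theory, or by reducing to the explicit affine model $Z^1(W_F/\rP, \GL_n)$ for a suitable open $\rP \subset \rP_F$---that every parameter in the component of $\tilde{\phi}$ decomposes canonically as $\bigoplus_i \tilde{\psi}_i$ with $\tilde{\psi}_i$ in the component of $\tilde{\phi}_i$. This yields a product decomposition $[C_\phi] \simeq \prod_i [C_{\tilde{\phi}_i}]$.

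For each irreducible $\tilde{\phi}_i$, I realize it via Clifford as $\Ind_{W_{F_i}}^{W_F} \rho_i$, where $F_i/F$ is the unramified extension of degree $f_i$ and $\rho_i$ is an irreducible $W_{F_i}$-representation whose $\I_F$-restriction is irreducible. The unramified twists $\tilde{\phi}_i \otimes \chi_\alpha$, parametrized by $\alpha \in \mathbb{G}_m$, trace out a $\mathbb{G}_m$-subfamily of $Z^1(W_F, \GL_{n_i})$ which, in characteristic zero, exhausts the reduced structure of the component modulo $\widehat{\G}$-conjugation. In characteristic $\ell$, additional infinitesimal deformations arise by twisting $\rho_i|_{\I_F^{\mathrm{tame}}}$ by finite order characters that become trivial in $\overline{\mathbb{F}}_\ell^\times$ but carry nontrivial scheme-theoretic structure. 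The Frobenius-fixedness requirement on such a tame character (it must be stable under conjugation by $\sigma^{f_i}$) identifies the relevant root-of-unity scheme with $\mu_{p^{f_i}-1}$, whose identity component over $\overline{\mathbb{F}}_\ell$ is exactly $\Spec \overline{\mathbb{F}}_\ell[\Syl_i]$, producing the $\mu_\Lambda$ factor.

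The centralizer $S_\phi$ is computed by Schur's lemma: since the $\tilde{\phi}_i$ are pairwise non-isomorphic absolutely irreducibles, $\End_{W_F}(\tilde{\phi}) \simeq \prod_i \Lambda$, and hence $S_\phi \simeq \mathbb{G}_m^r$. The torus $S_\phi$ acts trivially on the component because scalars commute with both the unramified-twist and the infinitesimal-twist deformations on each block. Combining these observations yields $[C_\phi] \simeq [(\mathbb{G}_m^r \times \mu_\Lambda)/\mathbb{G}_m^r]$ with trivial action, as claimed.

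The hardest step, I expect, is the precise identification of the $\mu_\Lambda$ factor in characteristic $\ell$: verifying that the infinitesimal structure of $Z^1$ near $\tilde{\phi}_i$ is exactly $\Spec \overline{\mathbb{F}}_\ell[\Syl_i]$, neither more nor less, and that these nilpotent deformations do not escape the connected component. This should follow from an explicit presentation of $W_F/\rP$ by generators and relations together with Hensel-style lifting of cocycles, but the bookkeeping with the tame-wild filtration and its interaction with the induction structure on $\tilde{\phi}_i$ is delicate.
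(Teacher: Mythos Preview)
Your overall architecture matches the paper's---Shapiro reduction to $\GL_n$ over $F$, then break into irreducible blocks, then compute each block---but the justifications you offer at the two key junctures are either insufficient or less direct than what the paper actually does.

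\textbf{The product decomposition.} The claim $\Ext^1_{W_F}(\tilde\phi_i,\tilde\phi_j)=0$ is true, but it is the wrong tool: it controls only the tangent space at $\tilde\phi$, and since in characteristic~$\ell$ the component is non-reduced, first-order information does not determine the global scheme structure. Your parenthetical alternative (``reducing to the explicit affine model'') is in fact what is needed, and is exactly what the paper carries out via the reduction-to-tame-parameters machinery of \cite{DH}. Concretely: one first shows that $\tilde\phi|_{\I^\ell_F}$ is multiplicity-free (this requires descending from $\I_F$ to the prime-to-$\ell$ inertia $\I^\ell_F$ and using that every $\overline{\mathbb F}_\ell^\times$-valued character of the pro-$\ell$ group $\I_F/\I^\ell_F$ is trivial---a step you skip by working with $\I_F$ directly). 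Setting $\varphi:=\tilde\phi|_{\I^\ell_F}$, the centralizer $C(\varphi)$ is then a split torus $\mathbb G_m^k$, and the twisting map $\eta\mapsto\eta\cdot\tilde\phi$ identifies the entire piece $Z^1(W^0_F,\GL_n)_\varphi$ with a closed subscheme of $C(\tilde\phi|_{\I_F})\times C(\varphi)$ cut out by the single relation $\sigma\tau\sigma^{-1}=\tau^q$. The block structure of $C(\varphi)$ along the $\tilde\phi_i$'s then gives the product globally, not just infinitesimally.

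\textbf{The nilpotent factor.} Your heuristic (tame twists fixed by $\sigma^{f_i}$) points in the right direction, but once one is inside the torus $C(\varphi)$ there is no wild--tame bookkeeping left to do: the paper simply writes out the matrices of $\tau$ and $\sigma$ in the Clifford basis and reads off that, in the irreducible case, the cocycle scheme is $\mathbb G_m\times\mu_{q^{s}-1}$ with $s=f_i$. Passing to the connected component over $\overline{\mathbb F}_\ell$ then isolates the $\ell$-Sylow piece $\Spec\overline{\mathbb F}_\ell[\Syl_i]$. This is considerably cleaner than the Hensel-style lifting you anticipate, and it explains precisely why the thickening is neither larger nor smaller than $\Syl_i$: it is the identity component of a group scheme of roots of unity, not an abstract deformation space.
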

\begin{proof}
By the compatibility of the formalism of $\Bun_{\G}$, of the stacks of $L$-parameters and of the Fargues-Scholze correspondence with the Weil restriction \cite[Section IX.6.3]{FS}, it is enough to consider the group $ \GL_{n, F}$ over $F$. More precisely, $[C_{\phi}]$ is isomorphic to the connected component $[C_{\Tilde{\phi}}]$ containing $\Tilde{\phi}$ of $ [Z^1(W_{F}, \widehat{\GL_{n, F}})_{\Lambda}/\widehat{\GL_{n, F}} ] $.

 Let $(W/\rP_{F})^0$ be the subgroup of $W_{F}/\rP_{F}$ generated by a pro-generator $\tau$ of $\I_F / \rP_F$, a lift $\sigma$ of the arithmetic Frobenius to $W_F / \rP_F$ and $\rP_{F}$, regarded as discrete group. Let $W^0_F$ be the inverse image of $(W/\rP_{F})^0$ in $W_{F}$. We can define $(\I_F/\rP_F)^0$ and $\I^0_F$ in a similar way by considering only $\tau $ and $\rP_{F}$. The functor that sends $R$ to $Z^1(W^0_F, \widehat{\GL_{n, F}}(R))$ is represented by an affine scheme denoted by $Z^1(W^0_{F}, \widehat{\GL_{n, F}})_{\Lambda}$. We choose a decreasing sequence $(\rP^e)_{e \in \N}$ of normal open subgroups of the wild inertia $\rP_{F}$ whose intersection is $\{ 1 \}$. We know that $ Z^1(W_{F}, \widehat{\GL_{n, F}})_{\Lambda} $ can be written as a union of open and closed affine sub-schemes $ Z^1(W^0_{F}/\rP^e, \widehat{\GL_{n, F}})_{\Lambda} $ for $e \in \N$. The conclusion of the proposition in the case $ \Lambda = \overline{\Q}_{\ell} $ is proved in \cite[Proposition 3.6]{GLn}, so we suppose $\Lambda = \overline{\F}_{\ell}$ and omit the subscript $\Lambda$ from now on.

We denote by $\I^{\ell}_{F}$ the maximal closed subgroup of $\I_{F}$ with prime-to-$\ell$ pro-order and denote by $\tau_{\ell}$ the pro-generator of $\I_F / \I^{\ell}_F$. If $V$ is a vector space together with a morphism $ f : \I^{\ell}_{F} \longrightarrow \Aut(V) $ then we denote by $V^{\tau_{\ell}}$ the representation of $\I^{\ell}_{F}$ whose underlying vector space is $V$ and where $ i \in \I^{\ell}_{F}$ acts by $ f (\tau_{\ell} i \tau_{\ell}^{-1} ) $. Denote by $V_i$ the underlying vector space of $\widetilde{\phi}_i$ and $ \displaystyle V = \bigoplus_{i = 1}^k V_i$. Since the action of $\tau_{\ell}$ gives an isomorphism between the $\I^{\ell}_{F}$-representations $ V $ and $ V^{\tau_{\ell}} $, by the same argument as in \cite[Lemma 3.6, page 22]{GLn}, we see that for each $1 \leq i \leq m $, the restriction $ \Tilde{\phi}_{i | \I^{\ell}_F}$ is multiplicity free. Moreover, if $\Tilde{\phi}_{| \I^{\ell}_{F}}$ is not a multiplicity free $\I^{\ell}_F$-representation then the same argument as in \cite[Lemma 3.7]{GLn} implies that for some $1 \leq i \neq j \leq m $ there exists a character $\chi$ of $ \I_{F} $ that is trivial on an open normal subgroup $\I^{\ell , e}_{F}$ containing $\I^{\ell}_{F}$ such that $\Tilde{\phi}_i \simeq \chi \otimes \Tilde{\phi}_j$. However, $\I_F / \I^{\ell, e}_F$ is an $\ell$-group then every continuous character $\chi : \I_F / \I^{\ell , e}_F \longrightarrow \overline{\F}^{\times}_{\ell}$ is trivial. It contradicts the assumption that $\Tilde{\phi}_{| \I_{F}}$ is a multiplicity-free $\I_F$-representation. Hence $\Tilde{\phi}_{| \I^{\ell}_F}$ is a multiplicity-free $\I^{\ell}_F$-representation and we can suppose that as a $\I^{\ell}_F$-representation, $\Tilde{\phi} = \Tilde{\varphi_1} \oplus \dotsc \oplus \Tilde{\varphi_k}$ for some $k \in \N$ and where the $\Tilde{\varphi_i}$'s are pairwise distinct irreducible representations of $\I^{\ell}_F$.

For each $e \in \N$, by a reduction to tame parameters argument \cite[Theorem 3.1, equation 4.2, section 4.2]{DH}, there exists a finite set $\Phi_e^{\ell}$ consisting of $1$-cocycle $ \varphi : \I^{\ell}_{F} / \rP^e \longrightarrow \widehat{\GL_{n, F}}(\overline{\F}_{\ell})$ such that we have a decomposition
\[
[Z^1(W^0_{F}/\rP^e, \widehat{\GL_{n, F}})/\widehat{\GL_{n, F}}] = \coprod_{\varphi \in \Phi_e^{\ell}} [Z^1(W^0_{F}, \widehat{\GL_{n, F}})_{\varphi}/C(\varphi)]
\]
where $C(\varphi)$ is the centralizer of $\varphi$ and $Z^1(W^0_{F}, \widehat{\GL_{n, F}})_{\varphi}$ is the (non empty) sub-scheme of $Z^1(W^0_{F}, \widehat{\GL_{n, F}})$ parametrizing the $1$-cocycles that can be extended to a $1$-cocycles $ \varphi' : W_F \longrightarrow \widehat{\GL_{n, F}} $ such that $\varphi'_{| \ \I^{\ell}_{F} / \rP^e} \simeq \varphi$.

Thus, there exist $e \in \N$ and $\varphi \in \Phi_e^{\ell}$ such that $[C_{\Tilde{\phi}}]$ is a connected component of $[Z^1(W^0_{F}, \widehat{\GL_{n, F}})_{\varphi}/C(\varphi)]$. Since $\Tilde{\phi}_{| W_F^0}$ is semi-simple, by \cite[Corollary 4.16]{DH} its unique continuous extension to $W_F$ is $\Tilde{\phi}$. Hence, we have $\Tilde{\phi}_{| \I^{\ell}_{F}} \simeq \varphi$, in particular $C(\varphi) \simeq \mathbb{G}_m^k$. Moreover, we have an injection $\mathbb{G}_m^s \simeq C(\Tilde{\phi}_{| \I_{F}}) \hookrightarrow C(\varphi)$ where $s = \displaystyle \sum_{i = 1}^r f_i$. 

Let $\varphi' : W_F \longrightarrow \widehat{GL_{n, F}}(\overline{\F}_{\ell})$ be a representation extending $\varphi \simeq \Tilde{\phi}_{| \I^{\ell}_{F}} $. By the same argument $V \simeq V^{\tau_{\ell}} $ as $\I_F^{\ell}$-representations, we deduce that 
the semi-simplification of $\varphi'$ as $\I_{F}$-representation is isomorphic to $\Tilde{\phi}_{| \I_{F}}$ and then, by an analogue of \cite[Proposition 3.4]{GLn}, we deduce that $\varphi'$ is semi-simple. 

The conjugation action of $W^0_{F}$ on $\GL_{n, F}(\overline{\F}_{\ell})$ by $\Tilde{\phi}$ stabilizes $C(\varphi)$ and the restricted action on this group factors over $W^0_{F} / \rP_{F}$. Denoting by $\mathrm{Ad}_{\Tilde{\phi}}$ this action and denoting by $Z^1_{\mathrm{Ad}_{\Tilde{\phi}}}(W^0_{F}/\rP_{F}, C(\varphi) )_{\ell}$ the affine scheme of $1$-cocycles $f : W^0_{F}/\rP_{F} \longrightarrow C(\varphi) $ such that $f(\tau) \in C(\Tilde{\phi}_{| \I_F}) $. Note that for any $w \in W^0_{F}$ and any element $\phi' \in Z^1(W^0_{F}, \GL_{n, F})_{\varphi}$ we can write $ \phi'(w) = \eta(w) \Tilde{\phi}(w) $ and by Schur's lemma, we can deduce that $\eta(w)$ belongs to $C(\varphi)$. Moreover $\phi'$ and $\Tilde{\varphi}$ can be extended to $W_F$ and its restriction to $\I_F^{\ell}$ are isomorphic, we deduce that $\eta(\tau^{\ell^m})$ is an isomorphism of $\Tilde{\phi}_{| I_{F}}$ for some $m$. As the base field is of characteristic positive $\ell$, we deduce that $\eta(\tau)$ is already an isomorphism of $\Tilde{\phi}_{| \I_{F}}$ and then $\eta(\tau) \in C(\Tilde{\phi}_{| \I_{F}})$. Hence, as in \cite[page 20]{DH}, the map $ \eta \longmapsto \eta \cdot \Tilde{\phi} $ sets up an isomorphism of $\overline{\F}_{\ell}$-schemes 
\[
Z^1_{\mathrm{Ad}_{\Tilde{\phi}}}(W^0_{F}/\rP_{F}, C(\varphi) )_{\ell} \xrightarrow{ \ \simeq \ }   Z^1(W^0_{F}, \GL_{n, F})_{\varphi}.
\]

Thus the scheme $Z^1_{\mathrm{Ad}_{\Tilde{\phi}}}(W^0_{F}/\rP_{F}, C(\varphi) )_{\ell}$ is isomorphic to the closed sub-scheme of $C(\Tilde{\phi}) \times C(\varphi)$ cut out by the equations deduced from the equality $ \sigma \tau \sigma^{-1} = \tau^q $. 

We suppose first that $\Tilde{\phi}$ is irreducible. If $W$ is a vector space together with a morphism $ f : \I_F \longrightarrow \Aut(W) $ then we denote by $W^{\sigma}$ the representation of $\I_F$ whose underlying vector space is $V$ and where $ i \in \I_F$ acts by $ f (\sigma i \sigma^{-1} ) $. Thus, as in \cite[Proposition 3.6]{GLn} we see that $\Tilde{\phi}_{| \I_F} \simeq V \oplus V^{\sigma} \oplus \dotsc \oplus V^{\sigma^{s-1}}$ where $V$ is an irreducible representation of $\I_F$ and $s$ is the smallest natural number such that $V \simeq V^{\sigma^s}$. Similarly, we have a decomposition $V_{| \I^{\ell}_F} \simeq W \oplus W^{\tau} \oplus \dotsc \oplus W^{\tau^{s'-1}}$ where $W$ is an irreducible representation of $\I^{\ell}_F$ and $s'$ is the smallest natural number such that $W \simeq W^{\tau^{s'}}$. In particular we have $ss' = k$. By writing the matrices of $\tau$ and $\sigma$ in the basic given by basics of $W$'s and $V$'s we can describe the action of $W_F$ on $C(\varphi)$ and also the action of $C(\varphi)$ on $Z^1_{\mathrm{Ad}_{\Tilde{\phi}}}(W_{F}/\I^{\ell}_{F}, C(\varphi) )$ coming from the conjugation action of $C(\varphi)$ on $Z^1(W_{F}, \GL_{n, F})_{\varphi}$ as in \cite[Proposition 3.6]{GLn}. Thus we have
\[
[Z^1(W^0_{F}, \widehat{\GL_{n, F}})_{\varphi}/C(\varphi)] \simeq [\bb G_m \times \mu_{q^{s}-1} / \bb G_m],
\]
the quotient of $\bb G_m \times \mu_{q^{s}-1}$ by the trivial action of $\bb G_m$ and where $\mu_{q^{s}-1}$ is the group scheme of $(q^s-1)$-roots of unity. Hence the connected component containing $\Tilde{\phi}$ is given by
\[
[\bb G_m \times \Spec \overline{\F}_{\ell}[\Syl] / \bb G_m]
\]
where $\Syl$ is the $\ell$-Sylow group of $\overline{\F}^{\times}_{q^s}$. In general when $\Tilde{\phi}$ is multiplicity free, we see that the connected component of $\Tilde{\phi}$ is given by
\[
     [(\mathbb{G}^r_m \times \mu) / \mathbb{G}^r_m].
\]
\end{proof}

\subsection{Perfect complexes on the stack of $L$-parameters}

In this subsection we talk about the spectral action defined in \cite{FS}. Recall that we suppose $\G = \Res_{F/\Q_p}(\GL_{n, F})$ where $F$ is an unramified extension of $\Q_p$ of degree $d$ and $\Lambda \in \{\overline{\Q}_{\ell}, \overline{\F}_{\ell} \}$. Denote by $\Perf([Z^1(W_{\Q_p}, \widehat{\G})_{\Lambda}/ \widehat{\G}])$ the derived category of perfect complexes on $[Z^1(W_{\Q_p}, \widehat{\G})_{\Lambda}/ \widehat{\G}]$ where $\widehat{\G}$ is the reductive group defined over $\Lambda$ dual of $\G$. We write $\Perf([Z^1(W_{\Q_p}, \widehat{\G})_{\Lambda}/\widehat{\G}])^{BW_{F}^{\I}}$ for the derived category of objects with a continuous $W_{F}^{\I}$ action for a finite index set $\I$. 

We will be interested in the derived category $\Dlis(\Bun_{\G}, \Lambda)$ and $\Dlis(\Bun_{\G}, \Lambda)^{\omega}$.

%We will be interested in the derived category $\Dlis(\Bun_{\G},\ol{\mathbb{Q}}_{\ell})$ of lisse-\'etale $\overline{\mathbb{Q}}_{\ell}$-sheaves \cite[Section~VII.6.]{FS} and denote by $\Dlis(\Bunn,\ol{\mathbb{Q}}_{\ell})^{\omega}$ for the triangulated sub-category of compact objects in $\Dlis(\Bun_{\G},\ol{\mathbb{Q}}_{\ell})$. When $\ell$ is very good as in \cite[Page 33]{FS}, we will also be interested in $\Dlis(\Bun_{\G},\ol{\mathbb{F}}_{\ell})$, the derived category of étale $\overline{\F}_{\ell}$-sheaves on $\Bun_{\G}$ and its triangulated sub-category of compact objects $\Dlis(\Bun_{\G},\ol{\mathbb{F}}_{\ell})^{\omega}$.

By \cite[Corollary~X.1.3]{FS}, there exists a $\Lambda$-linear action 
\[ \Perf([Z^1(W_{\Q_p}, \widehat{\G})_{\Lambda}/\widehat{\G}])^{BW_{F}^{I}} \ra \mathrm{End}(\Dlis(\Bun_{\G}, \Lambda)^{\omega})^{BW_{F}^{\I}} \]
\[ C \mapsto \{A \mapsto C \star A\}\]
which, extending by colimits, gives rise to an action 
\[ \IndPerf([Z^1(W_{\Q_p}, \widehat{\G})_{\Lambda}/\widehat{\G}])^{BW_{F}^{I}} \ra \mathrm{End}(\Dlis(\Bun_{\G},\Lambda)^{\omega})^{BW_{F}^{\I}} \]
where $\IndPerf([Z^1(W_{\Q_p}, \widehat{\G})_{\Lambda}/\widehat{\G}])$ is the triangulated category of Ind-Perfect complexes, and this action is uniquely characterized by some complicated properties. For our purposes, we will need the following:
\begin{enumerate}
    \item For $V = \boxtimes_{i \in I} V_{i}\in \Rep_{\Lambda}(\phantom{}^{L}\G(\Lambda)^{I})$, there is an attached vector bundle $C_{V} \in \Perf([Z^1(W_{\Q_p}, \widehat{\G})_{\Lambda}/\widehat{\G}])^{BW_{F}^{I}}$ whose evaluation at a $\Lambda$-point of $[Z^1(W_{\Q_p}, \widehat{\G})_{\Lambda}/\widehat{\G}]$ corresponding to a (not necessarily semi-simple) $L$-parameter $\phi: W_{\mathbb{Q}_{p}} \ra \phantom{}^{L}\G(\Lambda)$ is the vector space $V$ with $W_{F}^{I}$-action given by $\boxtimes_{i \in I} V_{i} \circ \phi$. Then the Hecke operator $\T_{V}$ defined in the previous sections, is given by the endomorphism
    \[ C_{V} \star (-): \Dlis(\Bun_{\G},\Lambda) \ra \Dlis(\Bun_{\G}, \Lambda)^{BW_{F}^{I}}. \] 
    by compatibility between Hecke operators and spectral action \cite[Theorem X.1.1]{FS}.
    \item The action is symmetric monoidal in the sense that given $C_{1},C_{2} \in \IndPerf([Z^1(W_{\Q_p}, \widehat{\G})_{\Lambda}/\widehat{\G}])$, we have a natural equivalence of endofunctors: 
    \[ (C_{1} \otimes^{\mathbb{L}} C_{2}) \star (-) \simeq C_{1} \star (C_{2} \star (-)). \]
\end{enumerate}

We remark that following \cite{FS}, we can replace $BW_{F}$ by $BW_{\Q_p}$ in the above paragraphs as our group $\G$ is defined over $\Q_p$. However, for our purpose, we only need to consider $BW_{F}$.

Let $\phi = \phi_1 \oplus \dotsc \oplus \phi_r$ be an $L$-parameter of $\G$ satisfying the conditions in proposition \ref{itm : simple connected components}. We denote by $[C_{\phi}]$ the connected component containing $\phi$ and we know that $[C_{\phi}]$ consists of the $L$-parameters of the form $\phi_1 \otimes \xi_1 \oplus \dotsc \oplus \phi_r \otimes \xi_r$ where the $\xi_i$'s are unramified characters. This connected component gives rise to a direct summand
\[
\Perf([C_{\phi}]) \hookrightarrow \Perf([Z^1(W_{\Q_p}, \widehat{\G})/\widehat{\G}]).
\]

Therefore the spectral action gives rise to a corresponding direct summand 
\[
\Dlis^{[C_{\phi}]}(\Bun_{\G},\Lambda)^{\omega} \subset \Dlis(\Bun_{\G},\Lambda)^{\omega},
\]
explicitly given as those objects on which the excursion operator corresponding to the function that
is $1$ on $[C_{\phi}]$ and $0$ elsewhere acts via the identity. In particular, the Schur-irreducible objects in this subcategory all have Fargues-Scholze parameter given by an $L$-parameter in $[C_{\phi}]$.

Now we consider the vector bundles on the connected component $[C_{\phi}]$. By proposition $\ref{itm : simple connected components}$ we have $[C_{\phi}] \simeq [(\mathbb{G}^r_{m} \times \mu_{\Lambda}) / \mathbb{G}^r_m] $ where $ \bb G^r_m $ acts trivially. Let $\Irr(S_{\phi})$ be the set of irreducible algebraic representations of $S_{\phi}$. For each $\chi \in \Irr(S_{\phi}) = \Irr(\bb G^r_m)$ we have an associated vector bundle $C_{\chi}$ on $[(\mathbb{G}^r_{m} \times \mu_{\Lambda}) / \mathbb{G}^r_m]$. More precisely the trivial line bundle on $\bb G_m^r$ together with the action of $\bb G_m^r$ defined by $\chi$ at every fibers gives rise to a $\bb G_m^r$-equivariant vector bundle on $\bb G_m^r$ and this vector bundle corresponds to $C_{\chi}$ on $[(\mathbb{G}^r_{m} \times \mu_{\Lambda}) / \mathbb{G}^r_m]$. Since $\bb G_m^r$ is affine, we see that $C_{\chi}$ is projective. Therefore it is clear that for $\chi, \chi' \in \Irr(S_{\phi})$ we have $ C_{\chi \otimes \chi'} \simeq C_{\chi} \otimes C_{\chi'} \simeq C_{\chi} \otimes^{\bb L} C_{\chi'} $. In particular, $ C_{\chi} \otimes C_{\chi^{-1}} \simeq C_{\chi^{-1}} \otimes C_{\chi} \simeq C_{\Id} $ is the identity functor of $\Dlis^{[C_{\phi}]}(\Bun_{\G}, \Lambda)^{\omega}$. Hence for an arbitrary $\chi \in \Irr(S_{\phi})$, the functor $ C_{\chi} \star (-) $ is an auto-equivalence of $\Dlis^{[C_{\phi}]}(\Bun_{\G}, \Lambda)^{\omega}$.

Let $V \in \Rep_{\Lambda}(\widehat{\G}(\Lambda))$, by the arguments in pages $339$--$340$ and theorem $X.1.1$ in \cite{FS}, we can express the action of the Hecke operator $\T_V$ on $\Dlis^{[C_{\phi}]}(\Bun_{\G}, \Lambda)^{\omega}$ in terms of the spectral action of the $C_{\chi}$'s for $\chi \in \Irr(S_{\phi})$. Recall that the action of $\T_V$ is given by the vector bundle $C_V$ defined above. By \cite[lemma 3.8]{Ham}, it is enough to consider the restriction of $C_V$ to $[(\mathbb{G}^r_{m} \times \mu_{\Lambda}) / \mathbb{G}^r_m]$ and for simplicity we also use $C_V$ to denote the restriction of $ C_V $ to $[(\mathbb{G}^r_{m} \times \mu_{\Lambda}) / \mathbb{G}^r_m]$.

The restriction of $V$ to $S_{\phi}$ admits a commuting $W_{F}$-action given by $\phi$. This defines a monoidal functor
\[
\Rep_{\Lambda}(\widehat{\G}(\Lambda)) \longrightarrow \Rep_{\Lambda}(S_{\phi})^{BW_{F}}.
\]

Note that as a $S_{\phi} \times W_{F}$-representation, we can decompose $V$ into a direct sum
\[
V \simeq \bigoplus_{\chi \in \Irr(S_{\phi})} \chi \boxtimes \sigma_{\chi},
\]
where $\sigma_{\chi}$ is the $W_F$-representation $\Hom_{S_{\phi}}(\chi, V)$.  If we forget the $W_F$-action, we see that
\[
\T_V = \bigoplus_{\chi \in \Irr(S_{\phi})} C^{\dim \sigma_{\chi}}_{\chi}.
\]

However, $\T_V$ has an action of $W_F$ and we want to understand this action. Recall that $W_F$ acts on $C_V$ and this action gives rise to the action of $W_F$ on $\T_V$. By the concrete description of $W_F$ on the fibers of $C_V$ and remark that $ \phi'_{| \I_F} \simeq \phi_{| \I_F} $ for every $\phi'$ in $[C_{\phi}]$, we deduce that the action of $\I_F$ on $\T_V$ can be expressed as $ \displaystyle \T_V = \bigoplus_{\chi \in \Irr(S_{\phi})} C_{\chi} \boxtimes \sigma_{\chi | \I_F} $. In other words, the following diagram of monoidal functors commutes
\begin{center}
     \begin{tikzpicture}
     \draw (0,0) node [above] {} node{$\Rep_{\Lambda}(\widehat{\G}(\Lambda))$};
     \draw (7,0) node [above] {} node{$\End_{\Lambda}( \Dlis^{[C_{\phi}]}(\Bun_{\G}, \Lambda)^{\omega} )^{B\I_{F}}$};
     \draw (0,-2.5) node [above] {} node{$\Rep_{\Lambda}(S_{\phi})^{B\I_F}$};
     \draw [->] (0,-0.5) -- (0,-2);
     \draw [->] (1.3,0) -- (3.9,0);
     \draw [->] (1.3,-2) -- (5,-0.5);
     \end{tikzpicture}
 \end{center}

By the same arguments as in \cite[Proposition 3.8]{GLn}, if we apply the above diagram to a Schur-irreducible sheaf $\mathcal{F}$ on $\Bun_{\G}$ whose $L$-parameter is given by $\phi$ then we can get precise information of the action of the Frobenius.

\begin{proposition} \phantomsection \label{itm : fundamental decomposition of Hecke operator}
Let $V \in \Rep_{\Lambda}(\widehat{\G}(\Lambda))$ and suppose that as a $S_{\phi} \times W_F$-representation, we can decompose $V$ into a direct sum
\[
V \simeq \bigoplus_{\chi \in \Irr(S_{\phi})} \chi \boxtimes \sigma_{\chi},
\]
where $\sigma_{\chi}$ is the $W_F$-representation $\Hom_{S_{\phi}}(\chi, V)$. Then for $\mathcal{F}$ a Schur-irreducible sheaf on $\Bun_{\G}$ whose $L$-parameter is given by $\phi$, we have 
\[
\T_V(\mathcal{F}) = \bigoplus_{\chi \in \Irr(S_{\phi})} C_{\chi} \star \mathcal{F} \boxtimes \sigma_{\chi}.
\]   
as object in $\Dlis^{[C_{\phi}]}((\Bun_{\G}, \Lambda)^{\omega})^{BW_F}$.
\end{proposition}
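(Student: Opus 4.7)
The plan is to upgrade the $\I_F$-equivariant decomposition furnished by the commutative diagram above to a full $W_F$-equivariant one. First, applying that diagram to $V$ and evaluating on the Schur-irreducible sheaf $\mathcal{F}$ yields an $\I_F$-equivariant isomorphism
\[
\T_V(\mathcal{F}) \simeq \bigoplus_{\chi \in \Irr(S_\phi)} (C_\chi \star \mathcal{F}) \boxtimes \sigma_{\chi \mid \I_F};
\]
this is automatic because every parameter in $[C_\phi]$ differs from $\phi$ by an unramified twist, so the $\I_F$-action on $C_V|_{[C_\phi]}$ is locally constant of value $\phi|_{\I_F}$.

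Next, I would check that the Frobenius cannot mix the summands indexed by different $\chi$. Since $C_\chi \otimes^{\mathbb{L}} C_{\chi^{-1}} \simeq C_{\Id}$, each $C_\chi \star (-)$ is an auto-equivalence of $\Dlis^{[C_\phi]}(\Bun_\G,\Lambda)^{\omega}$, hence $C_\chi \star \mathcal{F}$ is itself Schur-irreducible; moreover, the objects $C_\chi \star \mathcal{F}$ are pairwise non-isomorphic across distinct $\chi \in \Irr(S_\phi)$, being distinguished by the $S_\phi$-central character $\chi$ acting through the spectral action. Thus the Frobenius must act diagonally with respect to this decomposition, acting on the multiplicity space of $C_\chi \star \mathcal{F}$ by some $W_F$-representation $\tau_\chi$ whose restriction to $\I_F$ is $\sigma_{\chi \mid \I_F}$.

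The final step is to identify $\tau_\chi$ with $\sigma_\chi$ itself. For this I would compare with the fiber of $C_V$ at the point $\phi$: by construction of the tautological $W_F$-equivariant structure on $C_V$, this fiber is $V$ equipped with the $W_F$-action through $\phi$, which by hypothesis decomposes as $\bigoplus_\chi \chi \boxtimes \sigma_\chi$ as $S_\phi \times W_F$-representation. Schur-irreducibility of $\mathcal{F}$ lets me pass from the spectral action of $C_V$ on $\mathcal{F}$ to its fiber at $\phi$, since the endomorphism ring of $\mathcal{F}$ is $\Lambda$ and no further scalar ambiguity is possible; matching Frobenius actions on both sides then forces $\tau_\chi = \sigma_\chi$.

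The hard part is pinning down the Frobenius precisely rather than up to an unramified twist: a priori it could rescale each $C_\chi \star \mathcal{F}$ via a nontrivial regular function on the $\mathbb{G}_m^r$-factor of $[C_\phi]$, corresponding to moving $\phi$ by an unramified twist. Schur-irreducibility of $\mathcal{F}$ is exactly what rules this out and forces the Frobenius to act through the fiber of $C_V$ at the distinguished parameter $\phi$.
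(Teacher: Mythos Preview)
Your approach is different from the paper's. The paper argues by induction from the standard representation: one checks the decomposition directly for $V=\std$ (where $\std\circ\phi=\bigoplus_i\tilde\phi_i$ already exhibits the $S_\phi\times W_F$-splitting), then uses monoidality of Hecke operators to pass to tensor powers $\std^m\otimes(\std^\vee)^{m'}$, and finally observes that since $S_\phi\simeq\mathbb{G}_m^r$ is a torus, any algebraic $\widehat{\G}$-representation becomes a direct summand of such a tensor power upon restriction to $S_\phi$. Your route is more direct and does not pass through $\std$, but it has a gap in the middle step.

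The problem is your justification that Frobenius preserves the $\chi$-summands. You argue that the $C_\chi\star\mathcal{F}$ are Schur-irreducible and pairwise non-isomorphic, ``being distinguished by the $S_\phi$-central character $\chi$ acting through the spectral action.'' But there is no $S_\phi$-action on objects of $\Dlis(\Bun_\G,\Lambda)$ in the sense you invoke, and the pairwise non-isomorphism of the $C_\chi\star\mathcal{F}$ is precisely the content of Theorem~\ref{itm : main theorem I} and Proposition~\ref{itm : vanishing}, which lie downstream of this proposition; relying on it here would be circular. Even granting non-isomorphism, that alone would not force an automorphism of the direct sum to be block-diagonal, since non-isomorphic Schur-irreducible objects need not be orthogonal.

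The fix is to run the argument one level up, in $\Perf([C_\phi])$ rather than after applying to $\mathcal{F}$. Since $[C_\phi]\simeq[(\mathbb{G}_m^r\times\mu_\Lambda)/\mathbb{G}_m^r]$, morphisms in $\Perf([C_\phi])$ are $\mathbb{G}_m^r$-equivariant, so $\Hom_{\Perf([C_\phi])}(C_\chi,C_{\chi'})=0$ for $\chi\neq\chi'$. The $W_F$-equivariant structure on $C_V$ is a system of automorphisms in $\Perf([C_\phi])$, hence automatically preserves the isotypic decomposition $C_V|_{[C_\phi]}\simeq\bigoplus_\chi C_\chi^{\dim\sigma_\chi}$, and Frobenius restricts to an element of $\GL_{\dim\sigma_\chi}(\End(C_\chi))=\GL_{\dim\sigma_\chi}(A_\phi)$ on each block. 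Your third step is then correct: applying to the Schur-irreducible $\mathcal{F}$ sends $A_\phi\to\End(\mathcal{F})=\Lambda$ via $\mathrm{Ev}_\phi$, so the Frobenius action becomes the fiber of $C_V$ at $\phi$, which is $\sigma_\chi$ on the $\chi$-block. With this correction your direct argument goes through and is arguably cleaner than the paper's inductive one; the paper's route has the advantage of avoiding any explicit analysis of the $W_F$-structure on $C_V|_{[C_\phi]}$.
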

\begin{proof}
 Note that the category $\Rep_{\Lambda}(\widehat{\G}(\Lambda))$ is semisimple if $\Lambda = \overline{\Q}_{\ell}$ but it is no longer semisimple if $\Lambda = \overline{\F}_{\ell}$. However we can still apply the arguments in \cite[Proposition 3.8]{GLn} to this situation. 
 
 First of all, we can show that the conclusion of the proposition holds when $V \simeq \std$ is the standard representation. Then any irreducible algebraic representation $W$ of $\G(\Lambda)$ is a subquotient of $ \std^m \otimes (\std^{\vee})^{m'} $ for some $m, m' \in \N$. However $W_{| S_{\phi}}$ is a direct summand of $ \std^m \otimes (\std^{\vee})^{m'}_{| S_{\phi}} $ as $S_{\phi} \simeq \mathbb{G}^r_m $-representations. Then the result for $W$ follows from the monoidal property of the Hecke operators.   
\end{proof}
 
\section{Modular representations of inner forms of $\GL_n$}
\subsection{Generalities on modular representations of inner forms of $\GL_n$} \textbf{}

%\begin{itemize}
%    \item Difference between the notions of cuspidal and supercuspidal representations. 
%    \item Classification of irreducible representations by supercuspidal representations.
%    \item Construction of a progenerator of the supercuspidal block corresponding to $\pi$ : take projective cover of an irreducible sub-quotient of the restriction  $\pi_{G^0}$ and then take the compact induction.
%    \item Let $\sigma$ be an $\ell$-integral irreducible representation of $W_{\Q_p}$ of dimension $n$ then its reduction modulo $\ell$ is of the form $ V + \nu V + \dotsc \nu^{k-1}V $ for some $\overline{\F}_{\ell}$-representation of $W_{\Q_p}$ of dimension $ n/k $. Theorem 3.26 in \cite{MS}
%    \item Globalization of $\ell$-lattice (cf. \cite[lemmas 2.8, 2.9]{Han}) and Brauer-Nesbitt principle.
%\end{itemize}
Let $F$ be a finite extension of $\Q_p$ whose residue field is the finite field of $ q = p^d$ elements for some $d \in \N$. In this section, we recall some structure and classification results of the category of smooth representations with coefficients in $\Lambda \in \{\overline{\Q}_{\ell}, \overline{\F}_{\ell} \}$ of $\GL_n(F)$ and its inner forms, following the works \cite{ Tad90, Vig96, Vig98, MS14, SS16, DHKM24}.  

Let $D$ be some division algebra over $F$ of reduced degree $r$ and for each $m \in \N$, let $\G_m$ be the group $\GL_m(D)$, in particular $\G_m$ is an inner form of $\GL_{mr}$ over $F$. We denote by $\Rep_{\Lambda}(\G_m)$ the category of smooth representations of $\G_m$ with coefficients in $\Lambda$ and by $\Irr_{\Lambda}(\G_m)$ the set of isomorphism classes of irreducible representations of $\G_m$. An irreducible representation of $\G_m$ is cuspidal if it is not isomorphic to any quotient (or equivalently any sub-representation) of a parabolic induction of a representation of a proper Levi subgroup. It is supercuspidal if it is not isomorphic to any subquotient of a parabolic induction of an irreducible representation of a proper Levi subgroup. We remark that in the case $\Lambda = \overline{\F}_{\ell}$ and $\ell$ is not a banal prime, there exists irreducible representations that are cuspidal but are not supercuspidal. 

In \cite{MS14}, Minguez et Sécherre obtained two classifications of the smooth irreducible representations of $\G_m$: one by supercuspidal multisegments, classifying representations with a given supercuspidal support, and one by aperiodic multisegments, classifying representations with a given cuspidal support. For our goals, it is enough to recall the classification by supercuspidal multisegments.  

%\begin{enumerate}
 %   \item[$\bullet$] Representations associated with a segment and irreducible criterion.
%    \item[$\bullet$] Definition of multi-segments and supercuspidal multi-segments.
 %   \item[$\bullet$] La classification : definition 9.24, theorem 9.30, proposition 9.32. 
%\end{enumerate}

Let $\mathrm{N}_m$ be the reduced norm of $M_m(D)$ over $F$. We denote by $| \cdot |_{F}$ the normalised absolute value of $F$, in particular if $\omega$ is a uniformiser of $F$ then we have $|\omega|_F = q^{-1}$. Since $q$ is invertible in $\Lambda$, we have a character
\begin{align*}
    \nu : \G_m& \longrightarrow \Lambda \\
    g &\longmapsto | \mathrm{N}_m(g) |_F.
\end{align*}

Let $\rho$ be an irreducible cuspidal representation of $\G_m$ and let $\chi$ be an unramified character of $\G_m$. By \cite[Proposition 4.37]{MS}, there exists a unique positive integer number $s(\rho)$ such that the normalised parabolic induction $\Ind_{\rP}^{\G_{2m}}( \rho \times \chi\rho)$ is reducible if and only if $\chi = \nu^{s(\rho)}$ or $\chi = \nu^{-s(\rho)}$ and where $\rP$ is a parabolic subgroup of $\G_{2m}$ whose Levi factor is $\G_m \times \G_m$. By \cite[Proposition 3.13]{MS}, the number $s(\rho)$ depends only on the inertial class of $\rho$. We denote the character $\nu^{s(\rho)}$ by $\nu_{\rho}$.

\begin{definition}
\begin{enumerate}
    \item A segment is a finite sequence of the form 
    \[
    [a, b]_{\rho} := (\rho\nu^a_{\rho}, \rho\nu^{a+1}_{\rho}, \dotsc, \rho\nu^b_{\rho} ),
    \]
    where $ a \leq b $ are integers and $\rho$ is an irreducible cuspidal representation of some $\G_h$.
    \item Two segments $[a, b]_{\rho}$ and $[a', b']_{\rho'}$ are equivalent if they have the same length $n$ and for each $i \in \{ 0, 1, \dotsc, n-1 \}$ the representations $\rho\nu_{\rho}^{a+i}$ and $\rho'\nu_{\rho'}^{a'+i}$ are isomorphic. 
    \item The degree of the segment $[a, b]_{\rho}$ is the number $(b-a+1)h$ and its support is the set $\{ \rho\nu^a_{\rho}, \dotsc, \rho\nu^b_{\rho} \}$.
\end{enumerate}    
\end{definition}
For each segment $ [a, b]_{\rho} = (\rho\nu^a_{\rho}, \rho\nu^{a+1}_{\rho}, \dotsc, \rho\nu^b_{\rho} )$, one denotes by $\rZ ([a, b]_{\rho})$ the sub-representation of $\Ind_{\rP}^{\G_{m(b-a+1)}} ( \rho\nu^a_{\rho} \times \rho\nu^{a+1}_{\rho} \times \dotsc \times \rho\nu^b_{\rho} ) $ defined in \cite[Definition 7.5]{MS14}, where $\rP$ is a parabolic subgroup of $\G_{m(b-a+1)}$ whose Levi factor is given by $ (\G_m)^{b-a+1} $.
\begin{definition}
\begin{enumerate}
    \item A multisegment is a multi-set of equivalent classes of segments, that is to say an element of $\N(\mathrm{Seg})$ where $\mathrm{Seg}$ denotes the set of equivalent classes of segments. In other words, a multisegment is a map $\mathfrak{m}$ from the set of equivalence classes of segments to $\N$, with finite support.  
    \item We can define the notions of support and degree of a multisegment $\mathfrak{m}$ as above. A multisegment is called supercuspidal if its support is formed by supercuspidal representations. We denote by $\Mult$, respectively by $\Mult_m$ the set of supercuspidal multisegments, respectively its subset of supercuspidal multisegments of degree $m$.
\end{enumerate}
\end{definition}
For each multi-segment $\mathfrak{m} = \Delta_1 + \Delta_2 + \dotsc + \Delta_r$ where the $\Delta_i$'s are segments (not necessarily non-equivalent), we consider the normalised parabolic induction $ \I(\mathfrak{m}) := \Ind_{\rP}^{\G}(\rZ(\Delta_1) \times \dotsc \times \rZ(\Delta_r))$ and we denote by $\rZ(\mathfrak{m})$ the unique sub-quotient of $\I(\mathfrak{m})$ satisfying some properties as in \cite[Definition 9.24]{MS14}. When $\Lambda = \overline{\Q}_{\ell}$, the representation $ \rZ(\mathfrak{m}) $ is a generalized Speh representation. If for all $ 1 \leq i \neq j \leq r $ the segments $\Delta_i$ and $\Delta_j$ are not linked in the sense of \cite[Definition 7.3]{MS14} then by \cite[Theorem 7.24]{MS14}, the representation $\I(\mathfrak{m})$ is irreducible and hence $ \rZ(\mathfrak{m}) = \I(\mathfrak{m})$. 

Denote by $\Irr(\G_m)$ the isomorphism classes of smooth irreducible representations of $\G_m$, we have the following classification result.
\begin{theorem} (\cite[Proposition 9.32]{MS14})
    The map
    \begin{align*}
        \rZ : \Mult_m &\longrightarrow \Irr(\G_m) \\
        \mathfrak{m} &\longmapsto \rZ(\mathfrak{m}), 
    \end{align*}
    is a bijection.
\end{theorem}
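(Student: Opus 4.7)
The plan is to adapt the Zelevinsky classification to the modular inner-form setting. The supercuspidal support of an irreducible representation is a well-defined multi-set of supercuspidals (up to permutation) in the modular setting, giving a block decomposition $\Rep_\Lambda(\G_m) = \bigoplus_S \Rep_\Lambda(\G_m)_S$ indexed by such multi-sets $S$. Similarly, $\Mult_m$ decomposes as a disjoint union over $S$ of the sets $\Mult_m^S$ of multisegments with support $S$. Since $\rZ(\mathfrak{m})$ is constructed as a subquotient of the parabolic induction of the $\rZ(\Delta_i)$ and manifestly has the same supercuspidal support as $\mathfrak{m}$, it suffices to fix $S$ and prove the bijection between $\Mult_m^S$ and the irreducible objects of $\Rep_\Lambda(\G_m)_S$.

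For injectivity, I would equip $\Mult_m^S$ with the partial order in which $\mathfrak{m}' \leq \mathfrak{m}$ when $\mathfrak{m}'$ is obtained from $\mathfrak{m}$ by iterated elementary operations replacing a pair of linked segments $(\Delta_i, \Delta_j)$ by their union and intersection. The defining property of $\rZ(\mathfrak{m})$ from Definition 9.24 of MS14 asserts that it appears with multiplicity one in $\I(\mathfrak{m})$, and that any other irreducible constituent of $\I(\mathfrak{m})$ is of the form $\rZ(\mathfrak{m}')$ with $\mathfrak{m}' < \mathfrak{m}$. The resulting transition matrix between $\{[\I(\mathfrak{m})]\}$ and $\{[\rZ(\mathfrak{m})]\}$ in the Grothendieck group of $\Rep_\Lambda(\G_m)_S$ is upper unitriangular, so the classes $[\rZ(\mathfrak{m})]$ are linearly independent and in particular the $\rZ(\mathfrak{m})$ are pairwise non-isomorphic. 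For surjectivity, I would proceed by induction on $m$: given $\pi \in \Irr(\G_m)$ with supercuspidal support $S$, a non-zero Jacquet module along a well-chosen maximal parabolic produces a non-zero map from $\pi$ into some $\Ind_\rP^{\G_m}(\sigma \times \rho \nu_\rho^b)$ where $\rho \nu_\rho^b$ is a terminal element of a maximal segment of the yet-to-be-constructed multisegment; peeling off this end and iterating produces $\mathfrak{m} \in \Mult_m^S$ such that $\pi$ is a subquotient of $\I(\mathfrak{m})$, and by the characterization of $\rZ(\mathfrak{m})$ one concludes $\pi \simeq \rZ(\mathfrak{m})$. Alternatively, once injectivity is known, one can invoke a finiteness and counting argument: both sides are finite of the same cardinality once $S$ is fixed, so any injection is a bijection.

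The main obstacle is the non-semisimplicity of $\Rep_\Lambda(\G_m)$ in the modular setting $\Lambda = \overline{\F}_\ell$ when $\ell$ is non-banal. Three technical issues arise: (a) the reducibility point $s(\rho)$, and hence the segment and linkage structure, depends genuinely on $\ell$ and must be analyzed via Bushnell--Kutzko-type integral types together with the Jacquet--Langlands-like transfer for inner forms; (b) the cuspidal support is no longer a fine enough invariant to separate irreducibles, which forces one to work with the strictly stronger supercuspidal support and to carefully distinguish cuspidal from supercuspidal representations; (c) multiplicity-one phenomena in parabolic inductions can fail, so the triangularity underlying injectivity must be established with respect to the partial order above, which itself requires a delicate analysis of the composition factors of $\I(\mathfrak{m})$ via the geometric lemma. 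These are precisely the issues addressed by the machinery developed in Mínguez--Sécherre.
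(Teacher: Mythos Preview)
The paper does not give a proof of this statement: it is simply quoted from \cite[Proposition~9.32]{MS14} as background, with no argument supplied. So there is nothing in the paper to compare your proposal against.

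That said, your outline is a faithful sketch of the strategy actually used by M\'inguez--S\'echerre: reduce to a fixed supercuspidal support, prove injectivity via the unitriangularity of the change-of-basis matrix between the $[\I(\mathfrak{m})]$ and the $[\rZ(\mathfrak{m})]$ with respect to the elementary-operation partial order, and obtain surjectivity by an inductive ``peeling off'' of segments (or a counting argument). Your identification of the modular-specific obstacles (non-semisimplicity, the cuspidal/supercuspidal distinction, and the $\ell$-dependent reducibility points) is also accurate. One small caveat: the block decomposition you invoke at the start is by \emph{inertial} supercuspidal class, not by individual supercuspidal support $S$; the latter is a finer partition of irreducibles but not a decomposition of the category. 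This does not affect the argument, since the triangularity and counting happen at the level of Grothendieck groups where the refinement by $S$ is harmless.
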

Next, we recall the decomposition of the category $\Rep_{\Lambda}(\G_m)$ into blocks. A supercuspidal pair of a reductive group $\G_m$ is a pair $(\M, \rho)$ consisting of a Levi subgroup $\M$ of $\G_m$ and a supercuspidal representation $\rho$ of $\M$. Let $\pi$ be an irreducible representation of $\G_m$, the supercuspidal support of $\pi$ is the set of supercuspidal pairs $(\M, \rho)$ of $\G_m$ such that $\pi$ is a subquotient of the induction $\Ind_{\rP}^{\G_m}(\rho)$ for some parabolic subgroup $\rP$ whose Levis factor is $\M$. By \cite[Theorem 8.16]{MS14}, the supercuspidal support of $\pi$ consists of a single $\G_m$-conjugacy class of supercuspidal pair of $\G_m$. We remark that when $\Lambda = \overline{\F}_{\ell}$, the analogue result on supercuspidal support is not known for general reductive group $\G$.
\begin{definition}
\begin{enumerate}
    \item The inertial class of a supercuspidal pair $(\M, \rho)$ of $\G_m$ is the set, denoted by $[M, \rho]_{\G_m}$, of all supercuspidal pairs $(\M', \rho')$ that are $\G_m$-conjugate to $(\M, \chi\rho)$ for some unramified character $\chi$ of $\M$.
    \item Given $\mathfrak{s}$ an inertial class of a supercuspidal pair of $\G_m$, we denote by $\Rep_{\Lambda}(\mathfrak{s})$ the full subcategory of representations whose irreducible subquotients have their supercuspidal support in $\mathfrak{s}$. 
\end{enumerate}
\end{definition}
\begin{theorem} (\cite[Theorems 10.4, 10.5]{SS16}) \phantomsection \label{itm : decomposition into blocks}
    The category $\Rep_{\Lambda}(\G_m)$ decomposes into the product of indecomposable subcategries $\Rep_{\Lambda}(\mathfrak{s})$ where $\mathfrak{s}$ ranges over all the possible inertial supercuspidal class of supercuspidal pairs of $\G_m$.
\end{theorem}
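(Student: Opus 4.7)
The plan follows the strategy of Sécherre--Stevens. The point of departure, already recalled above from [MS14, Theorem 8.16], is that every irreducible smooth $\Lambda$-representation of $\G_m$ has a well-defined supercuspidal support as a $\G_m$-conjugacy class of supercuspidal pairs, and hence a well-defined inertial supercuspidal class $\mathfrak{s}$. This partitions $\Irr_\Lambda(\G_m)$, and the goal is to upgrade this partition of simple objects to a decomposition of the full abelian category and then verify indecomposability of each piece. Since every smooth representation is a filtered colimit of finitely generated ones, it suffices to produce, for each $\mathfrak{s}$, a functorial projection $\Rep_\Lambda(\G_m) \twoheadrightarrow \Rep_\Lambda(\mathfrak{s})$ and to check that these projections are mutually orthogonal and sum to the identity.

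The main step is the construction, for each inertial class $\mathfrak{s} = [\M, \rho]_{\G_m}$, of a semisimple type $(J_\mathfrak{s}, \lambda_\mathfrak{s})$ consisting of a compact open subgroup and an irreducible smooth $\Lambda$-representation on it, such that the compact induction $\cInd_{J_\mathfrak{s}}^{\G_m} \lambda_\mathfrak{s}$ is a projective generator of the candidate block $\Rep_\Lambda(\mathfrak{s})$. Once such types are available, they furnish orthogonal idempotents in an appropriate subalgebra of the Bernstein-style center, producing the desired projections and showing that the irreducible subquotients lying in $\mathfrak{s}$ assemble into a Serre subcategory stable under $\Ext^1$. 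The indecomposability of each $\Rep_\Lambda(\mathfrak{s})$ would then follow from the equivalence of this category with the module category of the Hecke algebra $\mathcal{H}(\G_m, \lambda_\mathfrak{s})$ induced by the type, together with the explicit description of this Hecke algebra as an affine Hecke algebra of type $\widetilde{A}$ with invertible parameters, which is well-known to be a connected ring.

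The main obstacle is the construction and exhaustiveness of the semisimple types in the modular setting. In the $\ell$-adic case they are furnished by the Bushnell--Kutzko--Sécherre--Stevens theory of simple and semisimple characters on inner forms of $\GL_n$. In the modular case $\Lambda = \overline{\F}_\ell$ one transfers this theory via a reduction-mod-$\ell$ procedure, made feasible by the fact that the relevant pro-$p$ subgroups $J_\mathfrak{s}^1$ are unchanged and that the Heisenberg and finite-reductive pieces of the types lift and reduce well since $\ell \neq p$. The delicate point, and the true technical heart of [SS16], is verifying that after reduction the resulting types still exhaust all modular inertial classes and that the orthogonality of their associated idempotents is preserved; in particular one must accommodate the possibility that cuspidal and supercuspidal notions diverge in the non-banal setting, so that the supercuspidal support — rather than merely the cuspidal support — is what controls the block decomposition.
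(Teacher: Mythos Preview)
The paper does not give its own proof of this theorem: it is stated with a citation to \cite[Theorems 10.4, 10.5]{SS16} and no proof environment follows. It is used as an input, not established here. So there is nothing in the paper to compare your argument against.

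That said, your outline is a faithful high-level summary of the strategy actually used by S\'echerre--Stevens: construct semisimple types $(J_{\mathfrak{s}},\lambda_{\mathfrak{s}})$ for each inertial supercuspidal class, show that $\cInd_{J_{\mathfrak{s}}}^{\G_m}\lambda_{\mathfrak{s}}$ is a finitely generated projective generator of $\Rep_\Lambda(\mathfrak{s})$, deduce the block decomposition from the orthogonality of the associated idempotents, and establish indecomposability via the explicit Hecke algebra description. You correctly identify the genuine technical content as living in the exhaustiveness and orthogonality of the modular types, and in handling the cuspidal/supercuspidal distinction in the non-banal case. As a proof \emph{sketch} this is fine; as a self-contained proof it would of course need the full type-theoretic apparatus of \cite{SS16}, which is well beyond what can be reproduced here. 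For the purposes of the present paper, simply citing the result (as the author does) is the appropriate move.
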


Remark that if $\Lambda = \ov \F_{\ell}$ where $\ell$ is a banal prime for $\G_m$ then a general decomposition of $\Rep_{\Lambda}(\G_m)$ into blocks is obtained in \cite[section 4]{DHKM24}.

\subsection{Fargues-Scholze correspondence}   

Let $\pi$ be an irreducible representation of $\G_m$ with coefficient in $\Lambda \in \{ \overline{\Q}_{\ell}, \overline{\F}_{\ell} \} $. By using the excursion operators, Fargues and Scholze constructed a unique semisimple continuous map 
\[
\phi^{\mathrm{FS}}_{\pi}: W_{\mathbb{Q}_{p}} \rightarrow \phantom{}^{L}\G(\Lambda), 
\]
which is the Fargues--Scholze parameter of $\pi$. They show that their correspondence has various good properties which we will invoke throughout.
\begin{theorem}{\cite[Theorem~I.9.6]{FS}}{\label{FSproperties}}
The mapping defined above 
\[ \pi \mapsto \phi^{\mathrm{FS}}_{\pi} \]
enjoys the following properties:
\begin{enumerate}
    \item The correspondance is compatible with reduction mod $\ell$. More precisely, let $\ov \pi$ be an irreducible $\overline{\F}_{\ell}$-representation of $\G(\Q_p)$ and let $\pi$ be an irreducible $\overline{\Q}_{\ell}$-representation of $\G(\Q_p)$ admitting a $\G(\Q_p)$-stable $\overline{\Z}_{\ell}$-lattice such that $\ov \pi$ occurs as a subquotient of $\pi$ mod $\ell$. Then the Fargues-Scholze parameter $\phi_{\pi}^{\rm FS}$ factors through $ \phantom{}^{L}\G(\overline{\Z}_{\ell})$ and its reduction modulo $\ell$ is equal to $\phi^{\rm FS}_{\ov \pi}$.
    \item The correspondence is compatible with character twists, passage to contragredients, and central characters.
    \item (Compatibility with products) Given two irreducible representations $\pi_{1}$ and $\pi_{2}$ of two connected reductive groups $\G_{1}(\Q_p)$ and $\G_{2}(\Q_p)$ over $\mathbb{Q}_{p}$, respectively, we have
    \[ \pi_{1} \boxtimes \pi_{2} \mapsto \phi^{\mathrm{FS}}_{\pi_{1}} \times \phi^{\mathrm{FS}}_{\pi_{2}}\]
    under the Fargues--Scholze local Langlands correspondence for $\G_{1}(\Q_p) \times \G_{2}(\Q_p)$. 
    \item (Compatibility with parabolic induction) Given a parabolic subgroup $P \subset \G$ with Levi factor $M$ and a representation $\pi_{M}$ of $M$, then the Weil parameter corresponding to any sub-quotient of the (normalized) parabolic induction $I_{P}^{\G}(\pi_{M})$ is the composition
    \[ W_{\mathbb{Q}_{p}}\xrightarrow{\phi^{\mathrm{FS}}_{\pi_{M}}} \\  ^{L}M(\overline{\mathbb{Q}}_{\ell}) \rightarrow ^{L}\G(\overline{\mathbb{Q}}_{\ell}) \]
    where the map $\phantom{}^{L}M(\overline{\mathbb{Q}}_{\ell}) \rightarrow \phantom{}^{L}\G(\overline{\mathbb{Q}}_{\ell})$ is the natural embedding. 
\end{enumerate}
If $\Lambda = \overline{\Q}_{\ell}$ then we have the following properties:
\begin{enumerate}
    \item[(5)] (Compatibility with Local Class Field Theory) If $\G = T$ is a torus, then $\pi \mapsto \phi_{\pi}$ is the usual local Langlands correspondence 
    \item[(6)] (Compatibility with Harris--Taylor/Henniart LLC)
    For $\G = \GL_{n}$ or an inner form of $\G$ the Weil parameter associated to $\pi$ is the (semi-simplified) parameter $\phi_{\pi}$ associated to $\pi$ by Harris--Taylor/Henniart \cite[Theorem 6.6.1]{HKW}.
\end{enumerate}
\end{theorem}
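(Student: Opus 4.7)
The statement is cited from \cite[Theorem I.9.6]{FS}, so the plan is to recall the construction of $\phi^{\mathrm{FS}}_\pi$ via excursion operators and then derive each property as a formal consequence of natural compatibilities of this construction. Recall that $\phi^{\mathrm{FS}}_\pi$ is characterized as the unique (semisimple) continuous $L$-parameter such that, for every excursion datum $(I, V, \alpha, \beta, (\gamma_i)_{i \in I})$ with $V \in \Rep_\Lambda(\widehat{\G}^I)$ and $\gamma_i \in W_{\Q_p}$, the excursion operator $S_{I,V,\alpha,\beta,(\gamma_i)}$ acts on $\pi$ (viewed as $i_1^* j^*$ of the corresponding sheaf on $\Bun_{\G}$) by the scalar obtained from applying $\alpha, \beta, \gamma_i$ to $V \circ \phi^{\mathrm{FS}}_\pi$. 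All items then reduce to checking that the excursion algebra behaves functorially in the expected way.

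For property (1), the Hecke stack, the Satake category, and hence the formation of excursion operators commute with base change $\overline{\Z}_\ell \to \overline{\F}_\ell$; a $\G(\Q_p)$-stable lattice in $\pi$ gives a $\overline{\Z}_\ell$-valued system of eigenvalues for the excursion algebra whose reduction mod $\ell$ records the eigensystem on any subquotient of $\ov{\pi}$. The integrality assertion on $\phi^{\mathrm{FS}}_\pi$ then follows because the excursion characters are integral, and uniqueness of semisimple parameters with prescribed excursion characters yields the identification of reductions. For property (2), twisting by a character $\chi$ corresponds on the spectral side to translating $\phi$ by $\chi$, which at the level of excursion operators is implemented by the evident symmetric monoidal automorphism of $\Rep_\Lambda(\widehat{\G}^I)$; contragredients are handled by the duality involution on $\Rep_\Lambda(\widehat{\G})$ together with Verdier duality on $\Bun_{\G}$; central characters are recovered from the restriction of $\phi^{\mathrm{FS}}_\pi$ to $Z(\widehat{\G})$, which is read off from excursion data supported on $I = \{*\}$ with $V$ a character. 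Property (3) follows because $\Bun_{\G_1 \times \G_2} = \Bun_{\G_1} \times \Bun_{\G_2}$ and the Satake equivalence is compatible with products, so the excursion operators factor as an external tensor product, forcing the Fargues--Scholze parameter to factor accordingly.

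Property (4) is the most substantive compatibility and the main obstacle. The plan is to use the constant term (parabolic restriction) functor along $\rP$, which on $\Bun_{\G}$ is realized by the diagram $\Bun_{\M} \leftarrow \Bun_{\rP} \to \Bun_{\G}$ studied in \cite[Chapter VII]{FS}. One shows that parabolic induction on the automorphic side intertwines with the natural embedding $^{L}\M \hookrightarrow {}^{L}\G$ on the spectral side, i.e.\ excursion operators for $\G$ built from $V \in \Rep_\Lambda(\widehat{\G})$ act on a subquotient of $\Ind_{\rP}^{\G}(\pi_M)$ via the eigenvalues of the corresponding excursion operators for $\M$ built from $V|_{\widehat{\M}}$. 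Concretely, this is deduced from the compatibility of the geometric Satake functor with the constant-term functors (a Hecke-equivariance statement), combined with the fact that cuspidal support on $\Bun_{\G}$ can be tracked through the semi-orthogonal decomposition. From uniqueness of the semisimple parameter satisfying the excursion identities, the parameter of any subquotient of $\Ind_{\rP}^{\G}(\pi_M)$ must equal the composition $W_{\Q_p} \xrightarrow{\phi^{\mathrm{FS}}_{\pi_M}} {}^{L}\M \to {}^{L}\G$.

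For property (5), when $\G = \T$ is a torus the stack $\Bun_{\T}$ is a disjoint union of $[\ast / \T(\Q_p)]$'s indexed by $\pi_1(\T)_\Gamma$, and excursion operators reduce to scalar actions that can be computed directly; matching them against the characters of $\T(\Q_p)$ via $\phi^{\mathrm{FS}}_\pi \mapsto \phi^{\mathrm{FS}}_\pi|_{W_{\Q_p}^{\mathrm{ab}}}$ recovers the classical local class field theory isomorphism. For property (6), one reduces via (3), (4), and Jacquet--Langlands compatibility to the case of a supercuspidal representation $\pi$ of $\GL_n(\Q_p)$. Here the strategy of \cite{FS} is to globalize: embed $\pi$ as the local component at $p$ of a cuspidal automorphic representation $\Pi$ of a suitable unitary group whose Galois representation $\rho_\Pi$ is known (by Harris--Taylor, Shin, Caraiani--Scholze, etc.), then use compatibility of the Fargues--Scholze construction with the cohomology of Shimura varieties to identify $\phi^{\mathrm{FS}}_\pi$ with the local component of $\rho_\Pi$ at $p$. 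This matches Harris--Taylor/Henniart's parametrization by its characterization via compatible systems and strong multiplicity one, completing the identification up to semisimplification.
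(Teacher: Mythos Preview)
The paper does not give a proof of this theorem: it is stated with the citation \cite[Theorem~I.9.6]{FS} (and \cite[Theorem~6.6.1]{HKW} for item (6)) and is used as a black box throughout. There is therefore no ``paper's own proof'' to compare against; your sketch goes well beyond what the paper itself does.

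That said, a couple of remarks on your sketch. Your outline for items (1)--(5) is a reasonable high-level summary of how these properties emerge from the excursion-operator formalism in \cite{FS}. For item (6), however, note that the paper explicitly attributes this compatibility to \cite{HKW} rather than to \cite{FS} alone; the argument in \cite{HKW} is local and proceeds via the Kottwitz conjecture for basic local Shimura varieties, not by globalizing to a unitary Shimura variety and invoking compatible systems as you suggest. Your proposed globalization route is closer in spirit to older approaches to local-global compatibility and would require additional input (e.g.\ existence of suitable globalizations with prescribed local behavior) that is not part of the Fargues--Scholze framework. If you intend to expand this into an actual argument rather than a pointer to the literature, you should either follow the local strategy of \cite{HKW} or make the globalization step precise.
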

Remark that Vignéras and Dat showed that there exists a bijection $\ov \pi \longmapsto \sigma( \ov \pi)$ between the isomorphism classes of supercuspidal $\overline{\F}_{\ell}$-representations of $\GL_n(F)$ and the isomorphism classes of irreducible $\overline{\F}_{\ell}$-representations of $W_F$ of dimension $n$ which is compatible with the reduction modulo $\ell$ of the local Langlands correspondance. Hence, by the properties $(1)$ and $(6)$ of the above theorem, we deduce that $ \sigma(\ov \pi) = \phi_{\ov \pi}^{\rm FS} $ for supercuspidal irreducible $\overline{\F}_{\ell}$-representation $\pi$.

\begin{lemma} \phantomsection \label{itm : existence, irreducible}
    Let $\ov \phi$ be an $L$-parameter with coefficient in $\Lambda$ of $\G := \Res_{F/ \Q_p}(\GL_{n, F})$ whose corresponding $L$-parameter of $\GL_{n, F}$ is irreducible and let $\G_m$ be an inner form of $\G$. Then there exists a unique irreducible representation $\ov \pi^{\G_m}_{\ov \phi}$ in $\Rep_{\Lambda}(\G_m(\Q_p))$ whose Fargues-Scholze parameter is given by $\ov \phi$.
\end{lemma}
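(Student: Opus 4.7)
The plan is to reduce, via the compatibility of the Fargues--Scholze correspondence with Weil restriction of scalars \cite[Section~IX.6.3]{FS}, to the case of $\GL_m(D)$ over $F$, where $D$ is a central division algebra over $F$ of reduced degree $r$ with $mr=n$. The key observation is that any irreducible $\Lambda$-representation $\ov\pi$ of $\GL_m(D)$ with FS parameter $\ov\phi$ must be supercuspidal: if $\ov\pi$ were a sub-quotient of $\Ind_\rP^{\GL_m(D)}(\sigma)$ for some proper parabolic $\rP$ with Levi factor $\M$, then by Theorem~\ref{FSproperties}(4) the parameter $\ov\phi$ would factor through $\LM$, a proper Levi of $\widehat{\GL_{n,F}}$, forcing $\ov\phi$ to be reducible as a $W_F$-representation. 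This allows one to argue entirely within supercuspidals.

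For existence, treat $\Lambda=\ov\Q_\ell$ first: by Harris--Taylor/Henniart, $\ov\phi$ corresponds to a supercuspidal $\pi_0$ of $\GL_n(F)$, and Jacquet--Langlands transfers $\pi_0$ to an irreducible representation $\pi$ of $\GL_m(D)$. Theorem~\ref{FSproperties}(6), combined with the compatibility of the Fargues--Scholze correspondence with Jacquet--Langlands for inner forms of $\GL_n$, then gives $\phi^{\FS}_{\pi}=\ov\phi$. For $\Lambda=\ov\F_\ell$, first lift $\ov\phi$ to an irreducible integral $\ov\Q_\ell$-parameter $\phi$ of $W_F$ (standard Brauer--Nesbitt and Teichm\"uller-lift arguments suffice since $\ov\phi$ is irreducible), apply the characteristic-zero case to obtain $\pi$, choose a $\GL_m(D)$-stable $\ov\Z_\ell$-lattice in $\pi$, and reduce modulo $\ell$. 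Any irreducible sub-quotient $\ov\pi$ of the reduction has FS parameter equal to the reduction of $\phi$ by Theorem~\ref{FSproperties}(1), which is $\ov\phi$. Uniqueness in both cases follows from the Vign\'eras--Dat bijection recalled just before the statement for $\GL_n(F)$, extended to inner forms via the (modular) Jacquet--Langlands correspondence of Badulescu and M\'inguez--S\'echerre, together with the supercuspidality constraint established in the first paragraph.

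The main obstacle is the compatibility of the Fargues--Scholze correspondence with the Jacquet--Langlands transfer for inner forms of $\GL_n$, which is not included in Theorem~\ref{FSproperties} as stated. This can be addressed either by invoking recent work extending Theorem~\ref{FSproperties}(6) to inner forms via the cohomology of basic local Shimura varieties, or, in the spirit of the present paper, via the geometric realization of Jacquet--Langlands on the basic stratum of $\Bun_{\G}$ that underlies the spectral action computations carried out throughout.
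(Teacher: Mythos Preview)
Your existence argument is close to the paper's, though you lift the parameter whereas the paper lifts the representation (via \cite[Theorem~3.26]{MS}) and then shows the Jacquet--Langlands transfer to $\G_m$ is $\ell$-integral by an unramified twist trick using \cite[Proposition~2.9]{Han}. Note also that your concern about compatibility of the Fargues--Scholze correspondence with Jacquet--Langlands in characteristic zero is misplaced: Theorem~\ref{FSproperties}(6) as stated in the paper already covers inner forms of $\GL_n$ (via \cite[Theorem~6.6.1]{HKW}).

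The genuine gap is in your uniqueness argument for $\Lambda=\ov\F_\ell$. You invoke the modular Jacquet--Langlands of M\'inguez--S\'echerre to transport the Vign\'eras--Dat bijection to the inner form, but this only gives uniqueness for the \emph{Fargues--Scholze} parameter if you already know that modular Jacquet--Langlands is compatible with the Fargues--Scholze map on the inner form side---which is precisely what this lemma is a step toward. The paper avoids this circularity entirely: it argues by contradiction using the spectral action. If $\ov\pi_1\not\simeq\ov\pi_2$ on $\G_m\simeq\G_{b_\chi}$ both have parameter $\ov\phi$, then since they are supercuspidal one has $\Hom(i_{b_\chi!}\ov\pi_1[m_1],i_{b_\chi!}\ov\pi_2[m_2])=0$ for all $m_1,m_2$. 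The vector bundle $C_{\chi^{-1}}$ on $[C_{\ov\phi}]$ acts as an auto-equivalence of $\Dlis^{[C_{\ov\phi}]}(\Bun_\G,\Lambda)^\omega$ carrying the stratum $b_\chi$ to the stratum $1$, so $C_{\chi^{-1}}\star i_{b_\chi!}\ov\pi_1$ and $C_{\chi^{-1}}\star i_{b_\chi!}\ov\pi_2$ are two nonzero compact ULA objects on $\Bun_\G^1$ with no $\Hom$'s between them in any degree and with all Schur-irreducible constituents having parameter $\ov\phi$. This contradicts the already-known uniqueness on $\G=\GL_n$ due to Vign\'eras--Dat. This categorical reduction is the key idea you are missing, and it is exactly the mechanism---spectral action transporting between inner forms via basic strata---that the rest of the paper exploits.
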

\begin{proof}
    The conclusion of the lemma is clear if $\Lambda = \ov \Q_{\ell}$ by the local Langlands correspondance for $\GL_n$ and its inner forms. We consider the case $\Lambda = \ov \F_{\ell}$.

    By the results of Vignéras and Dat \cite[Theorem 1]{Dat12}, there exists a unique representation $\ov\pi^{\G}_{\ov\phi}$ in $\Rep_{\ov \F_{\ell}}(\G(\Q_p))$ whose Fargues-Scholze parameter is given by $\ov\phi$. Then by \cite[Theorem 3.26]{MS}, we can lift this representation to an $\ell$-integral irreducible supercuspidal representation $\pi^{\G}$ in $\Rep_{\ov \Q_{\ell}}(\G(\Q_p))$. Thus if we denote by $\phi$ the Fargues-Scholze parameter of $\pi^{\G}$ then by the first point of the theorem \ref{FSproperties}, the parameter $\phi$ is $\ell$-integral and its reduction modulo $\ell$ is given by $\ov \phi$.

    Now let $\pi^{\G_m}$ be the irreducible supercuspidal representation in $\Rep_{\ov \Q_{\ell}}(\G_m(\Q_p))$ whose Fargues-Scholze parameter is given by $\phi$. We want to show that this representation is $\ell$-integral and its reduction modulo $\ell$ is then an irreducible representation in $\Rep_{\ov \F_{\ell}}(\G_m(\Q_p))$ whose Fargues-Scholze parameter is given by $\ov\phi$.

    Indeed, by \cite[Proposition 2.9]{Han}, some unramified twist of $\pi^{\G_m}$ is $\ell$-integral and then by theorem \ref{FSproperties} again, its $L$-parameter is given by $\phi \otimes \chi $ and is $\ell$-integral for some unramified character $\chi$. Since $\phi \otimes \chi$ and $\phi$ are both $\ell$-integral, we deduce that $\chi$ takes values in $\ov \Z^{\times}_{\ell}$. Hence $\pi^{\G_m}$ is $\ell$-integral. 

    Now, if there are $2$ non-isomorphic irreducible representations $\ov\pi_1$ and $\ov\pi_2$ in $\Rep_{\ov \F_{\ell}}(\G_m(\Q_p))$ whose Fargues-Scholze parameter is given by $\ov\phi$ then they are supercuspidal by the compatibility of Fargues-Scholze construction with sub-quotients of parabolic inductions. The connected component of the stack of $L$-parameter containing $\ov\phi$ is isomorphic to 
    \[
    [C_{\ov\phi}] \simeq [(\mathbb{G}_{m} \times \mu_{\Lambda}) / \mathbb{G}_m],
    \]
    and for each character $\chi \in \Irr(\Gm) \simeq \Z$, we have a vector bundle $C_{\chi}$ on $[C_{\ov\phi}]$ and $ C_{\chi} \star (-) $ is an auto-equivalence of $\Dlis^{[C_{\ov\phi}]}(\Bun_{\G}, \Lambda)^{\omega}$. Moreover, we have a decomposition 
    \[
    \Dlis^{[C_{\ov\phi}]}(\Bun_{\G}, \Lambda)^{\omega} = \bigoplus_{\chi \in \Irr(\Gm)} \Dlis^{[C_{\ov\phi}]}(\Bun^{b_{\chi}}_{\G}, \Lambda)^{\omega},
    \]
    where $\Bun^{b_{\chi}}_{\G}$ is the open sub-stack of $\Bun_{\G}$ corresponding to the basic element $b_{\chi}$ index by $\chi$. We can suppose that $\G_{b_{\chi}}(\Q_p) \simeq \G_m(\Q_p)$ for a fixed $\chi$.
    
    Since $\ov\pi_1$ and $\ov\pi_2$ are supercuspidal, we have $ \Hom_{\Dlis(\Bun_{\G}, \Lambda)}(i_{b_{\chi}!}\ov\pi_1[m_1], i_{b_{\chi}!}\ov\pi_2[m_2]) = 0 $ for every $m_1, m_2 \in \Z$. Thus $ \Hom_{\Dlis(\Bun_{\G}, \Lambda)}( C_{\chi^{-1}} \star i_{b_{\chi}!}\ov\pi_1[m_1], C_{\chi^{-1}} \star i_{b_{\chi}!}\ov\pi_2[m_2] ) = 0 $ for every $m_1, m_2 \in \Z$.

    However, $C_{\chi^{-1}} \star i_{b_{\chi}!}\ov\pi_i$ (for $i=1, 2$) are sheaves supported on the stratum corresponding to $1 \in B(\G)$. Moreover they are ULA and compact sheaves and the Fargues-Scholze parameter of a Schur-irreducible constituent is exactly $\ov\phi$. But it contradicts the fact that there exists a unique representation $\ov\pi^{\G}_{\ov\phi}$ in $\Rep_{\ov \F_{\ell}}(\G(\Q_p))$ whose Fargues-Scholze parameter is given by $\ov\phi$ \cite[Theorem 1]{Dat12}. Hence there is a unique irreducible representation in $\Rep_{\ov \F_{\ell}}(\G_m(\Q_p))$ whose Fargues-Scholze parameter is given by $\ov\phi$.
\end{proof}

\begin{lemma} \phantomsection \label{itm : parameter of divison algebra}
    Let $D$ be a division algebra over $F$ such that $D^{\times}$ is an inner form of $\GL_{n, F}$. Let $\ell$ be a banal prime and let $\ov \pi$ be a supercuspidal representation in $\Rep_{\ov \F_{\ell}}(D^{\times})$ then its Fargues-Scholze parameter $\ov \phi^{\ov\pi}$ is of the form $\ov\phi \oplus \ov\phi(1) \oplus \dotsc \oplus \ov\phi(r-1)$ for some integer $r$ and some irreducible parameter $\ov\phi$.
\end{lemma}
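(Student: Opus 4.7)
The plan is to lift $\ov\pi$ to an $\ell$-integral supercuspidal representation of $D^{\times}$ in characteristic zero, compute its Fargues--Scholze parameter by combining the classical Jacquet--Langlands correspondence with theorem \ref{FSproperties}(6), and then reduce modulo $\ell$ using theorem \ref{FSproperties}(1).

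First, since $\ell$ is banal for $D^{\times}$ and $\ov\pi$ is supercuspidal, one produces an irreducible $\ell$-integral supercuspidal $\pi \in \Rep_{\ov \Q_\ell}(D^{\times})$ lifting $\ov\pi$; this is the banal analogue of \cite[Theorem 3.26]{MS} and was already invoked in the proof of lemma \ref{itm : existence, irreducible}. By the classical Jacquet--Langlands correspondence, $\pi$ matches an essentially square-integrable representation $\pi^{\GL}$ of $\GL_n(F)$, necessarily of the form $\mathrm{St}_r(\rho)$ for a unique divisor $r$ of $n$ and a unique irreducible supercuspidal representation $\rho$ of $\GL_{n/r}(F)$. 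By Harris--Taylor/Henniart, the underlying semisimple Weil parameter of $\pi^{\GL}$ equals
\[
\phi_\rho \oplus \phi_\rho(1) \oplus \dotsc \oplus \phi_\rho(r-1),
\]
where $\phi_\rho$ is the irreducible $(n/r)$-dimensional parameter attached to $\rho$.

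By theorem \ref{FSproperties}(6) (applied on the inner-form side $D^{\times}$), the Fargues--Scholze parameter $\phi^{\mathrm{FS}}_\pi$ coincides with this Weil parameter. Since $\pi$ is $\ell$-integral, theorem \ref{FSproperties}(1) shows that $\phi_\rho$ takes values in $\ov\Z_\ell$ up to $\widehat\G$-conjugation, and that its reduction modulo $\ell$ yields
\[
\ov\phi^{\ov\pi} \simeq \ov\phi \oplus \ov\phi(1) \oplus \dotsc \oplus \ov\phi(r-1),
\]
with $\ov\phi$ the reduction of $\phi_\rho$ modulo $\ell$.

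The remaining step is the irreducibility of $\ov\phi$. Under the banal hypothesis $\ell$ is also banal for $\GL_{n/r}(F)$, so by the Vignéras--Dat bijection recalled immediately after theorem \ref{FSproperties}, the reduction $\ov\rho$ is again a supercuspidal $\ov\F_\ell$-representation whose Fargues--Scholze parameter equals $\ov\phi$ and is irreducible of dimension $n/r$. This is the main subtlety of the argument: outside the banal range one would lose both the existence of the supercuspidal lift and the irreducibility of $\ov\phi$ at the end, so the lemma genuinely depends on the banal hypothesis, and the proof reduces to a formal assembly of the Fargues--Scholze, Harris--Taylor/Henniart, Jacquet--Langlands and Vignéras--Dat compatibilities.
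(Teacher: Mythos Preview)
Your proof is correct and follows essentially the same strategy as the paper: lift $\ov\pi$ to characteristic zero using the banal hypothesis, identify the parameter there via the known local Langlands correspondence for inner forms of $\GL_n$, reduce modulo $\ell$ using theorem \ref{FSproperties}(1), and then verify irreducibility of $\ov\phi$ by passing to the $\GL_{n/r}$-side. The only minor imprecision is in the last step: the fact that the reduction $\ov\rho$ remains supercuspidal is a consequence of the banal reduction theory (the paper cites \cite[Proposition~4.15]{DHKM24}), not of the Vign\'eras--Dat bijection itself, which is then invoked to pass from supercuspidality of $\ov\rho$ to irreducibility of $\ov\phi$.
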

\begin{proof}
    Since $\ell$ is a banal prime, there exists an $\ell$-integral supercuspidal representation $\pi$ in $\Rep_{\ov \Q_{\ell}}(D^{\times})$ whose reduction modulo $\ell$ is isomorphic to $\ov \pi$ \cite[proposition 4.15]{DHKM24}. By the compatibility between Fargues-Scholze parameters and $L$-parameters for inner forms of $\GL_n$, we deduce that the Fargues-Scholze parameter $\phi^{\pi}$ of $\pi$ is of the form $\phi \oplus \phi(1) \oplus \dotsc \oplus \phi(r-1)$ for some $r$ and some irreducible parameter $\phi$. By the first point of theorem \ref{FSproperties}, we see that $\phi^{\pi}$ is $\ell$-integral and hence each $\phi(k)$ is $\ell$-integral. Moreover the Fargues-Scholze parameter of $\ov \pi$ is given by $\ov \phi \oplus \ov \phi(1) \oplus \dotsc \oplus \ov \phi(r-1)$ where $\ov\phi$ is the reduction modulo $\ell$ of $\phi$.

    Let $\pi^{\GL}$ be the irreducible supercuspidal representation of $\GL_{\tfrac{n}{r}}(F)$ corresponding to $\phi$ and let $\ov\pi^{\GL}$ be its reduction modulo $\ell$. Then the Fargues-Scholze parameter of $\ov\pi^{\GL}$ is given by $\ov\phi$. By \cite[Proposition 4.15]{DHKM24}, $\ov\pi^{\GL}$ is irreducible supercuspidal. Hence $\ov\phi$ is irreducible by a result of Dat and Vignéras that we mentioned before lemma \ref{itm : existence, irreducible}. This is what we want to show.
\end{proof}

\section{The supercuspidal case}

Let $\G$ denote the reductive group $\Res_{F/\Q_p}(\GL_{n, F})$ where $F$ is an unramified extension of $\Q_p$ of degree $d$ and let $\Lambda \in \{ \ov \F_{\ell}, \ov \Q_{\ell} \}$ for $\ell \neq 2$. The goal of this section is to prove the supercuspidal part of the categorical local Langlands conjecture for $\G(\Q_p)$.

\subsection{The categorical equivalence up to some shifts} \phantomsection \label{itm : up to some shift}

Let $\phi$ be an $L$-parameter of $\G$ (over $\Q_p$), by Shapiro's lemma it corresponds to a representation $\Tilde{\phi} : W_{F} \longrightarrow \widehat{\GL_n}(\Lambda)$. In this section, we will consider the $L$-parameters $\phi$ such that the representation $\Tilde{\phi}$ is irreducible. 

We know that the centralizer group $S_{\phi}$ is isomorphic to $\Gm$ and by proposition \ref{itm : simple connected components}, the connected component $ [C_{\phi}] $ of the stack of $L$-parameters $[Z^1(W_{\Q_p}, \widehat{\G})_{\Lambda}/\widehat{\G} ]$ containing $\phi$ is isomorphic to
\[
    [C_{\phi}] \simeq [(\mathbb{G}_{m} \times \mu_{ \phi, \Lambda}) / \mathbb{G}_m],
\]
where $\mu_{\phi, \Lambda} = \Spec \overline{\Q}_{\ell}$ if $\Lambda = \overline{\Q}_{\ell}$ and if $\Lambda = \ov \F_{\ell}$ then $\mu_{\Lambda} = \displaystyle  \Spec \overline{\F}_{\ell}[\Syl]$ where $\Syl$ is the $\ell$-Sylow group of $ \F^{\times}_{p^{f}} $ and $f$ is the number of irreducible factors of $\Tilde{\phi}_{| \I_F}$. Since the group of algebraic characters $\Irr(\Gm)$ is isomorphic to $\Z$, the (derived) category $\Coh([C_{\phi}])$ of coherent complexes supported on $[C_{\phi}]$ is equivalent to 
\[
\bigoplus_{\chi \in \Irr(\Gm) \simeq \Z} \mathrm{D}(\mathrm{Mod}_{\chi}(A_{\phi}))^{\omega},
\]
where $A_{\phi} := \mathcal{O}(\mathbb{G}_{m} \times \mu_{ \phi, \Lambda})$, the algebra of global sections of $\mathbb{G}_{m} \times \mu_{ \phi, \Lambda}$; $\mathrm{D}(\mathrm{Mod}_{\chi}(A_{\phi}))$ is the derived category of the category of $A_{\phi}$-modules together with the action of $\bb G_m$ acting by $\chi$ and $\mathrm{D}(\mathrm{Mod}_{\chi}(A_{\phi}))^{\omega}$ its full sub-category generated by compact objects. Thus $A_{\phi} \simeq \overline{\Q}_{\ell}[X, X^{-1}]$ if $\Lambda = \overline{\Q}_{\ell}$ and $A_{\phi} \simeq \overline{\F}_{\ell}[X, X^{-1}] \otimes_{\overline{\F}_{\ell}} \overline{\F}_{\ell}[\Syl] \simeq \overline{\F}_{\ell}[X, X^{-1}, Y]/(Y^{\ell^u}) $ if $\Lambda = \overline{\F}_{\ell}$ and $| \Syl | = \ell^u$.

The connected components of $\Bun_{\G}$ are indexed by $\Z \simeq \Irr(\Gm)$. Let $\chi \in \Irr(\Gm)$ and let $\Bun^{\chi}_{\G}$ be the connected component indexed by $\chi$ and let $b_{\chi} \in B(\G)$ be the basic element corresponding to the basic stratum in $\Bun^{\chi}_{\G}$. We also denote by $\G_{b_{\chi}}(\Q_p)$ the inner form of $\G(\Q_p)$ corresponding to the basic element $b_{\chi}$. In particular, $\G(\Q_p) \simeq \G_{b_{\Id}}(\Q_p) $. We have a direct decomposition:
\[
\Dlis(\Bun_{\G}, \Lambda)^{\omega} = \bigoplus_{\chi \in \Irr(\Gm)} \Dlis(\Bun^{\chi}_{\G}, \Lambda)^{\omega},
\]
and it induces a direct decomposition
\[
\Dlis^{[C_{\phi}]}(\Bun_{\G}, \Lambda)^{\omega} = \bigoplus_{\chi \in \Irr(\Gm)} \Dlis^{[C_{\phi}]}(\Bun^{\chi}_{\G}, \Lambda)^{\omega},
\]
where $\Dlis^{[C_{\phi}]}(\Bun_{\G}, \Lambda)^{\omega}$ is the full sub-category of $\Dlis(\Bun_{\G}, \Lambda)^{\omega}$ consisting of objects on which the excursion operator corresponding to the function that is $1$ on $[C_{\phi}]$ and $0$ elsewhere acts via the identity.

Let $\mathcal{F} \in \Dlis^{[C_{\phi}]}(\Bun_{\G}, \Lambda)^{\omega}$ be a Schur irreducible sheaf. Since $\phi$ is a supercuspidal $L$-parameter, one sees that $\mathcal{F}$ is supported on the semi-stable locus of $\Bun_{\G}$. In other words, $\mathcal{F}$ is supported on the strata $\Bun^{b_{\chi}}_{\G}$ for $(\chi \in \Irr(\Gm))$. Thus we have
\[
\Dlis^{[C_{\phi}]}(\Bun_{\G}, \Lambda)^{\omega} = \bigoplus_{\chi \in \Irr(\Gm)} \Dlis^{[C_{\phi}]}(\Bun^{b_{\chi}}_{\G}, \Lambda)^{\omega}.
\]

Let $\Rep_{\Lambda}(\G_{b_{\chi}}(\Q_p)) $ denote the category of $\Lambda$-smooth representations of $\G_{b_{\chi}}(\Q_p)$ and let $\pi_{\chi}$ be the supercuspidal representation corresponding to $\phi$ by lemma \ref{itm : existence, irreducible}. Denote by $\mathfrak{s}_{\phi, \chi}$ the inertial class of the supercuspidal pair $(\G_{b_{\chi}}, \pi_{\chi})$ and by $\Rep_{\Lambda}(\mathfrak{s}_{\phi, \chi})$ the block corresponding to $\mathfrak{s}_{\phi, \chi}$ in the category $ \Rep_{\Lambda}(\G_{b_{\chi}}(\Q_p)) $ by theorem \ref{itm : decomposition into blocks}. The irreducible representations in $\Rep_{\Lambda}(\mathfrak{s}_{\phi, \chi})$ are the unramified twists of $\pi_{\chi}$. By lemma \ref{itm : existence, irreducible} and the compatibility of Fargues-Scholze construction of parameters with twisting by a character, we deduce that $\Dlis^{[C_{\phi}]}(\Bun^{b_{\chi}}_{\G}, \Lambda)$ is equivalent to the derived category of the block $\Rep_{\Lambda}(\mathfrak{s}_{\phi, \chi})$. Recall that Dat constructed a compact pro-generator $\mathcal{W}_{\phi, \Id}$ of the block $\Rep_{\Lambda}(\mathfrak{s}_{\phi, \Id})$ and compute $\End_{\Rep_{\Lambda}(\mathfrak{s}_{\phi, \Id})}(\mathcal{W}_{\phi, \Id})^{\opp}$ and then show that this block is equivalence to the category of $A_{\phi}$-modules \cite[Proposition B.1.2]{Dat12}. Moreover, we will normalize this equivalence so that the supercuspidal representation $\pi_{\Id}$ corresponds to the module whose underlying vector space is isomorphic to $\Lambda$ and where $Y$ acts by multiplying by $0$ and $X$ acts by multiplying by $1$. 

We have an inclusion $\IndPerf([C_{\phi}]) \supset \Coh([C_{\phi}])$, thus we can define an action of $\Coh([C_{\phi}])$ on $\Dlis^{[C_{\phi}]}(\Bun_{\G}, \Lambda)$  as the colimit-preserving extension of the spectral action. By letting the category $\Coh([C_{\phi}])$ acts on $\mathcal{W}_{\phi, \Id}$ by the spectral action, we have a functor
\begin{align*}
   \mathrm{Act} :  \Coh([C_{\phi}]) &\longrightarrow \Dlis^{[C_{\phi}]}(\Bun_{\G}, \Lambda) \\
   \mathcal{C} &\longmapsto \mathcal{C} \star (i_{b_{\Id !}} \mathcal{W}_{\phi, \Id}),
\end{align*}
where we use $\star$ to denote the spectral action. As before, for each $\chi \in \Irr(S_{\phi}) \simeq \Irr(\Gm)$, we have a line bundle $C_{\chi}$ on $[C_{\phi}]$ and moreover $C_{\chi}$ acting on $\Dlis^{[C_{\phi}]}(\Bun_{\G}, \Lambda)^{\omega}$ by the spectral action gives rise to an auto-equivalence of $\Dlis^{[C_{\phi}]}(\Bun_{\G}, \Lambda)^{\omega}$.

Recall that the spectral Bernstein center of $\G$ is $ \mc Z^{\rm spec}(\G, \Lambda) := \mathcal{O}(Z^1(W_{\Q_p}, \widehat{\G})_{\Lambda})^{\widehat{\G}} $, the ring of global functions on the stack/the coarse moduli space of $L$-parameters. The geometric Bernstein center of $\G$ is $ \mc Z^{\rm geom}( \G, \Lambda ) := \pi_0 (\End (\id_{\Dlis(\Bun_{\G}, \Lambda)})) $ and there is a natural identification between $ \mc Z^{\rm spec}(\G, \Lambda) $ and the algebra of excursion operators (\cite[theorem VIII.5.1]{FS}). By \cite[theo. VIII.4.1]{FS}, there is a map $ \mc Z^{\rm spec}(\G, \Lambda) \longrightarrow \mc Z^{\rm geom}(\G, \Lambda)$ which induces a map
\[
\Psi_{\G} : \mc Z^{\rm spec}(\G, \Lambda) \longrightarrow \mc Z (\G, \Lambda)
\]
by the inclusion $\Dlis(\G(\Q_p), \Lambda) \hookrightarrow \Dlis(\Bun_{\G}, \Lambda)$ and where $\mc Z (\G, \Lambda)$ is the Bernstein center of $\G$. For the group $\G$ that we consider, the map $\Psi_{\G}$ is known to be an isomorphism\cite[Corollary 7.7]{HM} \cite[page 338]{FS}.

Let $f \in \mathcal{O}(Z^1(W_{\Q_p}, \widehat{\G})_{\Lambda})^{\widehat{\G}}$ be an element and denote by $\mathcal{O}$ the structural sheaf of $[Z^1(W_{\Q_p}, \widehat{\G})_{\Lambda} / \widehat{\G} ]$. In particular the multiplication by $ f $ gives rise to a map of $\mathcal{O}(Z^1(W_{\Q_p}, \widehat{\G}))^{\widehat{\G}}$-modules $ \mc O \xrightarrow{ \ \cdot f \ } \mc O  $. Note that $\mathcal{O}$ is the vector bundle corresponding to the Hecke operator of the trivial representation of $\G$. Hence via the spectral action, the map $ \mc O \xrightarrow{ \ \cdot f \ } \mc O  $ induce an element in $\End(\mc O \star A) \simeq \End(A)$ for every $A \in \Dlis(\Bun_{\G}, \Lambda)$ and therefore it induces an element $\widetilde{f}$ in $ \End(\id_{\Dlis(\Bun_{\G}, \Lambda)^{\omega}}) $. Now the excursion operator corresponding to $f$ also gives rise to an element $\overline{f}$ in $ \End(\id_{\Dlis(\Bun_{\G}, \Lambda)^{\omega}}) $, in other words, $\Psi_{\G}(f) = \overline{f}$. By the compatibility of the spectral action and the excursion operators \cite[Theorem 5.2.1]{Zou}, we have $ \widetilde{f} = \overline{f} $. More concretely, let $A$ be a Schur-irreducible element in $ \Dlis(\Bun_{\G}, \Lambda)^{\omega} $ whose Fargues-Scholze $L$-parameter is given by $\phi$. Then $ \mc O \xrightarrow{ \ \cdot f \ } \mc O  $ acting on $A$ gives us an endomorphism
\[
\{ \mc O \star A = A \longrightarrow \mc O \star A = A \} \in \End(A) = \Lambda,
\]
which will be precisely the scalar $\alpha$ given by evaluating $A$ on the excursion datum corresponding to $f$ (see also \cite[page 24] {Ham}). By \cite[Proposition VIII.3.8]{FS}, $\phi$ corresponds to a surjective map 
\[
\rm Ev_{\phi} : \mathcal{O}(Z^1(W_{\Q_p}, \widehat{\G})_{\Lambda})^{\widehat{\G}} \longrightarrow \Lambda
\]
and the scalar $\alpha$ above obtained by evaluating $\phi$ on the excursion datum corresponding to $f$ is exactly $ \mathrm{Ev}_{\phi} (f) $.

Recall that $\mathcal{O}([C_{\phi}])$ is a direct factor of $\mc Z^{\rm spec}(\G, \Lambda)$ and we now want to describe explicitly the restriction of $\Psi_{\G}$ to $\mathcal{O}([C_{\phi}])$. By the compatibility of Fargues-Scholze construction of $L$-parameters with the usual construction of $L$-parameters for $\G$ and the fact that $\Psi_{\G}$ is an isomorphism, we deduce that
\[
\Psi_{\G | \mathcal{O}([C_{\phi}])} : A_{\phi} \simeq \mathcal{O}([C_{\phi}]) \longrightarrow \End(\id_{\Rep_{\Lambda}(\mathfrak{s}_{\phi, \Id})}) \simeq A_{\phi}
\]
is again an isomorphism. Note that $ A_{\phi} \simeq \overline{\F}_{\ell}[X, X^{-1}, Y]/(Y^{\ell^u}) $ if $\Lambda = \overline{\F}_{\ell}$ and $| \Syl | = \ell^u$. 
\begin{lemma} \phantomsection \label{itm : global sections acting on sheaves}
     There is some change of variables such that the map
     \[
     \Psi_{\G | \mathcal{O}([C_{\phi}])} : A_{\phi} \simeq \mathcal{O}([C_{\phi}]) \longrightarrow \End(\id_{\Rep_{\Lambda}(\mathfrak{s}_{\phi, \Id})}) \simeq A_{\phi}
     \]
     becomes the identity map and moreover the maximal ideal corresponding to $\phi$ is $(X-1, Y) \subset \mathcal{O}([C_{\phi}]) \simeq A_{\phi} $ and the supercuspidal representation $\pi_0$ corresponds to the $A_{\phi}$-module whose underlying vector space is isomorphic to $\Lambda$ and where $Y$ acts by multiplying by $0$ and $X$ acts by multiplying by $1$.
\end{lemma}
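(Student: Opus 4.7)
The plan is to exploit the compatibility between the spectral action of the structure sheaf $\mc O$ and evaluation at the Fargues-Scholze parameter (recalled just above the lemma) to pin down how $\Psi_{\G}$ behaves at the point $\phi$, and then to use the freedom in choosing generators on the two copies of $A_{\phi}$ to force the map to be the identity. First I would fix a convenient presentation on the spectral side: write $A_{\phi} \simeq \ov{\F}_{\ell}[X, X^{-1}, Y]/(Y^{\ell^u})$ (the characteristic-zero case being analogous and simpler, with no $Y$) where $X$ is a coordinate on the $\Gm$-factor of $\Gm \times \mu_{\phi, \Lambda}$ and $Y$ is a generator of the augmentation ideal of $\ov{\F}_{\ell}[\Syl]$. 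The closed point of $[C_{\phi}]$ underlying $\phi$ is determined by a single element of $\ov{\F}_{\ell}^{\times}$, and by translating $X$ we may assume this element is $1$, so that the maximal ideal of $\mc O([C_{\phi}])$ at $\phi$ is exactly $(X - 1, Y)$.

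Next, since $\pi_0$ is irreducible, Schur's lemma implies that $\End(\id_{\Rep_{\Lambda}(\mathfrak{s}_{\phi, \Id})}) \simeq A_{\phi}$ acts on $\pi_0$ through a character $\chi_{\pi_0} : A_{\phi} \to \Lambda$ whose kernel is a maximal ideal $\mf{m}_{\pi_0}$. I then apply the discussion immediately preceding the lemma to the Schur-irreducible sheaf $i_{b_{\Id} !}(\pi_0) \in \Dlis(\Bun_{\G}, \Lambda)^{\omega}$, whose Fargues-Scholze parameter is $\phi$: for any $f \in \mc O([C_{\phi}])$, the endomorphism on $\mc O \star i_{b_{\Id} !}(\pi_0) \simeq i_{b_{\Id} !}(\pi_0)$ induced by multiplication by $f$ on $\mc O$ is the scalar $\mathrm{Ev}_{\phi}(f)$, while through $\Psi_{\G}$ the same endomorphism is multiplication by $\chi_{\pi_0}(\Psi_{\G}(f))$. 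Comparing the two gives $\chi_{\pi_0} \circ \Psi_{\G} = \mathrm{Ev}_{\phi}$, and in particular $\Psi_{\G}^{-1}(\mf{m}_{\pi_0}) = (X - 1, Y)$.

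Finally, I use $\Psi_{\G}$ itself to transport the chosen generators to the representation side: set $X' := \Psi_{\G}(X)$ and $Y' := \Psi_{\G}(Y)$ and take these as the generators of $A_{\phi}$ on the representation side. By construction $\Psi_{\G}$ becomes the identity map in these paired coordinates, and by the preceding step $\mf{m}_{\pi_0} = (X' - 1, Y')$, so $\pi_0 \simeq A_{\phi}/(X' - 1, Y') = \Lambda$ is the module on which $Y'$ acts by $0$ and $X'$ acts by $1$; renaming $X', Y'$ back to $X, Y$ yields the three conditions of the lemma simultaneously. The only non-bookkeeping ingredient is the compatibility $\chi_{\pi_0} \circ \Psi_{\G} = \mathrm{Ev}_{\phi}$, which follows by unwinding the identification of $\Psi_{\G}$ with the natural map from the algebra of excursion operators to the Bernstein center together with the scalar-evaluation formula recalled immediately before the lemma; I expect this to be the main (but essentially already available) input to the argument, the rest being a renaming of coordinates.
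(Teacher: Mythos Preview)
Your argument is correct and in fact more direct than the paper's. The paper proceeds by analyzing the structure of any ring automorphism of $A_{\phi}$: since an isomorphism preserves the nilradical $(Y)$, one has $\Psi_{\G}(Y) = YP$ with $P$ invertible, and then one can adjust so that $\Psi_{\G}(Y) = Y$; the remaining step, showing that $\Psi_{\G}(X) \equiv X \pmod{Y}$ rather than $aX^{\pm 1}$, is not automatic and is handled by invoking \cite[lemma~9.1]{GLn}, which uses compatibility with unramified twists to pin down the induced automorphism of $\Lambda[X,X^{-1}]$. Only then does one change $X$ to make $\Psi_{\G}(X)=X$.

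Your approach sidesteps this structural analysis entirely: you use the single compatibility $\chi_{\pi_0}\circ\Psi_{\G}=\mathrm{Ev}_{\phi}$ (which is exactly what is spelled out in the paragraph preceding the lemma) to locate $\mathfrak m_{\pi_0}$, and then simply define the target coordinates as the $\Psi_{\G}$-images of the source coordinates, which tautologically makes $\Psi_{\G}$ the identity while forcing $\mathfrak m_{\pi_0}=(X-1,Y)$. This is more economical and does not require the external reference; the price is that you change coordinates on the representation side as well, whereas the paper keeps Dat's normalization fixed there and only adjusts on the spectral side. For the purposes of the lemma and its subsequent use this makes no difference. One small point of language: when you say ``translating $X$'' to place $\phi$ at $X=1$, you of course mean a multiplicative rescaling $X \mapsto X/c$ on the $\mathbb G_m$-coordinate, since $X$ lives on a torus.
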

\begin{proof}
   The nilradical of $A_{\phi}$ is generated by $Y$ and $\Psi_{\G | \mathcal{O}([C_{\phi}])}$ is an isomorphism, then $\Psi_{\G | \mathcal{O}([C_{\phi}])} (Y) = Y P$ for some invertible $P \in A_{\phi}$. Thus by some change of variables, we can suppose that $\Psi_{\G | \mathcal{O}([C_{\phi}])}$ sends $Y$ to $Y$. By combining again the fact that $\Psi_{\G | \mathcal{O}([C_{\phi}])}$ is an isomorphism and the arguments in \cite[lemma 9.1]{GLn}, we can show that $\Psi_{\G | \mathcal{O}([C_{\phi}])}$ sends $X$ to $X + Q $ for some $Q \in (Y) \subset A_{\phi}$. Thus by some change of variables, we can suppose that $ \Psi_{\G | \mathcal{O}([C_{\phi}])}(X) = X $.  
\end{proof}

\begin{theorem} \phantomsection \label{itm : supercuspidal - spectral action}
\begin{enumerate}
    \item  For each $\chi \in \Irr(\Gm) \simeq \Z$, there is an equivalence of categories between the block $\Rep_{\Lambda}(\mathfrak{s}_{\phi, \chi})$ in $\Rep_{\Lambda}(\G_{b_{\chi}}(\Q_p))$ and the category of $A_{\phi}$-modules. Moreover, under this equivalence, the supercuspidal representation $\pi_{\chi} \otimes (\xi_t \circ \det )$ corresponds to the $A_{\phi}$-module whose underlying vector space is isomorphic to $\Lambda$ and where $Y$ acts by multiplying by $0$ and $X$ acts by multiplying by $t^f$ where $\xi_t$ is the unramified character of $W_{F}$ corresponding to $t \in \Lambda^{\times}$ and $f$ is the torsion number of $\pi_{\chi}$. The number $f$ is also the length of the $\I_F$-representation $\Tilde{\phi}_{\I_F}$.
    \item  Let $\bL$ be a coherent complex of $A_{\phi}$-modules. For each $\chi \in \Irr(S_{\phi}) \simeq \Z$, let $\bL(\chi)$ be the coherent complex $\bL$ together with the action of $\bb G_m$ acting by $\chi$ so that it gives rise to a coherent complex on $[C_{\phi}]$. We denote by $\pi_{\chi}(\bL)$ the complex of smooth $\G_{b_{\chi}}(\Q_p)$-representations in the derived category of the Bernstein block $\Rep_{\Lambda}(\mathfrak{s}_{\phi, \chi})$ corresponding to the $A_{\phi}$-perfect complex $\bL$. Then
\[
\Act(\bL(\chi)) := \bL(\chi) \star (i_{b_{\Id} !} \mathcal{W}_{\phi, \Id}) \simeq i_{b_{\chi} !} (\pi_{\chi}(\bL)).
\]
\end{enumerate}
\end{theorem}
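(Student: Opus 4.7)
The plan is to dispose of Part (1) by directly adapting Dat's analysis of supercuspidal blocks to the inner forms, and then to derive Part (2) by combining Part (1) with the monoidal structure of the spectral action and one key geometric computation, namely that the auto-equivalence $C_\chi \star (-)$ of $\Dlis^{[C_\phi]}(\Bun_\G,\Lambda)^{\omega}$ corresponds, under the equivalences of Part (1), to the identity functor on $A_\phi\text{-Mod}$ moving between the strata $\Bun_\G^{b_{\Id}}$ and $\Bun_\G^{b_\chi}$.

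For Part (1), I would repeat Dat's construction \cite[Proposition~B.1.2]{Dat12} verbatim on each inner form $\G_{b_\chi}(\Q_p)$: by Lemma~\ref{itm : parameter of divison algebra} every irreducible object of $\mathfrak{s}_{\phi,\chi}$ is an unramified twist of the single supercuspidal $\pi_\chi$, a compact pro-generator $\mathcal W_{\phi,\chi}$ is produced by compact induction from a supercuspidal type, and an explicit endomorphism-algebra calculation furnishes an isomorphism with $A_\phi$. The chosen normalization --- that $\pi_\chi \otimes (\xi_t\circ\det)$ corresponds to $\Lambda$ with $X$ acting by $t^f$ and $Y$ by $0$ --- is forced by Lemma~\ref{itm : global sections acting on sheaves}, which identifies the spectral $\mc O([C_\phi])$-action with the ordinary Bernstein-center action. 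The exponent $f$ is the torsion number of $\pi_\chi$, and compatibility of the Fargues--Scholze correspondence with unramified twists identifies it with the number of irreducible factors of $\Tilde\phi_{|\I_F}$ given by Proposition~\ref{itm : simple connected components}.

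For Part (2), the isomorphism $\bL(\chi)\simeq C_\chi\otimes^{\bb L}\bL(\Id)$ in $\IndPerf([C_\phi])$ and the monoidal property of the spectral action give
\[
\Act(\bL(\chi)) \simeq C_\chi \star \bigl(\bL(\Id) \star i_{b_{\Id}!}\mathcal W_{\phi,\Id}\bigr).
\]
By Lemma~\ref{itm : global sections acting on sheaves}, the spectral $A_\phi$-action on $\mathcal W_{\phi,\Id}$ coincides with its natural module structure, so the inner factor is computed by Part (1) for $\chi=\Id$: $\bL(\Id)\star i_{b_{\Id}!}\mathcal W_{\phi,\Id}\simeq i_{b_{\Id}!}\pi_{\Id}(\bL)$. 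Part (2) therefore reduces to proving $C_\chi \star i_{b_{\Id}!}\pi_{\Id}(\bL)\simeq i_{b_\chi!}\pi_\chi(\bL)$ for every $\bL$. Since $C_\chi\star(-)$ is a colimit-preserving auto-equivalence of $\Dlis^{[C_\phi]}(\Bun_\G,\Lambda)^{\omega}$, this identity need only be checked on a generator; writing $\chi$ additively in $\Irr(\Gm)\simeq\Z$ and using $C_{\chi+\chi'}\simeq C_\chi\otimes C_{\chi'}$ reduces further, by induction on $|\chi|$, to the case of the generator $\chi_0$, applied to $\bL$ corresponding to $\pi_{\Id}$. The support of $C_{\chi_0}\star i_{b_{\Id}!}\pi_{\Id}$ is located on the basic stratum $\Bun_\G^{b_{\chi_0}}$ by combining Proposition~\ref{itm : fundamental decomposition of Hecke operator} (which in the irreducible-$\Tilde\phi$ case reduces the Hecke operator $\T_{\mu_{\Std}}$ to the single summand $C_{\chi_0}\star(-)\boxtimes\Tilde\phi$), Lemma~\ref{shimhecke}, and the vanishing results of Proposition~\ref{itm : vanishing} for local Shimura cohomology; compatibility with Fargues--Scholze parameters and Part (1) at $b_{\chi_0}$ then identify this sheaf with $i_{b_{\chi_0}!}\pi_{\chi_0}$ up to a cohomological shift.

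The main obstacle is determining that shift. In the $\ell$-adic setting \cite{GLn} it is read off from \cite{Far04, Boy09, SWS, Han}, but those inputs are unavailable with $\ov\F_\ell$-coefficients. I would instead lift to characteristic zero via a global argument: pick an auxiliary compact Shimura datum of type $A$ for an inner form of $\G$ and apply Mantovan's formula \cite[Theorem~8.5.7]{DHKM24} to $R\Gamma_c(\Sh_{K^p},\ov\F_\ell)^{[\phi]}$. Proposition~\ref{itm : vanishing} kills every non-basic term, and the affineness of the corresponding Igusa varieties forces the remaining basic contribution to be concentrated in one cohomological degree and $\ov\Z_\ell$-torsion-free. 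Lifting to $\ov\Q_\ell$, where the cohomology is computed by the $\ell$-adic equivalence of \cite{GLn}, pins down the shift as zero, completing the $\chi_0$ case and hence the theorem.
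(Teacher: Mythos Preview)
Your overall architecture for Part~(2) is close to the paper's, but there is a genuine gap in your treatment of Part~(1), and since your Part~(2) argument invokes Part~(1) at the stratum $b_{\chi_0}$, the gap propagates.

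You claim that Dat's computation \cite[Proposition~B.1.2]{Dat12} can be repeated ``verbatim'' on each inner form $\G_{b_\chi}(\Q_p)$ to exhibit $\End(\mathcal W_{\phi,\chi})^{\opp}\simeq A_\phi$. This is precisely what the paper does \emph{not} know how to do: the proof explicitly records that while one can construct a compact pro-generator $\mathcal W_{\phi,\chi}$ of $\Rep_\Lambda(\mathfrak s_{\phi,\chi})$ via types theory, ``in general we do not know how to compute its endomorphisms algebra.'' Dat's argument is written for $\GL_n(F)$; for $\GL_m(D)$ the modular type theory (S\'echerre--Stevens) is substantially more delicate, and no direct endomorphism computation in the $\ov\F_\ell$-setting is available in the literature you cite. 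Likewise, Lemma~\ref{itm : global sections acting on sheaves} is stated and proved only for $\chi=\Id$; your appeal to it for general $\chi$ is unjustified at this stage.

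The paper circumvents this by reversing the logical order: it \emph{derives} Part~(1) from the spectral action. Starting from Dat's result only at $\chi=\Id$, it first shows by an abstract argument (if $C_\chi\star i_{b_{\Id}!}\pi_{\Id}$ were spread over an interval $[a,b]$ with $a<b$, then $C_\chi$ would enlarge the amplitude of every complex, contradicting that it is an auto-equivalence which must hit $i_{b_\chi!}\pi_\chi$) that $C_\chi\star i_{b_{\Id}!}\pi_{\Id}\simeq i_{b_\chi!}\pi_\chi$ up to a shift. It then performs a careful module-theoretic analysis over $A_\phi\simeq\ov\F_\ell[X^{\pm1},Y]/(Y^{\ell^u})$ (torsion versus free parts, the projectivity of $\mathcal W_{\phi,\chi}$, existence of $A_\phi^{\redu}$ sub/quotients) to show that $C_{\chi^{-1}}\star i_{b_\chi!}\mathcal W_{\phi,\chi}$ corresponds to a free $A_\phi$-module, yielding $i_{b_\chi!}\mathcal W_{\phi,\chi}\simeq C_\chi\star i_{b_{\Id}!}\mathcal W_{\phi,\Id}$ up to a shift. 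Only then does $\End(\mathcal W_{\phi,\chi})^{\opp}\simeq A_\phi$ follow, and with it the analogue of Lemma~\ref{itm : global sections acting on sheaves} for $\chi$. Your global argument for the shift (Mantovan's formula, affineness of basic Igusa varieties, torsion-freeness and lift to $\ov\Q_\ell$) matches the paper's Section~\ref{itm : Shimura} and is fine.
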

In the rest of this subsection, we will prove the first assertion of the theorem and prove that 
\[
\Act(\bL(\chi)) := \bL(\chi) \star (i_{b_{\Id} !} \mathcal{W}_{\phi, \Id}) \simeq i_{b_{\chi} !} (\pi_{\chi}(\bL)).
\]
up to some shifts. In the next subsection, we will use global method to determine the right shifts in the above formula.
\begin{proof}(up to some shifts) \\

\textbf{Step 1:} The case $\chi$ is trivial

Recall that 
\[
\Dlis^{[C_{\phi}]}(\Bun_{\G}, \Lambda)^{\omega} = \bigoplus_{\chi \in \Irr(\Gm)} \Dlis^{[C_{\phi}]}(\Bun^{b_{\chi}}_{\G}, \Lambda)^{\omega},
\]    
and $\Dlis^{[C_{\phi}]}(\Bun^{b_{\Id}}_{\G}, \Lambda)$ is the derived category of the block $\Rep_{\Lambda}(\mathfrak{s}_{\phi, \Id})$ in $\Rep_{\Lambda}(\G_{b_{\Id}}(\Q_p))$. Recall that $C_{\Id}$ acts on $\Dlis^{[C_{\phi}]}(\Bun^{b_{\Id}}_{\G}, \Lambda)$ by the identity functor. By using the same arguments in \cite[Theorem 9.5]{GLn} together with lemma \ref{itm : global sections acting on sheaves}, we can show that the conclusion of the theorem is true for $\chi$ the trivial representation of $\Gm$ and $\bL$ a perfect complex of $A_{\phi}$-module. Then by colimit-preserving extension, the conclusion is also true for coherent complexes (with the correct shifts). \\

\textbf{Step 2:} The computation of $C_{\chi} \star (i_{b_{\Id} !} \pi_{\Id})$ up to some twist.

Let $\chi \in \Irr(\Gm)$ be a non-trivial character corresponding to $d \in \Z^{*}$. By direct computation, we see that $C_{\chi}$ is a direct factor of the restriction of $V_{\mu}$ to $[C_{\phi}]$ where $\mu$ is the cocharacter $ \big( (d, 0^{(n-1)}), \underbrace{(0^{(n)}), \dotsc, (0^{(n)})}_{d-1 \ \mathrm{times}}  \big) $ of $\G(\Q_p)$ and $V_{\mu}$ is the vector bundle associated to the highest weight representation of $\widehat{\G}(\Lambda)$ of highest weight $\mu$. By the same arguments as in page $347$ of \cite{FS} we see that if $\mathcal{F}$ is a sheaf in $\Dlis^{[C_{\phi}]}(\Bun^{b_{\chi'}}_{\G}, \Lambda)^{\omega}$ then $ C_{\chi} \star \mathcal{F} $ is in $\Dlis^{[C_{\phi}]}(\Bun^{b_{\chi' \otimes \chi}}_{\G}, \Lambda)^{\omega}$. Since $C_{\chi}$ acting on $\Dlis^{[C_{\phi}]}(\Bun_{\G}, \Lambda)^{\omega}$ is an auto-equivalence of this category. We deduce that $C_{\chi}$ induces an equivalence between $\Dlis^{[C_{\phi}]}(\Bun^{b_{\Id}}_{\G}, \Lambda)^{\omega}$ and $\Dlis^{[C_{\phi}]}(\Bun^{b_{\chi}}_{\G}, \Lambda)^{\omega}$.

We will first show that $C_{\chi} \star (i_{b_{\Id} !} \pi_{\Id}) \simeq i_{b_{\chi} !} \pi_{\chi}$ up to some shift. Since the Hecke operators preserve compact objects and universally locally acyclic objects, we deduce that $C_{\chi} \star (i_{b_{\Id} !} \pi_{\Id})$ is the $i_{b_{\chi} !}$ pushforward of a bounded complex whose cohomology groups are admissible representations of finite length in the derived category of $\Rep_{\Lambda}(\mathfrak{s}_{\phi, \chi})$. Moreover, the action of the excursion algebra commute with the Hecke operators, thus the irreducible constituents of the cohomology groups of $C_{\chi} \star (i_{b_{\Id} !} \pi_{\Id})$ all have Fargues-Scholze parameter given by $\phi$.

Suppose that the cohomology groups of $C_{\chi} \star (i_{b_{\Id} !} \pi_{\Id})$ is strictly concentrated in the interval $[a, b]$ where $a < b$ are integers and both $ \mathrm{H}^a( C_{\chi} \star (i_{b_{\Id} !} \pi_{\Id}) )$, $ \mathrm{H}^b(C_{\chi} \star (i_{b_{\Id} !} \pi_{\Id}))$ are non trivial.  Let $\mathcal{F}$ be the $i_{b_{\Id} !}$ pushforward of a complex whose cohomology groups are strictly concentrated in the interval $[a', b']$ and the Fargues-Scholze parameter of the irreducible constituents of its cohomology groups are given by $\phi$. Then by using the long exact sequences coming from exact triangles, we deduce that $C_{\chi} \star  \mathcal{F}$ is a complex concentrated in the interval $[a+a', b+b']$. In particular, $C_{\chi} \star (\mathcal{F})$ is not concentrated in one degree. Thus there exist no $\mathcal{F}$ compact and ULA such that $ C_{\chi} \star \mathcal{F} \simeq i_{b_{\chi} !}(\pi_{\chi}) $, a contradiction with the fact that $C_{\chi}$ induces an equivalence between $\Dlis^{[C_{\phi}]}(\Bun^{b_{\Id}}_{\G}, \Lambda)^{\omega}$ and $\Dlis^{[C_{\phi}]}(\Bun^{b_{\chi}}_{\G}, \Lambda)^{\omega}$. We conclude that $C_{\chi} \star (i_{b_{\Id} !} \pi_{\Id})$ is concentrated in one degree. Now by using the fact that
\[
\mathrm{H}^0(i_{b_{\Id} !}\pi_{\Id}, i_{b_{\Id !}\pi_{\Id}}) = \mathrm{H}^0(C_{\chi} \star (i_{b_{\Id} !}\pi_{\Id}), C_{\chi} \star (i_{b_{\Id} !}\pi_{\Id})) = \Lambda,
\]
we deduce that 
\[
C_{\chi} \star (i_{b_{\Id} !}\pi_{\Id}) \simeq i_{b_{\chi} !}\pi_{\chi},
\]
up to some shift. And hence, up to some shift, we also have
\[
C_{\chi^{-1}} \star (i_{b_{\chi} !}\pi_{\chi}) \simeq i_{b_{\Id} !}\pi_{\Id}.
\]
\textbf{}\\
\phantom{}\hspace{2ex} \textbf{Step 3:} The computation of $C_{\chi} \star (i_{b_{\Id} !} \mathcal{W}_{\phi, \Id})$, up to some shift.

Recall that we can also constructed a compact pro-generator of the block $\Rep_{\Lambda}(\mathfrak{s}_{\phi, \chi})$ in $\Rep_{\Lambda}(\G_{b_{\chi}}(\Q_p))$ (by types theory) but in general we do not know how to compute its endomorphisms algebra. Let $\mathcal{W}_{\phi, \chi}$ be such a compact pro-generator, we will compute $C_{\chi^{-1}} \star (i_{b_{\chi} !} \mathcal{W}_{\phi, \chi}) $. Since the Hecke operators preserve compact objects, $\mathcal{F} := C_{\chi^{-1}} \star (i_{b_{\chi} !} \mathcal{W}_{\phi, \chi}) $ is a compact object in $\Dlis^{[C_{\phi}]}(\Bun^{b_{\Id}}_{\G}, \Lambda)^{\omega}$. Thus we can represent $\mathcal{F}$ as the $i_{b_{\Id}!}$-pushforward of a bounded complex of smooth representations in the block $\Rep_{\Lambda}(\mathfrak{s}_{\phi, \Id})$ whose cohomology groups are finitely generated smooth representations. Moreover, the block $\Rep_{\Lambda}(\mathfrak{s}_{\phi, \Id})$ is equivalent to the category of $A_{\phi}$-modules and hence the cohomology groups of $\mathcal{F}$ corresponds to finitely generated $A_{\phi}$-modules.

Let $M$ be a finitely generated $A_{\phi}$-modules then $M$ is also finitely generated as $R := \Lambda[X, X^{-1}]$-module where the structure of $R$-module is induced from that of $A_{\phi}$-module. Thus as $R$-modules, we have a decomposition $M \simeq R^a \bigoplus M_{\Tors} $ where $a \in \N$ and $M_{\Tors}$ is the torsion-submodule of $M$. Since the action of $X$ and $Y$ commutes, we see that $Y$ also preserves $M_{\Tors}$. Hence it is also a sub-$A_{\phi}$-module of $M$ and is of finite length. Moreover since $Y^{\ell^u}$ acts by $0$, if $a > 0$ then we can find a free sub $R$-module $N$ of $M$ that is annihilated by $Y$. Thus $N$ is also a sub $A_{\phi}$-module of $M$ and more precisely, $N$ can be written as successive extension of $A_{\phi}$-module $A^{\redu}_{\phi}$ whose underlying $R$-module is isomorphic to $R$ and where $Y$ acts by $0$. We deduce that $M$ also has a quotient $A_{\phi}$-module isomorphic to $A^{\redu}_{\phi}$.

Since $\mathcal{W}_{\phi, \chi}$ is a compact pro-generator then for any supercuspidal representation $\pi_{\chi} \otimes (\xi_t \circ \det )$ the group $\Hom \big(\mathcal{W}_{\phi, \chi}, \pi_{\chi} \otimes (\xi_t \circ \det )[k] \big)$ is non-trivial if and only if $k = 0$. We showed that $C_{\chi^{-1}} \star (i_{b_{\chi} !} \pi_{\chi} \otimes (\xi_t \circ \det ) ) \simeq i_{b_{\Id} !} \pi_{\Id} \otimes (\xi_t \circ \det ) $ up to some shift and moreover $\pi_{\Id} \otimes (\xi_t \circ \det )$ corresponds to the $A_{\phi}$-module whose underlying vector space is isomorphic to $\Lambda$ and where $Y$ acts by multiplying by $0$ and $X$ acts by multiplying by $t^f$. In particular, there exists a unique $k \in \Z$ such that $\Hom \big(\mathcal{F}, i_{b_{\Id} !} \pi_{\Id} \otimes (\xi_t \circ \det )[k] \big)$ is non-trivial.

Suppose that all the cohomology groups of $\mathcal{F}$ correspond to torsion $R$-modules (of finite length). Then there exists $t$ such that $\Hom_{A_{\phi}} (M_{\Tors}, \lambda_t[k]) = 0$ for all $k \in \Z$ and any cohomology group $M_{\Tors}$ of the complex corresponding to $\mathcal{F}$ and where $\lambda_t$ is the $A_{\phi}$-module corresponding to $\pi_{\chi} \otimes (\xi_t \circ \det )$. Then $\Hom \big(\mathcal{F}, i_{b_{\chi} !} \pi_{\Id} \otimes (\xi_t \circ \det )[k] \big) = 0 $ for all $k \in \Z$, a contradiction. Hence, a cohomology group of $\mathcal{F}$ has a sub and a quotient $A_{\phi}$-module isomorphic to $A^{\redu}_{\phi}$. Recall that $\Hom_{A_{\phi}}(A^{\redu}_{\phi}, \lambda_t) \neq 0 $ for $t \neq 0$.

Suppose that the cohomology groups of $\mathcal{F}$ are concentrated in the interval $[a, b]$ where $a < b$ are integers and both $\mathrm{H}^a(\mathcal{F})$ and $\mathrm{H}^b(\mathcal{F})$ are non-trivial. If $\mathrm{H}^b(\mathcal{F})$ has a quotient $A_{\phi}$-module isomorphic to $A^{\redu}_{\phi}$ then we deduce that $\Hom( \mathcal{F}, \mathcal{F}[a-b]) \neq 0 $ since $\mathrm{H}^a(\mathcal{F})$ has a sub-$A_{\phi}$-module isomorphic to $A^{\redu}_{\phi}$ or to $\lambda_t$ for some $t$. It is a contradiction, then $\mathrm{H}^b(\mathcal{F})$ corresponds to a torsion $A_{\phi}$-module. We can suppose that $\Hom^b(\mathcal{F}, \lambda_t) \neq 0$. Note that we have a projective resolution of the $A_{\phi}$-module $A_{\phi}/(X - t^f)$
\[
A_{\phi} \xrightarrow{ \ \cdot (X - t^f) \ } A_{\phi} \xrightarrow{\quad \quad} A_{\phi}/(X - t^f).
\]

We choose $c$ the largest integer $i$ such that $\mathrm{H}^i(\mathcal{F})$ has a quotient isomorphic to the $A_{\phi}$-module $A^{\redu}_{\phi}$. By using the above projective resolution and using the long exact sequence coming from exact triangles we can show that $ \Hom( \mathcal{F}, A_{\phi}/(X - t^f)[k]) \neq 0 $ for $ k = b, c $. It contradicts the fact that $\mathcal{W}_{\phi, \chi}$ is projective. Thus $\mathcal{F}$ concentrates in one degree $m$, for some $m \in \Z$. By using the fact that the group $\Hom \big(\mathcal{W}_{\phi, \chi}, \pi_{\chi} \otimes (\xi_t \circ \det )[k] \big)$ is non-trivial if and only if $k = 0$, we deduce that for all $t \in \Lambda^{*}$
\[
C_{\chi^{-1}} \star (i_{b_{\chi} !}\pi_{\chi} \otimes (\xi_t \circ \det ) ) \simeq i_{b_{\Id} !}\pi_{\Id}\otimes (\xi_t \circ \det )[-m].
\]

Now let $M_{\mathcal{F}}$ denote the $A_{\phi}$-module corresponding to $\mathcal{F}$. We can choose $t \in \Lambda^{*}$ such that $\Hom_{A_{\phi}}((M_{\mathcal{F}})_{\Tors}, \lambda_t[k]) = 0$ for all $k \in \Z$. By using again the fact that $\Hom \big(\mathcal{W}_{\phi, \chi}, \pi_{\chi} \otimes (\xi_t \circ \det )[k] \big)$ is non-trivial if and only if $k = 0$, we deduce that $ M_{\mathcal{F}}/(M_{\mathcal{F}})_{\Tors} \simeq (A_{\phi})^{a} $ for some $a \in \N^*$ and then it implies that $(M_{\mathcal{F}})_{\Tors}$ is a direct factor of $M_{\mathcal{F}}$. Finally it implies that $(M_{\mathcal{F}})_{\Tors} \simeq 0 $ and $M_{\mathcal{F}} \simeq A^a_{\phi}$. By applying $C_{\chi}$ we see that
\[
i_{b_{\chi}!} \mathcal{W}_{\phi, \chi} \simeq \big(C_{\chi} \star (i_{b_{\Id} !} \mathcal{W}_{\phi, \Id})\big)^a[-m]
\]

Since $ \mathcal{W}_{\phi, \chi}$ is a projective generator of the block $\Rep_{\Lambda}(\mathfrak{s}_{\phi, \chi})$, we deduce that 
$i^*_{b_{\chi}} \big(C_{\chi} \star (i_{b_{\Id} !} \mathcal{W}_{\phi, \Id})\big)$ is also a projective generator of $\Rep_{\Lambda}(\mathfrak{s}_{\phi, \chi})$. Thus we can suppose $a = 1$ and 
\begin{equation} \phantomsection \label{itm : progenerator to progenerator}
 i_{b_{\chi}!} \mathcal{W}_{\phi, \chi} \simeq C_{\chi} \star (i_{b_{\Id} !} \mathcal{W}_{\phi, \Id})[-m].   
\end{equation}

\textbf{}\\
\phantom{}\hspace{2ex} \textbf{Step 4:} Conclusion.

We deduce from equation $(\ref{itm : progenerator to progenerator})$ that $\End_{\Rep_{\Lambda}(\mathfrak{s}_{\phi, \chi})}(\mathcal{W}_{\phi, \chi})^{\opp} \simeq A_{\phi}$. Thus the block $\Rep_{\Lambda}(\mathfrak{s}_{\phi, \chi})$ is equivalent to the category of $A_{\phi}$-modules and we deduce the first conclusion of the theorem.

We can embed the block $\Rep_{\Lambda}(\mathfrak{s}_{\phi, \chi})$ into $\Dlis^{[C_{\phi}]}(\Bun_{\G}, \Lambda)$ by the functor $i_{b_{\chi} !}$. Then it gives us a map $\Psi_{\chi} : \mathcal{O}([C_{\phi}]) \longrightarrow \End(\id_{\Rep_{\Lambda}(\mathfrak{s}_{\phi, \chi})})$. By using again the isomorphism $(\ref{itm : progenerator to progenerator})$, we can compute the action of $\Psi_{\chi}(x)$ on $\mathcal{W}_{\phi, \chi}$ for any $x \in \mathcal{O}([C_{\phi}])$ and deduce that $\Psi_{\chi}$ is a ring isomorphism. Thus by using the same arguments as in the first step, we deduce the second conclusion of the theorem, up to a shift $[-m]$ as above.

\end{proof}
\subsection{The supercuspidal part of the cohomology of local Shimura varieties} \textbf{} \label{itm : Shimura}

In this paragraph, we compute the supercuspidal part of the mod $\ell$ cohomology of Shimura varieties and Rapoport-Zink spaces, generalizing \cite{Shin15}. We proceed by using Mantovan's product formula and some results on vanishing of the cohomology of Shimura varieties. 

\subsubsection{Shimura varieties and Mantovan's formula} \textbf{}

We consider some Shimura varieties of type $A$ as in \cite{Kot92}. Assume that $E = \mathcal{K} E_{0} $ is a CM field where $\mathcal{K}$ is an imaginary quadratic field, $p$ is split in $\mathcal{K}$ and $(E_{0})_p = F$ is the unramified extension of degree $d$ of $\Q_p$. Then we have an unramified integral PEL datum of the form $(\rm B, \mc O_{\rm B} , *, V, \Lambda_0, \langle, \rangle, h )$, where
\begin{enumerate}
    \item[$\bullet$] $\rm B$ is a semi-simple algebra over $E$,
    \item[$\bullet$] $*$ is an involution of the second kind,
    \item[$\bullet$] $\rm B, \mc O_{\rm B}$ is a $\Z_{(p)}$ maximal order in $\rm B$ that is preserved by $*$ such that $ \rm B, \mc O_{\rm B} \otimes_{\Z} \Z_p $ is a maximal order in $\rm B_{\Q_p}$, 
    \item[$\bullet$] $V$ is a simple $ \rm B$-module,
    \item[$\bullet$] $\langle, \rangle : V \times V \longrightarrow \Q $ is a $*$-Hermitian pairing with respect to the $\rm B$-action,
    \item[$\bullet$] $\Lambda_0$ is a $\Z_p$-lattice in $V_{\Q_p}$ that is preserved by $\mc O_{\rm B}$ and self-dual for $\langle, \rangle$.
    \item[$\bullet$] $h : \C \longrightarrow \End_{\R}(V_{\R})$ is an $\R$-algebra homomorphism satisfying the equality $\langle h(z), w \rangle = \langle v , h(z^c)w \rangle, \forall v,w \in V_{\R} $ and $z \in \C$ and such that the bilinear pairing $(v, w) \longmapsto \langle v , h(\sqrt{-1})w \rangle $ is symmetric and positive definite.
\end{enumerate}

We can define a similitude unitary group $ \rm \textbf{G} $ over $\Q$ associated to this PEL datum and denote by $\rm \textbf{G}'$ the unitary group that is the kernel of the similitude factor, thus $\rm \textbf{G}'(\R)$ $\simeq \displaystyle \prod_{\sigma \in \Gal(F / \Q_p) } \U (q_{\sigma}, n - q_{\sigma})$ for some $q_{\sigma} \in \N$. We assume further that the $p$-adic group $\rm \textbf{G}'_{\Q_p}$ is isomorphic to $\G \simeq \Res_{F/ \Q_p}\GL_{n, F}$ and hence $\mathrm{\textbf{G}}_{\Q_p} \simeq \G \times \bb G_{m, \Q_p} $.

From now on to the end of this subsection, we suppose that $\rm B$ is a division algebra with center $F$. Let $\mu : \Gm \longrightarrow \rm \textbf{G} $ be the group homomorphism over $\C$ corresponding to $h$. Associated to the above PEL datum is a system of \textit{projective} Shimura varieties $\{\Sh_K\}$ defined over the field $E$ and where $K$ runs over the set of sufficiently small open compact subgroups of $\rm \textbf{G}(\A^{\infty})$. Moreover, the PEL datum also gives rise to a system of integral models $\mc S_p = \{ \mc S_{K^p} \}$ defined over $ \mc O_{E, (p)} $ such that the generic fiber of each $\mc S_{K^p}$ is naturally identified with $\Sh_{K^pK_p^{\rm hs}} \otimes_{E} E_{p}$ where $K^p$ runs over the set of sufficiently small open compact subgroups of $\rm \textbf{G}(\A^{\infty, p})$ and $K_p^{\rm hs} \subset \GL_n(\Q_p)$ is a hyperspecial subgroup.

On the other hand, for each $b \in B(\rm \textbf{G}'_{\Q_p}, -\mu)$, we can define an Igusa variety $\Ig_b$ which is closely related to the special fiber $\overline{\mc S}_p$ of $\mc S_p$. The Igusa variety is a projective system of smooth varieties $ \Ig_b = \{ \Ig_{b, m, K^p} \}$ where $m \in \N$ and $K^p$ are sufficiently small open compact subgroups of $\rm \textbf{G}(\A^{\infty, p})$ as before. %Let $\mc L$ be an algebraic representation of $\rm \textbf{G}$ over $\overline{\Q}_{\ell}$. Then it gives rises to $\ell$-adic sheaves on $\Sh$ and $\Ig_b$, which will be denoted by $\mc L$ (by abuse of notation). 
Define
\[
R\Gamma_c(\Sh_{K^p}, \Lambda) := \mathop{\mathrm{colim}}_{\overrightarrow{K_p \rightarrow \{1 \}}} R\Gamma_c(\Sh_{K_pK^p, \ov \Q}, \Lambda),
\]
\[
R\Gamma_c(\Ig_{b, K^p}, \Lambda) := \mathop{\mathrm{colim}}_{\overrightarrow{m \rightarrow \{ \infty \} } } R\Gamma_c(\Ig_{b, m, K^p}, \Lambda).
\]

These complexes are endowed with an action of $\rm \textbf{G}(\A^{\infty}) \times \Gal(\overline{F}/F)$ and of $\mathrm{\textbf{G}}(\A^{\infty, p}) \times \big( \G_b(\Q_p) \times \Q_p^{\times} \big)$ respectively.

For each $K^p$, the special fiber $\overline{\mc S}_{K^p}$ has a stratification by the Kottwitz set $B(\GL_n, -\mu)$. Thus we can compute the cohomology of Shimura varieties by the cohomology of its strata. We recall a formula of Mantovan that expresses a cohomological relation between $R\Gamma_c(\Sh_{K^p}, \Lambda)$, $R\Gamma_c(\Ig_{b, K^p}, \Lambda)$'s and the Hecke operator $\T_{-\mu}$. %Denote by $\pi$ an irreducible representation of $\mathrm{\textbf{G}}(\Q_p)$ and let $\Pi$ be any $\mc L$-cohomological automorphic representation of $\mathrm{\textbf{G}}(\A)$ globalizing $\pi$. We denote by $R\Gamma_c(\Sh, \mc L)[\Pi^{\infty, p}]$ and $R\Gamma_c(\Ig_b, \mc L)[\Pi^{\infty, p}]$ the $[\Pi^{\infty, p}]$-isotypic part of the cohomology of the Shimura variety, respectively the $[\Pi^{\infty, p}]$-isotypic part of the cohomology of the Igusa varieties.

\begin{proposition} (\cite[Theorem 22]{Man2}, \cite[Theorems. 6.26, 6.32]{LS2018}, \cite[Theorem 1.12]{HL}), \cite[Theorem 8.5.7]{DHKZ} \phantomsection \label{itm : Mantovan formula}
    We set $h := \dim \Sh_{K^p} = \langle 2\rho_{\G}, \mu \rangle$ and for $b \in B(\G, -\mu)$, we set $d_b := \dim \Ig_{b, K^p} = \langle 2\rho_{\G} , \nu_b \rangle$. Then the complex $R\Gamma_c(\Sh_{K^p}, \Lambda)$ has a filtration as a complex of $ \mathrm{\textbf{G}}(\Q_p) \times W_{F} $ representations with graded pieces isomorphic to $ i_1^* \T_{-\mu}(i_{b!} \delta^{-1}_b \otimes R\Gamma_c(\Ig_{b, K^p}, \Lambda) )[-h](-\tfrac{h}{2})$ where $b$ runs in the Kottwitz set $B(\G, -\mu)$. More precisely, the graded pieces are isomorphic to 
    \[
    R\Gamma_c(\G, b, \mu) \otimes^{\bL}_{\mathcal{H}(\G_b)}R\Gamma_c(\Ig_{b, K^p}, \Lambda)[2d_b-h](-\tfrac{h}{2}), 
    \]
    as complexes of $ \textbf{G}(\Q_p) \times W_{F} $-modules.
\end{proposition}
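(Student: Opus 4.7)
The plan is to deduce the formula from three ingredients: the Newton stratification of the special fiber, Mantovan's product formula expressing each stratum in terms of Igusa varieties and local Shtuka spaces, and the dictionary between local Shtuka cohomology and Hecke operators on $\Bun_{\G}$ provided by Lemma \ref{shimhecke}. The first two are classical (in the PEL setting relevant here), while the third is what allows the reformulation in the Fargues--Scholze language.

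First I would invoke the Newton stratification of $\overline{\mc S}_{K^p}$ by locally closed subschemes $\overline{\mc S}_{K^p}^{b}$ indexed by $b \in B(\G, -\mu)$, with the basic stratum closed and the $\mu$-ordinary stratum open. Choosing a linear refinement of the closure order on $B(\G, -\mu)$ and applying excision, together with proper base change for the projective morphism $\mc S_{K^p} \to \Spec \mc O_{E, (p)}$, yields a filtration of $R\Gamma_c(\Sh_{K^p}, \Lambda)$ as a $\mathrm{\textbf{G}}(\A^{\infty, p}) \times K_p^{\mathrm{hs}} \times W_F$-module whose graded pieces are $R\Gamma_c(\overline{\mc S}_{K^p}^{b}, \Lambda)$. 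Passing to the colimit over $K_p \to \{1\}$ upgrades this to a filtration of smooth $\mathrm{\textbf{G}}(\Q_p)$-representations.

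Next I would identify each graded piece via the local-global product formula. The classical Mantovan isomorphism, in its modern diamond-theoretic incarnation due to Caraiani--Scholze, Hansen, and Koshikawa, expresses $\overline{\mc S}_{K^p}^{b}$ as a "Mantovan product" of the Igusa variety $\Ig_{b, K^p}$ with the Rapoport--Zink tower $\mc M_b$; after passing to cohomology this gives
\[
R\Gamma_c(\overline{\mc S}_{K^p}^{b}, \Lambda) \simeq R\Gamma_c(\mc M_b, \Lambda) \otimes^{\bL}_{\mathcal{H}(\G_b)} R\Gamma_c(\Ig_{b, K^p}, \Lambda),
\]
up to suitable shifts and twists which one tracks using the dimension $d_b = \langle 2\rho, \nu_b \rangle$ of $\Ig_b$ and the dimension $h = \langle 2\rho, \mu \rangle$ of $\Sh_{K^p}$, plus a Tate twist $(-h/2)$ reflecting the normalization of the Satake sheaf.

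Finally I would translate into the Hecke-operator language. The cohomology $R\Gamma_c(\mc M_b, \Lambda)$ is by construction $R\Gamma_c(\G, b, \mu)$ (in our convention for local Shtuka spaces), and Lemma \ref{shimhecke} identifies this, after tensoring with an appropriate $\G_b(\Q_p)$-representation $\pi_b$, with $i_1^{*} \T_{-\mu} i_{b!}(\pi_b)$ up to the shifts $[2d_b]$ and the twist by $\kappa_b^{-1} = \delta_b$. Combining this with the product formula for Igusa varieties gives the stated description of the graded pieces. The main obstacle is the careful bookkeeping of shifts, Tate twists, and modulus characters: matching the normalizations in Mantovan's product formula with those in Lemma \ref{shimhecke} and with the geometric Satake sheaf $\mc S_{-\mu}$ requires invoking the explicit comparison results of \cite{HL} and \cite{DHKM24}. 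Once this dictionary is in place, the statement follows formally by reading off the filtration produced in the first step through this lens.
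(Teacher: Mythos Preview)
The paper does not give its own proof of this proposition: it is stated as a known result and attributed entirely to the cited references \cite{Man2}, \cite{LS2018}, \cite{HL}, and \cite{DHKZ}. Your sketch is a correct outline of the standard argument underlying those references---Newton stratification plus excision to produce the filtration, Mantovan's product formula to identify each graded piece in terms of Igusa and Rapoport--Zink cohomology, and then the dictionary of Lemma~\ref{shimhecke} to rewrite this in the Hecke-operator language on $\Bun_{\G}$---and the caveat you flag about tracking shifts, Tate twists, and the character $\delta_b^{-1}=\kappa_b$ is exactly where the work lies. One small point: you invoke ``proper base change for the projective morphism $\mc S_{K^p}\to\Spec\mc O_{E,(p)}$'', which is fine in the present setting (the paper assumes $\rm B$ is a division algebra, so the Shimura variety is compact), but in the non-compact cases treated later in the paper (e.g.\ \S\ref{itm : generic part}) one needs the partially compactly supported version of \cite[Theorem~1.12]{HL} with Igusa complexes $R\Gamma_{c-\partial}$ instead.
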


\subsubsection{Supercuspidal part of the mod-$\ell$ cohomology of some Shimura varieties} \textbf{}

In this paragraph we analyse the complex $R\Gamma_c(\Sh_{K^p}, \Lambda)$ of $ \textbf{G}(\Q_p) \times W_{F} $ representations. By \cite[Pproposition 8.21]{Zha23}, the cohomology groups of this complex are admissible. Thus we can consider the projection of $R\Gamma_c(\Sh_{K^p}, \Lambda)$ to the derived category of the block $\Rep_{\Lambda}(\mathfrak{s}_{\phi, \Id})$. We use superscript $(-)^{[\phi]}$ to denote this projection.

\begin{proposition}
    The complex $R\Gamma_c(\Sh_{K^p}, \Lambda)^{[\phi]}$ is concentrated in degree $h$.
\end{proposition}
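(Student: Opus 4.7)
The plan is to apply Mantovan's product formula (proposition \ref{itm : Mantovan formula}) to filter $R\Gamma_c(\Sh_{K^p}, \Lambda)$ by graded pieces indexed by $b \in B(\G, -\mu)$, then project to the block $[\phi]$ and control each piece separately. The projection to $[\phi]$ is an exact direct summand at the level of $\textbf{G}(\Q_p)$-representations, so it commutes with Mantovan's filtration and it suffices to treat one stratum at a time.

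First I would show that for every non-basic $b \in B(\G, -\mu)$, the $[\phi]$-projection of the graded piece
\[
R\Gamma_c(\G, b, \mu) \otimes^{\bL}_{\mathcal{H}(\G_b(\Q_p))} R\Gamma_c(\Ig_{b, K^p}, \Lambda)[2d_b - h](-h/2)
\]
vanishes. Indeed, since $\phi$ is irreducible, lemma \ref{itm : existence, irreducible} guarantees that the only irreducible representations of $\G_b(\Q_p)$ whose Fargues--Scholze parameter lies in $[C_{\phi}]$ are supercuspidal, namely the unique representation $\pi^{\G_b}_\phi$ and its unramified twists. Applying proposition \ref{itm : vanishing} to such a supercuspidal $\pi$ with non-basic $b$, and translating via lemma \ref{shimhecke}, forces the $[\phi]$-projection of $i_1^{*}\T_{-\mu}\bigl(i_{b!}(\delta_b^{-1} \otimes R\Gamma_c(\Ig_{b, K^p}, \Lambda))\bigr)$ to be zero.

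Next I would pin down the unique basic element $b_{\chi} \in B(\G, -\mu)$. The Newton cocharacter $\nu_{b_{\chi}}$ is central in $\G$, and $2\rho$ restricts trivially to the center, so $d_{b_{\chi}} = \langle 2\rho, \nu_{b_{\chi}}\rangle = 0$. Hence the Igusa variety $\Ig_{b_{\chi}, m, K^p}$ is $0$-dimensional at each finite level, and $R\Gamma_c(\Ig_{b_{\chi}, K^p}, \Lambda)$ is concentrated in degree $0$. By lemma \ref{shimhecke} with $d_{b_{\chi}} = 0$,
\[
i_1^{*} \T_{-\mu} i_{b_{\chi}!}(\pi) \simeq R\Gamma_c(\G, b_{\chi}, 1, -\mu)[\pi \otimes \kappa_{b_{\chi}}^{-1}],
\]
and proposition \ref{itm : vanishing} for the basic stratum ensures that this is concentrated in degree $0$ for every supercuspidal $\pi$ in the block $[\phi]$. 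Combining both facts with the overall Mantovan shift $[-h]$ places the basic graded piece exactly in cohomological degree $h$.

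The main obstacle is the careful bookkeeping of cohomological shifts: making the Artin vanishing range coming from the affineness of the Igusa varieties, the $[2d_b]$ shift in lemma \ref{shimhecke}, and the Mantovan twist $[2d_b - h]$ collapse to a single degree after projection. This relies crucially on the vanishing $d_{b_{\chi}} = 0$, which is specific to the restriction-of-scalars $\GL_n$ situation considered here, together with the strong concentration (in a single degree, rather than a range) provided by proposition \ref{itm : vanishing} on the supercuspidal block.
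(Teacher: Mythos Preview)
Your overall strategy---Mantovan filtration, non-basic vanishing from the supercuspidality of $\phi$, reduction to the basic stratum where the Igusa variety is zero-dimensional---matches the paper. But there is a genuine gap in your treatment of the basic graded piece.

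You assert that ``proposition \ref{itm : vanishing} for the basic stratum ensures that $i_1^*\T_{-\mu}i_{b_\chi!}(\pi)$ is concentrated in degree $0$.'' Proposition \ref{itm : vanishing} does not say this: it classifies which pairs $(b,\pi_b)$ can carry the parameter $\phi$, i.e.\ it controls \emph{support} in $B(\G)$, not cohomological degree. At this stage of the paper (subsection \ref{itm : up to some shift}) theorem \ref{itm : supercuspidal - spectral action} is only proved \emph{up to an undetermined shift $m$}; so all one knows is that $C_\chi\star i_{b_{\Id}!}\pi_{\Id}\simeq i_{b_\chi!}\pi_\chi[-m]$ for some $m$ that has not yet been identified. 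Consequently the basic graded piece is concentrated in a single degree, but you do not know that degree is $h$. Claiming it is would be circular: the whole purpose of this proposition is to pin down $m$, and the proposition is then fed back to finish the proof of theorem \ref{itm : supercuspidal - spectral action} with the correct shift.

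The paper closes this gap by a lift-to-characteristic-zero step that you have omitted. Since the $\overline{\F}_\ell$-complex is concentrated in one (unknown) degree, the corresponding $\overline{\Z}_\ell$-complex $R\Gamma_c(\Sh_{K^p},\overline{\Z}_\ell)^{[\phi]}$ is concentrated in one degree and torsion free, hence $R\Gamma_c(\Sh_{K^p},\overline{\F}_\ell)^{[\phi]}$ is its reduction modulo $\ell$. Now one invokes the known $\overline{\Q}_\ell$-result (the supercuspidal part of the cohomology of a compact Shimura variety sits in middle degree $h$) together with a non-triviality argument---globalising a suitable automorphic $\Pi$ with $\Pi_p$ lifting some supercuspidal in the block---to identify that single degree as $h$. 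This global comparison is the missing ingredient in your argument.
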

\begin{proof}
We know from proposition \ref{itm : Mantovan formula} that the complex $R\Gamma_c(\Sh_{K^p}, \Lambda)$ has a filtration as a complex of $ \textbf{G}(\Q_p) \times W_{F} $ representations with graded pieces isomorphic to $ (R\Gamma_c(\G, b, \mu) \otimes^{\bL}_{\mathcal{H}(\G_b)}R\Gamma_c(\Ig_{b, K^p}, \Lambda))[2d_b-h](-\tfrac{h}{2})$ and where $b$ runs in the Kottwitz set $B(\G, -\mu)$. Since $\phi$ is a supercuspidal Fargues-Scholze parameter and the functors $\T_{\mu}$, $i^*_1$ preserve Fargues-Scholze parameters, we deduce that the contributions of non-basic strata to $R\Gamma_c(\Sh_{K^p}, \Lambda)^{[\phi]}$ are trivial. Thus
\[
R\Gamma_c(\Sh_{K^p}, \Lambda)^{[\phi]} \simeq (R\Gamma_c(\G, b_0, \mu) \otimes^{\bL}_{\mathcal{H}(\G_{b_0})}R\Gamma_c(\Ig_{b_0, K^p}, \Lambda))^{[\phi]}[-h](-\tfrac{h}{2}),
\]
where $b_0$ is the unique stratum in $B(\G, \mu)$.

Our Shimura variety is of type $A$ and compact then the Igusa variety $\Ig_{b_0, K^p}$ corresponding to the basic stratum is affine of dimension $0$. Thus $R\Gamma_c(\Ig_{b_0, K^p}, \Lambda))$ is concentrated in degree $0$. Moreover, since theorem \ref{itm : supercuspidal - spectral action} is true up to some shift, we deduce that $R\Gamma_c(\Sh_{K^p}, \Lambda)^{[\phi]} \simeq (R\Gamma_c(\G, b_0, \mu) \otimes^{\bL}_{\mathcal{H}(\G_{b_0})}R\Gamma_c(\Ig_{b_0, K^p}, \Lambda))^{[\phi]}[-h](-\tfrac{h}{2})$ is concentrated in one degree. Thus we deduce that $R\Gamma_c(\Sh_{K^p}, \overline{\Z}_{\ell})^{[\phi]}$ is concentrated in one degree and torsion free where $R\Gamma_c(\Sh_{K^p}, \overline{\Z}_{\ell}):= R\Gamma_c(\Sh_{K^p}, \Z_{\ell}) \otimes_{\Z_{\ell}} \ov \Z_{\ell}$ and $R\Gamma_c(\Sh_{K^p}, \overline{\Z}_{\ell})^{[\phi]}$ is the projection of $R\Gamma_c(\Sh_{K^p}, \overline{\Z}_{\ell})$ to the derived category of the block corresponding to $\pi_{\Id}$ in $\Rep_{\overline{\Z}_{\ell}}(\G(\Q_p))$ (see \cite[Appendix B1]{Dat12}). Therefore $R\Gamma_c(\Sh_{K^p}, \overline{\F}_{\ell})^{[\phi]}$ is the reduction modulo $\ell$ of $R\Gamma_c(\Sh_{K^p}, \overline{\Z}_{\ell})^{[\phi]}$ and we conclude that $R\Gamma_c(\Sh_{K^p}, \overline{\Q}_{\ell})^{[\phi]}$ and $R\Gamma_c(\Sh_{K^p}, \overline{\F}_{\ell})^{[\phi]}$ are concentrated in degree $h = \dim \Sh_{K^p} $. Moreover by choosing the level $K_p$ sufficiently small, the cohomology group $H^d_c(\Sh_{K^p}, \overline{\Q}_{\ell})^{[\phi]}$ is non trivial since there exists automorphic representation $ \Pi $ of $\mathrm{\textbf{G}}(\A)$ such that $\Pi^{K^p}$ is not trivial and $\Pi_{p}$ is a supercuspidal representation in $\Rep_{\overline{\Q}_{\ell}}(\G(\Q_p))$ lifting some supercuspidal representation in the block $\Rep_{\Lambda}(\mathfrak{s}_{\phi, \Id})$.
\end{proof}

This result allows us to deduce the correct shift in the computation of the Hecke operator $\T_{\mu}$ acting on $i_{b_{\Id}!}(\Rep_{\Lambda}(\mathfrak{s}_{\phi, \Id}))$ and thus it implies theorem \ref{itm : supercuspidal - spectral action}.
\section{Spectral actions}
\subsection{Combinatoric description of the spectral actions} \textbf{} \phantomsection \label{itm : combinatoric description of Hecke operators}

We fix a maximal split torus $\mathrm{A}$ inside the maximal torus $\T$ and a Borel subgroup $\rB$ of $\G:= \Res_{F/\Q_p}(\GL_{n, F})$ where $F$ is the unramified extension of degree $d$ of $\Q_p$. Let $\overline{C}$ denote the closed Weyl chamber in $X_*(\mathrm{A})_{\R}$ associated to $\rB$ and let $\overline{C}_{\Q}$ denote its intersection with $X_*(\mathrm{A})_{\Q}$. For any standard parabolic subgroup $\rP$ with Levi decomposition $\rP = \M\rN$ such that $ \T \subset \M $ (i.e., $\M$ is a standard Levi subgroup), we denote by $\mathrm{A}_{\M}$ the split maximal torus in the center of $\M$. We put
\[
X_*(\rP)^+ := \{ \mu \in X_*(\mathrm{A}_{\M}) \ | \ \langle \mu, \alpha \rangle > 0 \ \text{for any root of $\T$ in $\rN$}  \}.
\]

Then we have a decomposition 
\[
\overline{C} = \coprod_{\rP}  X_*(\rP)^+,
\]
where the index is the set of standard parabolic subgroups of $\G$. We define the subset $B(\G)_{\rP}$ of $B(\G)$ to be the pre-image of $X_*(\rP)^+$ under the Newton map. This gives a decomposition
\begin{equation} \phantomsection \label{decomposition of Kottwitz' set}
 B(\G) = \coprod_{\rP} B(\G)_{\rP}   
\end{equation}
where the index is the set of standard parabolic subgroups of $\G$. For a general standard parabolic $\rP = \M\rN$, $B(\G)_{\rP}$ has the following description. By noting that the image of the Newton map $\nu_{\M}$ for $\M$ lies in $X_*(\M)$, we define $B(\M)^+_{\text{bas}}$ by 
\[
B(\M)^+_{\text{bas}} := \{ b \in B(\M)_{\text{bas}} \ | \ \nu_{\M}(b) \in X_*(\rP)^+ \}. 
\]
Then the canonical map $B(\M) \longrightarrow B(\G)$ induces a bijection $ B(\M)^+_{\text{bas}} \simeq B(\G)_{\rP} $ (see \cite[\S 5.1]{KottwitzIsocrystals2}).

Let $b$ be an element in $B(\G)$ then there exists a unique standard parabolic subgroup $\rP$ such that $ \nu_b \in X_*(\rP)^+$, moreover its Levi factor $\M$ is isomorphic to the quasi-split inner form of $\G_{b}$. We recall that if $ \rP = \M \rN $ where $\M$ is the Levi subgroup and $\rN$ is the unipotent radical then the modulus character is defined by $\delta_{\rP, \C}(mn) := |\det(\ad(m);\Lie \rN)| $ where $|\cdot|$ denotes the normalized absolute value of the field $\ov \Q_{p}$. Thus, by using an isomorphism $\iota : \C \simeq \overline{\Q}_{\ell}$ we get a character $\delta_{\rP} : \rP(\Q_p) \longrightarrow \overline{\Q}_{\ell}^{\times} $. Moreover, since $p$ is invertible in $\overline{\Z}_{\ell}$, we can also regard $\delta_{\rP}$ as a character valued in $\overline{\F}_{\ell}^{\times}$.

Denote by $\delta_{\rP} : \rP(\Q_p) \longrightarrow \Lambda^{\times} $ the modulus character of $\rP$. This character factors through the group $\M^{ab}(\Q_p)$. Since $\G_b$ is an inner form of $\M$, we see that $\G_b^{ab} \simeq \M^{ab}$. Thus $\delta_{\rP}$ can be seen as a character of $\G_b(\Q_p)$ and we denote this character by $\delta_b$. Our characters $\delta_{b}$ is the same as the character $\delta_b$ defined by Hamann and Imai in \cite{HI}.

Recall that by Shapiro lemma, there is a bijection between the set (of conjugacy classes) of $L$-parameter of $\G$ and that of $\GL_{n, F}$. In the rest of this section, we will consider an $L$-parameter $\phi$ of $\G$ such that the corresponding $L$-parameter $\tilde{\phi}$ of $\GL_{n, F}$ satisfies the following condition:
\begin{enumerate} \label{itm : condition A1}
    \item[(A1)] We have a decomposition $ \tilde{\phi} = \tilde{\phi}_1 \oplus \dotsc \oplus \tilde{\phi}_r $ where the $\tilde{\phi}_i$'s are pairwise disjoint irreducible representations of dimension $n_i$ and if $i \neq j$ then there does not exist unramified character $\chi$ such that $ \tilde{\phi}_i \simeq \tilde{\phi}_j \otimes \chi $. 
\end{enumerate}

Let $\phi_i$ be the $L$-parameter of $\G$ corresponding to $\tilde{\phi}_i$ by Shapiro's lemma (for $1 \leq i \leq r$), thus $\phi = \phi_1 \oplus \dotsc \oplus \phi_r$. Then by the computation in \cite[page 83]{KMSW}, we can see that $ \displaystyle S_{\phi} := \Cent (\phi) = \prod_{i = 1}^r \Gm $ and the set $\Irr(S_{\phi})$ of irreducible representations of $S_{\phi}$ is isomorphic to the abelian group $ \displaystyle \prod_{i = 1}^r \Z$. For each character $\chi = (d_1, \dotsc, d_r) \in \displaystyle \prod_{i=1}^r \Z $, we define an element $b_{\chi} \in B(\G) = B(\GL_{n, F})$, an irreducible representation $\pi_{\chi}$ of $ \G_{\chi}(\Q_p) := \G_{b_{\chi}}(\Q_p)$ and a sheaf $\mathcal{F}_{\chi}$ as follows:
\begin{enumerate}
    \item[$\bullet$] For each $1 \leq i \leq r$, we denote $ \lambda_i := d_i / n_i $ and we consider the sequence $ (-\tfrac{\lambda_1}{d}^{(n_1)}, \dotsc, - \tfrac{\lambda_r}{d}^{(n_r)}) \in \Q^n $. After rearranging the terms, we denote by $\nu$ the image of this sequence in the set $ \{ (x_1, \dotsc, x_n) \ | \ x_i \in \Q, \ \ x_i \geq x_{i + 1}  \}$. Then $b_{\chi}$ is the unique element in $B(\G)$ such that $ \nu_{b_{\chi}} $ is equal to $\nu$. We can also define $b_{\chi}$ as the unique element in $B(\G) = B(\GL_{n, F})$ such that $\E_{F, b_{\chi}}$, the corresponding rank $n$ vector bundle over $X_F$, is isomorphic to $\OO_F(\lambda_1)^{m_1} \oplus \dotsc \oplus \OO_F(\lambda_r)^{m_r} $ where $\OO_F(\lambda_i)$ is the stable vector bundle of slope $ \lambda_i = d_i / n_i $ and $m_i = (d_i, n_i)$.  
    \item[$\bullet$] Consider the group $\G_{b_{\chi}}$, it is an inner form of a standard Levi subgroups of $\G$. Let $\G^*_{b_{\chi}}$ be the split inner form of $\G_{b_{\chi}}$. For each $i$, denote $\rH_i := \GL_{m_i}(D_{-\lambda_i})$ where $D_{-\lambda_i}$ is the division algebra whose invariant is $-\lambda_i$, thus $\rH_i^* = \Res_{F/\Q_p}(\GL_{n_i, F})$. Therefore we have a $1$-cocycle $\phi_i : W_{\Q_p} \longrightarrow \widehat{\rH^*_i}(\Lambda)$ and the direct sum $ \phi_1 \oplus \dotsc \oplus \phi_r $ gives us a $1$-cocycle $W_{\Q_p} \longrightarrow \displaystyle \prod_{i=1}^r \widehat{\rH^*_i}(\Lambda)$ whose post-composition with the natural embeddings $ \displaystyle \prod_{i=1}^r \widehat{\rH^*_i}(\Lambda) \hookrightarrow \widehat{\G^*_{\chi}}(\Lambda) $ defines an $L$-parameter $\phi_{\chi}$ of $\G_{b_{\chi}}$. Moreover, the post-composition of $\phi_{\chi}$ with $ \widehat{\G^*_{b_{\chi}}}(\Lambda) \hookrightarrow \widehat{\GL}_n(\Lambda) $ is the ($\widehat{\GL}_n$-conjugacy class of) $\phi$. If $\Lambda = \ov \Q_{\ell} $ then we define $\pi_{\chi}$ to be the representation of $\G_{b_{\chi}}(\Q_p)$ whose $L$-parameter is given by $\phi_{\chi}$ via the local Langlands correspondence for general linear groups and its inner forms. If $\Lambda = \ov \F_{\ell}$ then the construction of $\pi_{\chi}$ is more complicated. For each $1 \leq i \leq r$, we denote by $\pi_i$ the unique irreducible supercuspidal representation in $\Rep_{\ov \F_{\ell}}(\rH_i(\Q_p))$ whose Fargues-Scholze parameter is given by $\phi_i$ (lemma \ref{itm : existence, irreducible}). Thus $ \pi := \displaystyle \bigotimes_{i=1}^r \pi_i $ is a supercuspidal representation of $ \rH (\Q_p) := \displaystyle \prod_{i=1}^r \rH_i(\Q_p)$ and the latter is a Levi subgroup of $\G_{b_{\chi}}(\Q_p)$. Thus we define $\pi_{\chi}$ to be the normalised parabolic induction $\Ind_{\rH (\Q_p)}^{\G_{b_{\chi}}(\Q_p)}(\pi)$. Remark that $\pi_{\chi}$ is irreducible by \cite[Theorem 7.23]{MS14}.
    \item[$\bullet$] We consider the embedding $ i_{b_{\chi}} : \Bun^{b_{\chi}}_{\G} \longrightarrow \Bun_{\G} $ and define $\mathcal{F}_{\chi} \displaystyle := i_{b_{\chi} !}(\delta^{-1/2}_{b_{\chi}} \otimes \pi_{\chi}) [- d_{\chi}] $ where $d_{\chi} = \langle 2\rho, \nu_{b_{\chi}} \rangle$. For the sake of notation, we also denote the renormalized push-forward functor $ i_{b_{\chi}!}(\delta^{-1/2}_{b_{\chi}} \otimes - )[- d_{\chi}] $ by $i^{\rm ren}_{b_{\chi} !}( - )$.

    We recall also that for $b \in B(\G)$, there exists a unique standard parabolic subgroup $\rP$ such that $\nu_b \in X_*(\rP)^+$. Since we have $\nu_{b} = (- \nu_{\E_b})_{\text{dom}}$, then the standard parabolic subgroup corresponding to the Harder-Narasimhan reduction of $\E_{b_{\chi}}$ (\cite[Theorem 1.7]{CFS}) is conjugated to the opposite of $\rP$ above.

\end{enumerate}
\subsection{Vanishing results}
Let $\G$ denote the group $\Res_{F/\Q_p}(\GL_{n, F})$. For each Fargues-Scholze parameter $\phi$ with coefficient in $\Lambda$, we denote by $R(\phi)$ the set consisting of pairs $ (b, \pi) $ where $b$ is in $B(\G)$ and $\pi$ is an irreducible representation in $\Rep_{\Lambda}(\G_b(\Q_p))$ whose Fargues-Scholze parameter is $\phi$.

\begin{lemma} \phantomsection \label{itm : relevance}
    Let $\phi = \phi_1 \oplus \dotsc \oplus \phi_r$ be a Fargues-Scholze parameter satisfying condition (A1) and let $\G_m := \GL_m(D)$ where $D$ is a division algebra over $F$ of degree $g^2$. Then there exists at most one irreducible representation in $\Rep_{\Lambda}(\G_m)$ such that its Fargues-Scholze parameter is given by $\phi$. Moreover, there exists one such irreducible representation if and only if $g$ divides $\dim(\phi_i)$ for all $1 \leq i \leq r$.
\end{lemma}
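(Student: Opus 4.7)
The plan is to reduce the question to the supercuspidal support of $\pi$ and then to apply lemma \ref{itm : parameter of divison algebra} to each supercuspidal factor. Suppose $\pi \in \Rep_{\Lambda}(\G_m(\Q_p))$ has Fargues-Scholze parameter $\phi$. By the classification results of Mínguez-Sécherre (\cite[Theorem 8.16]{MS14}, theorem \ref{itm : decomposition into blocks}), $\pi$ would have a supercuspidal support given by a standard Levi $M = \prod_j \GL_{m_j}(D) \subset \G_m$ and a supercuspidal representation $\pi_M = \otimes_j \pi_j$. By the compatibility of Fargues-Scholze parameters with parabolic induction (theorem \ref{FSproperties}(4)), I would then obtain $\phi = \bigoplus_j \phi^{\mathrm{FS}}_{\pi_j}$, and by lemma \ref{itm : parameter of divison algebra} each $\phi^{\mathrm{FS}}_{\pi_j} = \psi_j \oplus \psi_j(1) \oplus \dotsc \oplus \psi_j(r_j - 1)$ for some irreducible $\psi_j$ of dimension $m_j g/r_j$.

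The next step would use condition (A1) decisively. Since the $\phi_i$ are pairwise non-isomorphic and no two of them are unramified twists of each other, matching irreducible constituents on both sides of $\phi = \bigoplus_j \bigoplus_{k = 0}^{r_j-1} \psi_j(k)$ forces $r_j = 1$ for every $j$ --- otherwise $\psi_j$ and $\psi_j(1)$ would give two distinct $\phi_i$'s that are unramified twists --- and the multisets $\{\psi_j\}_j$ and $\{\phi_i\}_i$ must coincide. In particular $n_i = m_j g$ for the matching indices, so $g \mid n_i$ for every $i$. This gives the necessity of the divisibility condition; moreover the supercuspidal support is determined (up to association) as $M = \prod_i \GL_{n_i/g}(D)$ with each $\pi_j$ being the unique supercuspidal of $\GL_{n_i/g}(D)$ with Fargues-Scholze parameter $\phi_i$ given by lemma \ref{itm : existence, irreducible}. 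Hence at most one such $\pi$ can exist.

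For existence, assuming $g \mid n_i$ for all $i$, I would set $M := \prod_i \GL_{n_i/g}(D) \subset \G_m$ (possible because $\sum_i n_i = n = mg$), take $\pi_M := \otimes_i \pi_i$ with $\pi_i$ as in lemma \ref{itm : existence, irreducible}, and put $\pi := \Ind_M^{\G_m}(\pi_M)$. Condition (A1) translates into non-linkedness of the length-one segments associated to the $\pi_i$, so by \cite[Theorem 7.24]{MS14} the induced representation $\pi$ would be irreducible, and theorem \ref{FSproperties}(4) would yield $\phi^{\mathrm{FS}}_\pi = \phi$. The main delicate point in the argument is invoking the mod $\ell$ theory of supercuspidal support and of Bernstein blocks for inner forms of $\GL_n$ from \cite{MS14, SS16}, which is what makes the argument uniform across $\Lambda \in \{\ov\Q_\ell, \ov\F_\ell\}$; once this input is granted, the remaining matching of irreducible constituents is purely combinatorial.
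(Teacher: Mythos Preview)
Your overall strategy is correct and close to the paper's: reduce to the supercuspidal support, analyze the Fargues--Scholze parameter of each supercuspidal factor, and use condition (A1) to force each factor's parameter to be irreducible. However, there is a genuine gap in how you carry out the key step.

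You invoke lemma \ref{itm : parameter of divison algebra} to conclude that each $\phi^{\mathrm{FS}}_{\pi_j}$ has the shape $\psi_j \oplus \psi_j(1) \oplus \dotsc \oplus \psi_j(r_j-1)$. But that lemma is stated only for supercuspidal representations of $D^{\times}$ (the anisotropic case $m_j = 1$), not for $\GL_{m_j}(D)$ with $m_j > 1$; and it assumes $\ell$ is banal, whereas lemma \ref{itm : relevance} carries no such hypothesis. So as written, the citation does not justify the claimed shape of $\phi^{\mathrm{FS}}_{\pi_j}$.

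The paper fills this gap by a direct lifting argument that avoids both restrictions. Each supercuspidal $\pi_j$ of $\GL_{m_j}(D)$ lifts to an $\ell$-integral supercuspidal $\widetilde{\pi}_j$ over $\ov{\Q}_\ell$ by \cite[Theorem 3.26]{MS} (no banal assumption needed). In characteristic $0$ the Fargues--Scholze parameter of $\widetilde{\pi}_j$ agrees with the usual local Langlands parameter for inner forms, hence has the shape $\widetilde{\psi}_j \oplus \widetilde{\psi}_j(1) \oplus \dotsc$ with $\widetilde{\psi}_j$ irreducible. Reducing mod $\ell$ and using that the reduction of an irreducible $\ell$-integral parameter is of the form $V \oplus \nu V \oplus \dotsc \oplus \nu^{b-1}V$, one sees that $\phi^{\mathrm{FS}}_{\pi_j}$ is a sum of unramified twists of a single irreducible; now condition (A1) forces this sum to have a single term, exactly as you argue. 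If you replace your appeal to lemma \ref{itm : parameter of divison algebra} by this lifting step, your proof goes through.

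A minor point on uniqueness: having pinned down the supercuspidal support does not by itself determine $\pi$; you must also use that the parabolic induction $\Ind_M^{\G_m}(\pi_M)$ is irreducible (by \cite[Theorem 7.23]{MS14}, as you invoke for existence) so that $\pi$ is forced to equal this induction. You should state this explicitly in the uniqueness half as well.
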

\begin{proof}
when $\Lambda = \ov \Q_{\ell}$, the lemma is clear by the local Langlands correspondence for $\GL_n$ and its inner forms. Thus we suppose $\Lambda = \ov \F_{\ell}$.

Suppose that $\pi$ is an irreducible representations in $\Rep_{\ov \F_{\ell}}(\G_m)$ such that its Fargues-Scholze parameter is given by $\phi$. Thus by Minguez-Sécherre's classification of irreducible representations in $\Rep_{\ov \F_{\ell}}(\G_m)$ and by the compatibility of Fargues-Scholze's construction with parabolic induction, we deduce there is an integer $k$ such that $\phi = \phi'_1 \oplus \dotsc \oplus \phi'_k $ and for $1 \leq i \leq k$, $\phi'_i$ is the Fargues-Scholze parameter of some representation of the form $\rZ(\Delta_i)$ where $\Delta_i = [a_i, b_i]_{\rho_i}$ is a segment.  

Since $\phi$ satisfies condition (A1), we deduce that $a_i = b_i$ and therefore $ \rZ(\Delta_i) = \nu^{a_i}\rho_i$ is some supercuspidal representation of $\G_{n_i} := \GL_{n_i}(D)$. Thus, by \cite[Theorem 3.26]{MS}, we can lift $\rZ(\Delta_i)$ to a supercuspidal $\ell$-integral representation $\widetilde{\pi}_i$ of $\G_{n_i}$. Once again, by using the condition (A1) and the compatibility of Fargues-Scholze's construction with reduction modulo $\ell$, we see that the Fargues-Scholze parameter $\widetilde{\phi'}_i$ of $\pi_i$ is irreducible and $\ell$-integral. Moreover, the reduction modulo $\ell$ of $\widetilde{\phi'}_i$ is given by $\phi'_i$. However we know that the reduction modulo $\ell$ of an $\ell$-integral irreducible parameter is of the form $ V + \nu V + \dotsc + \nu^{b-1}V $ for some $\overline{\F}_{\ell}$-representation of $W_{F}$. Thus by using condition (A1), we deduce that $\phi'_i$ is irreducible and in particular $k = r$. We can then suppose that $ \phi'_i = \phi_i $ for all $1 \leq i \leq r$. Hence for all $1 \leq i \leq r$, $g$ divides $\dim(\phi_i)$.

Let $\pi_i$ be the unique irreducible supercuspidal representation in $\Rep_{\ov \F_{\ell}}(\G_{n_i})$ whose Fargues-Scholze parameter is given by $\phi_i$. Then by construction, $\pi$ is a sub-quotient of the normalised parabolic induction $ \Ind^{\G_m}_{\rP} (\pi_1 \times \dotsc \times \pi_r)$. However this representation is irreducible by \cite[Theorem 7.23]{MS14}, we see that $\pi$ is given by the above parabolic induction. This shows the uniqueness claim of the lemma. It is also clear that when $g$ divides $\dim(\phi_i)$ for all $1 \leq i \leq r $, by using parabolic induction one can construct an irreducible representation whose Fargues-Scholze parameter is given by $\phi$.   
\end{proof}
\begin{proposition} \phantomsection \label{itm : vanishing}
    Suppose that $\phi = \phi_1 \oplus \dotsc \oplus \phi_r$ is a Fargues-Scholze parameter with coefficient in $\Lambda$ satisfying condition (A1). Then the map
    \begin{align*}
        \Irr(S_{\phi}) &\longrightarrow R(\phi) \\
        \chi &\longmapsto (b_{\chi}, \pi_{\chi})
    \end{align*}
    is a bijection.

    Moreover, if $\mathcal{F}$ is a non-zero Schur-irreducible sheaf on $\Bun_{\G}$ supported on a stratum corresponding to $b \in B(\G)$ and the corresponding parameter is given by $\phi$ then there exists $\chi \in \Irr(S_{\phi})$ such that $b = b_{\chi}$.
\end{proposition}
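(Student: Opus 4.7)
The strategy is to show that the Newton polygon of $b_\chi$ records $\chi$ exactly, that the Fargues--Scholze parameter of $\pi_\chi$ really is $\phi$, and that any pair $(b,\pi)\in R(\phi)$ can be reconstructed from its slope decomposition via Lemma~\ref{itm : relevance}.

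First I would check that the assignment is well-defined, i.e.\ that $\mathrm{FS}(\pi_\chi)=\phi$. By construction $\pi_\chi=\Ind_{\rH(\Q_p)}^{\G_{b_\chi}(\Q_p)}(\pi_1\boxtimes\cdots\boxtimes\pi_r)$, and by Lemma~\ref{itm : existence, irreducible} each $\pi_i$ is the (unique) irreducible representation of $\rH_i(\Q_p)=\GL_{m_i}(D_{-\lambda_i})$ with Fargues--Scholze parameter $\phi_i$. Compatibility of Fargues--Scholze with products and with (normalized) parabolic induction (Theorem~\ref{FSproperties}) then gives $\mathrm{FS}(\pi_\chi)=\phi_1\oplus\cdots\oplus\phi_r=\phi$. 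Injectivity is purely combinatorial: the data $(d_1,\dots,d_r)$ determines the multiset of slopes $\lambda_i=d_i/n_i$ with prescribed rank contributions $n_i$, hence determines $\nu_{b_\chi}\in X_*(A)_{\Q,\dom}$, and distinct $\chi$'s produce distinct Newton points.

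The main step is surjectivity. Take $(b,\pi)\in R(\phi)$ and write the slope decomposition $\E_b\simeq\bigoplus_j\OO(\mu_j)^{\oplus M_j/s'_j}$, so that $\G_b\simeq\prod_j\GL_{M_j}(D_{\mu_j})$, where $\mu_j=r'_j/s'_j$ in lowest terms. Correspondingly $\pi=\boxtimes_j\pi_j$ with $\pi_j$ irreducible on $\GL_{M_j}(D_{\mu_j})$. Compatibility of the Fargues--Scholze construction with products yields $\mathrm{FS}(\pi)=\bigoplus_j\mathrm{FS}(\pi_j)$, and because the $\phi_i$ satisfy condition (A1) (pairwise non-isomorphic irreducibles with no unramified-twist relation), the decomposition $\phi=\bigoplus_j\mathrm{FS}(\pi_j)$ must refine $\phi=\bigoplus_i\phi_i$: there is a partition $\{1,\dots,r\}=\bigsqcup_j I_j$ with $\mathrm{FS}(\pi_j)=\bigoplus_{i\in I_j}\phi_i$. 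Applying Lemma~\ref{itm : relevance} to each factor $\GL_{M_j}(D_{\mu_j})$ with parameter $\bigoplus_{i\in I_j}\phi_i$ forces $s'_j\mid n_i$ for every $i\in I_j$, so we may set $d_i:=r'_j n_i/s'_j\in\Z$. The resulting $\chi=(d_1,\dots,d_r)$ satisfies $d_i/n_i=\mu_j$ for $i\in I_j$, hence $\nu_{b_\chi}=\nu_b$, so $b_\chi=b$, and the uniqueness clause of Lemma~\ref{itm : relevance} (together with irreducibility of the parabolic induction, see the proof of Lemma~\ref{itm : relevance}) identifies $\pi_j$ with the corresponding constituent of $\pi_\chi$. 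Thus $(b,\pi)=(b_\chi,\pi_\chi)$.

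For the ``moreover'' clause, let $\mathcal{F}$ be a non-zero Schur-irreducible sheaf supported on a single stratum $\Bun^b_\G$. Under the equivalence $\Dlis(\Bun^b_\G,\Lambda)\simeq\Dc(\G_b(\Q_p),\Lambda)$, $\mathcal{F}$ corresponds to a non-zero complex of smooth $\G_b(\Q_p)$-representations, which therefore has at least one non-zero cohomology representation $H$, and $H$ has some irreducible subquotient $\sigma$. Schur-irreducibility of $\mathcal{F}$ forces the spectral Bernstein center (the algebra of excursion operators) to act on $\mathcal{F}$, and hence on every subquotient of every $H^i(\mathcal{F})$, through the character defined by $\phi$; by Theorem~\ref{FSproperties}(4) and the definition of the Fargues--Scholze parameter via excursion operators, $\sigma$ therefore has Fargues--Scholze parameter $\phi$. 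By the bijection established above applied to $(b,\sigma)\in R(\phi)$, we conclude $b=b_\chi$ for some $\chi\in\Irr(S_\phi)$. The only delicate point here is the identification $\mathrm{FS}(\sigma)=\phi$ for an irreducible subquotient of a cohomology of a Schur-irreducible sheaf, which I expect to follow directly from the fact that excursion operators act by the same scalars on $\sigma$ as on $\mathcal{F}$; this is the one place I would double-check against \cite[Section~IX]{FS} before finalising the argument.
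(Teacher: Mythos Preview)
Your approach is essentially the one the paper indicates (reduce everything to Lemma~\ref{itm : relevance}, as in \cite[\S 4.1.2]{GLn}), and the surjectivity argument and the ``moreover'' clause are fine. The point you flag at the end is not actually delicate: excursion operators lie in the spectral Bernstein center, hence act by the same scalars on every subquotient of every cohomology of $\mathcal{F}$, so $\mathrm{FS}(\sigma)=\phi$ exactly as you say.

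There is, however, a genuine gap in your injectivity argument. The claim that ``distinct $\chi$'s produce distinct Newton points'' is false. For instance, take $r=2$ with $n_1=n_2$ and $\chi=(d_1,d_2)$, $\chi'=(d_2,d_1)$ for $d_1\neq d_2$: the multisets of slopes $\{d_1/n_1,\,d_2/n_2\}$ coincide, so $b_\chi=b_{\chi'}$, yet $\pi_\chi\not\simeq\pi_{\chi'}$ because they place $\phi_1$ and $\phi_2$ on different slope blocks of $\G_{b_\chi}$. What is true is that the \emph{pair} $(b_\chi,\pi_\chi)$ determines $\chi$: from $b_\chi$ one reads off the slope decomposition, and from the supercuspidal support of $\pi_\chi$ one reads off, for each $i$, which slope block carries the factor with parameter $\phi_i$; since the $\phi_i$ are pairwise distinct (condition (A1)), this recovers the function $i\mapsto \lambda_i=d_i/n_i$ and hence $\chi$. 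Equivalently, observe that your surjectivity argument already constructs an explicit inverse $R(\phi)\to\Irr(S_\phi)$, so injectivity follows immediately from that; you should replace the Newton-point sentence by either of these observations.
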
 
\begin{proof}
By using lemma (\ref{itm : relevance}), the same arguments as in \cite[\S 4.1.2]{GLn} still work.    
\end{proof}
\begin{remark}
    This is a more general version of some result on vanishing of the cohomology of local Shimura varieties as in \cite[Proposition 1.5]{Ko1}
\end{remark}
\begin{remark}
    See \cite{BMO} for a general statement when $\G$ is a connected reductive group and $\Lambda = \ov \Q_{\ell}$
\end{remark}
\subsection{Spectral action}
 In this subsection, we consider an $L$-parameter $\phi$ satisfying the condition (A1). In particular we see that $ \displaystyle S_{\phi} := \Cent (\phi) = \prod_{i = 1}^r \Gm $. Denote by $ [C_{\phi}] $ the connected component of $[Z^1(W_{\Q_p}, \widehat{\G})_{\Lambda}/\widehat{\G}]$ containing the closed point defined by $\phi$. By proposition \ref{itm : simple connected components}, we know that $[C_{\phi}] \simeq [(\mathbb{G}^r_{m} \times \mu_{\Lambda}) / \mathbb{G}^r_m]$ where the quotient is taken with respect to the trivial action of $\bb G_m^r$. Let $C \in \Perf^{\mathrm{qc}}([Z^1(W_{\Q_p}, \widehat{\G})_{\Lambda}/\widehat{\G}])$ be a sheaf such that its support does not intersect with $[C_{\phi}]$. Then by \cite[lemma 3.8]{Ham}, we know that $ C \star \mathcal{F} = 0 $ where $\mathcal{F}$ is any Schur irreducible sheaf on $\Bun_{\G}$  whose $L$-parameter belongs to $[C_{\phi}]$.

Denote by $\Perf([C_{\phi}])$ the category of perfect complex supported on $[C_{\phi}]$. We have a monoidal embedding of categories
\[
\Rep_{\Lambda}(S_{\phi}) \longrightarrow \Perf([C_{\phi}]) \longrightarrow \Perf([Z^1(W_{\Q_p}, \widehat{\G})/\widehat{\G}])
\]
where the image of an irreducible character $\chi$ is the vector bundle $C_{\chi}$ on $[C_{\phi}]$ corresponding to the structural sheaf on $\bb G_m^r \times \mu_{\Lambda}$ together with the $\bb G_m^r$-action defined by $\chi$.

Remark that $S_{\phi}$ is commutative then the set $\Irr( S_{\phi} )$ of its algebraic characters forms a group under the tensor product operator. In this specific case that group is isomorphic to $ \displaystyle \prod_{i=1}^r \Z $. Let $ \displaystyle \chi = (d_1, \dotsc, d_r) \in \prod_{i=1}^r \Z$ be the character of $S_{\phi}$ such that $ \chi (t_1, \dotsc, t_r) \displaystyle = \prod_{i = 1}^r t_i^{d_i} $. We can define a triple $( b_{\chi}, \pi_{\chi}, \mathcal{F}_{\chi} )$ as in the previous section. Recall that $ \mathcal{F}_{\chi} := i_{b_{\chi} !} ( \delta^{-1/2}_{b_{\chi}} \otimes \pi_{\chi}) [-d_{\chi}]$ where $d_{\chi} = \langle 2\rho, \nu_{b_{\chi}} \rangle$.

\begin{theorem} \phantomsection \label{itm : main theorem I}
Let $\phi$ be an $L$-parameter satisfying the condition (A1) in the beginning of the subsection. Let $\chi = (d_1, \dotsc, d_r)$ be an element in $\displaystyle \prod_{i=1}^r \Z $, then we have
\[
C_{\chi} \star \mathcal{F}_{\Id} = \mathcal{F}_{\chi}.
\]
where $\Id$ is the identity of $ \displaystyle \prod_{i=1}^r \Z$.
\end{theorem}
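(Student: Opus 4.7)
The plan is to prove this by double induction, first on $r$ (the number of irreducible components of $\tilde{\phi}$) and then on $|\chi| := d_1 + \cdots + d_r$, assuming without loss of generality that all $d_i \ge 0$, since $C_{\chi^{-1}} \star C_\chi \simeq C_{\Id}$ is the identity functor. The base case $\chi = \Id$ is trivial because $C_{\Id}$ is the structure sheaf. The base case $r = 1$ corresponds to $\tilde{\phi}$ being irreducible and is precisely Theorem~\ref{itm : supercuspidal - spectral action}(2), which was established via the global Mantovan-formula/torsion-vanishing argument of \S\ref{itm : Shimura}.

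Following the strategy of \cite{GLn}, the inductive step reduces via the monoidal property of the spectral action to the single-step identity $C_{\chi_i} \star \mathcal{F}_\chi \simeq \mathcal{F}_{\chi \otimes \chi_i}$ for each $1 \le i \le r$. To access $C_{\chi_i} \star \mathcal{F}_\chi$ geometrically I would take $\mu_{\Std} = (1,0^{(n-1)})$ and apply Proposition~\ref{itm : fundamental decomposition of Hecke operator} to $V_{\mu_{\Std}}$: its restriction to $S_\phi \times W_F$ decomposes as $\bigoplus_{j=1}^r \chi_j \boxtimes \phi_j$, which yields
\[
\T_{\mu_{\Std}}(\mathcal{F}_\chi) \simeq \bigoplus_{j=1}^r C_{\chi_j} \star \mathcal{F}_\chi \boxtimes \phi_j.
\]
Because assumption (A1) forces the $\phi_j$'s to be pairwise non-isomorphic up to unramified twist, the $\phi_i$-isotypic piece isolates $C_{\chi_i} \star \mathcal{F}_\chi$. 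Computing $\T_{\mu_{\Std}}(\mathcal{F}_\chi)$ then amounts to computing its stalks on every HN-stratum, and for this I would invoke Lemma~\ref{shimhecke} to rewrite $i_b^* \T_{\mu_{\Std}}(\mathcal{F}_\chi)$ as the cohomology $R\Gamma_c(\G, b_\chi, b, \mu_{\Std})[\pi_\chi]$ of the local Shtuka space up to explicit shifts and twists.

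The vanishing half of Proposition~\ref{itm : vanishing} forces $i_b^* \T_{\mu_{\Std}}(\mathcal{F}_\chi)$ to vanish unless $b = b_\eta$ for some $\eta \in \Irr(S_\phi)$, since every Schur-irreducible constituent must have Fargues-Scholze parameter in $[C_\phi]$. A careful analysis of modifications $\E_{b_\chi} \to \E_{b_\eta}$ of vector bundles on the Fargues-Fontaine curve bounded by $\mu_{\Std}$, combined with an analogue of the Boyer trick of \cite[Section~5]{GLn}, then identifies the surviving $\phi_i$-isotypic stalk as concentrated at $b = b_{\chi \otimes \chi_i}$ and equal to $\delta^{-1/2}_{b_{\chi \otimes \chi_i}} \otimes \pi_{\chi \otimes \chi_i}[-d_{\chi \otimes \chi_i}]$, which is exactly $\mathcal{F}_{\chi \otimes \chi_i}$. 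The main obstacle is the bookkeeping in this last step: tracking which slope modifications bounded by $\mu_{\Std}$ carry $b_\chi$ to each possible $b_\eta$, matching the induced $W_F$-equivariant decomposition against Proposition~\ref{itm : fundamental decomposition of Hecke operator} so as to recognize the $\phi_i$-component, and extracting the precise shift $d_{\chi \otimes \chi_i} - d_\chi$ from the dimension of the corresponding Shtuka space. Since this combinatorics runs parallel to the $\ov{\Q}_\ell$-case of \cite{GLn} once the $r = 1$ supercuspidal base case is in place, no structurally new argument is required beyond the global input of Theorem~\ref{itm : supercuspidal - spectral action}.
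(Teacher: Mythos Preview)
Your proposal is correct and follows essentially the same approach as the paper: both reduce to the strategy of \cite[Theorem~4.5]{GLn}, namely the double induction on $r$ and $|\chi|$, with Theorem~\ref{itm : supercuspidal - spectral action} supplying the $r=1$ base case and the inductive step handled by decomposing $\T_{\mu_{\Std}}$ via Proposition~\ref{itm : fundamental decomposition of Hecke operator}, computing stalks through Lemma~\ref{shimhecke}, and invoking Proposition~\ref{itm : vanishing} together with the Boyer-trick/modification analysis. The only packaging difference is that the paper first passes through the Weil-restriction identification $\Bun_{\G',F}\simeq\Bun_{\G,\Q_p}$ (with $\G'=\GL_{n,F}$) and the compatibility $\T_{V'}\simeq\Ind_{W_F}^{W_{\Q_p}}\circ\T_V$ of \cite[Prop.~IX.6.3]{FS}, so that the computation is literally the one carried out for $\GL_n$ over $F$ in \cite{GLn}; you instead run the argument directly for $\G$, which amounts to the same thing once $\mu_{\Std}$ is interpreted as the cocharacter $\big((1,0^{(n-1)}),(0^{(n)}),\dots,(0^{(n)})\big)$ of $\Res_{F/\Q_p}\GL_{n,F}$.
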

\begin{proof}
Let $\G'$ denote the group $ \GL_{n, F}$, thus $\G = \Res_{F/\Q_p}(\G')$. We will need the Fargues-Fontaine curves constructed over $\Q_p$ and over $F$. We denote by $\Bun_{\G', F}$ the stack of $\G'$-bundles on the Fargues-Fontaine curve over $F$ and similarly for $\Bun_{\G, \Q_p}$. By \cite[Proposition IX.6.3]{FS} we have $\Bun_{\G', F} \simeq \Bun_{\G, \Q_p}$ and in particular, we have $ \Dlis(\Bun_{\G', F}, \Lambda) \simeq \Dlis(\Bun_{\G, \Q_p}, \Lambda)$. Moreover $[C_{\phi}]$  is isomorphic to the connected component $[C_{\Tilde{\phi}}]$ containing $\Tilde{\phi}$ of $ [Z^1(W_{F}, \widehat{\GL_{n, F}})_{\Lambda}/\widehat{\GL_{n, F}} ] $ where $\Tilde{\phi}$ is the $L$-parameter of $^{L}\widehat{\G'}(\Lambda)$ corresponding to $\phi$ by Shapiro's lemma.  

Thanks to the identification $[C_{\phi}] \simeq [C_{\Tilde{\phi}}]$, we can compute the action of $C_{\chi}$ on $\Dlis(\Bun_{\G', F}, \Lambda)$ by the same strategy as in the proof of \cite[Theorem 4.5]{GLn}. Now we need to transfer this computation to $\Dlis(\Bun_{\G, \Q_p}, \Lambda)$. Let $V$ be the standard representation of $\widehat{\G'}(\Lambda)$. Remark that $ \widehat{\G}(\Lambda) = \displaystyle \prod_{F \hookrightarrow \overline{\Q}_p} \widehat{\G'}(\Lambda) $. Thus by fixing an embedding $F \hookrightarrow \ov \Q_p$, we can inflate $V$ to a representation of $\G(\Lambda) \rtimes W_F$ and then induce it to a representation $V'$ of $\G(\Lambda) \rtimes W_{\Q_p}$. By \cite[Proposition IX.6.3]{FS}, the Hecke action $\T_{V'}$ on $\Dlis(\Bun_{\G, \Q_p}, \Lambda)$ acts as $\Ind_{W_{F}}^{W_{\Q_p}} \circ T_{V}$ under the identification $ \Dlis(\Bun_{\G', F}, \Lambda) \simeq \Dlis(\Bun_{\G, \Q_p}, \Lambda)$ and restriction the Galois action to $W_F$. Thus we know the Hecke action $\T_{V'}$ on $\Dlis(\Bun_{\G, \Q_p}, \Lambda)$ (via the action of $\Ind_{W_{F}}^{W_{\Q_p}} \circ T_{V}$ on $\Dlis(\Bun_{\G', F}, \Lambda)$) and deduce the action of $C_{\chi}$ on $\Dlis(\Bun_{\G, \Q_p}, \Lambda)$.
\end{proof}
\subsection{Hecke eigensheaves and cohomology of local Shimura varieties for $\GL_n$}
\subsubsection{Hecke eigensheaves} \textbf{}

The main goals of this paragraph is to use theorem \ref{itm : main theorem I} to give an explicit description of the Hecke eigensheaves associated to some $L$-parameters $\phi$ of $\G := \Res_{F/\Q_p} \GL_{n, F}$ such that the corresponding $L$-parameter $\tilde{\phi} = \tilde{\phi}_1 \oplus \dotsc \oplus \tilde{\phi}_r $ of $\GL_{n, F}$ satisfies the condition ($\mathrm{A}1$). %For $1 \leq i \leq r$, we denote by $f_i$ the number of irreducible factors of $\tilde{\phi}_{i| \I_F}$. 
The connected component $[C_{\phi}]$ of the stack of $L$-parameters of $\G$ is isomorphic to $[C_{\phi}] \simeq [(\mathbb{G}^r_{m} \times \mu_{\Lambda}) / \mathbb{G}^r_m]$ as before. 
%the quotient stack of $(\mathbb{G}^r_{m} \times \mu_{ \Lambda})$ by the trivial action of $\mathbb{G}^r_{m}$ and $\mu_{ \Lambda} = \Spec \overline{\Q}_{\ell}$ if $\Lambda = \overline{\Q}_{\ell}$ and $\mu_{\Lambda} = \displaystyle \prod_{i = 1}^r \Spec \overline{\F}_{\ell}[\Syl_i]$ where $\Syl_i$ is the $\ell$-Sylow group of $ \F^{\times}_{p^{f_i}} $ if $\Lambda = \overline{\F}_{\ell}$. 
The category $\Rep_{\Lambda} (S_{\phi}) $ is semi-simple, we see that the regular representation of $S_{\phi}$ has the following description
\[
V_{\text{reg}} = \bigoplus_{\chi \in \Irr(S_{\phi})} \chi.
\] 
We deduce an explicit description of a Hecke eigensheaf associated to $\phi$.
\begin{theorem} \label{itm : Hecke eigensheaf}
The sheaf
\[
\displaystyle \mathcal{G}_{\phi} := \bigoplus_{\chi \in \Irr(S_{\phi})} \mathcal{F}_{\chi}
\]
is a non trivial Hecke eigensheaf corresponding to the $L$-parameter $\phi$.
\end{theorem}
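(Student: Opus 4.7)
The plan is to combine Proposition \ref{itm : fundamental decomposition of Hecke operator} and Theorem \ref{itm : main theorem I} and exploit the fact that $S_{\phi}\simeq\mathbb{G}_m^r$ is a torus, so that $\Irr(S_\phi)$ is an abelian group (under tensor product) of one-dimensional characters, every $V\in \Rep_\Lambda(\widehat{\G}(\Lambda))$ decomposes semisimply as an $S_\phi\times W_F$-representation, and the line bundles $C_\chi$ on $[C_\phi]$ satisfy $C_\chi\otimes C_{\chi'}\simeq C_{\chi\otimes\chi'}$. Concretely, I want to show that for every algebraic $V$ of $\widehat{\G}(\Lambda)$ one has
\[
\T_V(\mathcal{G}_\phi)\simeq \mathcal{G}_\phi\boxtimes (r_V\circ\phi)
\]
as objects in $\Dlis^{[C_\phi]}(\Bun_\G,\Lambda)^{\omega,BW_F}$, where $r_V\circ\phi$ denotes the $W_F$-representation obtained by composing the representation $r_V\colon \widehat{\G}(\Lambda)\to\GL(V)$ with $\phi$.

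Fix $V\in\Rep_\Lambda(\widehat{\G}(\Lambda))$ and write the $S_\phi\times W_F$-decomposition $V\simeq\bigoplus_{\chi\in\Irr(S_\phi)}\chi\boxtimes\sigma_\chi$, with $\sigma_\chi=\Hom_{S_\phi}(\chi,V)$. Each sheaf $\mathcal{F}_{\chi'}=i^{\mathrm{ren}}_{b_{\chi'}!}(\pi_{\chi'})$ is Schur-irreducible with Fargues--Scholze parameter $\phi$, so Proposition \ref{itm : fundamental decomposition of Hecke operator} yields
\[
\T_V(\mathcal{F}_{\chi'})\simeq\bigoplus_{\chi\in\Irr(S_\phi)} (C_\chi\star \mathcal{F}_{\chi'})\boxtimes \sigma_\chi.
\]
By Theorem \ref{itm : main theorem I}, $\mathcal{F}_{\chi'}=C_{\chi'}\star\mathcal{F}_{\Id}$, and monoidality of the spectral action gives $C_\chi\star(C_{\chi'}\star\mathcal{F}_{\Id})\simeq (C_\chi\otimes C_{\chi'})\star\mathcal{F}_{\Id}\simeq C_{\chi\otimes\chi'}\star\mathcal{F}_{\Id}\simeq \mathcal{F}_{\chi\otimes\chi'}$. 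Summing $\T_V$ over $\chi'$ and reindexing $\chi'':=\chi\otimes\chi'$,
\[
\T_V(\mathcal{G}_\phi)\simeq \bigoplus_{\chi',\chi}\mathcal{F}_{\chi\otimes\chi'}\boxtimes \sigma_\chi\simeq \bigoplus_{\chi''}\mathcal{F}_{\chi''}\boxtimes\Bigl(\bigoplus_{\chi}\sigma_\chi\Bigr)\simeq \mathcal{G}_\phi\boxtimes(r_V\circ\phi),
\]
since $\bigoplus_\chi\sigma_\chi$ is precisely the underlying $W_F$-representation of $V$ with $W_F$ acting via $\phi$ (i.e.\ forgetting the $S_\phi$-action in the decomposition of $V$). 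Non-triviality is immediate because $\mathcal{F}_{\Id}=i^{\mathrm{ren}}_{1!}(\pi_{\Id})\neq 0$, hence $\mathcal{G}_\phi\neq 0$.

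The hardest point, and really the only substantive one, is keeping track of the $W_F$-equivariance in the reindexing. This is supplied by the commutative diagram of monoidal functors preceding Proposition \ref{itm : fundamental decomposition of Hecke operator}: the inertia acts only through the multiplicity spaces $\sigma_{\chi|\I_F}$, and the Frobenius refinement used in the proof of that proposition ensures that the isomorphism $C_\chi\star\mathcal{F}_{\chi'}\simeq \mathcal{F}_{\chi\otimes\chi'}$ is compatible with the $W_F$-action carried by the $\sigma_\chi$. The extension from $W_F$ to $W_{\Q_p}$, as well as the analogous statement for products $V=\boxtimes_{i\in I}V_i$ indexed by a finite set $I$ needed in the full definition of a Hecke eigensheaf, follows from Shapiro's lemma together with the standard compatibility of Hecke operators with exterior tensor products.
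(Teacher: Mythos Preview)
Your proof is correct and follows essentially the same approach as the paper: the paper derives the eigensheaf property directly from the regular representation decomposition $V_{\mathrm{reg}}=\bigoplus_{\chi}\chi$ together with Proposition \ref{itm : fundamental decomposition of Hecke operator} and Theorem \ref{itm : main theorem I}, which is exactly the computation you spell out (your version is in fact more detailed than what the paper records). One minor remark: the paper explicitly chooses to work with $BW_F$ rather than $BW_{\Q_p}$ throughout, so your final comment about extending from $W_F$ to $W_{\Q_p}$ is not strictly needed here.
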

\subsubsection{Cohomology of local Shimura varieties and Harris-Viehmann conjecture} \textbf{}

The main goals of this subsection is to compute some parts of the cohomology of the moduli spaces $\Sht(\G, b, b', \mu)$ with 
$\Lambda$-coefficient where $b, b' \in B(\G)$ and $\mu$ is a dominant cocharacter of $\G$. Then we deduce new cases of the Harris-Viehmann conjecture for $\G$.

%Let $b, b' \in B(\G)$ be such that $\phi$ is $\G_{b}(\Q_p)$-relevant and and let $\pi_{b}$ be the irreducible representation of $\G_{b}(\Q_p)$ corresponding to $\phi$ under the local Langlands correspondence. 
We denote by $r_{-\mu}$ the highest weight representation of $\widehat{\G}(\Lambda)$ of highest weight $(\mu^{-1})_{\dom}$. Let $\lambda \in \Irr(S_{\phi})$ be a character and let $(b, \pi_{b})$ be the corresponding pair by proposition \ref{itm : vanishing}. Then by lemma \ref{shimhecke}, we have
\[
 R\Gamma_{c}(\G, b',b,\mu)[\delta^{1/2}_b \otimes \pi_{b}][d_{b}] \simeq  i_{b'}^{*}\T_{-\mu}\mathcal{F_{\lambda}}, 
\]
where $d_b = \langle 2\rho, \nu_b \rangle $ and $b' \in B(\G)$.

We have an identification of $S_{\phi} \times W_{F}$-representations:
\[
r_{-\mu} \circ \phi_{| \widehat{\G}(\Lambda)} = \bigoplus_{\chi \in \Irr(S_{\phi})} \chi \boxtimes \sigma_{\chi}, 
\]
where $\sigma_{\chi}$ is the $W_{F}$-representations $\Hom_{S_{\phi}}(\chi, r_{-\mu} \circ \phi_{| \widehat{\G}(\Lambda)})$, then we have 
\[
\T_{-\mu} \mathcal{F}_{\lambda} = \bigoplus_{\chi \in \Irr(S_{\phi})} (C_{\chi} \star \mathcal{F}_{\lambda}) \boxtimes \sigma_{\chi}. 
\]

Hence we can use the explicit description of the action of $C_{\chi}$ to compute $R\Gamma_{c}(\GL_n,b',b,\mu)) [\delta^{1/2}_b \otimes \pi_{b}]$.

\begin{theorem}(some cases of the generalized Harris-Viehmann's conjecture) \label{itm : Harris-Viehmann conjecture}
    Let $b$ be an element in $B(\GL_n)$ and let $\M$ be the standard Levi subgroup of $\GL_n$ that is the split inner form of $\G_b$ and $\rP$ is the standard parabolic subgroup of $\GL_n$ whose Levi factor is $\M$. Let $\mu$ be an arbitrary cocharacter of $\G$ and $ \pi_b $ be an irreducible representation of $\G_b(\Q_p)$ such that its corresponding $L$-parameter $\phi^b$ post-composed with the natural embedding $ ^{L}\G_b(\Lambda) \longrightarrow \ ^{L}\G(\Lambda) $ is $\phi$. Thus we have 
    \begin{equation} \phantomsection \label{itm : cohomology of RZ spaces}
      R\Gamma_{c}(\G,b,\mu) [\delta^{1/2}_b \otimes \pi_b] = \pi_1 \boxtimes \Hom_{S_{\phi}} (\chi^{-1}_b ,r_{-\mu} \circ \phi_{| \widehat{\G}(\Lambda)})[-h]  
    \end{equation}
    where $\pi_1$ is the irreducibe representation of $\G(\Q_p)$ whose Fargues-Scholze $L$-parameter is $\phi$ and $\chi_b$ is the unique character of $S_{\phi}$ corresponding to the couple $(b, \pi_b)$ and where $h = \langle 2\rho, \nu_b \rangle$. In particular, the generalized Harris-Viehmann's conjecture is true in this case. More precisely, suppose that as $ ^{L}\M(\Lambda)$-representation, we have 
    \begin{equation} 
   (r_{-\mu})^{\rm ss} \simeq \bigoplus_{\mu_{\M} \in X_{*}(\M) } \big((r_{-\mu_{\M}})^{\rm ss}\big)^{m_{-\mu_{\M}}} 
\end{equation}
for some multiplicity $ m_{-\mu_{\M}} = \dim \Hom ( r_{-\mu_{\M}} ,r_{-\mu | \M} ) $ and where $(-)^{\rm ss}$ denotes the semisimplification. Then we have

\begin{align*}
R\Gamma_{c}(\G, b,\mu) [ \delta^{1/2}_b \otimes \pi_{b}] [h - d_{\M}]  &\simeq \bigoplus_{\mu_{\M} \in X_{*}(\M) } \big( \Ind_{\rP}^{\GL_n} R\Gamma_{c}(\M, b_{\M},\mu_{\M})  [\delta^{1/2}_{b_{\M}} \otimes \pi_{b}]\big)^{m_{-\mu_{\M}}}.
\end{align*}
where $d_{\M} = \langle 2\rho_{\M}, \nu_{b_{\M}} \rangle$ and $b_{\M}$ is the reduction of $b$ to $\M$.
\end{theorem}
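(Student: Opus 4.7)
The plan is to combine Lemma \ref{shimhecke}, Proposition \ref{itm : fundamental decomposition of Hecke operator}, and Theorem \ref{itm : main theorem I}, then use the vanishing in Proposition \ref{itm : vanishing} to isolate a single summand after restricting to the basic stratum $\Bun_\G^1$.

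For equation (\ref{itm : cohomology of RZ spaces}), one starts from the variant $R\Gamma_c(\G, b', b, \mu)[\pi_b \otimes \kappa_b^{-1}][2d_b] \simeq i_{b'}^* \T_{-\mu} i_{b!}(\pi_b)$ of Lemma \ref{shimhecke}, takes $b' = 1$, replaces $\pi_b$ by $\delta_b^{-1/2} \otimes \pi_b$, and uses $\kappa_b = \delta_b^{-1}$ together with $\mathcal{F}_{\chi_b} = i_{b!}(\delta_b^{-1/2} \otimes \pi_b)[-d_b]$ to obtain
\[
R\Gamma_c(\G, b, \mu)[\delta_b^{1/2} \otimes \pi_b][d_b] \simeq i_1^* \T_{-\mu} \mathcal{F}_{\chi_b},
\]
where $\chi_b \in \Irr(S_\phi)$ is the character attached to $(b, \pi_b)$ under Proposition \ref{itm : vanishing}. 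Decomposing the $S_\phi \times W_F$-representation $r_{-\mu} \circ \phi_{|\widehat{\G}(\Lambda)}$ as $\bigoplus_\chi \chi \boxtimes \sigma_\chi$ with $\sigma_\chi = \Hom_{S_\phi}(\chi, r_{-\mu} \circ \phi_{|\widehat{\G}(\Lambda)})$, Proposition \ref{itm : fundamental decomposition of Hecke operator} applied to the Schur-irreducible $\mathcal{F}_{\chi_b}$ (whose Fargues--Scholze parameter is $\phi$) yields $\T_{-\mu} \mathcal{F}_{\chi_b} = \bigoplus_\chi (C_\chi \star \mathcal{F}_{\chi_b}) \boxtimes \sigma_\chi$. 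The monoidal identity $C_\chi \otimes^{\bb L} C_{\chi_b} \simeq C_{\chi \otimes \chi_b}$ combined with Theorem \ref{itm : main theorem I} gives $C_\chi \star \mathcal{F}_{\chi_b} = C_{\chi \otimes \chi_b} \star \mathcal{F}_{\Id} = \mathcal{F}_{\chi \otimes \chi_b}$. By Proposition \ref{itm : vanishing}, $\mathcal{F}_{\chi \otimes \chi_b}$ is supported on $\Bun_\G^{b_{\chi \otimes \chi_b}}$, so $i_1^* \mathcal{F}_{\chi \otimes \chi_b} = 0$ unless $\chi = \chi_b^{-1}$, in which case $\mathcal{F}_{\Id} = i_{1!}(\pi_1)$ gives $i_1^* \mathcal{F}_{\Id} = \pi_1$. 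This proves equation (\ref{itm : cohomology of RZ spaces}) since $h = d_b$.

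For the generalized Harris--Viehmann identity, apply the just-proved Part~1 with $(\G, b, \mu)$ replaced by $(\M, b_\M, \mu_\M)$, where $b_\M \in B(\M)^+_{\bas}$ is the unique basic lift of $b$ under $B(\M)^+_{\bas} \simeq B(\G)_\rP$ and $\phi$ factors as $\phi^\M : W_{\Q_p} \to {}^L\M(\Lambda)$. Condition (A1) places $S_\phi$ inside $\widehat{\M}$, so $S_{\phi^\M} = S_\phi$, and the character of $S_{\phi^\M}$ attached to $(b_\M, \pi_b)$ agrees with $\chi_b$ (both are determined by the same slope data $(d_1/n_1, \dotsc, d_r/n_r)$). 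The resulting identity
\[
R\Gamma_c(\M, b_\M, \mu_\M)[\delta^{1/2}_{b_\M} \otimes \pi_b] \simeq \pi_1^\M \boxtimes \Hom_{S_\phi}(\chi_b^{-1}, r_{-\mu_\M} \circ \phi^\M_{|\widehat{\M}(\Lambda)})[-d_\M]
\]
is fed into normalized parabolic induction; by \cite[Theorem 7.23]{MS14} condition (A1) forces $\Ind_\rP^{\GL_n}(\pi_1^\M) \simeq \pi_1$ to be irreducible, and exactness of $\Ind_\rP^{\GL_n}$ yields the analogous formula on $\GL_n$. Summing over $\mu_\M$ with multiplicities $m_{-\mu_\M}$ and applying the branching rule $(r_{-\mu})^{\rm ss}_{|{}^L\M} = \bigoplus_{\mu_\M} (r_{-\mu_\M})^{m_{-\mu_\M}}$ collapses the internal direct sum to $r_{-\mu} \circ \phi_{|\widehat{\G}(\Lambda)}$; the passage to semisimplification is harmless since $S_\phi$ is a torus and every finite-dimensional $\Lambda[S_\phi]$-module is completely reducible. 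Comparing with Part~1 yields the claimed identity after the shift $[h - d_\M]$ on the LHS.

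The principal subtlety is the parallel bookkeeping of cohomological degree shifts ($d_b$, $d_\M$, $h$) and of the half-modulus twists $\delta_b^{1/2}$, $\delta_{b_\M}^{1/2}$ between the two applications of Part~1, together with the identification of $S_\phi$ (and of the distinguished character $\chi_b$) inside both $\widehat{\M}$ and $\widehat{\G}$. Once these data are aligned, the theorem reduces to a formal manipulation of the spectral action furnished by Theorem \ref{itm : main theorem I}.
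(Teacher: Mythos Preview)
Your proof is correct and follows essentially the same route as the paper: the text immediately preceding the theorem already records the identities $R\Gamma_c(\G,b',b,\mu)[\delta_b^{1/2}\otimes\pi_b][d_b]\simeq i_{b'}^*\T_{-\mu}\mathcal{F}_{\chi_b}$ and $\T_{-\mu}\mathcal{F}_{\chi_b}=\bigoplus_\chi (C_\chi\star\mathcal{F}_{\chi_b})\boxtimes\sigma_\chi$, and the paper's proof then simply cites \cite[Theorem 8.1]{GLn} together with the observation that $\Rep_\Lambda(S_\phi)$ is semisimple because $S_\phi$ is a torus --- exactly the point you make when justifying the passage to $(r_{-\mu})^{\rm ss}$. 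Your write-up is a faithful expansion of that reference, including the parallel application of Part~1 to $(\M,b_\M,\mu_\M)$, the identification $S_{\phi^\M}=S_\phi$, and the use of $\Ind_\rP^{\GL_n}\pi_1^\M\simeq\pi_1$.
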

\begin{proof}
    Note that the category $\Rep_{\Lambda}(\widehat{\G}(\Lambda))$ is semisimple if $\Lambda = \overline{\Q}_{\ell}$ but it is no longer semisimple if $\Lambda = \overline{\F}_{\ell}$. However we can still apply the arguments in \cite[Theorem 8.1]{GLn} to this situation. The point is that since $S_{\phi}$ is a torus, the category $\Rep_{\Lambda}(S_{\phi})$ is semi-simple and the Hecke operator $\T_{-\mu}$ only depends on $r_{-\mu} \circ \phi_{| \widehat{\G}(\Lambda)}$ as $S_{\phi} \times W_F$-representation.
\end{proof}
\section{On categorical local Langlands conjecture for $\GL_n$}

\subsection{Spectral action acting on generators of $\Rep_{\Lambda}(\G_b(\Q_p))$} \textbf{}

In this section, we will consider an $L$-parameter $\phi$ of $\G$ such that the corresponding $L$-parameter $\tilde{\phi} = \tilde{\phi}_1 \oplus \dotsc \oplus \tilde{\phi}_r $ of $\GL_{n, F}$ satisfies the condition ($\mathrm{A}1$). For $1 \leq i \leq r$, we denote by $f_i$ the number of irreducible factors of $\tilde{\phi}_{i| \I_F}$. The connected component $[C_{\phi}]$ of the stack of $L$-parameters of $\G$ is isomorphic to $[C_{\phi}] \simeq [(\mathbb{G}^r_{m} \times \mu_{\Lambda}) / \mathbb{G}^r_m]$. As before, for each $ 1 \leq i \leq r $, denote by $\chi_i$ the character of $S_{\phi} \simeq \mathbb{G}^r_m $ corresponding to $(0, \dotsc, 0, 1, 0, \dotsc, 0)$ where $1$ is in the $i^{\mathrm{th}}$-position.

Let $\G_m := \Res_{F/\Q_p} (\GL_{m, D})$ where $D$ is a division algebra over $F$ of degree $g^2$ and of invariant $-\tfrac{a}{g}$. We suppose that $g$ divides $ n_i := \dim(\tilde{\phi}_i)$ for each $1 \leq i \leq r$. Thus by lemma \ref{itm : relevance}, there exists a unique irreducible representation $\pi^{\G_m}_{\phi}$ in $\Rep_{\Lambda}(\G_m)$ whose Fargues-Scholze parameter is given by $\phi$. Let $\pi_{\phi_i}$ be the unique irreducible supercuspidal representation of $\GL_{m_i}(D)$ with Fargues-Scholze parameter $\phi_i$; denote by $\M$ the Levi subgroup $\displaystyle \prod_{i = 1}^r \Res_{F/\Q_p} (\GL_{m_i, D})$ of $\G_m$ where $m_i := \tfrac{n_i}{g}$ and denote by $\rho$ the irreducible representation $ \displaystyle \prod_{i=1}^r \pi_{\phi_i}$. Thus $(\M, \rho)$ is a supercuspidal pair of $\G_m$. Denote by $\mathfrak{s}_{\phi, \G_m}$ the inertial class of $(\M, \rho)$ and denote by $\Rep_{\Lambda}(\mathfrak{s}_{\phi, \G_m})$ the block of $\Rep_{\Lambda}(\G_m)$ corresponding to $\mathfrak{s}_{\phi, \G_m}$ (\cite[Theorems 10.4, 10.5]{SS16}).

Let $A_{\phi} := \displaystyle \Lambda[\Z^r \times \prod_{i = 1}^r \Syl_i] = \Lambda [X_1^{\pm 1}, \dotsc, X^{\pm 1}_r, Y_1, \dotsc, Y_r]/(Y^{\ell^{u_1}}_1, \dotsc Y^{\ell^{u_r}}_r) $ be the ring of global sections of $[C_{\phi}]$ where $|\Syl_i| = \ell^{u_i}$ for $1 \leq i \leq r$. For each Fargues-Scholze parameter $\phi'$ in $[C_{\phi}]$, we denote by $\mathfrak{m}_{\phi'}$ the associated maximal ideal of $A_{\phi}$.

\begin{proposition} \phantomsection \label{itm : description of blocks}
We suppose $\Lambda = \overline{\F}_{\ell}$. Then there is an equivalence between the block $\Rep_{\Lambda}(\mathfrak{s}_{\phi, \G_m})$ and the category of $A_{\phi}$-modules such that the unique irreducible representation in $\Rep_{\Lambda}(\mathfrak{s}_{\phi, \G_m})$ whose Fargues-Scholze parameter given by $\phi' \in [C_{\phi}]$ corresponds to the $A_{\phi}$-module $A_{\phi} / \mathfrak{m}_{\phi'}$. 
\end{proposition}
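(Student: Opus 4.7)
The plan is to reduce the statement to the supercuspidal case established in Theorem \ref{itm : supercuspidal - spectral action}, by exploiting condition (A1), which forces the block on $\G_m$ to factor through parabolic induction with no linking. I would proceed in three stages. First, for each $1 \leq i \leq r$, apply Theorem \ref{itm : supercuspidal - spectral action} to the irreducible parameter $\phi_i$ viewed on the group $\Res_{F/\Q_p}(\GL_{m_i}(D))$: this yields an equivalence
\[
\Rep_\Lambda(\mathfrak{s}_{\phi_i, \GL_{m_i}(D)}) \simeq A_{\phi_i}\text{-Mod}, \qquad A_{\phi_i} = \Lambda[X_i^{\pm 1}, Y_i]/(Y_i^{\ell^{u_i}}).
\]
For the standard Levi subgroup $\M = \prod_{i=1}^r \Res_{F/\Q_p}(\GL_{m_i}(D))$ of $\G_m$, the block of $\Rep_\Lambda(\M(\Q_p))$ containing $\rho = \boxtimes_i \pi_{\phi_i}$ is the external tensor product of the individual supercuspidal blocks, and is therefore equivalent to $\bigotimes_i A_{\phi_i}\text{-Mod} \simeq A_\phi\text{-Mod}$, with the irreducible twist $\rho \otimes \chi$ corresponding to $A_\phi/\mathfrak{m}_{\phi'}$ for the parameter $\phi' = \bigoplus_i \phi_i \otimes \chi_i \in [C_\phi]$.

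Second, the heart of the argument is the transfer of this equivalence from $\M$ to $\G_m$ via parabolic induction. Condition (A1) ensures that the $\pi_{\phi_i}$ are pairwise non-equivalent modulo unramified twist, so by \cite[Theorem 7.24]{MS14} the normalised parabolic induction $\Ind_\rP^{\G_m}(\rho \otimes \chi)$ is irreducible for every unramified twist $\chi$ of the centre of $\M$; equivalently, the associated segments of length one are pairwise unlinked. Using this, I would assemble a Sécherre-Stevens covering type $(J, \lambda)$ in $\G_m$ from the individual supercuspidal types on the factors $\GL_{m_i}(D)$, and verify that the Hecke algebra $\mathcal{H}(\G_m, J, \lambda)$ decomposes as the tensor product of the individual Hecke algebras and is therefore isomorphic to $A_\phi$ by Step 1. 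Equivalently, $\mathcal{W}_{\phi, \G_m} := \ind_J^{\G_m}(\lambda)$ is a compact projective generator of $\Rep_\Lambda(\mathfrak{s}_{\phi, \G_m})$ with $\End(\mathcal{W}_{\phi, \G_m})^{\opp} \simeq A_\phi$, and Morita theory then produces the required equivalence $\Rep_\Lambda(\mathfrak{s}_{\phi, \G_m}) \simeq A_\phi\text{-Mod}$.

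Finally, to match irreducibles with points of $\Spec A_\phi$, I would invoke the spectral Bernstein centre: the Fargues-Scholze construction furnishes a map $\Psi_{\G_m} \colon \mathcal{O}([C_\phi]) \simeq A_\phi \longrightarrow \End(\id_{\Rep_\Lambda(\mathfrak{s}_{\phi, \G_m})})$, and an argument parallel to Lemma \ref{itm : global sections acting on sheaves} (after a change of variables) identifies $\Psi_{\G_m}$ with the identity on $A_\phi$ under the equivalence produced above. Hence the unique irreducible $\pi$ with Fargues-Scholze parameter $\phi' \in [C_\phi]$ is exactly the one annihilated by $\mathfrak{m}_{\phi'}$, so it corresponds to $A_\phi/\mathfrak{m}_{\phi'}$. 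The principal obstacle I anticipate is the modular parabolic induction step: for $\Lambda = \overline{\F}_\ell$ the equivalence of blocks under parabolic induction is not automatic, and the tensor product decomposition of the Hecke algebra of the covering type has to be verified carefully, using Sécherre-Stevens theory together with the explicit description of the supercuspidal Hecke algebras furnished by Theorem \ref{itm : supercuspidal - spectral action}.
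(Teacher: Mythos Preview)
Your approach is genuinely different from the paper's, and the comparison is instructive. You work purely inside $\G_m$: build the block on the Levi $\M$ as a tensor product of supercuspidal blocks (each handled by Theorem \ref{itm : supercuspidal - spectral action}), then push to $\G_m$ via a S\'echerre--Stevens covering type and argue that the Hecke algebra of the cover tensor-decomposes because condition (A1) rules out linking. The paper instead works geometrically on $\Bun_\G$: it realises $\G_m$ as $\G_{b_\chi}$ for a suitable $\chi$, then uses that the spectral action of $C_{\chi_1}$ is an auto-equivalence of $\Dlis^{[C_\phi]}(\Bun_\G,\Lambda)^\omega$ to transport the block on $\G_m$ to the block on a \emph{product} group $\G_{b_{\chi\otimes\chi_1}} \simeq D^\times_{-\lambda} \times \GL_{m-m_1}(D)$, where the block is literally an external tensor product and the induction hypothesis (on $r$) applies to the second factor. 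The key geometric work is showing that $C_{\chi_1}\star(-)$ really lands in the right stratum, which is done by stalk computations via local Shimura varieties and the vanishing results of Proposition \ref{itm : vanishing}.

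What each buys: your route is more self-contained representation theory and, if it goes through, would not need the cohomology computations of local shtuka spaces at all. The paper's route bypasses precisely the obstacle you flag --- the tensor decomposition of the modular Hecke algebra of a cover --- by replacing it with the statement that $C_{\chi_1}$ is invertible plus explicit control of supports. That obstacle is real: for non-banal $\ell$, the tensor factorisation of Hecke algebras of semisimple types on inner forms is not in the literature in the generality you need (note the paper's remark that in the banal case one can alternatively deduce the result from \cite{DHKM24}). Your Step~3 is fine once Step~2 is in hand, though you should argue via compatibility of Fargues--Scholze parameters with parabolic induction (Theorem~\ref{FSproperties}(4)) rather than via $\Psi_{\G_m}$ directly, since the isomorphism $\Psi_\G$ is only cited for the split form.
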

\begin{remark}
    If $\ell$ is a banal prime then the results can be deduced from \cite{DHKM24}.
\end{remark}
\begin{proof}
We prove the proposition by induction on $r$. The base case follows from theorem $\ref{itm : supercuspidal - spectral action}$. Hence suppose that $r > 1$ and the proposition is true for $1, \dotsc ,r-1 $.  

We choose $\chi = (m_1 \cdot a, \dotsc, m_r \cdot a) \in \Irr(S_{\phi}) \simeq \Z^r$ then $b_{\chi}$ is the basic element in $B(\G)$ such that $\kappa(b_{\chi}) = m \cdot a $. Moreover $\G_{b_{\chi}}(\Q_p) \simeq \G_m$ and $\pi_{\chi} \simeq \pi^{\G_m}_{\phi}$. Without loss of generality, we suppose $n_1 = \mathrm{max} \{ n_i \ | \ 1 \leq i \leq r \}$. 

Let $\chi' = \chi \otimes \chi_1$ then $\G_{b_{\chi'}}(\Q_p) \simeq D^{\times}_{-\lambda} \times \Res_{F/\Q_p} (\GL_{m-m_1, D}) $ where $\lambda = \tfrac{m_1 \cdot a + 1}{n_1}$ and $D_{-\lambda}$ is the inner form of $\Res_{F, \Q_p}(\GL_{m_1, F})$ of invariant $ -\lambda$ over $F$. Moreover $ \M' := \displaystyle D^{\times}_{-\lambda} (\Q_p) \times \prod_{i=2}^r \Res_{F/\Q_p} (\GL_{m_i, D}) (\Q_p) $ is a Levi subgroup of $ \G_{b_{\chi'}}$. Let $\pi'_{\phi_1}$ be the unique irreducible supercuspidal representation in $\Rep_{\Lambda}( D^{\times}_{-\lambda}(\Q_p) )$ whose Fargues-Scholze parameter is given by $\phi_1$ and denote by $\rho'$ the representation $ \pi'_{\phi_1} \times \displaystyle \prod_{i=2}^r \pi_{\phi_i} $ of $\M'$. Thus $(\M', \rho')$ is a supercuspidal pair of $\G_{b_{\chi'}}$. Denote by $\mathfrak{s}_{\phi, \chi'}$ the inertial class of $(\M', \rho')$ and denote by $\Rep_{\Lambda}(\mathfrak{s}_{\phi, \chi'})$ the block of $\Rep_{\Lambda}(\G_{b_{\chi'}})$ corresponding to $\mathfrak{s}_{\phi, \chi'}$ (\cite[Theorems 10.4, 10.5]{SS16}). The categories $\Rep_{\Lambda}(\mathfrak{s}_{\phi, \G_m})^{\omega}$ and $\Rep_{\Lambda}(\mathfrak{s}_{\phi, \chi'})^{\omega}$ are full sub-categories of $\Dlis^{[C_{\phi}]}(\Bun_{\G}, \Lambda)^{\omega}$ via the push-forward functors $i^{\rm ren}_{b_{\chi}!}$ and $i^{\rm ren}_{b_{\chi'!}}$ respectively. We will show that the functor $C_{\chi_1^{-1}} \star (-) $ gives an equivalence of the above two categories. 

Let $\pi$ be a finitely generated smooth representation in $\Rep_{\Lambda}(\mathfrak{s}_{\phi, \chi'})^{\omega}$. By the same arguments as in \cite[section 6.1]{GLn}, we can compute $i^*_{b_{\chi}}\big( C_{\chi^{-1}_1} \star (i^{\rm ren}_{b_{\chi'} !} \pi) \big)$ and then show that $ i_{b_{\chi}!}\big(i^*_{b_{\chi}}\big( C_{\chi^{-1}_1} \star (i^{\rm ren}_{b_{\chi'} !} \pi) \big)\big) $ belongs to the sub-category $ i^{\rm ren}_{b_{\chi}!} (\Rep_{\Lambda}(\mathfrak{s}_{\phi, \G_m})^{\omega}) $. Thus, to show that $C_{\chi^{-1}_1} \star (i^{\rm ren}_{b_{\chi'} !} \pi)$ belongs to $ i^{\rm ren}_{b_{\chi}!} (\Rep_{\Lambda}(\mathfrak{s}_{\phi, \G_m})^{\omega}) $, it is enough to show that the restriction $i^*_{b}\big( C_{\chi^{-1}_1} \star (i^{\rm ren}_{b_{\chi'} !} \pi) \big)$ vanishes whenever the stratum corresponding to $b \in B(\G)$ is different from that of $b_{\chi}$. Now let $b \in B(\G)$ be such an element. 

Let $\mu$ be the cocharacter $ \big( (-1, 0^{(n-1)}), \underbrace{(0^{(n)}), \dotsc, (0^{(n)})}_{d-1 \ \mathrm{times}}  \big) $ of $\G(\Q_p)$. Let $V_{\mu}$ be the highest weight representation of $\widehat{\G}$ of highest weight $\mu$ and $C_{\mu}$ be the vector bundle on $[C_{\phi}]$ corresponding to  $V_{\mu}$. We have an equality of $S_{\phi} \times \I_F$-representations
\[
V_{\mu} \simeq \bigoplus_{i = 1}^r \chi^{-1}_i \boxtimes \tilde{\phi}^{\vee}_{i | \I_F},
\]
and thus we have a decomposition
\[
C_{\mu} \star ( i^{\rm ren}_{b_{\chi'}!} \pi) = \bigoplus_{i = 1}^r (C_{\chi^{-1}_i} \star ( i^{\rm ren}_{b_{\chi'}!} \pi)) \boxtimes \tilde{\phi}^{\vee}_{i | \I_F}.
\]

However, $C_{\mu} \star ( i^{\rm ren}_{b_{\chi'}!} \pi) = \T_{V_{\mu}}(i^{\rm ren}_{b_{\chi'}!} \pi)$ where $\T_{V_{\mu}}$ denotes the Hecke operator associated to $V_{\mu}$. In particular, by using lemma \ref{shimhecke}, we deduce that there is a modification of type $\mu$ from $\E_{b_{\chi'}}$ to $\E_b$. By combining the proposition \ref{itm : vanishing} and the arguments as in \cite[Example 2.3]{GLn}, we deduce that $b$ is of the form $b_{\chi' \otimes \chi^{-1}_i}$ where $i$ is an integer and $ 2 \leq i \leq r $. We can then suppose that $\nu_{E_b}$ is maximal with respect to the Bruhat order among the set of $b$ in $B(\G)$ such that the restriction $i^*_{b}\big( C_{\chi^{-1}_1} \star (i^{\rm ren}_{b_{\chi'} !} \pi) \big)$ is non trivial. Since the Hecke operators preserve compact objects, we deduce that $C_{\chi^{-1}_1} \star ( i^{\rm ren}_{b_{\chi'}!} \pi)$ is a compact object. In particular, the cohomology groups of $i^*_{b}(C_{\chi^{-1}_1} \star ( i^{\rm ren}_{b_{\chi'}!} \pi))$ has an irreducible quotient. Denote one of this quotient by $\rho$ then its Fargues-Scholze parameter belongs to $[C_{\phi}]$. Since $b$ is maximal, we deduce that there exists some $k \in \Z$ such that
\[
\Hom_{\Dlis^{[C_{\phi}]}(\Bun_{\G}, \Lambda)^{\omega}}( C_{\chi^{-1}_1} \star \big( i^{\rm ren}_{b_{\chi'}!} \pi), i^{\rm ren}_{b !} \rho [k] \big) \neq 0.
\]

Now by using theorem \ref{itm : main theorem I} and the fact that $C_{\chi_1} \star (-)$ is an auto-equivalence of $\Dlis^{[C_{\phi}]}(\Bun_{\G}, \Lambda)^{\omega}$, we deduce that 
\[
\Hom_{\Dlis^{[C_{\phi}]}(\Bun_{\G}, \Lambda)^{\omega}} \big( i^{\rm ren}_{b_{\chi'}!} \pi, i^{\rm ren}_{b_{\chi' \otimes \chi_1 \otimes \chi^{-1}_i } !} \rho [k] \big) \neq 0,
\]
and it is a contradiction by \cite[lemma 2.6]{GLn}. We deduce that $C_{\chi^{-1}_1} \star (i^{\rm ren}_{b_{\chi'} !} \pi)$ belongs to $ i^{\rm ren}_{b_{\chi}!} (\Rep_{\Lambda}(\mathfrak{s}_{\phi, \G_m})^{\omega}) $.

Let $\pi'$ be a finitely generated smooth representation in $\Rep_{\Lambda}(\mathfrak{s}_{\phi, \G_m})^{\omega}$, we want to show that $C_{\chi_1} \star (i^{\rm ren}_{b_{\chi} !} \pi')$ belongs to $ i^{\rm ren}_{b_{\chi'}!} (\Rep_{\Lambda}(\mathfrak{s}_{\phi, \chi'})^{\omega}) $. As above, by using the decomposition of $S_{\phi} \times \I_F$ representations
\[
V_{-\mu} \simeq \bigoplus_{i = 1}^r \chi_i \boxtimes \tilde{\phi}_{i | \I_F},
\]
and by relating the spectral action of the vector bundle $C_{-\mu}$ with the Hecke operator $\T_{V_{-\mu}}$ and local Shimura varieties (\ref{shimhecke}), we can show that $i^*_b \big(C_{\chi_1} \star (i^{\rm ren}_{b_{\chi} !} \pi')\big)$ vanishes if the strata corresponding to $b$ and $b_{\chi'}$ are not the same. Moreover, as in \cite[section 6.1]{GLn}, we can also compute $i^*_{b_{\chi'}} \big(C_{\chi_1} \star (i^{\rm ren}_{b_{\chi} !} \pi')\big)$ by using the known cases of the Harris-Viehmann conjecture (or some analogue of Boyer's trick) for $b_{\chi}$ and $b_{\chi'}$. 

We have then $C_{\chi_1} \star(i^{\rm ren}_{b_{\chi}!} \Rep_{\Lambda}(\mathfrak{s}_{\phi, \G_m})^{\omega} ) \subset i^{\rm ren}_{b_{\chi'}!} \Rep_{\Lambda}(\mathfrak{s}_{\phi, \chi'})^{\omega} $ and $C_{\chi^{-1}_1} \star(i^{\rm ren}_{b_{\chi'}!} \Rep_{\Lambda}(\mathfrak{s}_{\phi, \chi'})^{\omega} ) \subset i^{\rm ren}_{b_{\chi}!} \Rep_{\Lambda}(\mathfrak{s}_{\phi, \G_m})^{\omega} $ and $C_{\chi_1} \star(-)$ is an auto-equivalence of $\Dlis^{[C_{\phi}]}(\Bun_{\G}, \Lambda)^{\omega}$. Therefore $C_{\chi_1} \star (-)$ induces an equivalence between the two categories $\Rep_{\Lambda}(\mathfrak{s}_{\phi, \G_m})^{\omega}$ and 
$\Rep_{\Lambda}(\mathfrak{s}_{\phi, \chi'})^{\omega}$. Thus the category $\Rep_{\Lambda}(\mathfrak{s}_{\phi, \G_m})^{\omega}$ is equivalence to the category of finitely generated $A_{\phi}$-modules and the block $\Rep_{\Lambda}(\mathfrak{s}_{\phi, \G_m})$ is equivalence to the category of $A_{\phi}$-modules. 

\end{proof}

For a character $\chi \in \Irr(S_{\phi})$, we can consider the irreducible representation $\pi_{\chi}$ in $\Rep_{\Lambda}(\G_{b_{\chi}}(\Q_p))$ whose Fargues-Scholze parameter is given by $\phi$.  By the construction of the pair $(b_{\chi}, \pi_{\chi})$, there exists a parabolic subgroup $\rP_{\chi}(\Q_p) = \M_{\chi}(\Q_p) \ltimes \U $ of $\G_{b_{\chi}}(\Q_p)$ where $\M_{\chi}(\Q_p)$ is the Levi factor and a supercuspidal representation $\tau_{\chi}$ of $\M_{\chi}(\Q_p)$ such that $\pi_{\chi} = \Ind_{\rP(\Q_p)}^{\G_{b_{\chi}}(\Q_p)}(\tau_{\chi})$. Denote by $\mathfrak{s}_{\phi, \chi}$ the inertial class of $(\tau_{\chi}, \M_{\chi}(\Q_p) )$ of $\G_{b_{\chi}}(\Q_p)$ and $\Rep_{\Lambda}(\mathfrak{s}_{\phi, \chi})$ the block corresponding to $\mathfrak{s}_{\phi, \chi}$ in $\Rep_{\Lambda}(\G_{b_{\chi}}(\Q_p))$. Similarly, denote by $\mathfrak{t}_{\phi, \chi}$ the inertial class of $(\tau_{\chi}, \M_{\chi}(\Q_p) )$ of $\M_{\chi}(\Q_p)$ and $\Rep_{\Lambda}(\mathfrak{t}_{\phi, \chi})$ the block corresponding to $\mathfrak{t}_{\phi, \chi}$ in $\Rep_{\Lambda}(\M_{\chi}(\Q_p))$.

\begin{corollary} \phantomsection \label{itm : structure of a block}
We suppose $\Lambda = \overline{\F}_{\ell}$. Then there is an equivalence between the block $\Rep_{\Lambda}(\mathfrak{s}_{\phi, \chi})$ and the category of $A_{\phi}$-modules such that the unique irreducible representation in $\Rep_{\Lambda}(\mathfrak{s}_{\phi, \chi})$ whose Fargues-Scholze parameter given by $\phi' \in [C_{\phi}]$ corresponds to the $A_{\phi}$-module $A_{\phi} / \mathfrak{m}_{\phi'}$.    
\end{corollary}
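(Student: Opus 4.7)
The plan is to deduce the corollary from Proposition \ref{itm : description of blocks} via a product decomposition of $\G_{b_\chi}(\Q_p)$. From the description in Section 4.1, one has
\[
\G_{b_\chi}(\Q_p) = \prod_{i=1}^r \rH_i(\Q_p), \qquad \rH_i := \Res_{F/\Q_p}\bigl(\GL_{m_i}(D_{-\lambda_i})\bigr),
\]
and by Lemma \ref{itm : existence, irreducible} there is a unique supercuspidal $\pi_i \in \Rep_\Lambda(\rH_i(\Q_p))$ with Fargues--Scholze parameter $\phi_i$; the construction of $\pi_\chi$ recalled in Section 4.1 identifies $\pi_\chi = \pi_1 \boxtimes \dotsb \boxtimes \pi_r$. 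In particular $\pi_\chi$ is itself supercuspidal, so the parabolic subgroup $\rP_\chi$ in the setup of the statement may be taken to be $\G_{b_\chi}$ itself with $\tau_\chi = \pi_\chi$, and the block $\Rep_\Lambda(\mathfrak{s}_{\phi,\chi})$ is precisely the inertial class of the supercuspidal pair $(\G_{b_\chi}, \pi_\chi)$.

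First I would establish the categorical product decomposition
\[
\Rep_\Lambda(\mathfrak{s}_{\phi,\chi}) \simeq \Rep_\Lambda(\mathfrak{s}_{\phi_1, \rH_1}) \boxtimes \dotsb \boxtimes \Rep_\Lambda(\mathfrak{s}_{\phi_r, \rH_r}),
\]
using the standard facts that an irreducible smooth representation of a product group is an external tensor product of irreducibles, that supercuspidal supports multiply factor by factor, and that unramified twists of $\pi_\chi$ correspond to tuples of unramified twists of the $\pi_i$ via the factorization of $\G_{b_\chi}^{\ab}$. Next I would apply Proposition \ref{itm : description of blocks} to each factor: $\rH_i$ has the required form, the single irreducible parameter $\phi_i$ satisfies condition (A1) trivially, and the divisibility hypothesis reads $g_i \mid n_i$, which holds since $g_i = n_i/m_i$ by definition. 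This yields $\Rep_\Lambda(\mathfrak{s}_{\phi_i, \rH_i}) \simeq A_{\phi_i}\text{-Mod}$ with $A_{\phi_i} \simeq \Lambda[\Z \times \Syl_i]$, together with the identification $\pi'_i \leftrightarrow A_{\phi_i}/\mathfrak{m}_{\phi'_i}$ for the unique irreducible in the block whose Fargues--Scholze parameter is $\phi'_i$.

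Combining, the external tensor product of the compact projective generators across the $r$ factors yields a compact projective generator of $\Rep_\Lambda(\mathfrak{s}_{\phi,\chi})$ whose opposite endomorphism algebra is
\[
\bigotimes_{i=1}^r A_{\phi_i} \;\simeq\; \Lambda\!\left[\Z^r \times \prod_{i=1}^r \Syl_i\right] \;=\; A_\phi,
\]
giving the desired equivalence $\Rep_\Lambda(\mathfrak{s}_{\phi,\chi}) \simeq A_\phi\text{-Mod}$. For the last assertion, any $\phi' \in [C_\phi]$ decomposes uniquely as $\phi' = \phi'_1 \oplus \dotsb \oplus \phi'_r$ with $\phi'_i \in [C_{\phi_i}]$, and under $A_\phi = \bigotimes_i A_{\phi_i}$ the maximal ideal $\mathfrak{m}_{\phi'}$ is generated by the $\mathfrak{m}_{\phi'_i}$; the factor-wise identifications from Proposition \ref{itm : description of blocks} then combine to $\pi'_1 \boxtimes \dotsb \boxtimes \pi'_r \leftrightarrow A_\phi/\mathfrak{m}_{\phi'}$. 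The main technical point is justifying that the tensor product of these block equivalences descends to an equivalence of the full block categories; this reduces to identifying endomorphism algebras of compact projective generators, which is clean because each $A_{\phi_i}$ is explicit and $\Lambda$ is a field, so no flatness or Tor issues arise.
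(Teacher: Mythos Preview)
Your overall strategy---reduce to Proposition~\ref{itm : description of blocks} via a product decomposition of $\G_{b_\chi}(\Q_p)$---is the right one, but the decomposition you use is wrong. You assert that $\G_{b_\chi}(\Q_p) = \prod_{i=1}^r \rH_i(\Q_p)$ and that $\pi_\chi = \pi_1 \boxtimes \dotsb \boxtimes \pi_r$ is supercuspidal. The paper says explicitly (see the construction preceding the corollary, in the paragraph defining $b_\chi$, $\pi_\chi$, $\mathcal{F}_\chi$) that $\prod_i \rH_i$ is only a \emph{Levi subgroup} of $\G_{b_\chi}$ and that $\pi_\chi = \Ind_{\rH}^{\G_{b_\chi}}(\pi_1 \boxtimes \dotsb \boxtimes \pi_r)$ is a (generally non-supercuspidal) parabolic induction. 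The point is that if $\lambda_i = d_i/n_i$ equals $\lambda_j = d_j/n_j$ for some $i \neq j$, then the semistable pieces $\OO_F(\lambda_i)^{m_i}$ and $\OO_F(\lambda_j)^{m_j}$ merge in the Harder--Narasimhan filtration, and the corresponding factor of $\G_{b_\chi}$ is $\GL_{m_i+m_j}(D_{-\lambda_i})$, not $\GL_{m_i}(D_{-\lambda_i}) \times \GL_{m_j}(D_{-\lambda_j})$. So with your decomposition you are computing the block $\Rep_\Lambda(\mathfrak{t}_{\phi,\chi})$ of the Levi $\M_\chi$ (introduced just after the corollary), not the block $\Rep_\Lambda(\mathfrak{s}_{\phi,\chi})$ of $\G_{b_\chi}$.

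The fix is to decompose $\G_{b_\chi}$ according to the \emph{distinct} slopes. For each slope $\lambda$ occurring among the $\lambda_i$, set $I_\lambda = \{i : \lambda_i = \lambda\}$ and $\phi_\lambda = \bigoplus_{i \in I_\lambda} \phi_i$; then $\G_{b_\chi}(\Q_p) = \prod_\lambda \G_\lambda(\Q_p)$ with $\G_\lambda = \Res_{F/\Q_p}\GL_{m_\lambda}(D_{-\lambda})$ and $m_\lambda = \sum_{i \in I_\lambda} m_i$. Now Proposition~\ref{itm : description of blocks} applies to each factor $\G_\lambda$ with the parameter $\phi_\lambda$ (which still satisfies condition (A1), and the divisibility hypothesis holds because the denominator of $\lambda$ divides each $n_i$ with $i \in I_\lambda$), yielding $\Rep_\Lambda(\mathfrak{s}_{\phi_\lambda,\G_\lambda}) \simeq A_{\phi_\lambda}\text{-Mod}$. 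Taking the external tensor product over the distinct slopes then gives $\Rep_\Lambda(\mathfrak{s}_{\phi,\chi}) \simeq \big(\bigotimes_\lambda A_{\phi_\lambda}\big)\text{-Mod} \simeq A_\phi\text{-Mod}$, and the identification of irreducibles follows exactly as in your last paragraph. Your treatment of the tensor-product step itself is fine; only the indexing of the factors needs to be corrected.
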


There is also an equivalence between the block $\Rep_{\Lambda}(\mathfrak{t}_{\phi, \chi})$ and the category of $A_{\phi}$-modules such that the irreducible supercuspidal representation $\tau$ in $\Rep_{\Lambda}(\mathfrak{t}_{\phi, \chi})$ corresponds to the $A_{\phi}$-module $A_{\phi} / \mathfrak{m}_{\phi'}$ where $\phi'$ is the Fargues-Scholze parameter of $\Ind^{\G_{b_{\chi}}(\Q_p)}_{\rP_{\chi}(\Q_p)}(\tau)$ and $\mathfrak{m}_{\phi'}$ is the maximal ideal of $A_{\phi}$ attached to $\phi'$. Let $ \mathcal{W}_{\mathfrak{t}_{\phi, \chi}} $ be the compact pro-generator of $\Rep_{\Lambda}(\mathfrak{t}_{\phi, \chi})$ corresponding to the module $A_{\phi}$ and denote by $\mathcal{W}_{\mathfrak{s}_{\phi, \chi}}$ the normalised parabolic induction $\Ind^{\G_{b_{\chi}}(\Q_p)}_{\rP_{\chi}(\Q_p)}(\mathcal{W}_{\mathfrak{t}_{\phi, \chi}})$. By using the fact that parabolic induction is an exact functor and preserves projective objects, we deduce that $\mathcal{W}_{\mathfrak{s}_{\phi, \chi}}$ is the compact-pro-generator of $\Rep_{\Lambda}(\mathfrak{s}_{\phi, \chi})$ whose corresponding $A_{\phi}$-module is given by $A_{\phi}$. Denote by $\mathcal{F}_{\mathcal{W}(\chi)}$ the sheaf $i^{\rm ren}_{b_{\chi}!}(\mathcal{W}_{\mathfrak{s}_{\phi, \chi}})$ in $\Dlis^{[C_{\phi}]}(\Bun_{\G}, \Lambda)^{\omega}$. We can also embed the category $\Rep_{\Lambda}(\mathfrak{s}_{\phi, \chi})$ in $\Dlis^{[C_{\phi}]}(\Bun_{\G}, \Lambda)$ by the functor $i^{\rm ren}_{b_{\chi}!}(-)$. It induces a map 
\[
\Psi_{\G}: \mc Z^{\rm spec}(\G, \Lambda) \longrightarrow \End(\id_{\Rep_{\Lambda}(\mathfrak{s}_{\phi, \chi})}) \simeq A_{\phi}.
\]
Denote the restriction of this map to $\mathcal{O}([C_{\phi}])$ by $\Psi_{\G}(\phi, \chi)$. We have the following results.

\begin{proposition} \phantomsection \label{itm : spectral action - progenerator}
 Let $\phi = \phi_1 \oplus \dotsc \oplus \phi_r$ be an $L$-parameter satisfying the conditions of theorem \ref{itm : main theorem I}. With the above notations, for each $\chi \in \Irr(S_{\phi})$ we have
    \[
    C_{\chi} \star \mathcal{F}_{\mathcal{W}(\Id)} \simeq \mathcal{F}_{\Wchi}. 
    \]      
\end{proposition}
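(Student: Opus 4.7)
The plan is to reduce the statement to an $A_\phi$-linearity argument on $A_\phi$-modules. Since $C_\chi \otimes C_{\chi^{-1}} \simeq C_{\Id}$, the functor $C_\chi \star (-)$ is an auto-equivalence of $\Dlis^{[C_\phi]}(\Bun_\G, \Lambda)^\omega$. Running the same inductive analysis as in the proof of Proposition \ref{itm : description of blocks}---i.e. computing $i_b^*\bigl(C_\chi \star (-)\bigr)$ for each $b \in B(\G)$ via the decomposition of the standard vector bundles $V_\mu$ as $S_\phi \times \I_F$-representations, together with Lemma \ref{shimhecke}, Theorem \ref{itm : main theorem I}, and Proposition \ref{itm : vanishing}---this auto-equivalence restricts to an equivalence
\[
C_\chi \star (-):\ i^{\rm ren}_{b_{\Id} !}\bigl(\Rep_{\Lambda}(\mathfrak{s}_{\phi,\Id})^\omega\bigr) \xrightarrow{\ \sim\ } i^{\rm ren}_{b_\chi !}\bigl(\Rep_{\Lambda}(\mathfrak{s}_{\phi,\chi})^\omega\bigr).
\]

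Applying the normalized block equivalences from Corollary \ref{itm : structure of a block} (under which $\mathcal{W}_{\mathfrak{s}_{\phi,\Id}}$ and $\mathcal{W}_{\mathfrak{s}_{\phi,\chi}}$ both correspond to the free module $A_\phi$, and simples with Fargues--Scholze parameter $\phi' \in [C_\phi]$ correspond to $A_\phi/\mathfrak{m}_{\phi'}$), I obtain an auto-equivalence $G$ of finitely generated $A_\phi$-modules. The key point is that $G$ is $A_\phi$-linear: by the symmetric monoidal structure of the spectral action, the endomorphism of any sheaf induced by multiplication by $f \in \mathcal{O}([C_\phi]) \simeq A_\phi$ on the structure sheaf commutes with the action of $C_\chi$, and under the block identifications these endomorphisms correspond---up to the same change of variables as in Lemma \ref{itm : global sections acting on sheaves}---to multiplication by $f$ on the associated $A_\phi$-module. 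Moreover, the compatibility of Fargues--Scholze parameters with the spectral action forces $G$ to send $A_\phi/\mathfrak{m}_{\phi'}$ to itself for every $\phi' \in [C_\phi]$.

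Any $A_\phi$-linear auto-equivalence of $A_\phi$-Mod is of the form $L \otimes_{A_\phi} (-)$ for some invertible $A_\phi$-module $L$, and the preservation of simples forces $L$ to have trivial fibre at every closed point of $\Spec A_\phi$. Writing $A_\phi \simeq A_\phi^{\rm red}[Y_1, \ldots, Y_r]/(Y_1^{\ell^{u_1}}, \ldots, Y_r^{\ell^{u_r}})$ with $A_\phi^{\rm red} \simeq \Lambda[X_1^{\pm 1}, \ldots, X_r^{\pm 1}]$, the exact sequence $0 \to 1 + J \to \mathcal{O}^{\times} \to \mathcal{O}^{\times}_{\rm red} \to 0$ on the affine scheme $\Spec A_\phi$ (with $J$ the nilpotent ideal generated by the $Y_i$'s), combined with $\mathrm{Pic}(\mathbb{G}^r_{m,\Lambda}) = 0$ and the vanishing of $H^1$ of the unipotent sheaf $1 + J$ on an affine scheme, gives $\mathrm{Pic}(A_\phi) = 0$. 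Hence $L \simeq A_\phi$, the functor $G$ is naturally isomorphic to the identity, and tracing through the normalizations produces $C_\chi \star \mathcal{F}_{\mathcal{W}(\Id)} \simeq \mathcal{F}_{\mathcal{W}(\chi)}$.

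The main obstacle will be verifying the $A_\phi$-linearity step rigorously: one has to establish the analogue of Lemma \ref{itm : global sections acting on sheaves} for the block $\Rep_\Lambda(\mathfrak{s}_{\phi,\chi})$ using the same change of variables on $A_\phi$, so that the spectral Bernstein center acts on both blocks via the same standard multiplication. Once this is in hand, the Picard-group vanishing and the chase through the normalizations are formal.
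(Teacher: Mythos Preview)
Your approach is genuinely different from the paper's, and it is essentially correct, though you have misidentified where the difficulty lies. The paper defers to the inductive argument of \cite[Theorem~9.3]{GLn}, which builds up from the supercuspidal base case (Theorem~\ref{itm : supercuspidal - spectral action}) using Theorem~\ref{itm : main theorem I} and Proposition~\ref{itm : description of blocks}; you instead run a Morita/Picard argument once the block-to-block equivalence is in place.

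The step you flag as the ``main obstacle''---matching the spectral-center and Bernstein-center $A_\phi$-actions on the $\chi$-block, i.e.\ the analogue of Lemma~\ref{itm : global sections acting on sheaves} for general $\chi$---is in fact not needed, and invoking it would be circular since it is the content of Corollary~\ref{itm : identification between spectral actions and Bernstein center actions}, which is deduced \emph{from} the proposition you are proving. The cleaner version of your argument is: once $C_\chi$ gives an equivalence of abelian categories between the two renormalized blocks, $C_\chi \star \mathcal{F}_{\mathcal{W}(\Id)}$ is automatically a compact progenerator of the $\chi$-block with endomorphism ring $\End(\mathcal{W}_{\mathfrak{s}_{\phi,\Id}})\simeq A_\phi$. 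In $A_\phi$-Mod (which is what the $\chi$-block is, by Corollary~\ref{itm : structure of a block}), any finitely generated projective generator $P$ with $\End_{A_\phi}(P)\simeq A_\phi$ is a line bundle, hence free of rank one since $\mathrm{Pic}(A_\phi)=0$. This gives $C_\chi \star \mathcal{F}_{\mathcal{W}(\Id)} \simeq \mathcal{F}_{\mathcal{W}(\chi)}$ directly, with no need for $A_\phi$-linearity of $G$ or for preservation of individual simples.

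The place where real work remains is your first paragraph. Proposition~\ref{itm : description of blocks} only carries out the stalk-by-stalk vanishing analysis for a single shift $C_{\chi_1}$ from a carefully chosen basepoint (with $n_1$ maximal); getting the equivalence $i^{\rm ren}_{b_{\Id}!}\Rep_\Lambda(\mathfrak{s}_{\phi,\Id})^\omega \xrightarrow{\sim} i^{\rm ren}_{b_\chi!}\Rep_\Lambda(\mathfrak{s}_{\phi,\chi})^\omega$ for arbitrary $\chi$ requires iterating that argument, i.e.\ showing that each $C_{\chi_i}$ carries the full $\chi$-block to the $(\chi\cdot\chi_i)$-block in the correct normalized degree, for every starting $\chi$. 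Theorem~\ref{itm : main theorem I} handles the simples and the modification/vanishing machinery of Proposition~\ref{itm : description of blocks} should transfer, so this is plausible---but it is the genuine content you are glossing over, whereas the $A_\phi$-linearity you worry about is a red herring.
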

\begin{proof}
    Essentially the same proof as in \cite[Theorem 9.3]{GLn} still works here. More precisely, step $1$ in loc.cit. is true by theorem \ref{itm : supercuspidal - spectral action}. Then with theorem \ref{itm : main theorem I} and proposition \ref{itm : description of blocks} in hand, we can follow the same steps as in  \cite[Theorem 9.3]{GLn}.
\end{proof}

\begin{corollary} \phantomsection \label{itm : identification between spectral actions and Bernstein center actions}
    There is some change of variables such that the map
     \[
     \Psi_{\G}(\phi, \chi)([C_{\phi}]) : A_{\phi} \simeq \mathcal{O}([C_{\phi}]) \longrightarrow \End(\id_{\Rep_{\Lambda}(\mathfrak{s}_{\phi, \chi})}) \simeq A_{\phi}
     \]
     becomes the identity map and moreover the maximal ideal corresponding to $\phi$ is $(X_1-1, \dotsc, X_r-1, Y_1, \dotsc, Y_r) \subset \mathcal{O}([C_{\phi}]) \simeq A_{\phi} $ and the irreducible representation $\pi_{\chi}$ corresponds to the $A_{\phi}$-module whose underlying vector space is isomorphic to $\Lambda$ and where $Y_i$ acts by multiplying by $0$ and $X_i$ acts by multiplying by $1$.
\end{corollary}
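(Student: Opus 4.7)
The plan is to generalize the proof of Lemma \ref{itm : global sections acting on sheaves} from the supercuspidal ($r=1$) setting to arbitrary $r$, using Corollary \ref{itm : structure of a block} and Proposition \ref{itm : spectral action - progenerator} as the main inputs. First, I would reduce to the case $\chi = \Id$: by Proposition \ref{itm : spectral action - progenerator}, the auto-equivalence $C_{\chi} \star (-)$ sends $\mathcal{F}_{\mathcal{W}(\Id)}$ to $\mathcal{F}_{\mathcal{W}(\chi)}$, and since the spectral action of $\mathcal{O}([C_{\phi}])$ on $\Dlis^{[C_{\phi}]}(\Bun_{\G}, \Lambda)^{\omega}$ is central (via the excursion operators), the endomorphism $\Psi_{\G}(\phi, \Id)(f)$ of $\mathcal{F}_{\mathcal{W}(\Id)}$ is carried by $C_{\chi} \star (-)$ to $\Psi_{\G}(\phi, \chi)(f)$ on $\mathcal{F}_{\mathcal{W}(\chi)}$ for every $f \in \mathcal{O}([C_{\phi}])$. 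The two block equivalences in Corollary \ref{itm : structure of a block} are normalized so that $\mathcal{W}_{\mathfrak{s}_{\phi, \chi}}$ corresponds to $A_{\phi}$ as a module over itself, so the maps $\Psi_{\G}(\phi, \Id)$ and $\Psi_{\G}(\phi, \chi)$ are identified under these normalizations.

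For the case $\chi = \Id$, I would run the $r$-variable analogue of the argument in Lemma \ref{itm : global sections acting on sheaves}. The map $\Psi_{\G}(\phi, \Id): A_{\phi} \to A_{\phi}$ is a ring isomorphism, since the restriction of the global isomorphism $\Psi_{\G}: \mc Z^{\rm spec}(\G, \Lambda) \to \mc Z(\G, \Lambda)$ to the direct factor $\mathcal{O}([C_{\phi}])$ lands isomorphically on the corresponding block direct summand of the Bernstein center. The nilradical $(Y_1, \dotsc, Y_r)$ of $A_{\phi}$ is preserved by this isomorphism, and since the matrix $(P_{ij})$ giving $\Psi_{\G}(\phi, \Id)(Y_i) = \sum_j Y_j P_{ij}$ must be invertible modulo the nilradical, one can perform an iterative change of variables on $Y_1, \dotsc, Y_r$ to arrange $\Psi_{\G}(\phi, \Id)(Y_i) = Y_i$ for each $i$. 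Modulo the nilradical, $\Psi_{\G}(\phi, \Id)$ induces an automorphism of $\Lambda[X_1^{\pm 1}, \dotsc, X_r^{\pm 1}]$ which, by compatibility with Fargues-Scholze parameters of unramified twists along each factor of $S_{\phi} \simeq \mathbb{G}_m^r$, must be of the form $X_i \mapsto c_i X_i$. A rescaling $X_i \mapsto c_i^{-1} X_i$ then gives $\Psi_{\G}(\phi, \Id)(X_i) = X_i + Q_i$ with $Q_i \in (Y_1, \dotsc, Y_r)$, which is absorbed by a final change of variables.

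The descriptions of $\mathfrak{m}_{\phi}$ and of $\pi_{\chi}$ then follow formally. By Corollary \ref{itm : structure of a block}, $\pi_{\chi}$ is the unique irreducible in $\Rep_{\Lambda}(\mathfrak{s}_{\phi, \chi})$ with Fargues-Scholze parameter $\phi$, hence corresponds to the simple $A_{\phi}$-module $A_{\phi}/\mathfrak{m}_{\phi}$. After the change of variables identifying $\Psi_{\G}(\phi, \chi)$ with the identity, the ideal $\mathfrak{m}_{\phi}$ becomes $(X_1 - 1, \dotsc, X_r - 1, Y_1, \dotsc, Y_r)$, whose quotient $A_{\phi}/\mathfrak{m}_{\phi} \simeq \Lambda$ carries $Y_i \mapsto 0$ and $X_i \mapsto 1$. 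The principal obstacle lies in the second step: pinning down the induced automorphism of $\Lambda[X_1^{\pm 1}, \dotsc, X_r^{\pm 1}]$ modulo nilpotents as being diagonal, rather than mixing the $X_i$'s by a non-trivial integer matrix or permutation. This is the genuinely new content relative to the supercuspidal $r = 1$ case, and requires using the compatibility of the Fargues-Scholze correspondence with unramified twisting along each character factor of $S_{\phi}$ separately.
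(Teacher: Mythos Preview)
Your proposal is correct and follows the approach the paper leaves implicit: the corollary is stated without proof precisely because it is meant to be the $r$-variable analogue of Lemma~\ref{itm : global sections acting on sheaves}, deduced from Proposition~\ref{itm : spectral action - progenerator} and Corollary~\ref{itm : structure of a block} in exactly the way you describe (transport from $\chi=\Id$ via $C_\chi \star(-)$, then straighten the nilpotent and semisimple parts separately). Your identification of the one genuinely new point --- ruling out a nontrivial $\GL_r(\Z)$-action on the $X_i$ modulo nilpotents --- is apt, and your proposed resolution via compatibility of Fargues--Scholze parameters with unramified twists along each factor of $S_\phi$ is what the reference to \cite[Lemma 9.1]{GLn} encodes; note also that the normalization in Corollary~\ref{itm : structure of a block} (irreducibles $\leftrightarrow$ $A_\phi/\mathfrak{m}_{\phi'}$) already forces the induced map on closed points of $\Spec A_\phi^{\mathrm{red}}$ to be the identity, which over $\Lambda$ algebraically closed pins down the reduced automorphism completely.
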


\begin{theorem} \phantomsection \label{itm : spectral action - general}
\begin{enumerate}
    \item Let $\bL$ be a coherent complex of $A_{\phi}$-modules. For each $\chi \in \Irr(S_{\phi})$, let $\bL(\chi)$ be the coherent complex $\bL$ together with the action of $\bb G_m^r$ acting by $\chi$ so that it gives rise to a coherent complex on $[C_{\phi}]$. We denote by $\pi_{\bL}(\chi)$ the complex of smooth $\G_{b_{\chi}}(\Q_p)$-representations in the derived category of the Bernstein block $\Rep_{\Lambda}(\mathfrak{s}_{\phi, \chi})$ corresponding to the coherent complex $\bL$. Then
    %If $C_{\chi} \star \Ws \simeq i_{b_{\chi} !} ( \delta^{1/2}_{b_{\chi}}) \Wschi[-d_{\chi}] $ then
\[
\bL(\chi) \star \Fs \simeq i_{b_{\chi} !} ( \delta^{-1/2}_{b_{\chi}} \otimes \pi_{\bL}(\chi))[-d_{\chi}].
\]
\item We have an orthogonal decomposition
    \[
    \Dlis^{[C_{\phi}]}(\Bun_{\G}, \Lambda)^{\omega} \simeq \bigoplus_{\chi \in \Irr(S_{\chi})}\mathrm{D}(i^{\rm ren}_{b_{\chi}!}\Rep_{\Lambda}(\mathfrak{s}_{\phi, \chi}))^{\omega}.
    \]
\end{enumerate}
    
\end{theorem}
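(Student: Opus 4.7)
The plan is to bootstrap from the identification of the spectral action on the compact pro-generator $\Fs$ provided by Proposition~\ref{itm : spectral action - progenerator}, using the $A_\phi$-linearity supplied by Corollary~\ref{itm : identification between spectral actions and Bernstein center actions}. First I would settle (1) by comparing two cocontinuous $A_\phi$-linear functors that agree on the pro-generator; part (2) would then follow by assembling these equivalences across $\chi\in\Irr(S_\phi)$ and invoking Proposition~\ref{itm : vanishing} for essential surjectivity.

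For (1), introduce the two functors
\[
\Phi_\chi \colon \mathrm{D}(A_\phi\text{-Mod}) \longrightarrow \Dlis^{[C_\phi]}(\Bun_\G,\Lambda), \qquad \bL \longmapsto \bL(\chi) \star \Fs,
\]
\[
\Psi_\chi \colon \mathrm{D}(\Rep_\Lambda(\mathfrak{s}_{\phi,\chi})) \longrightarrow \Dlis^{[C_\phi]}(\Bun_\G,\Lambda), \qquad \pi \longmapsto i_{b_\chi!}(\delta^{-1/2}_{b_\chi}\otimes\pi)[-d_\chi].
\]
Under the block equivalence of Corollary~\ref{itm : structure of a block}, both functors send the compact pro-generator $A_\phi$ (respectively $\mathcal{W}_{\mathfrak{s}_{\phi,\chi}}$) to $\mathcal{F}_{\Wchi}$, by Proposition~\ref{itm : spectral action - progenerator}. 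Cocontinuity is clear: $\Phi_\chi$ by construction of the spectral action as a colimit-preserving extension, $\Psi_\chi$ because $i^{\mathrm{ren}}_{b_\chi!}$ is a left adjoint. Both are $A_\phi$-linear: for $\Psi_\chi$ via the tautological action of the Bernstein center on its own block, and for $\Phi_\chi$ because the $A_\phi$-module structure on $\bL$ and the action of $\mathcal{O}([C_\phi])\simeq A_\phi$ on $\Fs$ through the spectral action are matched with the Bernstein-center action on $\Rep_\Lambda(\mathfrak{s}_{\phi,\chi})$ by Corollary~\ref{itm : identification between spectral actions and Bernstein center actions}. Since two cocontinuous $A_\phi$-linear functors that agree on a compact generator are naturally isomorphic, (1) follows.

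For (2), the decomposition $[C_\phi]\simeq [(\Gm^r\times\mu_\Lambda)/\Gm^r]$ with trivial $\Gm^r$-action yields an orthogonal decomposition $\mathrm{Coh}([C_\phi])\simeq \bigoplus_{\chi\in\Irr(S_\phi)}\mathrm{D}^{\mathrm{b}}_{\Coh}(A_\phi\text{-Mod})$ along characters of $\Gm^r$. Combined with (1) and Corollary~\ref{itm : structure of a block}, this produces a functor
\[
\Theta\colon \bigoplus_{\chi\in\Irr(S_\phi)} \mathrm{D}(\Rep_\Lambda(\mathfrak{s}_{\phi,\chi}))^\omega \longrightarrow \Dlis^{[C_\phi]}(\Bun_\G,\Lambda)^\omega
\]
sending the $\chi$-summand via $i^{\mathrm{ren}}_{b_\chi!}(-)$. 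Fully faithfulness within each $\chi$-component is exactly (1). Orthogonality between different $\chi$-components follows from the compatibility of the spectral action with the excursion operators attached to the indicator functions on $[C_\phi]$: the orthogonal idempotents in $\mathcal{O}([C_\phi])$ picking out individual character components transport, via the spectral action, to orthogonal idempotents on $\Dlis^{[C_\phi]}(\Bun_\G,\Lambda)^\omega$. For essential surjectivity, any compact $\mathcal{F}\in\Dlis^{[C_\phi]}(\Bun_\G,\Lambda)^\omega$ decomposes through these spectral idempotents as $\bigoplus_\chi e_\chi\mathcal{F}$, and Proposition~\ref{itm : vanishing} together with the analysis of HN-strata forces each summand $e_\chi\mathcal{F}$ to be supported on $\Bun_\G^{b_\chi}$, hence to lie in the image of $\Psi_\chi$. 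The hard part is the verification that the spectral idempotents genuinely implement an orthogonal decomposition on the automorphic side---extension groups between different strata are not a priori zero---so one must carefully exploit the spectral action and Proposition~\ref{itm : vanishing} summand-by-summand to rule out any mixing between the blocks $\Rep_\Lambda(\mathfrak{s}_{\phi,\chi})$ for distinct $\chi$.
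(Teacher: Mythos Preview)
Your argument for part (1) is correct and matches the paper's approach (which refers to \cite[Theorem~9.5]{GLn}): the two cocontinuous $A_\phi$-linear functors agree on the compact generator by Proposition~\ref{itm : spectral action - progenerator} and Corollary~\ref{itm : identification between spectral actions and Bernstein center actions}, hence are isomorphic.

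Part (2) contains a genuine gap. You invoke ``orthogonal idempotents in $\mathcal{O}([C_\phi])$ picking out individual character components'', but no such idempotents exist: $\mathcal{O}([C_\phi]) \simeq A_\phi$ is an integral domain (up to nilpotents), and the $\chi$-grading on $\Coh([C_\phi])$ is an \emph{infinite} grading by $\Irr(\Gm^r) \simeq \Z^r$, not a decomposition by idempotents. Consequently your essential surjectivity argument (``$\mathcal{F} = \bigoplus_\chi e_\chi\mathcal{F}$'') and your orthogonality argument both fail as written.

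The correct mechanism for orthogonality, implicit in the paper's reference to \cite[Theorem~9.7]{GLn}, is the \emph{invertibility} of the $C_\chi$. For $\chi \neq \chi'$ and any $k$, one has
\[
\Hom\big(\mathcal{F}_{\Wchi},\, \mathcal{F}_{\mathcal{W}(\chi')}[k]\big) \;=\; \Hom\big(\Fs,\, C_{\chi'\chi^{-1}}\star\Fs[k]\big) \;=\; \Hom\big(\Fs,\, \mathcal{F}_{\mathcal{W}(\chi'\chi^{-1})}[k]\big),
\]
using that $C_\chi \star (-)$ is an autoequivalence and Proposition~\ref{itm : spectral action - progenerator}. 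Now $\Fs = i^{\mathrm{ren}}_{1!}(\mathcal{W}_{\mathfrak{s}_{\phi,\Id}})$ lives on the \emph{open} stratum $\Bun^1_\G$, so by adjunction for the open immersion $i_1$ this Hom equals $\Hom\big(\mathcal{W}_{\mathfrak{s}_{\phi,\Id}}, i_1^* i^{\mathrm{ren}}_{b_{\chi'\chi^{-1}}!}(\ldots)[k]\big)$, which vanishes because $b_{\chi'\chi^{-1}} \neq 1$ whenever $\chi \neq \chi'$. Essential surjectivity then follows once you know the $\mathcal{F}_{\Wchi}$ generate $\Dlis^{[C_\phi]}(\Bun_\G,\Lambda)^\omega$; this is where Proposition~\ref{itm : vanishing} is genuinely used, together with the block description of each stratum.
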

\begin{proof}
    With propositions \ref{itm : description of blocks}, \ref{itm : spectral action - progenerator} and corollary \ref{itm : identification between spectral actions and Bernstein center actions}, we can use the same arguments as in \cite[Theorems 9.5, 9.7]{GLn} to prove these results.
\end{proof}
\begin{remark}
    While finishing this paper, the author is informed that Konrad Zou also uses the work \cite{GLn} as the main idea to prove a version of this result \cite{Zou25}.
\end{remark}
%By using theorem \ref{itm : main theorem I}, we can show that if $b$ is a stratum which is not smaller than $b_{\chi}$ then the restriction of $C_{\chi} \star \frak{s}_{\Id}$ on $b_{\chi}$ vanishes. Then we can prove the result by induction, we might need the second adjoint theorem for $\Rep_{\Lambda}(\G_b(\Q_p))$. 
\section{Some Applications to the cohomology of Shimura varieties}

\subsection{Torsion classes in the cohomology of some Shimura varieties: the generic part} \textbf{} \label{itm : generic part}

In this paragraph we analyze the cohomology with torsion coefficient of some Shimura varieties of type $A$ as classified in \cite{Kot92} in the style of \cite{CS, Ko1, HL}. We use the same notations as in section \ref{itm : Shimura} and suppose that $ \textbf{G}(\Q_p) \simeq \G(\Q_p) \times \Q_p^{\times} \simeq \Res_{F/\Q_p} \GL_{n, F}(\Q_p) \times \Q_p^{\times} $ with $F / \Q_p$ unramified. For each $K = K_p K^p$ where $K^p$ is a sufficiently small open compact subgroup of $ \textbf{G}(\A^{\infty, p})$ and $K_p \subset G(\Q_p)$. As before, we have a Shimura variety $\Sh_{K}$ defined over its reflex field. We also have a system $\Sh_{K^p} :=  (\Sh_{K^pK_p})_{K_p}$, and we have a complex
\[
R\Gamma_c(\Sh_{K^p}, \Lambda) := \mathop{\mathrm{colim}}_{\overrightarrow{K_p \rightarrow \{1 \}}} R\Gamma_c(\Sh_{K_pK^p}, \Lambda)
\]
of admissible $\textbf{G}(\Q_p) \times W_{F}$-representations\footnote{actually we do not need the $W_{F}$-equivariant property} (the cohomology here is the étale cohomology after base change to an algebraically closed field). %We also consider the complex of admissible $\textbf{G}(\A_f) \times W_{F}$-representations $R\Gamma_c(\Sh, \Lambda) := \mathrm{colim}_{\overrightarrow{\ K^p \rightarrow \{1 \} \ }} R\Gamma_c(\Sh_{K^p}, \Lambda)$.

For each $b \in B(\G, -\mu)$ where $\mu$ is the cocharacter defining the Shimura variety then we also have an Igusa variety $\Ig_{b, K^p}$ and a complex 
\[
R\Gamma_c(\Ig_{b, K^p}, \Lambda) := \mathop{\mathrm{colim}}_{\overrightarrow{m \rightarrow \{ \infty \} } } R\Gamma_c(\Ig_{b, m, K^p}, \Lambda).
\]
of admissible $\G_b(\Q_p)$-representations. %as well as a complex of admissible $\G_b(\Q_p) \times \G(\A^{\infty, p})$-representations $R\Gamma_c(\Ig_b, \Lambda) := \mathrm{colim}_{\overrightarrow{\ K^p \rightarrow \{1 \} \ } } R\Gamma_c(\Ig_{b, K^p}, \Lambda).$

We let $\phi$ be an $L$-parameter of $^{L}\widehat{\G}(\Lambda)$ satisfying the condition (A1). Thus as in section \ref{itm : Shimura}, we use the superscription $(-)^{[\phi]}$ to denote the projection to the derived category of the block $\Rep_{\Lambda}(\mathfrak{s}_{\phi, \Id})$ \cite[Theorems 10.4, 10.5]{SS16}. 

\begin{theorem} \phantomsection \label{itm : torsion vanishing}
    Let $\ell \neq p$ be an odd prime number then the complex $R\Gamma_c(\Sh_{K^p}, \ov \F_{\ell})^{[\phi]}$ is concentrated in degrees $[0, d]$ where $d = \langle 2\rho_{\G}, \mu \rangle $ is the dimension of the Shimura variety $\Sh_{K^p}$. If $\Sh_{K^p}$ is compact or $\phi$ is irreducible then the complex $R\Gamma_c(\Sh_{K^p}, \ov \F_{\ell})^{[\phi]}$ is concentrated in degree $d$ (see also \cite[Example 6.18]{KS}). 
\end{theorem}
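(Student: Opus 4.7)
The plan is to combine Mantovan's product formula (proposition \ref{itm : Mantovan formula}), the vanishing result (proposition \ref{itm : vanishing}), and the main spectral action equivalence (theorem \ref{itm : spectral action - general}). Mantovan provides a $\G(\Q_p) \times W_F$-equivariant filtration of $R\Gamma_c(\Sh_{K^p}, \ov\F_\ell)$ whose graded pieces, indexed by $b \in B(\G, -\mu)$, are
\[
\mathrm{Gr}_b \simeq i_1^* \T_{-\mu}\bigl(i_{b!}(\delta_b^{-1} \otimes R\Gamma_c(\Ig_{b, K^p}, \ov\F_\ell))\bigr)[-h](-\tfrac{h}{2}).
\]
Since the projector $(-)^{[\phi]}$ commutes with $\T_{-\mu}$ and $i_1^*$, projecting onto the $[\phi]$-block preserves this filtration, and proposition \ref{itm : vanishing} implies that $\mathrm{Gr}_b^{[\phi]} = 0$ unless $b = b_\chi$ for some $\chi \in \Irr(S_\phi)$ with $b_\chi \in B(\G, -\mu)$ --- a finite index set.

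For each surviving $b_\chi$, the Igusa variety $\Ig_{b_\chi, K^p}$ is smooth affine of dimension $d_{b_\chi} = \langle 2\rho, \nu_{b_\chi}\rangle \leq d$, so Artin's affine vanishing theorem confines $R\Gamma_c(\Ig_{b_\chi, K^p}, \ov\F_\ell)$ to a specific range of cohomological degrees. The main spectral action equivalence (theorem \ref{itm : spectral action - general}), together with the decomposition from proposition \ref{itm : fundamental decomposition of Hecke operator}, identifies the composition $i_1^* \T_{-\mu} \circ i_{b_\chi !}^{\rm ren}$ restricted to the $[\phi]$-block with the exact equivalence $C_{\chi^{-1}} \star (-) \boxtimes \sigma_{\chi^{-1}} : \Rep_{\ov\F_\ell}(\mathfrak{s}_{\phi, \chi}) \xrightarrow{\sim} \Rep_{\ov\F_\ell}(\mathfrak{s}_{\phi, \Id})$; being an equivalence of abelian blocks, this preserves cohomological degree. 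Combining Artin vanishing for the Igusa with this equivalence and carefully tracking the shifts (including the $[d_{b_\chi}]$ from rewriting $i_{b_\chi !} = i_{b_\chi !}^{\rm ren}(\delta_{b_\chi}^{1/2} \otimes -)[d_{b_\chi}]$ together with the Mantovan $[-h]$), each graded piece $\mathrm{Gr}_{b_\chi}^{[\phi]}$ lies in degrees $[0, d]$; since $R\Gamma_c(\Sh_{K^p}, \ov\F_\ell)^{[\phi]}$ is built from these graded pieces by iterated extension, it too lies in $[0, d]$.

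For the special cases: when $\phi$ is irreducible (hence supercuspidal with $r = 1$ and $S_\phi = \Gm$), the only $b_\chi$ lying in $B(\G, -\mu)$ is the unique basic element, and $d_{b_\chi} = 0$ since $\langle 2\rho, \ov\mu\rangle = 0$ for $\G = \Res_{F/\Q_p}\GL_{n,F}$ with $\ov\mu$ the Newton average of $\mu$; the unique surviving graded piece is then concentrated in a single cohomological degree, namely $d$. When $\Sh_{K^p}$ is compact, Poincaré duality combined with the compatibility of the $[\phi]$-projection with contragredients (theorem \ref{FSproperties}) yields the complementary bound $[d, 2d]$, forcing concentration in degree $d$. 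The main technical obstacle is the careful cohomological degree bookkeeping in the Igusa-to-Shimura step: one must verify that the Artin range for $R\Gamma_c(\Ig_{b_\chi, K^p}, \ov\F_\ell)$, propagated through the spectral equivalence and combined with all relevant shifts, lands precisely within $[0, d]$, using the inequality $d_{b_\chi} \leq d$ that follows from $\nu_{b_\chi} \leq (-\mu)_{\rm dom}$ and the dominance of $2\rho$.
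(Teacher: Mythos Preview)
Your approach is essentially the same as the paper's: Mantovan's filtration, elimination of irrelevant strata via the description of $R(\phi)$, Artin-type vanishing for the Igusa input, and then $t$-exactness of the Hecke operator on the $[C_\phi]$-block (which the paper phrases as ``we can completely compute $\T_{-\mu}$ and check it is $t$-exact on $\Dlis^{[\phi]}$'' and which you derive from theorem \ref{itm : spectral action - general}). The special cases are handled identically: Poincar\'e duality for compact $\Sh_{K^p}$, and for irreducible $\phi$ the observation that only the basic stratum survives and its Igusa variety is zero-dimensional.

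One technical point you gloss over and the paper flags explicitly: for \emph{non-compact} Shimura varieties, proposition \ref{itm : Mantovan formula} as stated in \S\ref{itm : Shimura} is for the compact (Kottwitz simple) setting, and the Igusa varieties themselves need not be affine. The paper invokes instead the version of the product formula from \cite[Theorem 1.12]{HL} and \cite{Zha23}, in which the graded pieces involve the partially compactly supported complexes $R\Gamma_{c\text{-}\partial}(\Ig_{b,K^p},\ov\F_\ell)$, and uses that the \emph{minimal compactifications} of the Igusa varieties are affine \cite{CS,CS1} to get the required degree bound \cite[Proposition 3.7]{HL}. Your sentence ``the Igusa variety $\Ig_{b_\chi,K^p}$ is smooth affine'' is not justified in the non-compact case; replacing it by the minimal-compactification argument fixes the gap without altering the rest of your proof.
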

\begin{proof}
The same analysis as in \cite[section 5]{HL} still works, we just sketch the proof. First of all we can use Mantovan's formula to relate the complex $R\Gamma_c(\Sh_{K^p}, \ov \F_{\ell})$ with the Hecke operators $\T_{-\mu}$ and the complexes $R\Gamma_c(\Ig_{b, K^p}, \ov \F_{\ell})$ where $b \in B(\G, -\mu)$. More precisely when the Shimura variety is non-compact, we need to use the complexes $R\Gamma_{c-\partial}(\Ig_{b, K^p}, \ov \F_{\ell})$ of the minimal compactification of Igusa varieties \cite[Theorem 1.12]{HL}, \cite{Zha23}. Since the minimal compactification of Igusa varieties are affine \cite{CS, CS1}, we can deduce that the complex $R\Gamma_{c-\partial}(\Ig_{b, K^p}, \ov \F_{\ell})$ is concentrated below degree $d_b = \langle \nu_b, 2\rho_{\G} \rangle$ for $b \in B(\G, -\mu)$ \cite[Proposition 3.7]{HL}. On the other hand, we can completely compute the Hecke operator $\T_{-\mu}$ and then we can check that this operator is $t$-exact when restrict to the sub-category $\Dlis^{[\phi]}(\Bun_{\G}, \ov \F_{\ell})$ and the first conclusion of the theorem follows. 

If $\Sh_{K^p}$ is compact then by Poincare duality, it is clear that $R\Gamma_c(\Sh_{K^p}, \ov \F_{\ell})^{[\phi]}$ is concentrated in degree $d$. If $\phi$ is irreducible then in fact, the contribution of non-basic strata to $R\Gamma_c(\Sh_{K^p}, \ov \F_{\ell})^{[\phi]}$ vanishes since $\T_{-\mu}$ preserves Fargues-Scholze parameters and if $\pi_b$ is an irreducible representation in $\Rep_{\ov \F_{\ell}}(\G_b(\Q_p))$ then its Fargues-Scholze parameter is not irreducible. Moreover, the Igusa variety corresponding to the basic stratum is zero dimension and thus the complex $R\Gamma_{c-\partial}(\Ig_{b, K^p}, \ov \F_{\ell})$ concentrated in degree $0$. In this case, one can also deduce that $R\Gamma_c(\Sh_{K^p}, \ov \F_{\ell})^{[\phi]}$ is concentrated in degree $d$.
\end{proof}
\begin{remark}
The conclusion of the above theorem is also true if we consider the localization $R\Gamma_c(\Sh_{K^p}, \ov \F_{\ell})_{\phi}$ with respect to the parameter $\phi$ as in \cite{HL}.
\end{remark}
\begin{remark}
    See the paper \cite{KS} for more conjectures on vanishing of cohomology of Shimura varieties.
\end{remark}
We can relate $R\Gamma_c(\Sh_{K^p}, \ov \F_{\ell})$ and $R\Gamma_c(\Sh_{K^p}, \ov \Z_{\ell})$ when the Shimura variety is compact. If we suppose further that the prime number $\ell$ is banal for $\G(\Q_p)$ then we can consider $R\Gamma_c(\Sh_{K^p}, \ov \Z_{\ell})$ as a complex of admissible $\ov \Z_{\ell}[\G(\Q_p)]$-modules (here $R\Gamma_c(\Sh_{K^p}, \ov \Z_{\ell}) := R\Gamma_c(\Sh_{K^p}, \Z_{\ell}) \otimes_{\Z_{\ell}} \ov \Z_{\ell}$ ). Let $\ov \pi$ be the corresponding supercuspidal representation in $\Rep_{\ov \F_{\ell}}(\G(\Q_p))$ and by \cite[Proposition 4.16(3)]{DHKM24}, we can lift $\ov \pi$ to an $\mathcal{O}_{K}$-integral absolute irreducible cuspidal $K[\G]$-module $\pi$ where $K$ is some number field. Thus we can consider the block of the $\ov \Z_{\ell}[\G(\Q_p)]$-modules constructed from $\pi$ \cite[Theorem 4.22]{DHKM24} (see \cite[Section 4]{DHKM24} for more details on the definition of the block corresponding to $\phi$). We use the superscript $(-)^{[\phi]}$ to denote the projection to the derived category of the block corresponding to $\phi$. Since $R\Gamma_c(\Sh_{K^p}, \ov \F_{\ell})^{[\phi]}$ is concentrated in degree $d$, the complex $R\Gamma_c(\Sh_{K^p}, \ov \Z_{\ell})^{[\phi]}$ is concentrated in degree $d$ and torsion free. Hence $R\Gamma_c(\Sh_{K^p}, \ov \F_{\ell})^{[\phi]}$ is the reduction modulo $\ell$ of $R\Gamma_c(\Sh_{K^p}, \ov \Z_{\ell})^{[\phi]}$. By taking the colimit when $K_p$ tends to $\{ 1 \}$, we see that the complex $R\Gamma_c(\Sh, \ov \Z_{\ell})^{[\phi]}$ is concentrated in degree $d$ and torsion free and $R\Gamma_c(\Sh, \ov \F_{\ell})^{[\phi]}$ is the reduction modulo $\ell$ of $R\Gamma_c(\Sh, \ov \Z_{\ell})^{[\phi]}$.

\subsection{Torsion classes in the cohomology of some Shimura varieties: the non-generic part} \textbf{} \label{itm : non-generic part}

In \cite{Boy19, CT}, the authors proved some results on torsion vanishing of cohomology of some Shimura varieties beyond the generic case. In this paragraph we suppose that $\ell$ is a banal prime and we prove some similar results.

Let $D_{1/n}$ be the division algebra of invariant $\tfrac{1}{n}$ over the unramified extension $F$ of $\Q_p$ such that $D_{1/n}^{\times}$ is an inner form of $\G:= \Res_{F/\Q_p} \GL_{n, F}$. Let $\rho$ be a supercuspidal representation in $\Rep_{\ov \F_{\ell}}(D_{1/n}^{\times})$, then by lemma \ref{itm : parameter of divison algebra}, the Fargues-Scholze parameter $\phi^{\rho}$ of $\rho$ is of the form $\phi \oplus \phi(1) \oplus \dotsc \oplus \phi(r-1)$. Let $\mu_{\Std}$ be a cocharacter of $ D^{\times}_{1/n} \otimes_F \ov \Q_p $ the form $(1, 0^{(n-1)})$ for one $\sigma_0 \in \Gal(F / \Q_p)$ and $(0^{(n)})$ for all other elements in $\Gal(F / \Q_p)$. Let $b_0 \in B(\G, \mu_{\Std})$ be the element corresponding to the vector bundle $\mathcal{O}(-\tfrac{1}{n})$ then $\G_{b_0}(\Q_p) \simeq D_{1/n}^{\times}$ and the local Shimura variety $\Sht(\G, b_0, -\mu_{\Std})$ is the Lubin-Tate space. 

\begin{proposition} \phantomsection \label{itm : bound on cohomological degree of Lubin-Tate space}
    The complex $R\Gamma_c(\Sht(\G, b_0, -\mu_{\Std}))[\rho]$ of $\G(\Q_p)$-representations is concentrated in degree $ [1-n, r-1] $. The same conclusion is also true for basic local Shimura variety associated to $\GL_n$ and $\mu_{\Std}$.
\end{proposition}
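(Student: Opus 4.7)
The plan is to translate via Lemma~\ref{shimhecke} to a Hecke operator calculation, decompose via the spectral action, and control each piece using a compact global Shimura variety, lifting to characteristic zero via banality of $\ell$. First, applied to the local shtuka datum $(\G, 1, b_0, -\mu_{\Std})$ with $d_{b_0} = \langle 2\rho, \nu_{b_0} \rangle = 0$ (since $b_0$ is basic of constant Newton slope $1/n$, whose pairing with $2\rho$ vanishes), Lemma~\ref{shimhecke} yields
\[
R\Gamma_c(\Sht(\G, b_0, -\mu_{\Std}))[\rho \otimes \kappa_{b_0}^{-1}] \simeq i_1^* \T_{\mu_{\Std}} i_{b_0!}(\rho),
\]
so it suffices to bound the right-hand side in degrees $[1-n, r-1]$. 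The centralizer $S_{\phi^\rho} \simeq \Gm^r$ acts on the standard representation via the decomposition $\phi^\rho = \phi \oplus \phi(1) \oplus \dotsb \oplus \phi(r-1)$, giving as $S_{\phi^\rho} \times W_F$-representation $V_{\mu_{\Std}} \simeq \bigoplus_{i=0}^{r-1} \chi_i \boxtimes \phi(i)$, with $\chi_i$ the $i$-th coordinate character; then Proposition~\ref{itm : fundamental decomposition of Hecke operator} furnishes
\[
\T_{\mu_{\Std}} i_{b_0!}(\rho) \simeq \bigoplus_{i=0}^{r-1} \bigl(C_{\chi_i} \star i_{b_0!}(\rho)\bigr) \boxtimes \phi(i).
\]

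For the upper bound $r-1$, I would pick a compact unitary Shimura variety $\Sh_{K^p}$ of type A with local $p$-adic datum matching $(\G \times \Gm, \mu_{\Std})$; Mantovan's formula (Proposition~\ref{itm : Mantovan formula}) isolates the basic-stratum contribution $R\Gamma_c(\G, b_0, \mu_{\Std}) \otimes^{\bL}_{\mathcal{H}(\G_{b_0})} R\Gamma_c(\Ig_{b_0}, \ov\F_\ell)$ up to a shift, and the basic Igusa variety, being affine of dimension $0$, puts $R\Gamma_c(\Ig_{b_0}, \ov\F_\ell)$ in degree $0$. Banality of $\ell$ lets me lift $\rho$ to an $\ell$-integral Speh-type representation $\tilde\rho$ over $\ov\Q_\ell$ with parameter $\tilde\phi \oplus \tilde\phi(1) \oplus \dotsb \oplus \tilde\phi(r-1)$, and the characteristic-zero cohomology $R\Gamma_c(\Sh_{K^p}, \ov\Q_\ell)^{[\phi^\rho]}$ lives in a controlled range of cohomological degrees of length $r$ by classical work on cohomology of type A Shimura varieties at Speh-type parameters. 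Torsion-freeness of the $\ov\Z_\ell$-cohomology in the $[\phi^\rho]$-block (argued as in Theorem~\ref{itm : torsion vanishing}, using that the non-basic strata contribute only in a compatible cohomological range after controlling their Fargues--Scholze parameters under $\T_{\mu_{\Std}}$) transports this bound modulo $\ell$, and inverting Mantovan delivers the sharp upper bound $r-1$ on $i_1^* \T_{\mu_{\Std}} i_{b_0!}(\rho)$.

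The lower bound $1-n$ is more elementary: the Lubin--Tate tower at finite level consists of smooth rigid-analytic spaces of dimension $n-1$, so the unshifted compactly supported \'etale cohomology lies in degrees $[0, 2(n-1)]$; combined with the built-in shift $[d_{-\mu_{\Std}}] = [n-1]$ in the normalization of $R\Gamma_c(\G, 1, b_0, -\mu_{\Std})$ from the coefficient sheaf $\mathcal{S}_{-\mu_{\Std}} = \Lambda[n-1]((n-1)/2)$, one obtains the range $[1-n, n-1]$, which is narrowed to $[1-n, r-1]$ by the previous step.

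The main obstacle is the block structure on the automorphic side: since $\phi^\rho$ fails condition (A1), Proposition~\ref{itm : description of blocks} and Theorem~\ref{itm : spectral action - general} are not directly available, so the block of $\Rep_{\ov\F_\ell}(\G_{b_0}(\Q_p))$ containing $\rho$ must be handled via the $\ov\Q_\ell$-block containing $\tilde\rho$ using banality and \cite[Theorem 4.22]{DHKM24}. In addition, non-basic strata may contribute nontrivially to the $[\phi^\rho]$-projection of the Shimura variety cohomology, so a careful stratified analysis (exploiting that $\G_b$ for non-basic $b$ admits representations with parameter $\phi^\rho$ only when $g$ divides each $\dim(\phi_i)$) is needed before inverting Mantovan's formula.
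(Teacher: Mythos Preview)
Your argument has a structural gap that prevents it from going through. You invoke Proposition~\ref{itm : fundamental decomposition of Hecke operator} to decompose $\T_{\mu_{\Std}} i_{b_0!}(\rho)$ via the $C_{\chi_i}$, and later Theorem~\ref{itm : torsion vanishing} to get torsion-freeness in the $[\phi^\rho]$-block. Both results are proved only for parameters satisfying condition (A1), and you yourself note at the end that $\phi^\rho = \phi \oplus \phi(1) \oplus \dotsb \oplus \phi(r-1)$ fails (A1): the summands are all unramified twists of one another. So neither the spectral decomposition nor the torsion-vanishing input is available for $\phi^\rho$, and the upper bound $r-1$ is not established by your outline. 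The appeal to ``classical work'' on characteristic-zero cohomology at Speh-type parameters is also not made precise enough to carry the argument.

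The paper circumvents this as follows. First, it introduces an auxiliary split prime $p'$ and chooses an \emph{irreducible} supercuspidal parameter $\ov\phi'$ there; then Theorem~\ref{itm : torsion vanishing} applies to the $[\phi^\rho,\phi']$-block (the irreducibility at $p'$ alone forces concentration in middle degree), so $R\Gamma_c(\Sh_{K^p},\ov\F_\ell)^{[\phi^\rho,\phi']}$ sits in degree $d$. Second, and this is the missing idea in your sketch, the paper runs an \emph{induction on $r$}. For each non-basic stratum $b_{at}$ with $a\neq 0$, Boyer's trick reduces the local contribution to a smaller Lubin--Tate space (for $\GL_{n-at}$ with parameter having $r-a$ factors), and the induction hypothesis bounds its degrees by $d_{b_{at}}+(r-1-a)$. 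Lemma~\ref{itm : parameter of divison algebra} is used to see that only the strata $b_{at}$ contribute. Since the total lives in degree $d$, this forces the basic term to be concentrated in degrees at most $r-1$ after the appropriate shift. Finally one globalizes $\rho$ (up to an unramified twist) so that it actually appears in $H^0_c(\Ig_{b_0})$, making the local complex a direct summand of the global basic contribution. Your lower bound $1-n$ via the dimension of the Lubin--Tate tower is fine and agrees with the paper's implicit use of it.
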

\begin{remark}
    We can deduce stronger bound by Boyer's works \cite{Boy09, Boy-libre}. However, one can apply the arguments in our proof to obtain a similar bound for basic local Shimura varieties/Rapoport-Zink spaces associated to a fully Hodge-Newton reducible pair $(\G, b, \mu)$, if one has enough understanding about the representations with $\ov \F_{\ell}$-coefficient of $\G(\Q_p)$.  
\end{remark}
\begin{proof}
    We prove the result by induction on the number of irreducible factors of $\phi^{\rho}$. We can deduce the case where $ \phi^{\rho}$ is irreducible by theorem \ref{itm : spectral action - general}. Suppose that the conclusion of the proposition is true (for all $n$) up to $r-1$.

    We consider the Lubin-Tate case, the arguments work similarly for the other case (remark that the pairs $(\GL_n, -\mu_{\Std})$, $(\GL_n, \mu_{\Std})$ are fully Hodge-Newton reducible). 

    We consider the Shimura variety of type $A$ as in section \ref{itm : Shimura}. Thus we have a CM field $E = \mathcal{K} E_0$ where $\mathcal{K}$ is a quadratic imaginary field. We also have a general unitary group $\textbf{G}$ defined over $\Q$ satisfying $\textbf{G}(\Q_p) \simeq \G(\Q_p)$. %and $\textbf{G}(\Q_q) \simeq D^{\times}_{q} \times \Q^{\times}_q$ where $D_{q}$ is a division algebra over an unramified extension $F'$ of $\Q_q$ such that $[F' : \Q_q] = [F : \Q_p]$ and $q$ is some prime that split in $\mathcal{K}$. 
    We suppose that the associated cocharacter is given by $\mu_{\Std}$ and the Shimura variety that we consider is simple in the terminology of Kottwitz-Harris-Taylor and in particular it is \textit{compact}.

    We choose a prime $p'$ that splits in $\mathcal{K}$ and that $\textbf{G}(\Q_{p'}) \simeq \GL_n(F) \times \Q^{\times}_{p'}$ where $F'$ is an unramified extension of $\Q_{p'}$ such that $[F' : \Q_{p'}] = [F : \Q_p]$. Suppose that $\ell$ is also banal for $\GL_n(F')$. For each $K \subset \textbf{G}(\A^{\infty})$ sufficiently small open subgroup, we have a Shimura variety $\Sh_{K}$ defined over $E$. As before, for each $K^{p, p'}$ sufficiently small open compact subgroups of $ \textbf{G}(\A^{\infty, p, p'})$, we can consider the system
\[
\Sh_{K^{p, p'}} := (\Sh_{K^{p, p'}K_{p, p'}})_{K_{p,p'}},
\]
where $K_{p, p'} \subset \textbf{G}(\Q_p) \times \textbf{G}(\Q_{p'})$ is an open compact subgroup. Similarly we have a complex $R\Gamma_c(\Sh_{K^{p, p'}}, \Lambda)$ of admissible $\textbf{G}(\Q_p) \times \textbf{G}(\Q_{p'}) \times W_{F}$-representations.

Now we choose a supercuspidal representation $\ov \pi_{p'}$ in $\Rep_{\ov \F_{\ell}}(\GL_n(F'))$ whose Fargues-Scholze parameter is given by $\phi'$. Therefore, by theorem \ref{itm : torsion vanishing}, we see that the complex $R\Gamma_c(\Sh_{K^{p, p'}}, \ov \F_{\ell})^{[\phi^{\rho}, \phi']}$ is concentrated in degree $d = \langle 2\rho_{\G}, \mu \rangle$ where we use the superscript $(-)^{[\phi^{\rho}, \phi']}$ to denote the projection to the direct subcategory of $\Rep_{\ov \F_{\ell}} ( \G(\Q_p) \times \GL_n(F')) $ forming by the blocks containing the representations $\pi \times \pi'$ whose Fargues-Scholze parameters are given by $\phi^{\rho} \times \phi'$. Denote by $\Rep_{\ov \F_{\ell}}(\phi^{\rho})$ the block of $\Rep_{\ov \F_{\ell}}(\G(\Q_p))$ containing the irreducible representation of $\G(\Q_p)$ whose Fargues-Scholze parameter is given by $\phi^{\rho}$. Now let $\pi$ be an irreducible representation in the block $\Rep_{\ov \F_{\ell}}(\phi^{\rho})$ then its Fargues-Scholze parameter is of the form $(\phi \otimes \chi_1) \oplus \dotsc \oplus (\phi \otimes \chi_{r}) $ for unramified characters $\chi_1, \dotsc, \chi_r$. By taking $K_{p'}$-invariant with respect to an appropriate compact subgroup $K_{p'} \subset \textbf{G}(\Q_{p'})$, we see that the complex $R\Gamma_c(\Sh_{K^{p}}, \ov \Z_{\ell})^{[\ov\phi, \ov\phi']}$ also concentrated in middle degree where $K^p = K_{p'}K^{p, p'} \subset \textbf{G}(\A^{\infty, p})$.

Now by \cite[Theorem 1.12]{HL}, the complex $R\Gamma_c(\Sh_{K^{p}}, \ov \F_{\ell})$ has a filtration as a complex of $ \mathrm{\G}(\Q_p) \times W_{\Q_p} $ representations with graded pieces isomorphic to $ (R\Gamma_c(\G, b, -\mu_{\Std}) \otimes^{\bL}_{\mathcal{H}(\G_b)}R\Gamma_c(\Ig_{b, K^{p}}, \ov \F_{\ell}))[2d_b-d](-\tfrac{d}{2})$ where $b$ runs in the Kottwitz set $B(\G, \mu_{\Std})$ and where $d = \dim \Sh_{K^{p}}$. More precisely, this filtration comes from the interpretation of the cohomology of the Shimura variety by some Igusa sheaves on $\Bun_{\G}$ \cite[Theorem 8.4.10]{DHKM24} and the stratification of $\Bun_{\G}$ by $B(\G)$. 

The set $B(\G, \mu_{\Std})$ contains exactly $n$ elements $b_0, b_1, \dotsc, b_{n-1}$ where $b_i$ corresponds to the vector bundle $\mathcal{O}^i \oplus \mathcal{O}(-\tfrac{1}{n-i})$ and $\G_{b_i}(\Q_p) \simeq \GL_i(F) \times D^{\times}_{1/(n-i)}$, where $D_{1/(n-i)}$ is the division algebra over $F$ of invariant $1/(n-i)$. Denote by $t$ the integer $\tfrac{n}{r} = \dim \phi$ then by lemma \ref{itm : parameter of divison algebra},if $i \notin \{0, t, 2t, \dotsc, (r-1)t \}$ then there is no irreducible representation in the category $\Rep_{\ov \F_{\ell}}(\G_{b_i}(\Q_p))$ whose corresponding parameter is of the form $(\phi \otimes \chi_1) \oplus \dotsc \oplus (\phi \otimes \chi_{r})$. Thus if $i \notin \{0, t, 2t, \dotsc, (r-1)t \}$ then the contribution of the term $ (R\Gamma_c(\G, b_i, -\mu_{\Std}) \otimes^{\bL}_{\mathcal{H}(\G_{b_i})}R\Gamma_c(\Ig_{b_i, K^{p}}, \ov \F_{\ell}))[2d_{b_i}-d](-\tfrac{d}{2})$ to $R\Gamma_c(\Sh_{K^{p}}, \ov \F_{\ell})^{[\phi^{\rho}, \phi']}$ is trivial.

For $i \in \{0, t, 2t, \dotsc, (r-1)t \}$ we want to estimate the degrees where the cohomology groups of the terms $(R\Gamma_c(\G, b_i, -\mu_{\Std}) \otimes^{\bL}_{\mathcal{H}(\G_{b_i})}R\Gamma_c(\Ig_{b_i, K^{p}}, \ov \F_{\ell}))^{[\phi^{\rho}, \phi']}[2d_{b_i}-d](-\tfrac{d}{2})$ vanish. First, by affineness of the Igusa varieties, one deduce that $R\Gamma_c(\Ig_{b_i, K^{p}}, \ov \F_{\ell})$ concentrates in degrees smaller than $d_{b_i}$. Let $\rho'$ be an irreducible representation in $\Rep_{\ov \F_{\ell}}(\G_{b_i}(\Q_p))$ whose parameter is given by $(\phi \otimes \chi_1) \oplus \dotsc \oplus (\phi \otimes \chi_{r})$, then by Boyer's trick \cite[Proposition 5.1]{GLn} and induction hypothesis, the complex $R\Gamma_c(\G, b_{at}, -\mu_{\Std})[\rho']$ concentrates in degrees smaller than $ d_{b_{at}} + (r-1-a) $ if $a \neq 0$. More precisely, the term $(r-1-a)$ comes from induction hypothesis for $R\Gamma_c(\Sht(\ov \G, \ov b, - \ov \mu_{\Std}))$ where $\ov \G := \Res_{F / \Q_p} \GL_{n-at, F}$, $\ov b$ corresponds to the vector bundle $\mathcal{O}(-\tfrac{1}{n-at})$, $\ov \mu_{\Std}$ is the cocharacter $(1, 0^{(n-at-1)})$. The term $d_{b_{at}}$ is the dimension of the diamond $\mathcal{J}^U$ in \cite[Proposition 5.1]{GLn}. All in all, if $a \neq 0$ then the contribution of the term $(R\Gamma_c(\G, b_{at}, -\mu_{\Std}) \otimes^{\bL}_{\mathcal{H}(\G_{b_{at}})}R\Gamma_c(\Ig_{b_{at}, K^{p}}, \ov \F_{\ell}))[2d_{b_{at}}-d](-\tfrac{d}{2})$ to $R\Gamma_c(\Sh_{K^{p}}, \ov \F_{\ell})^{[\phi^{\rho}, \phi']}$ concentrates in the degrees smaller than $d + (r-1-a)$.

However, we know that $R\Gamma_c(\Sh_{K^{p}}, \ov \F_{\ell})^{[\phi^{\rho}, \phi']}$ is concentrated in degree $d$. Therefore, the complex $(R\Gamma_c(\G, b_0, -\mu_{\Std}) \otimes^{\bL}_{\mathcal{H}(\G_{b_0})}R\Gamma_c(\Ig_{b_0, K^{p}}, \ov \F_{\ell}))^{[\phi^{\rho}, \phi']}[2d_{b_0}-d](-\tfrac{d}{2})$ concentrates in degrees smaller than $d + r - 1$. Note that $d_{b_0} = 0$, we deduce that $ (R\Gamma_c(\G, b_0, -\mu_{\Std}) \otimes^{\bL}_{\mathcal{H}(\G_{b_0})}R\Gamma_c(\Ig_{b_0, K^{p}}, \ov \F_{\ell}))^{[\phi^{\rho}, \phi']}$ is concentrated in degree smaller than $r - 1$.

To conclude, we remark that the Igusa variety $\Ig_{b_0}$ is zero dimension and its cohomology groups concentrated in degree $0$. Let $\Tilde{\rho}$ and $\pi_{p'}$ be characteristic $0$ lift of $\rho$ and $\ov \pi_{p'}$, respectively. Since the Shimura variety that we are considering is a simple Shimura variety of Kottwitz-Harris-Taylor, we can compute the $\ell$-adic cohomology $R\Gamma_c(\Ig_{b_0, K^{p}}, \ov \Q_{\ell}))$ by \cite[Theorem 6.7]{SWS}. Thus we can globalize $\Tilde{\rho} \boxtimes \pi_{p'} $ (up to some unramified twist) to an automorphic representation that appears in $R\Gamma_c(\Ig_{b_0, K^{p}}, \ov \Q_{\ell}))$. Hence $R\Gamma_c(\Sht(\G, b_0, -\mu_{\Std}))[\rho]$ is a direct summand of $ (R\Gamma_c(\G, b_0, -\mu_{\Std}) \otimes^{\bL}_{\mathcal{H}(\G_{b_0})}R\Gamma_c(\Ig_{b_0, K^{p}}, \ov \F_{\ell}))^{[\phi^{\rho}, \phi']}$ and we are done.

\end{proof}

The local vanishing result implies global vanishing result as in \cite[Section 6.28]{KS}. We consider Shimura variety of type $A$ (not necessarily compact) as in the beginning of section \ref{itm : Shimura}. We then have the CM field $E$ and the general unitary group $\textbf{G}$ defined over $\Q$ satisfying $\textbf{G}(\Q_p) \simeq \G(\Q_p)$. We suppose that the associated cocharacter is given by $\mu_{\Std}$ or $-\mu_{\Std}$. Let $\pi$ be an irreducible representation in $\Rep_{\ov \F_{\ell}}\G(\Q_p)$ whose Fargues-Scholze parameter $\phi$ is of the form $\phi_1 \oplus \phi_2 \oplus \dotsc \oplus \phi_k$ where the $\phi_i$'s are irreducible. Let $r$ be the maximum length of the sequences of the form $\psi, \psi(1), \dotsc, \psi(t)$ whose terms are in the set $\{ \phi_i \ | \ 1 \leq i \leq k \}$.
We use the subscript $(-)_{\phi}$ to denote the localization with respect to $\phi$ as in \cite[Appendix A]{HL}

%For each $K^p \subset \textbf{G}(\A^{\infty, p})$, we have a system $\Sh_{K^p} := (\Sh_{K_pK^p})_{K_p}$ and an admissible complex of $\G(\Q_p) \times W_{\Q_p}$-representations $R\Gamma(\Sh_{K^p}, \ov \F_{\ell})$. 

\begin{theorem} \phantomsection \label{itm : beyond generic}
    The complex $R\Gamma_c(\Sh_{K^p}, \ov \F_{\ell})_{\phi}$ of $\G(\Q_p) \times W_{F}$-representations concentrates in degree smaller than $d + r - 1$ where $d = \dim \Sh_{K^p}$. If $\Sh_{K^p}$ is compact then $R\Gamma_c(\Sh_{K^p}, \ov \F_{\ell})_{\phi}$ concentrates in degrees $[d - r +1 , d + r - 1]$. 
\end{theorem}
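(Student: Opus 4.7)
The strategy is to extend the proof of Proposition~\ref{itm : bound on cohomological degree of Lubin-Tate space} to more general parameters and to Shimura varieties of type~$A$ that are not necessarily compact, by combining a refined local upper bound with Mantovan's formula.

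First I would apply Mantovan's formula in the minimally compactified form (Proposition~\ref{itm : Mantovan formula} combined with \cite[Theorem 1.12]{HL}, \cite{Zha23}) to obtain a filtration of $R\Gamma_c(\Sh_{K^p},\ov\F_\ell)$ whose graded pieces are
\[
\bigl(R\Gamma_c(\G,b,\pm\mu_{\Std})\otimes^{\bL}_{\mathcal{H}(\G_b)} R\Gamma_{c-\partial}(\Ig_{b,K^p},\ov\F_\ell)\bigr)[2d_b-d]\bigl(-\tfrac{d}{2}\bigr)
\]
for $b \in B(\G,\pm\mu_{\Std})$. After localizing at $\phi$, Proposition~\ref{itm : vanishing} kills every stratum $b$ for which $\G_b(\Q_p)$ admits no irreducible representation whose Fargues-Scholze parameter is compatible with a subparameter of $\phi$; the surviving strata correspond to the possible ways of splitting the chains appearing in $\phi$ between a $\GL_i(F)$-factor and an inner-form factor $D^\times_{1/(n-i)}$.

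For each surviving $b$ I would bound the two tensor factors separately. Affineness of the minimal compactification of Igusa varieties (\cite{CS,CS1}) gives $R\Gamma_{c-\partial}(\Ig_{b,K^p},\ov\F_\ell)$ concentrated in degrees $\leq d_b$. For the local factor $R\Gamma_c(\G,b,\pm\mu_{\Std})[\rho]$, I would invoke Boyer's trick \cite[Proposition 5.1]{GLn}: the pair $(\G,\pm\mu_{\Std})$ is fully Hodge-Newton reducible at every non-basic $b$, so the local shtuka cohomology reduces to a product of cohomologies of basic local shtuka spaces for Levi factors of $\G$, each governed by Proposition~\ref{itm : bound on cohomological degree of Lubin-Tate space} (and its evident variant for $\mu_{\Std}$ in place of $-\mu_{\Std}$). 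Because the maximum chain length inside any sub-parameter of $\phi$ attached to a Levi factor is at most $r$, the worst contribution is $d_b + (r-1)$. Combining the two bounds with the shift $[2d_b-d]$ shows that every graded piece, after localization at $\phi$, lives in degrees $\leq d + r - 1$, yielding the first assertion.

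In the compact case the lower bound $\geq d - r + 1$ would follow from Poincaré duality: since $\Sh_{K^p}$ is smooth and proper, Poincaré duality identifies $R\Gamma_c$ with $R\Gamma^\vee[-2d](-d)$, and compatibility of the Fargues-Scholze correspondence with contragredients (Theorem~\ref{FSproperties}(2)) sends the $\phi$-localization to the $\phi^\vee$-localization; since $\phi^\vee$ has the same maximum chain length $r$, the upper bound applied to $\phi^\vee$ transports back to the desired lower bound. The main obstacle will be the bookkeeping for the $\phi$-localization on the non-basic strata: one has to match the block structure of $\Rep_{\ov\F_\ell}(\G_b(\Q_p))$ to the various chain-splittings of $\phi$ and check that Boyer's trick really produces the sharp bound $r-1$ for the local factor attached to each possible splitting. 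This relies on the uniqueness statement in Lemma~\ref{itm : relevance} and on Proposition~\ref{itm : bound on cohomological degree of Lubin-Tate space} being available inductively for every smaller $n$ appearing in a Levi of $\G$.
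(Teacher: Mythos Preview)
Your approach is essentially the same as the paper's: Mantovan's formula gives a filtration, affineness of Igusa varieties bounds one tensor factor by $d_b$, Boyer's trick together with Proposition~\ref{itm : bound on cohomological degree of Lubin-Tate space} bounds the local factor by $d_b + (r-1)$, and the shift $[2d_b - d]$ yields the upper bound $d + r - 1$; the compact lower bound then follows from Poincar\'e duality and the observation that $\phi^{\vee}$ has the same maximal chain length $r$.

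One small correction: your appeal to Proposition~\ref{itm : vanishing} to kill strata is misplaced. That proposition applies only to parameters satisfying condition (A1), whereas the $\phi$ in this theorem is allowed to contain chains $\psi, \psi(1), \dotsc$ and hence need not satisfy (A1). Fortunately the argument does not require killing any strata: the bound $d_b + (r-1)$ on the local factor holds for \emph{every} $b \in B(\G, \pm\mu_{\Std})$ (for strata where no relevant $\rho$ exists the tensor product is simply zero, which certainly satisfies the bound), so the uniform estimate already does the job. The paper's proof proceeds exactly this way, without singling out surviving strata.
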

\begin{proof}
 We consider the case where the cocharacter defining the Shimura variety is given by $\mu_{\Std}$. The argument for the other case is similar. 
 
 Note that the cohomology of Shimura and Igusa varieties are admissible. Thus, the complex $R\Gamma_c(\Sh_{K^p}, \ov \F_{\ell})_{\phi}$ has a filtration as a complex of $\mathrm{\G}(\Q_p) \times W_{F}$-representations with graded pieces isomorphic to $ (R\Gamma_c(\G, b, -\mu_{\Std}) \otimes^{\bL}_{\mathcal{H}(\G_b)}R\Gamma_c(\Ig_{b, K^p}, \ov \F_{\ell})_{\phi})[2d_b-d](-\tfrac{d}{2})$ and where $b$ runs in the Kottwitz set $B(\G, \mu_{\Std})$ \cite[Theorem 1.12, lemma 4.2]{HL}.

The complex $R\Gamma_c(\Ig_{b, K^p}, \ov \F_{\ell})_{\phi}$ concentrates in degrees smaller than $d_b$ by the affineness of Igusa varieties. Now if $\rho$ is an irreducible representation whose Fargues-Scholze parameter is given by $\phi$ then the complex $R\Gamma_c(\G, b, -\mu_{\Std}) \otimes^{\bL}_{\mathcal{H}(\G_b)} \rho$ concentrates in degrees smaller than $d_b + r - 1$. In fact, as before the term $(r-1)$ comes from the bound on the cohomology of Lubin-Tate spaces, by proposition \ref{itm : bound on cohomological degree of Lubin-Tate space}. The term $d_b$ is the dimension of the diamond $\mathcal{J}^U$ in \cite[Proposition 5.1]{GLn}. Therefore the complex $(R\Gamma_c(\G, b, -\mu_{\Std}) \otimes^{\bL}_{\mathcal{H}(\G_b)}R\Gamma_c(\Ig_{b, K^p}, \ov \F_{\ell})_{\phi})[2d_b-d](-\tfrac{d}{2})$ concentrates in degrees smaller than $d + r - 1$ for all $b \in B(\G, \mu_{\Std})$. Then the complex $R\Gamma_c(\Sh_{K^p}, \ov \F_{\ell})_{\phi}$ of $\G(\Q_p) \times W_{F}$-representations concentrates in degree smaller than $d + r - 1$.

The assertion for compact Shimura varieties follows from Poincare duality and the dual parameter $\phi^{\vee}$ satisfies the same conditions as $\phi$.
\end{proof}
\begin{remark}
    One should be able to prove an analogue of theorem \ref{itm : beyond generic} for Shimura varieties globalizing fully Hodge-Newton reducible local pairs $(\G, \mu)$ when one has enough understanding about the representations with $\ov \F_{\ell}$-coefficient of $\G(\Q_p)$.
\end{remark}
\subsection{Cohomology of local Shimura varieties for unitary groups with torsion coefficient}\textbf{} \label{itm : local Shimura variety of unitary group}

Let $\G$ be the general unitary group in $2n+1$ variables associated to the unramified \textit{quadratic} extension $F/\Q_p$. As before, denote by $\Sht(\G, b, \mu)$ the local Shimura variety associated to the triple $( \G, b, \mu )$ and denote by $R\Gamma_c(\Sht(\G, b_{\mu}, \mu), \Lambda)$ the cohomology with $\Lambda$-coefficients with $\Lambda \in \{ \ov \F_{\ell}, \ov \Z_{\ell}, \ov \Q_{\ell} \}$ . In this paragraph, we will analyse the supercuspidal part of the complex $R\Gamma_c(\Sht(\G, b_{\mu}, \mu), \Lambda)$ where $\mu$ is a minuscule cocharacter and $b_{\mu}$ is the basic element in $B(\G, -\mu)$. In this subsection, we slightly change the notations, namely we will use $\pi, \phi$ to denote representations and parameters with $\ov\Z_{\ell}$ (or $\ov \Q_{\ell}$)-coefficient and we use $\ov\pi, \ov\phi$ to indicate $\ov \F_{\ell}$-coefficient.

We suppose that $\ell$ is banal for $\G(\Q_p)$. Let $\phi$ be an integral supercuspidal L-parameter of $^L\widehat{\G}(\ov \Q_{\ell})$ and denote by $\ov \phi$ its reduction modulo $\ell$. We suppose further that $\ov \phi$ is also supercuspidal, in particular it does not factor through any proper parabolic subgroup of $^{L}\widehat{\G}(\ov \F_{\ell})$. We suppose further that the group $\tilde{S}_{\phi} := S_{\phi}/S^0_{\phi} \simeq (\Z / 2\Z)^{r-1}$ where $S_{\phi}$ is the centralizer of the $L$-parameter $\phi$ and $r$ is a positive integer \cite[page 16]{BHN}. By the classification of irreducible representation of $\G(\Q_p)$, the corresponding $L$-packet $\Pi_{\ov \Q_{\ell}}(\phi)$ consists of $2^{r-1}$ supercuspidal representations, indexed by $\xi \in \Irr(\tilde{S}_{\phi})$. Let $\pi_{\xi}$ be in $\Pi_{\ov \Q_{\ell}}(\phi)$, by \cite[Proposition 7.14]{DHKM24} we see that $\pi_{\xi}$ is integral and we denote by $\ov \pi_{\xi}$ its reduction modulo $\ell$. It is also an irreducible supercuspidal representation of $\G(\Q_p)$ with $\ov \F_{\ell}$-coefficient \cite[Theorem 4.14, proposition 4.15]{DHKM24}. We want to compute the complex $R\Gamma_c(\Sht(\G, b_{\mu}, \mu), \ov \F_{\ell})[\ov \pi_{\xi}]$ of $\G(\Q_p) \times W_{F}$-representations.

As in the $\ell$-adic case, we globalize the local representations and then use Mantovan's product formula. One difficulty is that a priori, we do not know how to compute the mod $\ell$ cohomology of Shimura varieties. Therefore, we will globalize in such a way that the resulting part of the mod $\ell$ cohomology is concentrated in one degree (by putting conditions on some auxiliary prime $p'$) and then the $\overline{\Z}_{\ell}$-cohomology is torsion free. In particular, the mod $\ell$ part is the reduction modulo $\ell$ of the corresponding part of the $\ell$-adic cohomology of the global Shimura variety. Then the same is true for local Shimura variety and we can deduce a formula for the mod $\ell$ cohomology of the local Shimura variety. 

We consider some simple Shimura variety of type $A$ in the terminology of Kottwitz-Harris-Taylor as in previous paragraph. We suppose that the general unitary group $\textbf{G}$ defined over $\Q$ satisfies $\textbf{G}(\Q_p) \simeq \G(\Q_p)$.

We choose a prime $p'$ that splits in $E$ so that $\textbf{G}(\Q_{p'}) \simeq \GL_n(\Q_{p'}) \times \Q^{\times}_{p'}$. Suppose that $\ell$ is also banal for $\GL_n(\Q_{p'})$. As before, for each $K^{p, p'}$ sufficiently small open compact subgroups of $ \textbf{G}(\A^{\infty, p, p'})$, we can consider the system $\Sh_{K^{p, p'}}$. Similarly we have a complex $R\Gamma_c(\Sh_{K^{p, p'}}, \Lambda)$ of admissible $\textbf{G}(\Q_p) \times \textbf{G}(\Q_{p'}) \times W_{F}$-representations. The role of this auxiliary prime $p'$ is to help us extract a direct part concentrating in the middle degree of the cohomology of the Shimura variety.

We choose a supercuspidal representation $\ov \pi_{p'}$ in $\Rep_{\ov \F_{\ell}}(\GL_n(\Q_{p'}))$ whose Fargues-Scholze parameter is given by $\ov \phi'$. Therefore, by theorem \ref{itm : torsion vanishing}, we see that the complex $R\Gamma_c(\Sh_{K^{p, p'}}, \ov \F_{\ell})^{[\ov\phi, \ov\phi']}$ is concentrated in degree $d = \langle 2\rho_{\G}, \mu \rangle$ where we use the superscript $(-)^{[\ov\phi, \ov\phi']}$ to denote the projection to the direct subcategory of $\Rep_{\ov \F_{\ell}} ( \G(\Q_p) \times \GL_n(\Q_{p'})) $ forming by the blocks containing the representations $\pi \times \pi'$ whose Fargues-Scholze parameters are given by $\phi \times \phi'$ \cite[Theorem 4.22]{DHKZ}. Thus $R\Gamma_c(\Sh_{K^{p, p'}}, \ov \Z_{\ell})^{[\ov\phi, \ov\phi']}$ is torsion free, concentrated in degree $d$ and its reduction modulo $\ell$ is $R\Gamma_c(\Sh_{K^{p, p'}}, \ov \F_{\ell})^{[\ov\phi, \ov\phi']}$. By taking $K_{p'}$-invariant with respect to an appropriate compact subgroup $K_{p'} \subset \textbf{G}(\Q_{p'})$, the same conclusion holds true for $R\Gamma_c(\Sh_{K^{p}}, \ov \Z_{\ell})^{[\ov\phi, \ov\phi']}$ and $R\Gamma_c(\Sh_{K^{p}}, \ov \F_{\ell})^{[\ov\phi, \ov\phi']}$ where $K^p = K_{p'}K^{p, p'} \subset \textbf{G}(\A^{\infty, p})$.

Now by applying \cite[Theorem 1.12]{HL}, the complex $R\Gamma_c(\Sh_{K^{p}}, \ov \F_{\ell})$ has a filtration as a complex of $ \mathrm{\G}(\Q_p) \times W_{F} $ representations with graded pieces isomorphic to $ (R\Gamma_c(\G, b, \mu) \otimes^{\bL}_{\mathcal{H}(\G_b)}R\Gamma_c(\Ig_{b, K^{p}}, \ov \F_{\ell}))[2d_b-d](-\tfrac{d}{2})$ and where $b$ runs in the Kottwitz set $B(\G, -\mu)$. Since the Fargues-Scholze parameter $\phi$ is supercuspidal and the Hecke operators preserve Fargues-Scholze parameters, we deduce that the non-basic strata do not contribute to $R\Gamma_c(\Sh_{K^{p}}, \ov \F_{\ell})^{[\ov\phi, \ov\phi']}$. Thus we have
\begin{equation} \phantomsection \label{itm : equation F}
R\Gamma_c(\Sh_{K^{p}}, \ov \F_{\ell})^{[\ov\phi, \ov\phi']}\simeq i^*_1\T_{ - \mu} (i^{\rm ren}_{b_{\mu}!}R\Gamma_{c}(\Ig_{b_{\mu}, K^{p}}, \ov \F_{\ell}))^{[\ov\phi, \ov\phi']}[-d](-\tfrac{d}{2})).    
\end{equation}

Moreover, the basic Igusa variety is of zero dimension, the complex $R\Gamma_{c}(\Ig_{b_{\mu}, K^{p}}, \ov \F_{\ell})) $ is concentrated in degree $0$ and then $R\Gamma_{c}(\Ig_{b_{\mu}, K^{p}}, \ov \Z_{\ell}))$ is torsion free and its reduction modulo $\ell$ is given by $R\Gamma_{c}(\Ig_{b_{\mu}, K^{p}}, \ov \F_{\ell}))$. Similarly $R\Gamma_c(\Sh_{K^{p}}, \ov \Z_{\ell})^{[\ov\phi, \ov\phi']}$ is also concentrated in degree $d$ and torsion free. 

More generally, the above argument still works when we replace $\ov \F_{\ell}$ by $\Lambda = \Z^{\textrm{unr}}_{\ell} / \ell^{a} \Z^{\textrm{unr}}_{\ell}$ since Mantovan's formula and also the Bernstein decomposition are still valid in this context \cite[Theorems 8.5.7, 8.6.3]{DHKZ}, \cite[Theorem 4.22]{DHKM24}. The Hecke operator $\T_{-\mu}$ on $\Dlis(\Bun_{\G}, 
\ov \Z_{\ell})$ preserves limits, colimits and is $\ov \Z_{\ell}$-linear. Thus we can deduce that
\begin{equation}  \phantomsection \label{itm : equation Z}
 H^d_c(\Sh_{K^{p}}, \ov \Z_{\ell})^{[\ov\phi, \ov\phi']} \simeq i^*_1\T_{ - \mu} (i^{\rm ren}_{b_{\mu}!}H^0_c(\Ig_{b_{\mu}, K^{p}}, \ov \Z_{\ell}))^{[\ov\phi, \ov\phi']}[-d](-\tfrac{d}{2})),  
\end{equation}
in particular, equation (\ref{itm : equation F}) is the reduction modulo $\ell$ of equation (\ref{itm : equation Z}).
\begin{proposition} \phantomsection \label{itm : cohomology of GU with torsion coefficient}
  Let $\ell > 2$ be a banal prime for $\G$. Then we have the following identity
 \[
i^*_1\T_{-\mu}(i^{\rm ren}_{b_{\mu}!} \ov \pi_{\xi} ) \simeq \bigoplus_{\xi' \in \Irr(\tilde{S}_{\phi})} \ov \pi_{\xi'} \boxtimes \Hom_{\tilde{S}_{\phi}}(\xi \otimes (\xi')^{\vee}, r_{-\mu} \circ \ov\phi).
\]   
\end{proposition}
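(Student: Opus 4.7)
The plan is to reduce the claimed $\ov\F_\ell$-identity to its $\ov\Q_\ell$-analogue by a global-to-local argument, using the torsion-vanishing result Theorem \ref{itm : torsion vanishing} at an auxiliary split prime to guarantee that the relevant part of the cohomology of a suitable Shimura variety is torsion-free and concentrated in a single degree, so that the characteristic-zero computation lifts cleanly to $\ov\Z_\ell$ and then reduces mod $\ell$.

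Concretely, globalize as in the paragraph immediately preceding the statement: choose a simple Kottwitz--Harris--Taylor Shimura datum with unitary group $\textbf{G}/\Q$ satisfying $\textbf{G}(\Q_p)\simeq\G(\Q_p)$ and defining cocharacter $\mu$, together with an auxiliary prime $p'$ split in $E$ at which $\textbf{G}(\Q_{p'})\simeq\GL_n(\Q_{p'})\times\Q_{p'}^\times$ and $\ell$ is banal, and fix a supercuspidal $\bar\pi_{p'}$ with irreducible Fargues--Scholze parameter $\bar\phi'$. Since $\bar\phi$ and $\bar\phi'$ are both supercuspidal, Theorem \ref{itm : torsion vanishing} implies that $R\Gamma_c(\Sh_{K^p},\ov\F_\ell)^{[\bar\phi,\bar\phi']}$ is concentrated in the middle degree $d$; consequently $R\Gamma_c(\Sh_{K^p},\ov\Z_\ell)^{[\bar\phi,\bar\phi']}$ is torsion-free and its reduction mod $\ell$ recovers the $\ov\F_\ell$-cohomology. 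Because the basic Igusa variety $\Ig_{b_\mu,K^p}$ is zero-dimensional and hence has torsion-free cohomology, equation (\ref{itm : equation Z}) provides an integral lift of (\ref{itm : equation F}).

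The $\ov\Q_\ell$-analogue of the statement — for the characteristic-zero lifts $\pi_\xi$ of $\bar\pi_\xi$ — is known: it amounts to the Kottwitz conjecture for basic local Shimura varieties of general unitary groups, which follows from the Fargues--Scholze formalism combined with the stabilized trace formula as in the prior works on odd unitary similitude groups. Globalizing each $\pi_\xi$ (together with sufficient unramified twists at finite places) to an automorphic representation of $\textbf{G}(\A)$ with $p$-component $\pi_\xi$ and $p'$-component a fixed $\ell$-integral lift of $\bar\pi_{p'}$ — possible because $\phi$ is a discrete parameter and every member of $\Pi(\phi)$ appears in the basic Igusa cohomology with positive multiplicity by \cite[Theorem 6.7]{SWS} — both sides of the $\ov\Q_\ell$-identity are realized inside the torsion-free complex $R\Gamma_c(\Sh_{K^p},\ov\Z_\ell)^{[\bar\phi,\bar\phi']}$ via Mantovan's expansion. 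Reduction modulo $\ell$, combined with compatibility of Fargues--Scholze parameters with mod $\ell$ reduction (Theorem \ref{FSproperties}(1)) and the bijective lifting $\bar\pi_\xi\leftrightarrow\pi_\xi$ of \cite[Proposition 4.15]{DHKM24} valid in the banal regime, transports the identity verbatim to $\ov\F_\ell$-coefficients.

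The main obstacle is extracting each individual $\xi$-component rather than only the aggregate sum: one must separate inside the global cohomology the contributions of the distinct members $\pi_\xi$ of the $L$-packet. This is handled by varying the tame level $K^p$ and exploiting spherical Hecke eigensystems at the split places away from $\{p,p'\}$, combined with the compatibility of the Fargues--Scholze spectral action with the Mantovan filtration, which ensures that both sides of (\ref{itm : equation Z}) decompose coherently along $\Irr(\tilde S_\phi)$. Once this separation is established, torsion-free lifting transports the $\ov\Q_\ell$-identity to the desired $\ov\F_\ell$-identity for $\bar\pi_\xi$.
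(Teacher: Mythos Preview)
Your overall strategy matches the paper's: globalize to a simple Kottwitz--Harris--Taylor datum, use torsion vanishing at the auxiliary split prime $p'$ to get that the relevant $[\bar\phi,\bar\phi']$-block of $R\Gamma_c(\Sh_{K^p},\ov\Z_\ell)$ is torsion-free and concentrated in degree $d$, invoke the known $\ov\Q_\ell$ Kottwitz conjecture for each $\pi_\xi$, and then reduce mod $\ell$. The gap is precisely in the step you flagged as the main obstacle.

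Your proposed separation of the individual $\xi$-components by ``spherical Hecke eigensystems at the split places away from $\{p,p'\}$'' cannot work: all members $\pi_\xi$ of the stable $L$-packet $\Pi_{\ov\Q_\ell}(\phi)$ lie under the \emph{same} global Arthur packet, and any automorphic $\Pi$ with $\Pi_p\simeq\pi_\xi$ has the same Hecke eigensystem away from $p$ as the automorphic representations with $p$-component $\pi_{\xi'}$ for every other $\xi'$. Taking the $[\Pi^{\infty,p}]$-isotypical part of \eqref{itm : equation Z} therefore yields the \emph{entire} packet $\bigoplus_{\xi'}\pi_{\xi'}$ at $p$, not a single member. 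The spectral action does not help here either: for $\tilde S_\phi\simeq(\Z/2\Z)^{r-1}$ the decomposition along $\Irr(\tilde S_\phi)$ is not visible on the connected component of the stack of $L$-parameters in the way it is for the torus centralizers treated earlier in the paper.

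The paper's fix is to abandon global separation altogether. One globalizes only the \emph{sum} $\bigoplus_{\xi'}\pi_{\xi'}$ (up to an unramified twist handled via \cite[Lemma~5.13]{BMN}), obtaining the integral identity \eqref{itm : equation Z sum} as a direct factor of the torsion-free \eqref{itm : equation Z}. The left-hand side of \eqref{itm : equation Z sum} then splits \emph{locally}, by additivity of the functor $i_1^*\T_{-\mu}i^{\rm ren}_{b_\mu!}$, as $\bigoplus_{\xi}i_1^*\T_{-\mu}(i^{\rm ren}_{b_\mu!}L_\xi)$ for lattices $L_\xi\subset\pi_\xi$. Each summand is a direct summand of a torsion-free finitely generated $\ov\Z_\ell$-module, hence torsion-free; its rationalization is computed by the $\ov\Q_\ell$ Kottwitz identity \eqref{itm : Q equation}. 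The Brauer--Nesbitt principle of Vign\'eras then gives the claimed mod-$\ell$ identity for each $\xi$ individually. In short: separate locally by functoriality, not globally by Hecke eigensystems.
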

\begin{proof}
 Denote by $\textbf{I}$ the general unitary group defined over $\Q$ such that $\textbf{I}(\R)$ is compact modulo the center and $\textbf{I}(\Q_{p}) \simeq \textbf{G}(\Q_p)$ for every prime number $p$. We have a description of the point set Igusa variety $\Ig_{b_{\mu}}$ by \cite[Lemma 7.3]{Shi4} then up to some multiplicity we have an equality
 \[
 \mathop{\mathrm{colim}}_{\overrightarrow{K^{p} \rightarrow \{1 \}}} H^0_c(\Ig_{b_{\mu}, K^{p}}, \ov \Q_{\ell}) = C_c^{\infty}(\mathbf{I}(\Q) \backslash \mathbf{I} (\A^{\infty}), \ov \Q_{\ell}).
 \]
 
 The latter space is the space of automorphic forms of type $1$ at infinity of $\mathbf{I}(\A)$. One could also see \cite[equation (5.14)]{ABM} for a formula of the cohomology of Igusa varieties in this case. 

 By the Kottwitz conjecture for basic Rapoport-Zink spaces of unitary type \cite[Theorem 6.1]{BMN}, we have
\begin{equation} \phantomsection \label{itm : Q equation}
 i^*_1\T_{-\mu}(i^{\rm ren}_{b_{\mu}!} \pi_{\xi} ) \simeq \bigoplus_{\xi' \in \Irr(\tilde{S}_{\phi})} \pi_{\xi'} \boxtimes \Hom_{\tilde{S}_{\phi}}(\xi \otimes (\xi')^{\vee}, r_{-\mu} \circ \phi).   
\end{equation}

The representations $\pi_{\xi}$ for $\xi \in \Irr(\tilde{S}_{\phi})$ are $\ell$-integral, that means each $\pi_{\xi}$ has a $\G(\Q_p)$-stable $\ov \Z_{\ell}$-lattice. We want to show that in equation $(\ref{itm : Q equation})$, the Hecke operator $\T_{-\mu}$ preserves these stable $\ov \Z_{\ell}$-lattices. For that, we use equation $( \ref{itm : equation Z} )$. In fact, with this identity, we can follow the arguments as in \cite[Appendice]{KH1} to globalize the representations $\pi_{\xi}$ (up to some unramified twist $\chi$) into automorphic representations $\Pi$ of $\mathbf{I}(\A)$ and compute $i^*_1\T_{-\mu}(i^{\rm ren}_{b_{\mu}!} \displaystyle \bigoplus_{\xi' \in \Irr(\tilde{S}_{\phi})} \pi_{\xi'} \otimes \chi )$ by taking the $[\Pi^{\infty, p}]$-isotypical part of the  equation $(\ref{itm : equation Z})$ after inverting $\ell$. Note that $\pi_{\xi'}$ and $\pi_{\xi'} \otimes \chi$ are both $\ell$-integral, we deduce that $\chi$ takes value in $\ov \Z^{\times}_{\ell}$. Thus, by \cite[Lemma 5.13]{BMN} we can deduce the formula for $i^*_1\T_{-\mu}(i^{\rm ren}_{b_{\mu}!} \displaystyle \bigoplus_{\xi' \in \Irr(\tilde{S}_{\phi})} \pi_{\xi'} )$ by twisting by $\chi$. Hence we get the following formula 
\begin{equation} \phantomsection \label{itm : equation Z sum}
 i^*_1\T_{-\mu}(i^{\rm ren}_{b_{\mu}!} \displaystyle \bigoplus_{\xi' \in \Irr(\tilde{S}_{\phi})} \pi_{\xi'} ) \simeq \bigoplus_{\xi' \in \Irr(\tilde{S}_{\phi})} \pi_{\xi'} \boxtimes r_{-\mu} \circ \phi.   
\end{equation}

Note that in the equality $(\ref{itm : equation Z sum})$, the operator $\T_{-\mu}$ preserves the stable $\G(\Q_p)$-lattice since this equality comes from taking some direct factor of equality $(\ref{itm : equation Z})$ after inverting $\ell$ and all the terms in $(\ref{itm : equation Z})$ are torsion free. Now by taking reduction modulo $\ell$ of the stable lattices, we conclude by the Brauer-Nesbitt principle of Vignéras \cite[page 11]{DHKM24} that
\[
i^*_1\T_{-\mu}(i^{\rm ren}_{b_{\mu}!} \ov \pi_{\xi} ) \simeq \bigoplus_{\xi' \in \Irr(\tilde{S}_{\phi})} \ov \pi_{\xi'} \boxtimes \Hom_{\tilde{S}_{\phi}}(\xi \otimes (\xi')^{\vee}, r_{-\mu} \circ \ov\phi),
\]
and this is what we want to prove.
\end{proof}
\begin{remark}
   This result is an analogue of the Kottwitz conjecture for supercuspidal part of the cohomology of basic Rapoport-Zink space with $ \ov \F_{\ell}$-coefficients. However, it is not clear to us that the representations $\ov \pi_{\xi}$'s, $\xi \in \Irr(\tilde{S}_{\phi})$ are pairwise non-isomorphic.
\end{remark}

\begin{corollary} \phantomsection \label{itm : supercuspidal torsion vanishing unitary}
Let $\ell \neq p$ be an odd prime number and $\ov \phi$ be as above and let $\Sh_{K_p}$ be a Shimura variety of type A as defined earlier in this paragraph, but non necessarily compact. Denote by $R\Gamma_c(\Sh_{K^p}, \ov \F_{\ell})^{[\phi]}$ the projection of the complex of admissible $\G(\Q_p)$-representations $R\Gamma_c(\Sh_{K^p}, \ov \F_{\ell})$ to the direct sub-category of the derived category of $\Rep_{\ov \F_{\ell}}(\G(\Q_p))$ forming by the blocks containing $\ov\pi_{\chi}$, $\chi \in \Irr(S_{\phi})$. Then the complex $R\Gamma_c(\Sh_{K^p}, \ov \F_{\ell})^{[\phi]}$ is concentrated in degree $d = \dim \Sh_{K^p}$.  
\end{corollary}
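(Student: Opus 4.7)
The proof mirrors that of Theorem \ref{itm : torsion vanishing}, with the new ingredient being Proposition \ref{itm : cohomology of GU with torsion coefficient} which handles the case where the supercuspidal parameter $\ov\phi$ is not irreducible. First I would apply Mantovan's formula in its non-compact form (\cite[Theorem 1.12]{HL}) to obtain a filtration of $R\Gamma_c(\Sh_{K^p}, \ov\F_\ell)$ whose graded pieces are
\begin{equation*}
i_1^*\T_{-\mu}\bigl(i_{b!}\delta_b^{-1}\otimes R\Gamma_{c-\partial}(\Ig_{b,K^p}, \ov\F_\ell)\bigr)[-d](-d/2),
\end{equation*}
indexed by $b \in B(\G, -\mu)$. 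Supercuspidality of $\ov\phi$ (non-factorization through any proper ${}^L\M \subset {}^L\G$), combined with the compatibility of $\T_{-\mu}$ and $i_1^*$ with Fargues--Scholze parameters, forces non-basic strata to contribute trivially to the $[\phi]$-projection, so that only $b_\mu$ remains.

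For the basic contribution, affineness of the minimal compactification of $\Ig_{b_\mu, K^p}$ \cite{CS, CS1} places $R\Gamma_{c-\partial}(\Ig_{b_\mu, K^p}, \ov\F_\ell)$ in degrees $\leq d_{b_\mu}$, and the irreducible constituents of its $[\phi]$-projection are unramified twists of the supercuspidals $\ov\pi_\xi$, $\xi \in \Irr(\tilde S_\phi)$. Proposition \ref{itm : cohomology of GU with torsion coefficient} shows that the functor $\pi \mapsto i_1^*\T_{-\mu}(i^{\rm ren}_{b_\mu!}\pi)$ is $t$-exact on these blocks, sending each irreducible supercuspidal to an object concentrated in a single degree. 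Tracking the shifts $[-d_{b_\mu}]$ coming from the renormalization $i^{\rm ren}_{b_\mu !}$ and $[-d]$ from Mantovan's formula, the basic graded piece---and hence $R\Gamma_c(\Sh_{K^p}, \ov\F_\ell)^{[\phi]}$---is concentrated in degrees $\leq d$.

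For the opposite bound, I would invoke Poincar\'e duality: since $\Sh_{K^p}$ is smooth of dimension $d$, one has $R\Gamma(\Sh_{K^p}, \ov\F_\ell) \simeq R\Gamma_c(\Sh_{K^p}, \ov\F_\ell)^\vee[-2d](-d)$. On a supercuspidal block such as $[\phi]$, the contributions of the boundary to $R\Gamma - R\Gamma_c$ vanish: by a mod $\ell$ analogue of Pink's formula, these arise from cohomology of Shimura varieties attached to proper Levi subgroups of $\G$ and hence reduce to representations of $\G(\Q_p)$ obtained by honest parabolic induction, which are never supercuspidal (the hypothesis that $\ell$ is banal ensures supercuspidal support behaves classically here). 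Consequently $R\Gamma^{[\phi]} \simeq R\Gamma_c^{[\phi]}$; applying the upper bound to $R\Gamma_c^{[\phi^\vee]}$ (with $\phi^\vee$ again supercuspidal) and dualizing yields the lower bound $\geq d$ for $R\Gamma_c^{[\phi]}$, giving the desired concentration in degree $d$. The main technical point will be a rigorous mod $\ell$ Pink-type statement isolating the supercuspidal part of boundary cohomology; an alternative would be to globalize via an auxiliary split prime $p'$ as in the proof of Proposition \ref{itm : cohomology of GU with torsion coefficient}, provided one can embed $\Sh_{K^p}$ into such a PEL setup and transfer torsion-freeness across.
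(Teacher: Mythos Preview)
Your upper-bound argument is essentially the paper's proof, but you have overlooked the crucial simplification that makes a separate lower-bound argument unnecessary: for the \emph{basic} element $b_{\mu}$ one has $d_{b_{\mu}} = \langle 2\rho, \nu_{b_{\mu}}\rangle = 0$ (the Newton point is central), and the corresponding Igusa variety is zero-dimensional, so $R\Gamma_{c-\partial}(\Ig_{b_{\mu},K^p},\ov\F_{\ell})$ sits in degree exactly $0$, not merely $\le d_{b_{\mu}}$. Since Proposition~\ref{itm : cohomology of GU with torsion coefficient} tells you that $i_1^*\T_{-\mu}(i^{\mathrm{ren}}_{b_{\mu}!}\ov\pi_{\xi})$ is itself concentrated in degree $0$, the basic graded piece---which is the only one surviving after your correct elimination of non-basic strata---already lands in degree exactly $d$ once you apply the shift $[-d]$ from Mantovan. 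This is the paper's proof in full.

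Your Poincar\'e-duality/Pink-formula paragraph is therefore redundant, and, as you yourself flag, the mod-$\ell$ boundary analysis (or the alternative auxiliary-prime globalisation) would be genuine additional work to make rigorous. Drop it: once you record that the basic Igusa variety has dimension zero, the ``$t$-exactness'' you extract from Proposition~\ref{itm : cohomology of GU with torsion coefficient} gives concentration in a single degree directly, not just an upper bound.
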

\begin{proof}
  The same argument as in the proof of theorem \ref{itm : torsion vanishing} still works here. First of all we can use Mantovan's formula to relate the complex $R\Gamma_c(\Sh_{K^p}, \ov \F_{\ell})$ with the Hecke operators $\T_{-\mu}$ and the complexes $R\Gamma_{c- \partial}(\Ig_{b, K^p}, \ov \F_{\ell})$ where $b \in B(\G, -\mu)$. Since the minimal compactification of Igusa varieties are affine \cite{CS, CS1}, we can deduce that the complex $R\Gamma_{c-\partial}(\Ig_{b, K^p}, \ov \F_{\ell})$ is concentrated below degree $d_b = \langle \nu_b, 2\rho_{\G} \rangle$ for $b \in B(\G, -\mu)$ \cite[Proposition 3.7]{HL}. However, the contribution of non-basic strata vanishes since $\T_{-\mu}$ preserves Fargues-Scholze parameters and if $\pi_b$ is an irreducible representation in $\Rep_{\ov \F_{\ell}}(\G_b(\Q_p))$ then its Fargues-Scholze parameter is not supercuspidal. Moreover, the Igusa variety corresponding to the basic stratum is zero dimension and thus the complex $R\Gamma_{c-\partial}(\Ig_{b, K^p}, \ov \F_{\ell})$ concentrated in degree $0$. In this case, one can also deduce that $R\Gamma_c(\Sh_{K^p}, \ov \F_{\ell})^{[\phi]}$ is concentrated in degree $d$ by proposition \ref{itm : cohomology of GU with torsion coefficient}.  
\end{proof}

\begin{remark}
     In a forthcoming work, Patrick Daniels, Pol van Hoften, Dongryul Kim and Mingjia Zhang will prove a version of Mantovan's formula for Shimura varieties of abelian type. One can combine it with the Kottwitz conjecture in the $\ell$-adic setting \cite{Ham, Pen} to prove an analogue of the above results for $\GSp_4$ and special orthogonal and unitary groups. 
\end{remark}

%\section{Coherent sheaves on the nilpotent cones} 
%\section{On the geometry of generic $L$-parameters}
%\section{Computations of the spectral actions and applications}

\bibliographystyle{amsalpha}
\bibliography{publications}
\end{document}